\providecommand{\tabularnewline}{\\}
\numberwithin{equation}{section}
\numberwithin{figure}{section}
\theoremstyle{plain}
\newtheorem{thm}{\protect\theoremname}[section]
\theoremstyle{plain}
\newtheorem{assumption}[thm]{\protect\assumptionname}
\theoremstyle{plain}
\newtheorem{prop}[thm]{\protect\propositionname}
\theoremstyle{definition}
\newtheorem{defn}[thm]{\protect\definitionname}
\theoremstyle{definition}
\newtheorem{example}[thm]{\protect\examplename}
\theoremstyle{plain}
\newtheorem{cor}[thm]{\protect\corollaryname}
\theoremstyle{remark}
\newtheorem{rem}[thm]{\protect\remarkname}
\theoremstyle{plain}
\newtheorem{lem}[thm]{\protect\lemmaname}
\theoremstyle{plain}
\newtheorem{conjecture}[thm]{\protect\conjecturename}
\providecommand{\assumptionname}{Assumption}
\providecommand{\conjecturename}{Conjecture}
\providecommand{\corollaryname}{Corollary}
\providecommand{\definitionname}{Definition}
\providecommand{\examplename}{Example}
\providecommand{\lemmaname}{Lemma}
\providecommand{\propositionname}{Proposition}
\providecommand{\remarkname}{Remark}
\providecommand{\theoremname}{Theorem}
\begin{document}
\global\long\def\emb{\mathbb{E}^{\mathrm{emb}}}%
\global\long\def\F{\mathbb{\mathbb{\mathbf{F}}}}%
 
\global\long\def\rk{\mathbb{\mathrm{rk}}}%
 
\global\long\def\crit{\mathbb{\mathrm{Crit}}}%
 
\global\long\def\Hom{\mathrm{Hom}}%
 
\global\long\def\defi{\stackrel{\mathrm{def}}{=}}%
 
\global\long\def\tr{{\cal T}r }%
 
\global\long\def\id{\mathrm{id}}%
 
\global\long\def\Aut{\mathrm{Aut}}%
 
\global\long\def\wl{w_{1},\ldots,w_{\ell}}%
 
\global\long\def\alg{\le_{\mathrm{alg}}}%
 
\global\long\def\ff{\stackrel{*}{\le}}%
 
\global\long\def\chimax{\chi^{\mathrm{max}}}%
 
\global\long\def\fix{\mathrm{fix}}%
 
\global\long\def\cyc{\mathrm{cyc}}%
 
\global\long\def\R{{\cal R}}%
 
\global\long\def\remb{{\cal R}^{\mathrm{emb}}}%
 
\global\long\def\H{{\cal H}}%
 
\global\long\def\lm{\mathfrak{lm}}%
 
\global\long\def\asyexp{\stackrel{\mathrm{asym.~exp.}}{\sim}}%
 
\global\long\def\plab{\pi_{1}^{\mathrm{lab}}}%
 
\global\long\def\chigrp{\chi^{\mathrm{grp}}}%
 
\global\long\def\tsg{\widetilde{\Sigma_{g}}}%
 
\global\long\def\C{{\cal C}}%
 
\global\long\def\core{\mathrm{Core}}%

\title{Local Statistics of Random Permutations from Free Products}
\author{Doron Puder~~~~~~Tomer Zimhoni}
\maketitle
\begin{abstract}
Let $\alpha$ and $\beta$ be uniformly random permutations of orders
$2$ and $3$, respectively, in $S_{N}$, and consider, say, the permutation
$\alpha\beta\alpha\beta^{-1}$. How many fixed points does this random
permutation have on average? The current paper studies questions of
this kind and relates them to surprising topological and algebraic
invariants of elements in free products of groups. 

Formally, let $\Gamma=G_{1}*\ldots*G_{k}$ be a free product of groups
where each of $G_{1},\ldots,G_{k}$ is either finite, finitely generated
free, or an orientable hyperbolic surface group. For a fixed element
$\gamma\in\Gamma$, a $\gamma$-random permutation in the symmetric
group $S_{N}$ is the image of $\gamma$ through a uniformly random
homomorphism $\Gamma\to S_{N}$. In this paper we study local statistics
of $\gamma$-random permutations and their asymptotics as $N$ grows.
We first consider $\mathbb{E}\left[\fix_{\gamma}\left(N\right)\right]$,
the expected number of fixed points in a $\gamma$-random permutation
in $S_{N}$. We show that unless $\gamma$ has finite order, the limit
of $\mathbb{E}\left[\fix_{\gamma}\left(N\right)\right]$ as $N\to\infty$
is an integer, and is equal to the number of subgroups $H\le\Gamma$
containing $\gamma$ such that $H\cong\mathbb{Z}$ or $H\cong C_{2}*C_{2}$.
Equivalently, this is the number of subgroups $H\le\Gamma$ containing
$\gamma$ and having (rational) Euler characteristic zero. We also
prove there is an asymptotic expansion for $\mathbb{E}\left[\fix_{\gamma}\left(N\right)\right]$
and determine the limit distribution of the number of fixed points
as $N\to\infty$. These results are then generalized to all statistics
of cycles of fixed lengths.
\end{abstract}
\tableofcontents{}

\section{Introduction\label{sec:Introduction}}

Let us begin with a special case of the problem we study in this paper.
Let $\alpha$ and $\beta$ be uniformly random permutations of orders
$2$ and $3$, respectively, in $S_{N}$, or, almost equivalently,
uniformly random permutations among all those satisfying $\alpha^{2}=1$
and $\beta^{3}=1$. Consider random permutations formed by some fixed
word in $\alpha$ and $\beta$, e.g., the random permutation $\alpha\beta\alpha\beta^{-1}$.
This random permutation can also be described as the image of the
element $xyxy^{-1}$ of $\Gamma=\left\langle x,y\,\middle|\,x^{2},y^{3}\right\rangle \cong C_{2}*C_{3}\cong\mathrm{PSL}(2,\mathbb{Z}$)
through a uniformly random homomorphism to $S_{N}$. This paper studies
the local statistics of such random permutations and shows that their
limit distributions (as $N\to\infty$) can be completely extracted
from certain algebraic and topological invariants of the corresponding
element ($xyxy^{-1}$ in the above example) in the group $\Gamma$.

More generally, given a f.g.~(finitely generated) group $\Gamma$,
the set $\Hom\left(\Gamma,S_{N}\right)$ of group homomorphisms from
$\Gamma$ to the symmetric group $S_{N}$ is finite, and is a natural
object of study, being the set of all permutation-representations
(actions) of $\Gamma$ on a set of size $N$. This set also lies in
one-to-one correspondence with all $N$-sheeted covering spaces of
a ``nice'' topological space\footnote{For this correspondence, $X$ needs to be connected, locally path-connected
and semilocally simply-connected. Moreover, $X$ is equipped with
a basepoint $x_{0}\in X$, the group $\Gamma$ is identified with
$\pi_{1}\left(X,x_{0}\right)$, and $X$'s $N$-sheeted covering spaces
$\rho\colon\tilde{X}\to X$ are equipped with a bijection between
$\left\{ 1,\ldots,N\right\} $ and the fiber $\rho^{-1}\left(x_{0}\right)$.
See \cite[pp.~68-70]{hatcher2005algebraic}.} $X$ with fundamental group $\Gamma$. The set $\Hom\left(\Gamma,S_{N}\right)$
also shows up in the study of residual properties of $\Gamma$, of
its profinite topology, of its subgroup growth and so on.

In this paper we study $\Hom\left(\Gamma,S_{N}\right)$ where $\Gamma$
is a free product of finite, free, and (orientable) hyperbolic surface
groups. Namely,
\begin{assumption}
\label{assu:Gamma}Throughout this paper, we let 
\begin{equation}
\Gamma=G_{1}*\ldots*G_{k}\label{eq:Gamma}
\end{equation}
for some $k\in\mathbb{Z}_{\ge1}$, and for every $i=1,\ldots,k$,
the group $G_{i}$ is either a finite group, a f.g.~free group, or
the fundamental group $\Lambda_{g}\cong\left\langle a_{1},b_{1},\ldots,a_{g},b_{g}\,\middle|\,\left[a_{1},b_{1}\right]\cdots\left[a_{g},b_{g}\right]\right\rangle $
of a closed orientable surface of genus $g\ge2$. Denote by $m\left(\Gamma\right)\in\mathbb{Z}_{\ge1}$
the $\mathrm{lcm}$ of the orders of the finite factors in \eqref{eq:Gamma}
(in particular, $m=1$ if and only if $\Gamma$ is torsion free).
\end{assumption}

The case of $k=1$, namely, when $\Gamma$ is simply a finite, free
or surface group, was studied in previous works mentioned below and
on which we build upon in the current paper. Indeed, the main innovation
of the current paper is in treating non-trivial free products. Non-trivial
free products in our setting include the modular group $\mathrm{PSL}_{2}\left(\mathbb{Z}\right)\cong C_{2}*C_{3}$
(we denote by $C_{r}$ the cyclic group of order $r$) and all its
f.g.~subgroups, as well as many other f.g.~orientable Fuchsian groups:
those with parabolic or hyperbolic boundary generators (see \cite[pp.~553]{LiebeckShalev}). 

The mere number of homomorphisms $\Gamma\to S_{N}$ is well understood
-- see Section \ref{subsec:Related-works}. Another natural question
is whether a uniformly random action of $\Gamma$ on $\left\{ 1,\ldots,N\right\} $
is transitive, or, equivalently, if a random $N$-cover of a corresponding
space is connected. Here, known results are striking: in many of the
cases covered by our setting, the image of a random homomorphism $\varphi\colon\Gamma\to S_{N}$
is not only a.a.s.\footnote{We write a.a.s., or asymptotically almost surely, to describe an event
which has probability tending to $1$ as the implied parameter ($N$
in the current case) tends to infinity.}~a transitive subgroup of $S_{N}$, but actually a.a.s.~contains
the alternating group $A_{N}$. This is true for non-abelian free
groups by the famous result of Dixon \cite{dixon1969probability}
that two uniformly random permutations a.a.s.~generate $A_{N}$ or
$S_{N}$. It is true for hyperbolic surface groups and for free products
of cyclic groups which are Fuchsian by \cite[Thm.~1.12]{LiebeckShalev}.
Of course, adding free factors to $\Gamma$ can only enlarge the image
of a random homomorphism. If $\Gamma$ is a finite group, $\Gamma\cong\mathbb{Z}$
or $\Gamma\cong C_{2}*C_{2}$, it is known (and easy) that the image
of a random homomorphism to $S_{N}$ is not a.a.s.~transitive. It
is probable that in all remaining cases\footnote{The remaining cases are non-trivial free products where all factors
are finite groups or $\mathbb{Z}$, with at most one cyclic factor
or precisely two cyclic factors both of which are $C_{2}$ (excluding,
of course, the group $C_{2}*C_{2}$ itself).}, the image of a random homomorphism to $S_{N}$ should also contain
$A_{N}$ a.a.s., but we do not know of a reference. 

\subsection{Fixed points in a $\gamma$-random permutation}

In this paper, however, our focus is different. We fix an element
$\gamma\in\Gamma$ and consider its image through a uniformly random
homomorphism $\varphi\colon\Gamma\to S_{N}$. We call the resulting
random permutation a $\gamma$-random permutation. In the topological
setting, the image of $\gamma$ corresponds to the structure of the
lifts of the corresponding closed curve in the space $X$ to a random
$N$-sheeted cover. We concentrate on the \emph{local} statistics
of a $\gamma$-random permutation: the distribution of the number
of cycles of given fixed lengths. 

We begin by presenting our results for the distribution of the number
of fixed points, and later generalize to cycles of arbitrary fixed
lengths. Denote by $\fix_{\gamma}\left(N\right)$ the random variable
that counts the number of fixed points of a $\gamma$-random permutation
in $S_{N}$.

There is a clear distinction between torsion elements and elements
of infinite order. Any non-trivial torsion element $\gamma$ of $\Gamma$
is conjugate into one of the finite factors (the infinite factors
are torsion-free). So the statistics of a $\gamma$-random permutation
only depend on the particular factor it is conjugate into. In this
case, the following proposition readily follows from results in \cite{muller1997finite}: 
\begin{prop}
\label{prop:exp-torsion-element}Let $\gamma\in\Gamma$ have finite
order and let $\left|\gamma\right|$ denote its order. Then
\begin{equation}
\mathbb{E}\left[\fix_{\gamma}\left(N\right)\right]=N^{1/\left|\gamma\right|}+O\left(N^{1/\left(2\left|\gamma\right|\right)}\right).\label{eq:E=00005Bfix=00005D of torsion}
\end{equation}
\end{prop}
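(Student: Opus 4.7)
The plan is to reduce to the case where $\Gamma$ is itself a finite group and then read off the asymptotic from M\"uller's expansion of $\#\mathrm{Hom}(G,S_N)$ in \cite{muller1997finite}. Since all infinite factors in Assumption~\ref{assu:Gamma} are torsion-free, any finite-order element of $\Gamma$ is conjugate into one of the finite factors, so after conjugation I may assume $\gamma\in G_i$ for some finite $G_i$. The universal property of the free product identifies $\mathrm{Hom}(\Gamma,S_N)$ with $\prod_j\mathrm{Hom}(G_j,S_N)$, under which uniform sampling of $\varphi$ corresponds to independent uniform sampling on each factor; since $\mathrm{fix}_\gamma(N)$ depends only on $\varphi|_{G_i}$, the proposition reduces to the finite-group case $\Gamma=G$, $\gamma\in G$ of order $d:=|\gamma|$.

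In this finite case I would obtain a combinatorial formula for $\mathbb{E}[\mathrm{fix}_\gamma(N)]$ by orbit-stabilizer. By symmetry, $\mathbb{E}[\mathrm{fix}_\gamma(N)]=N\cdot\Pr[\psi(\gamma)\cdot 1=1]$. Decomposing a $\psi$ with $\psi(\gamma)\cdot 1=1$ according to the stabilizer $K:=\mathrm{Stab}_\psi(1)$ (which must contain $\langle\gamma\rangle$) and counting in turn the orbit of $1$, a labeling of that orbit as $G/K$ with basepoint $1$, and an arbitrary homomorphism on the complement yields
\[
\mathbb{E}[\mathrm{fix}_\gamma(N)] \;=\; \sum_{\substack{K\le G\\ \gamma\in K}} \frac{N!}{(N-[G:K])!}\cdot \frac{h_G(N-[G:K])}{h_G(N)}, \qquad h_G(N):=\#\mathrm{Hom}(G,S_N).
\]

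Finally I would feed in M\"uller's asymptotic expansion of $h_G(N)$ to extract the behavior of each summand. The key consequence of that expansion is that each summand has leading asymptotic of the form $c_K\,N^{1/|K|}$ with a relative error that is a negative power of $N$. Since $\langle\gamma\rangle\le K$ forces $d\mid|K|$, the largest exponent of $N$ appearing in the sum is $1/d$, realized uniquely by $K=\langle\gamma\rangle$, and the leading constant there works out to $c_{\langle\gamma\rangle}=1$ by direct computation from the explicit form of M\"uller's expansion. Every other $K$ has $|K|\ge 2d$ and so contributes $O(N^{1/(2d)})$, which matches the relative error inside the dominant summand and yields $\mathbb{E}[\mathrm{fix}_\gamma(N)]=N^{1/d}+O(N^{1/(2d)})$. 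The main obstacle is pinning down both the leading constant $c_{\langle\gamma\rangle}=1$ and the relative error of size $O(N^{-1/(2d)})$ directly from M\"uller's expansion for a general (not merely cyclic) finite $G$; the rest of the argument is bookkeeping.
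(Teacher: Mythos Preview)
Your proposal is correct and is essentially the paper's own argument, recast in purely group-theoretic language. The paper works with the covering space $\hat{X}_{\langle\gamma\rangle}$ of $X_G$ and its resolution $\mathcal{R}$ into full connected covers $Z_f$; these $Z_f$ are in bijection with subgroups $K\le G$ containing $\gamma$, and for each one the paper's formula $\emb_{Z_f}(N)=(N)_{[G:K]}\cdot h_G(N-[G:K])/h_G(N)$ is exactly your summand, so your orbit--stabilizer decomposition and the paper's resolution produce the identical sum. Both arguments then feed in M\"uller's expansion; the two details you flag as obstacles---that the leading constant for $K=\langle\gamma\rangle$ is $1$ and that the relative error is small enough---are handled in the paper by noting (from \cite[p.~552]{muller1997finite}) that the coefficients $Q_t$ vanish for $0>t>-\tfrac{1}{2}$, giving a relative error of $O(N^{-1/2})$ in each summand, which is more than enough since $\tfrac{1}{d}-\tfrac{1}{2}\le\tfrac{1}{2d}$.
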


For instance, Example \ref{exa:x^2  in C_4} explains why $\mathbb{E}\left[\fix_{\gamma}\left(N\right)\right]=N^{1/2}+N^{1/4}+O\left(1\right)$
for $\gamma=x^{2}\in\Gamma=C_{4}=\left\langle x\right\rangle $. (More
general statistics of $\gamma$-random permutations when $\gamma$
has finite order can be derived from \cite{muller2010statistics}.)

The picture is completely different for elements of infinite order.
Consider first the case where $\Gamma\cong\mathbb{Z}=\left\langle x\right\rangle $
and $\gamma=x^{q}$. A $x^{q}$-random permutation is simply the $q$-power
of a uniformly random permutation, and the local statistics here are
well-understood: as $N\to\infty$ they converge in distribution to
a sum of suitable independent Poisson variables -- see \cite{diaconis1994eigenvalues}.
In particular, $\mathbb{E}\left[\fix_{\gamma}\left(N\right)\right]$
converges to $d\left(q\right)$, the number of positive divisors of
$q$. Nica showed in \cite{nica1994number} that the same is true
for elements of a free group: if $\Gamma$ is a f.g.~free group and
$1\ne\gamma\in\Gamma$, write $\gamma=\gamma_{0}^{~q}$ with $q\in\mathbb{Z}_{\ge1}$
and $\gamma_{0}\in\Gamma$ a non-power. Then $\fix_{\gamma}\left(N\right)$
converges in distribution, as $N\to\infty$, to the same sum of Poissons
as $x^{q}\in\mathbb{Z}$ does. In particular, the limit distribution
depends only on $q$ and not on $\gamma_{0}$.

The case of orientable surface groups was recently studied by Magee
and the first author in \cite{magee2020asymptotic}. While the presence
of a relation makes the analysis in this case by far more complicated
than in free groups, it is nevertheless shown in \cite{magee2020asymptotic}
that Nica's results about free group elements hold in surface groups
as well. In particular, for $g\ge2$ and $1\ne\gamma\in\Lambda_{g}$,
if we write $\gamma=\gamma_{0}^{~q}$ with $\gamma_{0}$ a non-power
and $q\in\mathbb{Z}_{\ge1}$, then $\fix_{\gamma}\left(N\right)$
converges in distribution, as $N\to\infty$, to the same sum of Poissons
as $x^{q}\in\mathbb{Z}$ does\footnote{\label{fn:To-be-precise MP20}To be precise, this result is not stated
explicitly in \cite{magee2020asymptotic}. The paper \cite{magee2020asymptotic}
is long as is and its main feature is the development of a new representation-theoretic
method to compute integrals over $\Hom\left(\Lambda_{g},S_{N}\right)$.
To keep that paper to a manageable size, it states explicitly only
the result that $\mathbb{E}\left[\fix_{\gamma}\left(N\right)\right]\stackrel{N\to\infty}{\to}d\left(q\right)$.
However, the stronger results about the limit distributions of $\fix_{\gamma}\left(N\right)$
and other local statistics follow readily from \cite{magee2020asymptotic}.
At any rate, the proofs we give in the current paper heavily rely
on \cite{magee2020asymptotic} and encompass, as a special case, the
case of $\gamma\in\Lambda_{g}$.}.

An interesting twist arises when one considers groups with torsion,
and, in particular, free products of finite groups, as in the current
paper. It turns out that the property of an element $\gamma\in\Gamma$
which determines the local statistics of a $\gamma$-random permutation
in the limit is not only whether it is a power and the value of the
exponent, but rather, \emph{the array of subgroups of Euler characteristic
zero containing it} \emph{and its powers}. To explain this phenomenon,
let us first recall what the Euler characteristic is for the groups
in play in this paper. 
\begin{defn}[Euler Characteristic of groups]
\label{def:EC} The (rational) Euler characteristic of a group $\Gamma$,
denoted $\chi\left(\Gamma\right)$, is a rational number defined for
groups with a finite index subgroup of finite homological type --
see \cite[Sec.~IX.7]{brown1982cohomology}. For the sake of the current
paper, it is enough to mention that 
\begin{itemize}
\item For a finite group $G$, $\chi\left(G\right)=\frac{1}{\left|G\right|}$.
\item For a rank-$r$ free group, $\chi\left(\F_{r}\right)=1-r$.
\item For a surface group $\Lambda_{g}\cong\left\langle a_{1},b_{1},\ldots,a_{g},b_{g}\,\middle|\,\left[a_{1},b_{1}\right]\cdots\left[a_{g},b_{g}\right]\right\rangle $,
$\chi\left(\Lambda_{g}\right)=2-2g$.
\item If $G_{1}$ and $G_{2}$ have a well-defined Euler characteristic,
then so does $G_{1}*G_{2}$, and
\[
\chi\left(G_{1}*G_{2}\right)=\chi\left(G_{1}\right)+\chi\left(G_{2}\right)-1.
\]
\end{itemize}
\end{defn}

So, for example, $\chi\left(\mathrm{PSL}_{2}\left(\mathbb{Z}\right)\right)=\chi\left(C_{2}*C_{3}\right)=\frac{1}{2}+\frac{1}{3}-1=-\frac{1}{6}$.
By Kurosh subgroup theorem, if $\Gamma$ is as in \eqref{eq:Gamma},
then every subgroup of $\Gamma$ is a free product of (conjugates
of) subgroups of the factors of $\Gamma$ together with, possibly,
a free group factor. As every subgroup of a free group is free, and
every subgroup of $\Lambda_{g}$ $\left(g\ge2\right)$ is either free
or $\Lambda_{h}$ for some $h\ge g$ (e.g.~\cite{scott1978subgroups}),
we get that every f.g.~subgroup of $\Gamma$ is, too, of the form
\eqref{eq:Gamma}, and, in particular, has a well-defined EC (Euler
characteristic) as in Definition \ref{def:EC}. Note that when restricting
to the groups considered in this paper, the only groups with positive
EC are finite groups, and the only groups with EC zero are $\mathbb{Z}$
and $C_{2}*C_{2}$. 

For $\Gamma$ as in \eqref{eq:Gamma} and $\gamma\in\Gamma$, denote
\begin{equation}
\H_{\gamma}\defi\left\{ H\le\Gamma\,\middle|\,\gamma\in H~\mathrm{and}~\chi\left(H\right)=0\right\} .\label{eq:H_gamma}
\end{equation}
Equivalently, this is the set of subgroups of $\Gamma$ containing
$\gamma$ which are isomorphic to $\mathbb{Z}$ or to $C_{2}*C_{2}$.
It is not hard to show (and see Corollary \ref{cor:H_gamma is finite})
that this set is finite for every non-torsion $\gamma\in\Gamma$.
\begin{thm}
\label{thm:limit expectation of fix}Let $\Gamma$ be as in \eqref{eq:Gamma}
and $\gamma\in\Gamma$ have infinite order. Then 
\begin{equation}
\mathbb{E}\left[\fix_{\gamma}\left(N\right)\right]\stackrel{N\to\infty}{\to}\left|\H_{\gamma}\right|.\label{eq:subgroups of EC 0}
\end{equation}
More precisely, writing $m=m\left(\Gamma\right)$ as in Assumption
\ref{assu:Gamma}, we have
\[
\mathbb{E}\left[\fix_{\gamma}\left(N\right)\right]=\left|\H_{\gamma}\right|+O\left(N^{-1/m}\right).
\]
\end{thm}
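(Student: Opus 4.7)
The natural plan is to decompose $\mathbb{E}\left[\fix_{\gamma}\left(N\right)\right]$ into contributions indexed by subgroups $H \leq \Gamma$ containing $\gamma$. A fixed point of $\varphi(\gamma)$ at a position $i \in \{1,\ldots,N\}$ means that $\gamma$ lies in the point stabilizer $\mathrm{Stab}_{\varphi}(i)$. Reorganizing the sum over fixed points first by the $\Gamma$-orbit of $i$ and then by an ``algebraic closure'' operation in the poset of subgroups containing $\gamma$, followed by a Möbius-type inversion, should yield a master formula of the form
\begin{equation*}
\mathbb{E}\left[\fix_{\gamma}\left(N\right)\right] \;=\; \sum_{H}\, c_{H}(N),
\end{equation*}
where $H$ ranges over appropriate ``critical''/algebraic extensions of $\langle\gamma\rangle$ inside $\Gamma$, and $c_{H}(N)$ is the probability that a uniformly random $\varphi\in\Hom(\Gamma,S_{N})$ extends a distinguished partial $H$-action.

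To turn this sketch into a proof, two pieces of machinery are required. First, a Stallings-type combinatorial model for f.g.~subgroups of $\Gamma$: by Kurosh every f.g.~subgroup is again of the form \eqref{eq:Gamma}, and its ``core'' should fuse a Stallings graph (for the free factors), a surface-with-boundary complex in the spirit of \cite{magee2020asymptotic} (for the surface factors $\Lambda_{g}$), and coset data (for the finite factors). Second, an asymptotic expansion for $c_{H}(N)$: building on the representation-theoretic integration technique of \cite{magee2020asymptotic}, one expects
\begin{equation*}
c_{H}(N) \;=\; (\text{combinatorial count})\cdot N^{\chi(H)}\cdot\bigl(1+O(N^{-1/m})\bigr),
\end{equation*}
so that the leading $N$-power is controlled by the rational Euler characteristic of the core.

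Given such an expansion, the theorem follows by a size analysis. Since $\gamma$ has infinite order, every $H \ni \gamma$ that appears in the sum is infinite, hence $\chi(H)\le 0$. The subgroups $H$ with $\chi(H)=0$ are precisely those in $\H_{\gamma}$; each should contribute $1+O(N^{-1/m})$, and $\left|\H_{\gamma}\right|$ is finite by Corollary \ref{cor:H_gamma is finite}. Every other summand has $\chi(H)\leq -1/m$ (since $\chi$ of any f.g.~subgroup lies in $\frac{1}{m}\mathbb{Z}$), so each individually contributes $O(N^{-1/m})$. To close the proof one still needs a quantitative bound on the number of critical $H$ at each negative Euler-characteristic level, strong enough that the tail of the sum remains $O(N^{-1/m})$.

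The hard part is unquestionably establishing the master formula and the uniform $c_{H}(N)$-expansion in the mixed setting: one must glue the combinatorial Stallings framework for free(-product) pieces, the representation-theoretic surface-word-map machinery of \cite{magee2020asymptotic}, and the coset analysis forced by the finite factors into a single bookkeeping device that produces the same Euler-characteristic-governed expansion across all three kinds of factors. Once this unified expansion is in hand, the identification of the limit with $\left|\H_{\gamma}\right|$ and the extraction of the $N^{-1/m}$ error reduce to an Euler-characteristic accounting on the resulting combinatorial objects.
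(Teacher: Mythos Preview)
Your high-level strategy matches the paper's: express $\mathbb{E}[\fix_\gamma(N)]$ as a finite sum of terms each of order $N^{\chi(H)}$ for some f.g.\ $H\ni\gamma$, note that $\chi(H)\le 0$ since $\gamma$ has infinite order, and identify the $\chi=0$ terms with $\mathcal{H}_\gamma$. The paper's execution is geometric rather than algebraic: instead of indexing by subgroups and invoking M\"obius inversion over a poset of algebraic extensions, it builds a one-dimensional sub-cover $Y_\gamma$ (a labeled cycle in $X_\Gamma^{(1)}$) and uses the resolution $\mathcal{R}_\gamma=\{f\colon Y_\gamma\twoheadrightarrow Z_f\}$ of surjective sub-cover morphisms, giving $\mathbb{E}[\fix_\gamma(N)]=\sum_{f\in\mathcal{R}_\gamma}\emb_{Z_f}(N)$ directly. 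Because $Y_\gamma$ is compact, $\mathcal{R}_\gamma$ is \emph{finite}, so your concern about bounding the number of critical $H$ at each negative EC-level simply does not arise; there is no tail to control.

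There is, however, a genuine gap in your sketch at the step ``each should contribute $1+O(N^{-1/m})$.'' Theorem~\ref{thm:E^emb is N^chi and asym expansion} yields $\emb_{Z_f}(N)=a_0(Z_f)\,N^{\chigrp(Z_f)}(1+O(N^{-1/m}))$ with $a_0\in\mathbb{Z}_{\ge 1}$, but $a_0$ can exceed $1$ when surface factors are present (see Figure~\ref{fig:BR with a_0=00003D2}). The paper must argue separately (Proposition~\ref{prop:addendum - value of a0} and Corollary~\ref{cor:Y with a_0=00003D1}) that for $f\in\mathcal{R}_\gamma^0$ the surface pieces of $Z_f$ are either $\pi_1$-trivial or single annuli, forcing $a_0(Z_f)=1$. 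Equally nontrivial is the bijection $\mathcal{R}_\gamma^0\to\mathcal{H}_\gamma$: surjectivity requires Lemma~\ref{lem:Y_gamma to EC 0 is pi1-onto}, which shows that the image of $Y_\gamma$ in any EC-zero cover is automatically $\pi_1$-onto (this uses the structure of $\mathbb{Z}$ and $C_2*C_2$ as graphs of groups). Your outline does not address either point, and both are where the actual work of the proof lies.
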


In Table \ref{tab:examples for limit of E=00005Bfix=00005D} we illustrate
this result with some concrete examples. This generalizes the above-mentioned
results in free groups and surface groups, as these groups are torsion
free and have no embedded copies of $C_{2}*C_{2}$. Thus, in this
case ${\cal H}_{\gamma}$ contains only infinite cyclic groups: if
$\gamma=\gamma_{0}^{~q}$ with $\gamma_{0}$ a non-power, this set
is ${\cal H}_{\gamma}=\left\{ \left\langle \gamma_{0}^{~d}\right\rangle \,\middle|\,1\le d|q\right\} $
(see Footnote \ref{fn:cyclic subgroups containing gamma in free and surface groups}
for some details).

\begin{table}
\begin{centering}
\begin{tabular}{|>{\centering}p{3.5cm}|c|c|c|}
\hline 
$\Gamma$ & $\gamma$ & $\lim_{N\to\infty}\mathbb{E}\left[\fix_{\gamma}\left(N\right)\right]$ & ${\cal H}_{\gamma}$\tabularnewline
\hline 
\hline 
$C_{2}*C_{2}=\left\langle x\right\rangle *\left\langle y\right\rangle $ & \multirow{5}{*}{$\left[x,y\right]$} & $5$ & $\left\langle \gamma\right\rangle ,\left\langle xy\right\rangle ,\left\langle x,yxy\right\rangle ,\left\langle xyx,y\right\rangle ,\Gamma$\tabularnewline
\cline{1-1} \cline{3-4} \cline{4-4} 
$C_{2}*C_{q}=\left\langle x\right\rangle *\left\langle y\right\rangle $,
$2<q$ &  & $2$ & $\left\langle \gamma\right\rangle ,\left\langle x,yxy^{-1}\right\rangle $\tabularnewline
\cline{1-1} \cline{3-4} \cline{4-4} 
$C_{m}*C_{q}=\left\langle x\right\rangle *\left\langle y\right\rangle $,
$2<m\le q$ &  & $1$ & $\left\langle \gamma\right\rangle $\tabularnewline
\cline{1-1} \cline{3-4} \cline{4-4} 
$\F_{2}=\mathbb{Z}*\mathbb{Z}=\left\langle x\right\rangle *\left\langle y\right\rangle $ &  & $1$ & $\left\langle \gamma\right\rangle $\tabularnewline
\cline{1-1} \cline{3-4} \cline{4-4} 
$\Lambda_{2}=\left\langle x,y,z,t\,\middle|\,\left[x,y\right]\left[z,t\right]\right\rangle $ &  & $1$ & $\left\langle \gamma\right\rangle $\tabularnewline
\hline 
$C_{2}*C_{2}=\left\langle x\right\rangle *\left\langle y\right\rangle $ & $\left(xy\right)^{3}$ & $6$ & $\left\langle \gamma\right\rangle ,\left\langle xy\right\rangle ,\left\langle x,yxyxy\right\rangle ,\left\langle yxy,xyx\right\rangle ,\left\langle y,xyxyx\right\rangle ,\Gamma$\tabularnewline
\hline 
$C_{3}*C_{4}=\left\langle x\right\rangle *\left\langle y\right\rangle $ & $\left[x,y^{2}\right]$ & $2$ & $\left\langle \gamma\right\rangle ,\left\langle xy^{2}x^{-1},y^{2}\right\rangle $\tabularnewline
\hline 
\end{tabular}
\par\end{centering}
\caption{This table illustrates Theorem \ref{thm:limit expectation of fix}
and gives the limit value of $\mathbb{E}\left[\protect\fix_{\gamma}\left(N\right)\right]$
as $N\to\infty$ for various infinite-order elements $\gamma$ in
various groups. The limit is the number of subgroups $H\le\Gamma$
with $\chi\left(H\right)=0$ containing $\gamma$, and their full
list in each case is given in the rightmost column. \label{tab:examples for limit of E=00005Bfix=00005D}}
\end{table}

One can give a unified statement encompassing both Proposition \ref{prop:exp-torsion-element}
and Theorem \ref{thm:limit expectation of fix}: for every $\gamma\in\Gamma$
\begin{equation}
\mathbb{E}\left[\fix_{\gamma}\left(N\right)\right]=c_{\gamma}\cdot N^{1/\left|\gamma\right|}\left(1+O\left(N^{-1/m}\right)\right),\label{eq:leading order of E=00005Bfix=00005D for torsion and non-torsion}
\end{equation}
where $\left|\gamma\right|$ is the order of $\gamma$, $\frac{1}{\infty}\defi0$,
$c_{\gamma}$ is the number of subgroups $H\le\Gamma$ containing
$\gamma$ and of Euler characteristic $\frac{1}{\left|\gamma\right|}$,
and $m=m\left(\Gamma\right)$. Indeed, \eqref{eq:leading order of E=00005Bfix=00005D for torsion and non-torsion}
coincides with Theorem \ref{thm:limit expectation of fix} when $\left|\gamma\right|=\infty$.
If $\gamma$ is a torsion element, then $\left\langle \gamma\right\rangle $
is the sole subgroup of EC $\frac{1}{\left|\gamma\right|}$ containing
$\gamma$, and so $c_{\gamma}=1$. Both bounds on the error term --
$O\left(N^{1/\left|\gamma\right|-1/m}\right)$ in \eqref{eq:leading order of E=00005Bfix=00005D for torsion and non-torsion}
and $O\left(N^{1/\left(2\left|\gamma\right|\right)}\right)$ in \eqref{eq:E=00005Bfix=00005D of torsion}
-- hold in this case. See Section \ref{subsec:sub-coverings-finite-groups}
for details.

In fact, the role of EC of subgroups in local statistics of random
homomorphisms $\Gamma\to S_{N}$ goes much further. Roughly, for a
natural choice of a nice space $X_{\Gamma}$ with fundamental group
$\Gamma$, let $p\colon X\to X_{\Gamma}$ be an arbitrary covering
space, let $Y\subseteq X$ be a compact subspace, and for simplicity
assume that $Y$ is connected. Let $\plab\left(Y\right)\le\Gamma$
be the (conjugacy class of the) subgroup corresponding to $Y$, namely,
this is $\plab\left(Y\right)\defi p_{*}\left(\pi_{1}\left(Y\right)\right)\le\pi_{1}\left(X_{\Gamma}\right)=\Gamma$.
Then the average number of embeddings of $Y$ in a random $N$-cover
of $X_{\Gamma}$, or more precisely the average number of injective
lifts of $p|_{Y}\colon Y\to X_{\Gamma}$ to a random $N$-cover of
$X_{\Gamma}$, is of order $N^{\chi\left(\plab\left(Y\right)\right)}$.
The precise statement is given in Theorem \ref{thm:E^emb is N^chi and asym expansion}
below (and see Remark \ref{rem:point of confusion}). This result
is an important ingredient in the proof of Theorem \ref{thm:limit expectation of fix}
and the other main results.

The same method we use to prove Theorem \ref{thm:limit expectation of fix}
can be used to compute the limit of all moments of $\fix_{\gamma}\left(N\right)$
and, by the method of moments, prove the following.
\begin{thm}
\label{thm:limit distribution of fix}Let $\Gamma$ be as in \eqref{eq:Gamma}
and $\gamma\in\Gamma$ have infinite order. Let $H_{1},\ldots,H_{t}$
be \emph{representatives of the conjugacy classes of subgroups }represented
in ${\cal H}_{\gamma}$. For $i=1,\ldots,t$ let $\alpha_{i}=\left|\left\{ {\cal H}_{\gamma}\cap H_{i}^{~\Gamma}\right\} \right|$
be the number of conjugates of $H_{i}$ in ${\cal H}_{\gamma}$, and
let $\beta_{i}\defi\left[N_{\Gamma}\left(H_{i}\right):H_{i}\right]$
be the index of $H_{i}$ in its normalizer. Then 
\begin{equation}
\fix_{\gamma}\left(N\right)\stackrel[N\to\infty]{\mathrm{dis}}{\longrightarrow}\sum_{i=1}^{t}\alpha_{i}\beta_{i}Z_{1/\beta_{i}},\label{eq:limit distr of fix}
\end{equation}
where $Z_{\lambda}\sim\mathrm{Poi}\left(\lambda\right)$ (a random
variable with Poisson distribution with parameter $\lambda$), the
different $Z_{\lambda}$'s in the sum are independent, and ``$\stackrel{\mathrm{dis}}{\longrightarrow}$''
denotes convergence in distribution.
\end{thm}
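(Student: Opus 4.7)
The plan is to prove Theorem \ref{thm:limit distribution of fix} by the method of moments. The target random variable $\sum_{i=1}^{t}\alpha_{i}\beta_{i}Z_{1/\beta_{i}}$ is a finite sum of independent Poissons with bounded parameters, hence its distribution is determined by its moments. It therefore suffices to show that for every fixed $k \ge 1$, the $k$-th factorial moment $\mathbb{E}\bigl[(\fix_\gamma(N))_k\bigr]$ converges, as $N\to\infty$, to the $k$-th factorial moment of $\sum_{i}\alpha_{i}\beta_{i}Z_{1/\beta_{i}}$.

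Expanding $(\fix_\gamma(N))_k$ as a sum over ordered $k$-tuples of distinct $\varphi(\gamma)$-fixed vertices $v_{1},\ldots,v_{k}$, each tuple specifies $k$ based lifts of the $\gamma$-loop in the random cover $\tilde{X}_{\varphi}$. I partition these tuples by the isomorphism type of the union of the $k$ lifts as a based subcomplex; each such type defines a compact subspace $Y$ whose expected number of embeddings in $\tilde{X}_{\varphi}$ is of order $N^{\chi(\pi_{1}(Y))}$ by Theorem \ref{thm:E^emb is N^chi and asym expansion} (with $\chi$ summed over the connected components of $Y$). Since the EC-zero subgroups of $\Gamma$ are precisely copies of $\mathbb{Z}$ and $C_{2}*C_{2}$, the configurations contributing $\Theta(1)$ to the moment in the limit are exactly those in which the $k$ lifts sit inside $k'\le k$ pairwise-disjoint embedded components, each of which is the core of some $H\in\H_\gamma$. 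Any other pattern (overlaps between components, or extensions to a subgroup of strictly negative Euler characteristic) contributes $O(N^{-1/m})$.

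For the surviving ``disjoint $\H_\gamma$'' configurations, the expectation factorizes across connected components. Grouping components by conjugacy class $[H_{i}]$, and using the refined first-moment analysis from the proof of Theorem \ref{thm:limit expectation of fix} to show that each conjugacy class contributes an asymptotically Poisson count of embeddings together with a suitable multiplicity of $\gamma$-fixed vertices per embedding, one identifies the aggregate contribution of $[H_{i}]$ to $\fix_\gamma(N)$ with $\alpha_{i}\beta_{i}Z_{1/\beta_{i}}$ in the limit. Here $\beta_{i}=[N_\Gamma(H_i):H_i]$ arises as the number of $\gamma$-fixed vertices inside a single embedded $\core(H_{i})$ (these are cosets of $H_{i}$ in $N_\Gamma(H_i)$), while $\alpha_{i}$ arises as a multiplicity counting which element of the conjugacy class is ``selected'' to contain $\gamma$. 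Asymptotic independence across distinct conjugacy classes follows from the restriction to disjoint components. The matching of factorial moments then reduces to the Poisson identity $\mathbb{E}[(Z_\lambda)_k] = \lambda^k$ and a direct combinatorial rearrangement over compositions $k_{1}+\cdots+k_{t}=k$.

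The principal obstacle is the combinatorial classification in the second paragraph: enumerating all overlap patterns of $k$ copies of the $\gamma$-loop and verifying, via a Kurosh-type decomposition together with the list of EC-zero subgroups of $\Gamma$, that every pattern outside the ``disjoint $\H_\gamma$-union'' regime produces a subcomplex of strictly negative Euler characteristic, so that Theorem \ref{thm:E^emb is N^chi and asym expansion} forces its contribution to vanish. A secondary difficulty is the geometric interpretation of the multiplicities $\alpha_{i}$ and $\beta_{i}$: the identification of $\beta_{i}=[N_\Gamma(H_{i}):H_{i}]$ with the number of $\gamma$-fixed vertices inside a single embedded $\core(H_{i})$ relies on the structural classification of EC-zero subgroups of $\Gamma$ as $\mathbb{Z}$ or $C_{2}*C_{2}$ and a careful analysis of the $N_\Gamma(H_{i})$-action on the vertex set of the core. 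Once both are in hand, the moment-matching reduces to routine Poisson algebra.
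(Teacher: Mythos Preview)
Your overall architecture matches the paper's: method of moments, express the $r$-th moment as a sum over the resolution $\R_{\gamma,r}$ of $Y_\gamma^{\sqcup r}$, observe that only the elements with $\chigrp(Z_f)=0$ survive in the limit, and then identify those surviving elements combinatorially. Your ``principal obstacle'' is exactly the paper's Lemma \ref{lem:every chi=00003D0 element of the resolution is a disjoint union of copies of representatives} (proved via the $\pi_1$-surjectivity Lemma \ref{lem:Y_gamma to EC 0 is pi1-onto}), and your use of factorial versus ordinary moments is an immaterial variation.

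However, your geometric reading of $\alpha_i$ and $\beta_i$ is wrong, and if followed literally would produce the wrong limit law. You claim that a single embedded copy of the core $Z_i$ contains $\beta_i$ $\gamma$-fixed vertices, identified with the cosets of $H_i$ in $N_\Gamma(H_i)$, and that $\alpha_i$ is an external multiplicity over conjugates. In fact, Lemma \ref{lem:conjuage subgroups of EC=00003D0 correspond to identical sub-covers} shows that all conjugates in $\H_\gamma\cap H_i^{\Gamma}$ yield the \emph{same} sub-cover $Z_i$, so there is only one isomorphism type of core per conjugacy class, and a single embedded copy of $Z_i$ contains $\alpha_i\beta_i$ (not $\beta_i$) vertices $u$ with $\gamma\in\plab(Z_i,u)$: one $\Aut(Z_i)$-orbit of size $\beta_i$ for each of the $\alpha_i$ conjugates. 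Concretely, take $\gamma=[x,y]\in C_2*C_2$ and $H_i=\langle x,yxy\rangle$: here $\alpha_i=2$, $\beta_i=1$, and the single core $Z_i$ has two $\gamma$-fixed vertices, not one. Your reading, carried through, would predict a limit of the form $\beta_i\cdot\mathrm{Poi}(\alpha_i/\beta_i)$ for this conjugacy class rather than $\alpha_i\beta_i\cdot\mathrm{Poi}(1/\beta_i)$, and these are genuinely different distributions when $\alpha_i>1$. The correct bookkeeping (as in the paper's final display around \eqref{eq:lhs}) is that $\beta_i=|\Aut(Z_i)|$ enters as the number of embeddings per unlabelled copy, while $\alpha_i\beta_i$ is the per-copy vertex count; your interpretation happens to be right only in the torsion-free case where every $\alpha_i=1$.
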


\begin{example}
Consider the penultimate element from Table \ref{tab:examples for limit of E=00005Bfix=00005D}:
$\gamma=\left(xy\right)^{3}\in\Gamma=C_{2}*C_{2}=\left\langle x\right\rangle *\left\langle y\right\rangle $.
In this case, the elements of ${\cal H}_{\gamma}$ belong to four
different conjugacy classes: $\left\{ \left\langle \gamma\right\rangle \right\} $,
$\left\{ \left\langle xy\right\rangle \right\} $, $\left\{ \Gamma\right\} $
and\linebreak{}
$\left\{ \left\langle x,yxyxy\right\rangle ,\left\langle yxy,xyx\right\rangle ,\left\langle y,xyxyx\right\rangle \right\} $,
so $t=4$, $\alpha_{1}=\alpha_{2}=\alpha_{3}=1$ and $\alpha_{4}=3$.
In addition, $\left\langle \gamma\right\rangle \trianglelefteq\Gamma$
and $\left\langle xy\right\rangle \trianglelefteq\Gamma$ and so $\beta_{1}=\left[\Gamma:\left\langle \gamma\right\rangle \right]=6$,
$\beta_{2}=\left[\Gamma:\left\langle xy\right\rangle \right]=2$ and
$\beta_{3}=\left[\Gamma:\Gamma\right]=1$. Finally, $N_{\Gamma}\left(\left\langle x,yxyxy\right\rangle \right)=\left\langle x,yxyxy\right\rangle $
and so $\beta_{4}=1$. Hence in this case
\[
\fix_{\gamma}\left(N\right)\stackrel[N\to\infty]{\mathrm{dis}}{\longrightarrow}6Z_{1/6}+2Z_{1/2}+Z_{1}+3Z_{1}
\]
(here the last two $Z_{1}$'s are two distinct, independent Poisson
variables with parameter $1$ each).
\end{example}

Given a non-torsion $\gamma\in\Gamma$, the set $\H_{\gamma}$ can
be generated by following the procedure\footnote{In short, this procedure involves constructing a $1$-dimensional
``sub-cover'' corresponding to $\gamma$, producing all surjective
morphisms from it (namely, construction all 'sub-covers' which are
its quotients, with the map between them), and recognizing the quotients
with labeled fundamental group of Euler characteristic zero. See Sections
\ref{sec:The-space-X_Gamma its covers and sub-covers} and \ref{sec:limit-distr-and-asymptotic-expansion of fix}
for details.} in the proof of Theorem \ref{thm:limit expectation of fix} in Section
\ref{sec:limit-distr-and-asymptotic-expansion of fix}.

As a special case, we retrieve the known results when $\Gamma$ is
free (originally due to Nica \cite{nica1994number}) or a hyperbolic
orientable surface group (due to Magee-Puder \cite{magee2020asymptotic}
-- and see Footnote \ref{fn:To-be-precise MP20}). Recall that in
these cases, if $\gamma=\gamma_{0}^{~q}\in\Gamma$ with $\gamma_{0}$
a non-power, then ${\cal H}_{\gamma}=\left\{ \left\langle \gamma_{0}^{~d}\right\rangle \,\middle|\,1\le d|q\right\} $.
Moreover, $N_{\Gamma}\left(\left\langle \gamma_{0}^{~d}\right\rangle \right)=\left\langle \gamma_{0}\right\rangle $
and $\left\langle \gamma_{0}\right\rangle $ is malnormal. Thus Theorem
\ref{thm:limit distribution of fix} translates to the following.
\begin{cor}
\cite{nica1994number,magee2020asymptotic}\label{cor:limit dsitribution for free and surface}
Assume that $\Gamma$ is either free or a hyperbolic orientable surface
group (so $\Gamma=\Lambda_{g}=\left\langle a_{1},b_{1},\ldots,a_{g},b_{g}\,\middle|\,\left[a_{1},b_{1}\right]\cdots\left[a_{g},b_{g}\right]\right\rangle $
with $g\ge2$). Let $1\ne\gamma=\gamma_{0}^{~q}\in\Gamma$ with $\gamma_{0}$
a non-power and $q\in\mathbb{Z}_{\ge1}$. Then 
\[
\fix_{\gamma}\left(N\right)\stackrel[N\to\infty]{\mathrm{dis}}{\longrightarrow}\sum_{1\le d|q}dZ_{1/d}.
\]
\end{cor}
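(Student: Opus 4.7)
The plan is to derive Corollary~\ref{cor:limit dsitribution for free and surface} as a direct specialization of Theorem~\ref{thm:limit distribution of fix}. Three pieces of data must be computed: (i) the elements of $\mathcal{H}_\gamma$, (ii) their partition into $\Gamma$-conjugacy classes (which determines $t$ and the multiplicities $\alpha_i$), and (iii) the normalizer indices $\beta_i=[N_\Gamma(H_i):H_i]$.

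First, since both f.g.\ free groups and the surface groups $\Lambda_g$ with $g\ge 2$ are torsion-free, no subgroup of $\Gamma$ is isomorphic to $C_2*C_2$, so every element of $\mathcal{H}_\gamma$ is infinite cyclic. If $\langle h\rangle\ni\gamma$, then $\gamma=h^m$ for some $m\ge 1$. In both settings every non-trivial element has a unique maximal root up to inversion: this is classical in free groups, and in $\Lambda_g$ it follows from hyperbolicity together with torsion-freeness, which forces the centralizer of every non-trivial element to be infinite cyclic. Writing $\gamma=\gamma_0^{\,q}$ with $\gamma_0$ a non-power, this unique maximal root must be $\gamma_0^{\pm 1}$, so $h=\gamma_0^{\pm r}$ with $rm=q$ and hence $\langle h\rangle=\langle\gamma_0^{\,r}\rangle$ for some divisor $r\mid q$. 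Thus $\mathcal{H}_\gamma=\{\langle\gamma_0^{\,d}\rangle:1\le d\mid q\}$, a set of $d(q)$ subgroups.

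Next, I claim that these subgroups lie in distinct $\Gamma$-conjugacy classes and that each has normalizer equal to $\langle\gamma_0\rangle$. The key input is \emph{malnormality} of maximal cyclic subgroups: $g\langle\gamma_0\rangle g^{-1}\cap\langle\gamma_0\rangle\ne\{e\}$ forces $g\in\langle\gamma_0\rangle$. This is classical in free groups, and in surface groups it again follows from cyclicity of centralizers of non-trivial elements. Consequently, if $g\langle\gamma_0^{\,d}\rangle g^{-1}=\langle\gamma_0^{\,d'}\rangle$, then $g\gamma_0^{\,d}g^{-1}$ is a non-trivial element of $g\langle\gamma_0\rangle g^{-1}\cap\langle\gamma_0\rangle$, forcing $g\in\langle\gamma_0\rangle$; but then $g$ commutes with $\gamma_0^{\,d}$, so $\langle\gamma_0^{\,d}\rangle=\langle\gamma_0^{\,d'}\rangle$ and $d=d'$. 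The same malnormality argument yields $N_\Gamma(\langle\gamma_0^{\,d}\rangle)=\langle\gamma_0\rangle$, whence $\beta_d=[\langle\gamma_0\rangle:\langle\gamma_0^{\,d}\rangle]=d$; moreover $t=d(q)$ and every $\alpha_i=1$.

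Substituting these values into~\eqref{eq:limit distr of fix} gives
\[
\fix_\gamma(N)\stackrel[N\to\infty]{\mathrm{dis}}{\longrightarrow}\sum_{d\mid q}1\cdot d\cdot Z_{1/d}=\sum_{1\le d\mid q}dZ_{1/d},
\]
which is exactly the statement of the corollary. There is no serious obstacle; the only non-routine ingredient is the malnormality of maximal cyclic subgroups in $\Lambda_g$, which is standard and can be cited rather than reproved.
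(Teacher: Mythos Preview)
Your proof is correct and follows essentially the same route as the paper: the paragraph immediately preceding the corollary records exactly the three ingredients you compute (that $\mathcal{H}_\gamma=\{\langle\gamma_0^{\,d}\rangle:d\mid q\}$, that $N_\Gamma(\langle\gamma_0^{\,d}\rangle)=\langle\gamma_0\rangle$, and that $\langle\gamma_0\rangle$ is malnormal) and then invokes Theorem~\ref{thm:limit distribution of fix}. Your write-up simply supplies more detail for these facts than the paper does.
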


The following quantitative version of the residual finiteness of $\Gamma$,
follows from Theorem \ref{thm:limit distribution of fix} by a simple
application of the Markov inequality (and see \cite[Sec.~1.4]{magee2020asymptotic}
for some background).
\begin{cor}
\label{cor:quantitative RF}Given a non-torsion element $\gamma\in\Gamma$
and $r\in\mathbb{Z}_{\ge1}$,
\[
\frac{\left|\left\{ \varphi\in\Hom\left(\Gamma,S_{N}\right)\,\middle|\,\varphi\left(\gamma\right)\ne\id\right\} \right|}{\left|\Hom\left(\Gamma,S_{N}\right)\right|}\ge1-\frac{c_{r}\left(\gamma\right)}{N^{r}}-O\left(\frac{1}{N^{r+1/m}}\right),
\]
where $c_{r}\left(\gamma\right)=\mathbb{E}\left[\left(\sum\alpha_{i}\beta_{i}Z_{1/\beta_{i}}\right)^{r}\right]$,
and $\alpha_{i}$ and $\beta_{i}$ are the parameters from Theorem
\ref{thm:limit distribution of fix}.
\end{cor}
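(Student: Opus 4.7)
The first move is to reformulate the left-hand side as a tail probability. If $\varphi\in\Hom(\Gamma,S_N)$ is uniformly random, then $\varphi(\gamma)=\id$ is equivalent to $\fix_\gamma(N)=N$, and since $\fix_\gamma(N)\le N$ always, this is in turn equivalent to $\fix_\gamma(N)\ge N$. So the quantity we need to upper bound is
\[
\Pr\bigl[\fix_\gamma(N)\ge N\bigr]=\Pr\bigl[\fix_\gamma(N)^{r}\ge N^{r}\bigr],
\]
and Markov's inequality immediately gives
\[
\Pr\bigl[\fix_\gamma(N)^{r}\ge N^{r}\bigr]\le\frac{\mathbb{E}\bigl[\fix_\gamma(N)^{r}\bigr]}{N^{r}}.
\]

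The remaining task is therefore to control $\mathbb{E}[\fix_\gamma(N)^{r}]$ quantitatively. Theorem \ref{thm:limit distribution of fix} asserts convergence in distribution of $\fix_\gamma(N)$ to $Z\defi\sum_{i}\alpha_{i}\beta_{i}Z_{1/\beta_{i}}$, and its proof (by the method of moments, as flagged earlier in the excerpt) actually produces convergence of all moments: $\mathbb{E}[\fix_\gamma(N)^{r}]\to\mathbb{E}[Z^{r}]=c_{r}(\gamma)$. What one really needs here is a rate. I would extract this from the same asymptotic expansion machinery behind Theorem \ref{thm:limit expectation of fix} and Theorem \ref{thm:E^emb is N^chi and asym expansion}: expressing $\fix_\gamma(N)^{r}$ as a finite linear combination of expected embedding counts $\emb$ of small complexes, each such expectation has an asymptotic expansion in powers of $N^{-1/m}$, with leading term an integer contributing to $c_{r}(\gamma)$ and first correction of order $N^{-1/m}$. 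Summing yields
\[
\mathbb{E}\bigl[\fix_\gamma(N)^{r}\bigr]=c_{r}(\gamma)+O\bigl(N^{-1/m}\bigr).
\]
Dividing by $N^{r}$ and subtracting from $1$ gives exactly the claimed inequality.

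The only nontrivial step is the rate $O(N^{-1/m})$ on the $r$-th moment. For $r=1$ this is precisely the second assertion of Theorem \ref{thm:limit expectation of fix}. For general $r$, the $r$-th moment is a finite sum of expectations of products of falling factorials in cycle counts, each of which is an $\emb$-count for a bounded-complexity subspace; the same asymptotic expansion in $N^{-1/m}$ controls each term uniformly. This is the step where one genuinely needs the quantitative content of Theorem \ref{thm:E^emb is N^chi and asym expansion}, rather than merely convergence of moments, and it is the main -- though mild -- obstacle. Everything else is Markov plus the reformulation of $\varphi(\gamma)=\id$ as the extremal value of $\fix_\gamma(N)$.
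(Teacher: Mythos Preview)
Your proof is correct and follows essentially the same approach as the paper: the paper's parenthetical proof invokes Markov's inequality together with the fact that $\mathbb{E}[\fix_\gamma(N)^{r}]=c_{r}(\gamma)+O(N^{-1/m})$, obtained from Equation~\eqref{eq:r-th moment by lifts} (the resolution expressing the $r$-th moment as a finite sum of $\emb_{Z_f}(N)$'s) and Theorem~\ref{thm:E^emb is N^chi and asym expansion}. Your identification of the ``only nontrivial step'' and its resolution via the embedding-count expansion is exactly the paper's argument.
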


(The corollary follows from the fact that $\mathbb{E}\left[\fix_{\gamma}\left(N\right)^{r}\right]=c_{r}\left(\gamma\right)+O\left(N^{-1/m}\right)$,
which follows from Theorem \ref{thm:limit distribution of fix}, Equation
\eqref{eq:r-th moment by lifts} and Theorem \ref{thm:E^emb is N^chi and asym expansion}.)
\begin{rem}
\label{rem:Z^2}It is not clear to us to what extent the results in
this paper can be extended to more general f.g.~groups. There are
certainly groups which behave very differently. As an example, consider
the group $\mathbb{Z}^{2}=\left\langle x,y\,\middle|\,\left[x,y\right]\right\rangle $.
The image of $x$ in a uniformly random homomorphism $\mathbb{Z}^{2}\to G$
to some finite group $G$ is a uniformly random element in a uniformly
random conjugacy class. So if $\varphi\colon\mathbb{Z}^{2}\to S_{N}$
is uniformly random, $\varphi\left(x\right)$ has the cycle structure
of a uniformly random conjugacy class. In particular, $\mathbb{E}\left[\fix_{x}\left(N\right)\right]$
is the average number of rows of length one in a uniformly random
Young diagram with $N$ blocks. It it not hard to see that this number
is 
\begin{equation}
\mathbb{E}\left[\fix_{x}\left(N\right)\right]=\frac{p\left(0\right)+p\left(1\right)+\ldots+p\left(N-1\right)}{p\left(N\right)},\label{eq:E=00005Bfix=00005D in Z^2}
\end{equation}
where $p$ is the partition function. This number is of order $\sqrt{N}$.
In fact, $\fix_{x}\left(N\right)\cdot\frac{\pi}{\sqrt{6N}}$ converges
in distribution to the exponential distribution with expectation 1
-- see \cite[Thm.~2.1]{fristedt1993structure}. Notice there are
infinitely many EC-zero subgroups containing $x$: $\left\{ \left\langle x,y^{j}\right\rangle \,\middle|\,j\in\mathbb{Z}_{\ge0}\right\} $.
See also Section \ref{sec:Open-questions}.
\end{rem}

\subsection{Asymptotic expansion of $\mathbb{E}\left[\protect\fix_{\gamma}\left(N\right)\right]$}

When $\Gamma$ is free and $\gamma\in\Gamma$, it is not hard to show
that $\mathbb{E}\left[\fix_{\gamma}\left(N\right)\right]$ is given
by a rational function in $N$ for every large enough $N$ (see \cite{nica1994number,linial2010word}).
For example, for $\gamma=\left[x,y\right]\in\F_{2}=\F\left(x,y\right)$
we have $\mathbb{E}\left[\fix_{\gamma}\left(N\right)\right]=\frac{N}{N-1}$
for every $N\ge2$. Such a clean result does not hold for the other
groups we consider here. Yet, asymptotic expansion, in the form of
rational or ``fractional rational'' approximation, does exist. 
\begin{defn}[Asymptotic expansion]
\label{def:asymptotic expansion} Let $f\colon\mathbb{Z}_{\ge0}\to\mathbb{R}$.
Let $k_{1}>k_{2}>\ldots$ be a decreasing sequence of real numbers
and $a_{k_{1}},a_{k_{2}},\ldots$ a sequence of real numbers. We say
that $f$ has asymptotic expansion given by $a_{k_{1}},a_{k_{2}},\ldots$
and denote
\[
f\left(N\right)\asyexp a_{k_{1}}N^{k_{1}}+a_{k_{2}}N^{k_{2}}+a_{k_{3}}N^{k_{3}}+\ldots,
\]
or simply $f\left(N\right)\asyexp\sum_{j=0}^{\infty}a_{k_{j}}N^{k_{j}}$,
if for every $\ell\in\mathbb{Z}_{\ge1}$ we have
\[
f\left(N\right)=a_{k_{1}}N^{k_{1}}+a_{k_{2}}N^{k_{2}}+\ldots+a_{k_{\ell}}N^{k_{\ell}}+O\left(N^{k_{\ell+1}}\right).
\]
\end{defn}

The most recent development here is the easiest to state: 
\begin{thm}
\cite[Thm.~1.1]{magee2020asymptotic}\label{thm:MP - rational approx}
For any $\gamma\in\Lambda_{g}$ there are rational numbers $a_{i}=a_{i}\left(\gamma\right)$
for $i=1,0,-1,-2,\ldots$ such that 
\begin{equation}
\mathbb{E}\left[\fix_{\gamma}\left(N\right)\right]\asyexp a_{1}N+a_{0}+a_{-1}N^{-1}+a_{-2}N^{-2}+\ldots.\label{eq:asymptotic expansion is surface groups}
\end{equation}
\end{thm}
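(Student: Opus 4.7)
The plan is to reduce the statement to the asymptotic expansion of expected embedding counts, i.e.\ Theorem~\ref{thm:E^emb is N^chi and asym expansion}, specialized to $\Gamma = \Lambda_g$. A fixed point of $\varphi(\gamma)$ for a homomorphism $\varphi \colon \Lambda_g \to S_N$ is precisely a vertex $v$ in the fiber over the basepoint of $\Sigma_g$ at which the loop representing $\gamma$ lifts to a \emph{closed} loop in the $N$-cover $\tilde{\Sigma} \to \Sigma_g$ classified by $\varphi$. Each such closed lift lies in a unique minimal connected labeled sub-cover $Y \subseteq \tilde{\Sigma}$, and the labeled fundamental group $\plab(Y) \le \Lambda_g$ is a f.g.\ subgroup containing a conjugate of $\langle \gamma \rangle$. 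Grouping lifts by the isomorphism type of their minimal enclosure, and using M\"obius inversion to convert ``contained in some $Y' \supseteq Y$'' counts into ``minimal enclosure equals $Y$'' counts, yields a \emph{finite} identity
\[
\mathbb{E}\!\left[\fix_\gamma(N)\right] \;=\; \sum_{Y} c_Y \cdot \emb_Y(N),
\]
where the coefficients $c_Y \in \mathbb{Q}$ depend only on $\gamma$, and $Y$ ranges over a finite family of finite labeled sub-covers whose $\plab(Y)$ is an algebraic extension of $\langle \gamma \rangle$ inside $\Lambda_g$.

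Next, I would invoke Theorem~\ref{thm:E^emb is N^chi and asym expansion} to obtain, for each $Y$, an asymptotic expansion
\[
\emb_Y(N) \;\asyexp\; \sum_{j \ge 0} a_{Y,j}\, N^{\chi(\plab(Y)) - j}, \qquad a_{Y,j} \in \mathbb{Q}.
\]
Because every f.g.\ subgroup of $\Lambda_g$ is either free or a surface group $\Lambda_h$ with $h \ge g$, the exponent $\chi(\plab(Y))$ is always an integer, and since $\Lambda_g$ is torsion free we have $m(\Lambda_g) = 1$, so consecutive powers of $N$ in each individual expansion drop by whole units. Summing the finitely many expansions produces an asymptotic expansion of $\mathbb{E}[\fix_\gamma(N)]$ supported on integer powers of $N$, with rational coefficients. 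The top power is bounded by $\max_{H \le \Lambda_g}\chi(H) = 1$, attained only by the trivial subgroup and hence active only when $\gamma = 1$; for $\gamma \ne 1$ the leading power is at most $N^0$, consistent with Theorem~\ref{thm:limit expectation of fix}. This yields the expansion \eqref{eq:asymptotic expansion is surface groups}.

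The main obstacle is Theorem~\ref{thm:E^emb is N^chi and asym expansion} itself in the surface case; the reduction above is essentially formal. For $\Gamma = \Lambda_g$ the proof of the embedding-count expansion is the technical heart of \cite{magee2020asymptotic}: one starts from the Frobenius-type identity
\[
\#\!\left\{(x_i,y_i) \in S_N^{2g} : \prod_{i=1}^{g} [x_i,y_i] = \sigma\right\} \;=\; |S_N|^{2g-1}\sum_{\lambda \vdash N}\frac{\chi_\lambda(\sigma)}{\chi_\lambda(1)^{2g-1}},
\]
and then exploits Stanley--F\'eray-style polynomiality (irreducible characters of $S_N$ are polynomials in $1/N$ and suitable ``free cumulants'' of $\lambda$), together with a uniform tail-bound on the contribution of partitions $\lambda \vdash N$ whose Young diagram is not close to rectangular. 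The delicate point is simultaneously ensuring rationality of every coefficient and securing the cancellations that bring formally higher-order contributions down to the claimed order $N^{\chi(\plab(Y))}$; this is where the pants-decomposition arguments of \cite{magee2020asymptotic} do the real work. Once Theorem~\ref{thm:E^emb is N^chi and asym expansion} is granted, assembling \eqref{eq:asymptotic expansion is surface groups} from the display above is bookkeeping.
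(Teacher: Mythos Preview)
Your approach is essentially the one the paper takes. Note first that Theorem~\ref{thm:MP - rational approx} is \emph{cited} from \cite{magee2020asymptotic} rather than proved from scratch here; however, the paper does recover it as the special case $\Gamma=\Lambda_g$ of Theorem~\ref{thm:asmptotic expansion}, and that proof (Section~\ref{sec:limit-distr-and-asymptotic-expansion of fix}) follows exactly your outline: build a sub-cover $Y_\gamma$ from a loop representing $\gamma$, express $\mathbb{E}[\fix_\gamma(N)]$ as a finite sum of $\emb_{Z_f}(N)$ over a resolution, and apply Theorem~\ref{thm:E^emb is N^chi and asym expansion}. You also correctly identify that the substance lies in Theorem~\ref{thm:E^emb is N^chi and asym expansion} for surfaces, which the paper imports from \cite{magee2020asymptotic} via Theorem~\ref{thm:E^emb of BR and SBR }.

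One small simplification relative to your write-up: the paper does not need M\"obius inversion or rational coefficients $c_Y$. The ``natural'' resolution $\R_\gamma=\{f\colon Y_\gamma\twoheadrightarrow Z_f\}$ of surjective morphisms already gives the identity $\mathbb{E}[\fix_\gamma(N)]=\sum_{f\in\R_\gamma}\emb_{Z_f}(N)$ with all coefficients equal to $1$, because every lift to a full cover factors \emph{uniquely} as a surjection followed by an embedding (Definition~\ref{def:resolution} and Lemma~\ref{lem:resolution-sum-of-probabilities}). There is also no need to invoke ``algebraic extensions''; the finiteness of the resolution comes simply from $Y_\gamma$ being compact.
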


The case of finite groups has a long history. The expected number
of fixed points is intimately related to the size of $\Hom\left(G,S_{N}\right)$:
indeed, if $\left\langle x\right\rangle =C_{q}$ is a cyclic group,
then $\mathbb{E}\left[\fix_{x}\left(N\right)\right]=N\cdot\frac{\left|\Hom\left(C_{q},S_{N-1}\right)\right|}{\left|\Hom\left(C_{q},S_{N}\right)\right|}$.
Already in 1951 it was conjectured by Chowla, Herstein and Moore \cite{chowla1951recursions}
that $\frac{\left|\Hom\left(C_{2},S_{N}\right)\right|}{\left|\Hom\left(C_{2},S_{N-1}\right)\right|}$
has asymptotic expansion of the form $N^{1/2}+A+BN^{-1/2}+CN^{-1}+DN^{-3/2}+\ldots$,
a conjecture proven slightly later by Moser and Wyman \cite{moser1955solutions}.
After many milestones, a complete solution for arbitrary finite groups
was given by Müller in 1997.
\begin{thm}
\cite[Thm.~6]{muller1997finite}\label{thm:Muller} Let $G$ be a
finite group of order $m\ge2$. Then there are rational numbers\footnote{The statement of Theorem 6 in \cite{muller1997finite} does not explicitly
specify that the coefficients $Q_{i}$ are rational - the rationality
is explicit only when $1\le i\le m+3$, in which case concrete formulas
are given. However, the rationality of $Q_{i}$ for all $i$ does
follow from the proof and was verified via personal communication
with the author of \cite{muller1997finite}.} $Q_{t}=Q_{t}\left(G\right)$ for $t=-1/m,-2/m,\ldots$ such that
\begin{equation}
\frac{\left|\Hom\left(G,S_{N}\right)\right|}{\left|\Hom\left(G,S_{N-1}\right)\right|}\asyexp N^{1-1/m}\cdot\left\{ 1+Q_{-1/m}N^{-1/m}+Q_{-2/m}N^{-2/m}+\ldots\right\} .\label{eq:mullers expansion}
\end{equation}
\end{thm}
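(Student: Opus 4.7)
The plan is to derive the theorem via the exponential formula for generating functions together with a saddle-point analysis of the resulting entire series. First, a $G$-action on $\{1,\ldots,N\}$ decomposes uniquely into orbits, each carrying a transitive $G$-action, and a transitive $G$-action on an $n$-element set is specified by a conjugacy class $[H]$ of a subgroup with $[G:H]=n$ together with a labeling of $G/H$ modulo the automorphism group $N_G(H)/H$ of the $G$-set $G/H$. This yields
\[
H_G(z) \;:=\; \sum_{N\ge 0} \frac{|\Hom(G,S_N)|}{N!}\, z^N \;=\; \exp\bigl(T_G(z)\bigr), \qquad T_G(z) \;:=\; \sum_{[H] \le G} \frac{z^{[G:H]}}{|N_G(H)/H|},
\]
where the sum is over conjugacy classes of subgroups of $G$. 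Since every subgroup has index at most $m=|G|$, $T_G$ is a polynomial of degree $m$ with nonnegative rational coefficients, with leading term $z^m/m$ (from the trivial subgroup) and $T_G(0)=0$.

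Next I would apply the saddle-point method to Cauchy's integral representation
\[
\frac{|\Hom(G,S_N)|}{N!} \;=\; \frac{1}{2\pi i} \oint_{|z|=\zeta(N)} \exp\bigl(T_G(z)\bigr)\, z^{-N-1}\, dz,
\]
taking $\zeta=\zeta(N)>0$ to be the positive real saddle determined by $\zeta T_G'(\zeta)=N$. Because $zT_G'(z)$ is a polynomial of degree $m$ with leading coefficient $1$ and rational lower coefficients, substituting $\zeta=N^{1/m}y$ reduces this to $y^m = 1-\sum_{j<m} j a_j N^{-(m-j)/m} y^j$; Lagrange inversion then produces a full expansion
\[
\zeta(N) \;\asyexp\; N^{1/m}\bigl(1 + c_1 N^{-1/m} + c_2 N^{-2/m} + \cdots\bigr)
\]
with rational $c_j$. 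Since $\exp(T_G)$ is entire with $T_G$ a real polynomial of positive leading coefficient, it is Hayman-admissible and the standard Gaussian local analysis around $\zeta(N)$ delivers a complete asymptotic expansion
\[
\frac{|\Hom(G,S_N)|}{N!} \;\asyexp\; \frac{\exp(T_G(\zeta(N)))}{\zeta(N)^N \sqrt{2\pi\, b(N)}}\bigl(1 + s_1 N^{-1/m} + s_2 N^{-2/m} + \cdots\bigr),
\]
where $b(N) = \bigl(\zeta T_G'(\zeta)+\zeta^2 T_G''(\zeta)\bigr)\bigr|_{\zeta=\zeta(N)}$ and the $s_j$ are rational.

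Finally I would compute the ratio. The factorial factor contributes $N$; writing $\zeta(N-1)=\zeta(N)(1+\delta(N))$ with $\delta(N)$ admitting an expansion in $N^{-1/m}$ starting at $-1/(mN)$, the quotient $\zeta(N-1)^{N-1}/\zeta(N)^N$ expands as $\zeta(N)^{-1}$ times an expansion in $N^{-1/m}$, and the factors $\exp(T_G(\zeta(N))-T_G(\zeta(N-1)))$ and $\sqrt{b(N-1)/b(N)}$ produce similar expansions. Combining them, the leading behavior is $N/\zeta(N)\sim N^{1-1/m}$, and one obtains the full expansion $N^{1-1/m}\bigl(1+Q_{-1/m}N^{-1/m}+\cdots\bigr)$. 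Rationality is preserved throughout because every operation used — Lagrange inversion of a polynomial with rational coefficients, together with termwise composition, multiplication, exponentiation, and square root of formal series in $N^{-1/m}$ with rational coefficients and leading term $1$ — preserves $\mathbb{Q}$-coefficients.

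The main obstacle is not the formal expansion but its rigorous justification. One must bound the contribution to the contour integral coming from outside a shrinking arc around $\zeta(N)$ by an error smaller than the main term times $N^{-k/m}$ for every $k$, and keep careful track of how the truncation errors in $\zeta(N)$ and in the local Gaussian approximation propagate through each step. For $\exp$ of a polynomial this fits within Hayman's framework, but executing it uniformly to all orders — rather than extracting only the leading term, as in the classical Chowla–Herstein–Moore and Moser–Wyman work — is the substantive technical content of \cite{muller1997finite}.
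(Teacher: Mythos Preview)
This theorem is not proved in the paper under review; it is quoted from \cite[Thm.~6]{muller1997finite} and used as a black box (in Section~\ref{subsec:sub-coverings-finite-groups}) to establish Proposition~\ref{prop:E^emb of full coverings above a finite group} and Corollary~\ref{cor:E^emb above finite}. There is therefore no ``paper's own proof'' to compare against.

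That said, your outline is a faithful summary of M\"uller's actual argument: the exponential formula giving $\sum_N |\Hom(G,S_N)|z^N/N! = \exp(T_G(z))$ with $T_G$ a degree-$m$ polynomial with rational coefficients, followed by a complete saddle-point analysis of $\exp(T_G)$. Your derivation of the saddle $\zeta(N)\asyexp N^{1/m}(1+c_1N^{-1/m}+\cdots)$ via Lagrange inversion, and your bookkeeping of rationality through the subsequent formal manipulations, are correct. You have also correctly located the one genuinely nontrivial step: upgrading Hayman-admissibility from a leading-term estimate to a full asymptotic expansion with $O(N^{-k/m})$ error control for every $k$ --- this is precisely the technical content of \cite{muller1997finite}, and your sketch does not (and appropriately does not pretend to) carry it out.
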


Müller's result can be translated into a similar asymptotic expansion
for $\mathbb{E}\left[\fix_{\gamma}\left(N\right)\right]$ whenever
$\gamma$ is an element of a finite group (see Section \ref{subsec:sub-coverings-finite-groups}
below). In the current paper we rely on Theorems \ref{thm:MP - rational approx}
and \ref{thm:Muller} in order to generalize these results to arbitrary
free products as in \eqref{eq:Gamma}.
\begin{thm}
\label{thm:asmptotic expansion} Let $\Gamma$ be a free product and
let $m=m\left(\Gamma\right)$ as in \eqref{eq:Gamma}. Then for every
$\gamma\in\Gamma$ there are rational numbers $a_{t}=a_{t}\left(\gamma\right)$
for $t=1,\frac{m-1}{m},\frac{m-2}{m},\ldots,\frac{1}{m},0,-\frac{1}{m},\ldots$
so that 
\begin{equation}
\mathbb{E}\left[\fix_{\gamma}\left(N\right)\right]\asyexp a_{1}N+a_{1-1/m}N^{1-1/m}+a_{1-2/m}N^{1-2/m}+\ldots.\label{eq:asym expansion of E=00005Bfix=00005D}
\end{equation}
\end{thm}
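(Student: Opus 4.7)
The plan is to derive Theorem \ref{thm:asmptotic expansion} from the full asymptotic expansion of expected embedding counts promised by Theorem \ref{thm:E^emb is N^chi and asym expansion}, combined with a finite combinatorial decomposition of $\fix_{\gamma}\left(N\right)$ into such embedding counts.

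First I would unpack $\fix_{\gamma}\left(N\right)$ into embedding counts. A fixed point of $\varphi\left(\gamma\right)$, where $\varphi\colon\Gamma\to S_{N}$ is the random homomorphism, corresponds to a closed lift of the $\gamma$-loop in the associated random $N$-cover of $X_{\Gamma}$. I would classify such closed lifts by the isomorphism class of the minimal pointed sub-cover $Y$ they generate (the core sub-cover whose labeled fundamental group $\plab\left(Y\right)$ is the subgroup of $\Gamma$ generated by the lifted loop, possibly with mandatory finite-factor completions). By a Möbius inversion on this (finite) poset of pointed sub-covers, one writes
\[
\fix_{\gamma}\left(N\right)=\sum_{Y}c_{Y}\cdot\mathrm{emb}\left(Y,N\right),
\]
a finite sum with rational coefficients $c_{Y}$, where $\mathrm{emb}\left(Y,N\right)$ counts embeddings of $Y$ into the random $N$-cover. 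Taking expectations gives $\mathbb{E}\left[\fix_{\gamma}\left(N\right)\right]=\sum_{Y}c_{Y}\cdot\mathbb{E}\left[\mathrm{emb}\left(Y,N\right)\right]$.

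Next I would apply Theorem \ref{thm:E^emb is N^chi and asym expansion} to each summand. By Kurosh's theorem and Assumption \ref{assu:Gamma}, every f.g.\ subgroup of $\Gamma$ — in particular each $\plab\left(Y\right)$ above — is again a free product of the form \eqref{eq:Gamma}, so its Euler characteristic lies in $\frac{1}{m}\mathbb{Z}$ and is bounded above by $1$. Hence Theorem \ref{thm:E^emb is N^chi and asym expansion} furnishes
\[
\mathbb{E}\left[\mathrm{emb}\left(Y,N\right)\right]\asyexp\sum_{j\ge0}b_{Y,j}\,N^{\chi\left(\plab\left(Y\right)\right)-j/m}
\]
with rational $b_{Y,j}$. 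Summing these finitely many expansions and collecting like powers produces
\[
\mathbb{E}\left[\fix_{\gamma}\left(N\right)\right]\asyexp a_{1}N+a_{1-1/m}N^{1-1/m}+a_{1-2/m}N^{1-2/m}+\ldots
\]
with rational coefficients, which is the desired statement.

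The main obstacle, as expected, lives inside Theorem \ref{thm:E^emb is N^chi and asym expansion}: securing a \emph{full} asymptotic expansion for $\mathbb{E}\left[\mathrm{emb}\left(Y,N\right)\right]$ in the uniform scale $N^{-1/m}$. The factor-wise ingredients are available — Theorem \ref{thm:MP - rational approx} handles surface-group factors (integer powers of $N$, hence powers of $N^{-1/m}$), and Theorem \ref{thm:Muller} handles finite-group factors of order $d\mid m$ (powers of $N^{-1/d}$, again powers of $N^{-1/m}$), while free factors contribute rational functions in $N$. The real challenge is stitching these across the Kurosh decomposition of $\plab\left(Y\right)$: the embedding count essentially factorizes over the free-product structure (up to combinatorial identifications between the factors), and one must verify that the product and sum of finitely many asymptotic expansions in a common scale is itself an asymptotic expansion in that scale, with the correct leading exponent $\chi\left(\plab\left(Y\right)\right)$. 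Once this multiplicativity and the alignment of scales are established, the assembly in Step 3 is purely formal.
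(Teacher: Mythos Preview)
Your proposal is correct and follows essentially the same approach as the paper: express $\mathbb{E}\left[\fix_{\gamma}\left(N\right)\right]$ as a finite sum of $\emb_{Z}\left(N\right)$ over compact sub-covers $Z$, then invoke Theorem~\ref{thm:E^emb is N^chi and asym expansion} term by term, noting that each $\chigrp\left(Z\right)\in\frac{1}{m}\mathbb{Z}_{\le m}$. The paper's resolution framework (Definition~\ref{def:resolution} and Lemma~\ref{lem:resolution-sum-of-probabilities}) achieves this decomposition directly with all coefficients equal to~$1$ --- no M\"obius inversion is needed, since every lift of $Y_{\gamma}$ to an $N$-cover factors \emph{uniquely} as a surjection onto some $Z_f$ followed by an embedding --- so your inclusion-exclusion step is an unnecessary complication, but the argument is otherwise the same.
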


The leading non-vanishing term of \eqref{eq:asym expansion of E=00005Bfix=00005D}
is given by Proposition \ref{prop:exp-torsion-element} and Theorem
\ref{thm:limit expectation of fix}. The value of the second non-zero
term in \eqref{eq:asym expansion of E=00005Bfix=00005D}, or, similarly,
the order of $\mathbb{E}\left[\fix_{\gamma}\left(N\right)\right]-N^{1/\left|\gamma\right|}$,
may encode additional group-theoretic information about $\gamma$:
see Conjecture \ref{conj:pi-conjecture}. 

One may also consider joint local statistics of different elements
in $\Gamma$. We state our result for two elements, although it easily
generalizes to any finite set of elements. Two variables with parameter
$N$ are asymptotically independent if they have a joint limit distribution
as $N\to\infty$ and the limit is that of two independent random variables. 
\begin{thm}
\label{thm:asymptotic independence}Let $\Gamma$ be as in \eqref{eq:Gamma},
let $m=m\left(\Gamma\right)$ and let $\gamma_{1},\gamma_{2}\in\Gamma$
have infinite order. Then the following three conditions are equivalent:
\begin{enumerate}
\item $\fix_{\gamma_{1}}\left(N\right)$ and $\fix_{\gamma_{2}}\left(N\right)$
are asymptotically independent as $N\to\infty$.
\item \label{enu:gamma1 and gamma2 are not conjugate in a EC=00003D0 subgroup}$\gamma_{1}$
and $\gamma_{2}$ are \textbf{not} both conjugate into the same Euler-Characteristic-zero
subgroup of $\Gamma$. 
\item $\mathbb{E}\left[\fix_{\gamma_{1}}\left(N\right)\cdot\fix_{\gamma_{2}}\left(N\right)\right]=\mathbb{E}\left[\fix_{\gamma_{1}}\left(N\right)\right]\cdot\mathbb{E}\left[\fix_{\gamma_{2}}\left(N\right)\right]+O\left(N^{-1/m}\right)$.
\end{enumerate}
\end{thm}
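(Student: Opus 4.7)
The plan is to prove the two equivalences $(2)\Leftrightarrow(3)$ and $(2)\Leftrightarrow(1)$, both derived from the labeled sub-cover framework that underlies Theorems \ref{thm:limit expectation of fix} and \ref{thm:limit distribution of fix}. For $(2)\Leftrightarrow(3)$, I would decompose
\[
\mathbb{E}\bigl[\fix_{\gamma_{1}}(N)\cdot\fix_{\gamma_{2}}(N)\bigr]=\sum_{\mathcal{Y}}\emb[\mathcal{Y}]
\]
by grouping ordered pairs $(i,j)$ of simultaneous fixed points of $\varphi(\gamma_{1})$ and $\varphi(\gamma_{2})$ by the isomorphism type of the labeled sub-cover (``joint core'') $\mathcal{Y}$ spanned by the two pointed loops in the random $N$-cover. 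Theorem \ref{thm:E^emb is N^chi and asym expansion} renders each summand as $c_{\mathcal{Y}}N^{\chi(\mathcal{Y})}+O(N^{\chi(\mathcal{Y})-1/m})$, with $\chi(\mathcal{Y})$ equal to the sum of Euler characteristics over the connected components of $\mathcal{Y}$.

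The cores $\mathcal{Y}$ fall into two classes. \emph{Separated} cores have two disjoint components, one contributing to $\fix_{\gamma_{1}}$ and the other to $\fix_{\gamma_{2}}$; their total contribution factors as $\mathbb{E}[\fix_{\gamma_{1}}(N)]\cdot\mathbb{E}[\fix_{\gamma_{2}}(N)]+O(N^{-1/m})$. \emph{Merged} cores consist of a single connected component, each with labeled fundamental group $\plab(\mathcal{Y})=H\le\Gamma$ containing $\gamma_{1}$ together with some conjugate of $\gamma_{2}$, and conversely every such $H$ is realized. The merged contribution of leading order is therefore $\sum_{H}c_{H}$ summed over conjugacy classes of subgroups $H\le\Gamma$ with $\chi(H)=0$ containing $\gamma_{1}$ and a conjugate of $\gamma_{2}$, all other merged terms being absorbed into $O(N^{-1/m})$. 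Condition (2) is precisely the statement that this collection is empty, so (2) implies (3); conversely, if (2) fails, at least one $H\cong\mathbb{Z}$ or $C_{2}*C_{2}$ contributes a strictly positive constant, and (3) fails.

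For $(2)\Rightarrow(1)$ I would extend the same decomposition to every mixed moment $\mathbb{E}[\fix_{\gamma_{1}}^{r}\fix_{\gamma_{2}}^{s}]$ by indexing over joint cores with $r+s$ marked loops, and argue that under (2) every joint core whose merged components mix $\gamma_{1}$- and $\gamma_{2}$-loops has $\chi<0$ and is hence negligible; the surviving terms factor, so each mixed moment converges to the product of the corresponding marginal moments. The method of moments then delivers asymptotic independence, since the limit distributions from Theorem \ref{thm:limit distribution of fix} are finite linear combinations of independent Poissons and are determined by their moments. For the contrapositive $(1)\Rightarrow(2)$: if an EC-zero $H_{0}$ witnesses the failure of (2), then the $\Gamma$-orbits on $\{1,\ldots,N\}$ with point-stabilizer conjugate to $H_{0}$ contribute a common Poisson component to the limits of both $\fix_{\gamma_{1}}$ and $\fix_{\gamma_{2}}$, forcing strictly positive asymptotic covariance and ruling out independence.

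The main obstacle will be the bookkeeping for merged cores. One must identify the finite family of joint cores with a prescribed $\plab(\mathcal{Y})=H$, verify that the combinatorial coefficient $c_{H}$ attached to each EC-zero $H$ in the merged sum is strictly positive so that no accidental cancellations occur when (2) fails, and confirm that the spectrum of $\chi$-values of f.g.\ subgroups of $\Gamma$ admits a uniform gap of at least $1/m$ below $0$ — a fact specific to our setting, in which the only subgroups with $\chi=0$ are $\mathbb{Z}$ and $C_{2}*C_{2}$, and all other infinite subgroups have $\chi\le-1/m$.
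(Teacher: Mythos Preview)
Your approach is essentially the one in the paper: express the mixed moments via the natural resolution of $Y_{\gamma_1}^{\sqcup r_1}\sqcup Y_{\gamma_2}^{\sqcup r_2}$, invoke Theorem~\ref{thm:E^emb is N^chi and asym expansion} to isolate the constant term as a count of $\chigrp=0$ quotients, and use the method of moments. The split into ``separated'' and ``merged'' cores matches exactly how the paper organizes the argument, and your identification of the bookkeeping issues (positivity of $a_0$ for the relevant $Z_f$, and the $1/m$-gap below zero for $\chigrp$) is accurate.

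The one point where your outline diverges from the paper is the contrapositive $(1)\Rightarrow(2)$. You propose to argue that a common EC-zero subgroup $H_0$ forces a shared Poisson component in the two limit distributions, hence positive asymptotic covariance. This is the right intuition, but as stated it is not a proof: knowing the marginal limits from Theorem~\ref{thm:limit distribution of fix} tells you nothing about the \emph{joint} limit, and the only way to establish that the Poisson associated to $H_0$ is literally the same random variable in both limits is to compute joint moments --- at which point you are back to the covariance computation anyway. The paper sidesteps this by proving the cycle $(ii)\Rightarrow(i)\Rightarrow(iii)\Rightarrow(ii)$: the implication $(i)\Rightarrow(iii)$ is nearly free (asymptotic independence gives $o(1)$ covariance, and the asymptotic expansion upgrades this to $O(N^{-1/m})$), and then $\neg(ii)\Rightarrow\neg(iii)$ follows from exhibiting a single merged core in $\R^0_{\gamma_1\sqcup\gamma_2}$, which is exactly the argument you give for $(3)\Rightarrow(2)$. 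So rather than attempting the ``shared Poisson'' heuristic directly, you should close the loop through $(3)$.
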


In concrete terms, the condition from item \eqref{enu:gamma1 and gamma2 are not conjugate in a EC=00003D0 subgroup}
translates in our settings to that $\left(i\right)$ the non-power
root of $\gamma_{1}$ is not conjugate to the non-power root of $\gamma_{2}$
nor of $\gamma_{2}^{-1}$, and $\left(ii\right)$ $\gamma_{1}$ and
$\gamma_{2}$ do not have conjugates belonging to the same subgroup
isomorphic to $C_{2}*C_{2}$.

\subsection{Statistical asymptotics of cycles of bounded lengths}

The techniques used to study the asymptotic distribution of $\fix_{\gamma}\left(N\right)$,
the number of fixed points in a $\gamma$-random permutation, can
also be used to analyze the asymptotic distribution of $\cyc_{\gamma,L}\left(N\right)$:
the number of $L$-cycles for any fixed $L$ (in particular, $\cyc_{\gamma,1}=\fix_{\gamma}$).
In addition, they lead to the asymptotic joint distribution of $\cyc_{\gamma,1},\cyc_{\gamma,2},\ldots,\cyc_{\gamma,L}$.
To state the results, define, for every $\gamma\in\Gamma$ and $L\in\mathbb{Z}_{\ge1}$,
a set analogous to $\H_{\gamma}$ from \eqref{eq:H_gamma} :
\begin{equation}
\H_{\gamma,L}\defi\left\{ H\le\Gamma\,\middle|\,\begin{gathered}\gamma^{L}\in H,~\chi\left(H\right)=0\\
\forall1\le L'<L~~\gamma^{L'}\notin H
\end{gathered}
\right\} .\label{eq:H_gamma,L}
\end{equation}
So $\H_{\gamma,L}$ is the set of EC-zero subgroups of $\Gamma$ containing
$\gamma^{L}$ but not any smaller positive power of $\gamma$. Note
that $\H_{\gamma,1}=\H_{\gamma}$ and $\H_{\gamma^{L}}=\bigsqcup_{1\le d|L}\H_{\gamma,d}$.
We summarize these results in the following theorem generalizing Theorems
\ref{thm:limit expectation of fix} and \ref{thm:limit distribution of fix}.
\begin{thm}
\label{thm:asymptotic number of cyclces of bounded size}Let $\Gamma$
be as in \eqref{eq:Gamma}, $m=m\left(\Gamma\right)$ and $\gamma\in\Gamma$
have infinite order, and fix $L\in\mathbb{Z}_{\ge1}$. Then
\begin{enumerate}
\item \label{enu:limit of Exp=00005Bcyc_L=00005D}We have
\begin{equation}
\mathbb{E}\left[\cyc_{\gamma,L}\left(N\right)\right]=\frac{1}{L}\left|\H_{\gamma,L}\right|+O\left(N^{-1/m}\right).\label{eq:limit of E=00005Bcyc_L=00005D}
\end{equation}
\item \label{enu:limit of distribution of cyc_L}Let $H_{1},\ldots,H_{t}$
be \emph{representatives of the conjugacy classes of subgroups }represented
in ${\cal H}_{\gamma,L}$. For $i=1,\ldots,t$ let $\alpha_{i}=\left|\left\{ {\cal H}_{\gamma_{,L}}\cap H_{i}^{~\Gamma}\right\} \right|$
be the number of conjugates of $H_{i}$ in ${\cal H}_{\gamma,L}$,
and let $\beta_{i}\defi\left[N_{\Gamma}\left(H_{i}\right):H_{i}\right]$
be the index of $H_{i}$ in its normalizer. Then, 
\[
\cyc_{\gamma,L}\left(N\right)\stackrel[N\to\infty]{\mathrm{dis}}{\longrightarrow}\frac{1}{L}\sum_{i=1}^{t}\alpha_{i}\beta_{i}Z_{1/\beta_{i}},
\]
(as in Theorem \ref{thm:limit distribution of fix}, $Z_{\lambda}\sim\mathrm{Poi}\left(\lambda\right)$,
the different $Z_{\lambda}$'s in the sum are independent, and ``$\stackrel{\mathrm{dis}}{\longrightarrow}$''
denotes convergence in distribution).
\item \label{enu:cyc_i and cyc_j asymptoticallly independent}The variables
$\cyc_{\gamma,1}\left(N\right),\cyc_{\gamma,2}\left(N\right),\ldots,\cyc_{\gamma,L}\left(N\right)$
are asymptotically independent. In particular, for $L_{1}\ne L_{2}$,
\[
\mathbb{E}\left[\cyc_{\gamma,L_{1}}\left(N\right)\cdot\cyc_{\gamma,L_{2}}\left(N\right)\right]=\mathbb{E}\left[\cyc_{\gamma,L_{1}}\left(N\right)\right]\cdot\mathbb{E}\left[\cyc_{\gamma,L_{2}}\left(N\right)\right]+O\left(N^{-1/m}\right).
\]
\end{enumerate}
\end{thm}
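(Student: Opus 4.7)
The bedrock identity is the classical one, valid pointwise for every permutation $\sigma$:
\[
\fix(\sigma^d) = \sum_{e \mid d} e \cdot (\text{number of }e\text{-cycles of }\sigma).
\]
Applied to $\sigma = \varphi(\gamma)$ for a uniformly random $\varphi$ and inverted via M\"obius,
\begin{equation}
L \cdot \cyc_{\gamma,L}(N) \;=\; \sum_{d \mid L} \mu(L/d)\, \fix_{\gamma^d}(N). \label{eq:moebius-cyc}
\end{equation}
All three parts of the theorem will be read off from \eqref{eq:moebius-cyc} combined with the fixed-point asymptotics already in hand. For part~\eqref{enu:limit of Exp=00005Bcyc_L=00005D}, take expectations and invoke Theorem \ref{thm:limit expectation of fix} to get $\mathbb{E}[\fix_{\gamma^d}(N)] = |\H_{\gamma^d}| + O(N^{-1/m})$. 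For any EC-zero subgroup $H$ containing some positive power of $\gamma$, the set $\{L' \ge 1 : \gamma^{L'} \in H\}$ is additively closed, hence generated by a unique minimum $L(H)$; so $\gamma^d \in H$ iff $L(H) \mid d$ and therefore $\H_{\gamma^d} = \bigsqcup_{e \mid d} \H_{\gamma,e}$. The standard M\"obius inversion $\sum_{d \mid L} \mu(L/d) \sum_{e \mid d} x_e = x_L$ then yields $\mathbb{E}[\cyc_{\gamma,L}(N)] = \tfrac{1}{L}|\H_{\gamma,L}| + O(N^{-1/m})$.

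For part~\eqref{enu:limit of distribution of cyc_L}, I would observe that the proof of Theorem \ref{thm:limit distribution of fix}---the method of moments executed through the embedding-count asymptotics of Theorem \ref{thm:E^emb is N^chi and asym expansion}---actually delivers the \emph{joint} limit distribution of the family $\{\fix_{\gamma^d}(N)\}_{d \ge 1}$: to each $\Gamma$-conjugacy class $[H]$ of an EC-zero subgroup containing some power of $\gamma$ one attaches an independent Poisson $Z^{(H)} \sim \mathrm{Poi}(1/\beta_H)$, and the limit of $\fix_{\gamma^d}(N)$ equals $\sum_{[H]} \alpha_H \beta_H Z^{(H)}$ restricted to those $[H]$ with $L(H) \mid d$. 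Substituting into \eqref{eq:moebius-cyc} and using the M\"obius identity $\sum_{d : e \mid d \mid L} \mu(L/d) = [e=L]$ collapses the double sum to exactly $\sum_{[H] \in \H_{\gamma,L}/\mathrm{conj}} \alpha_H \beta_H Z^{(H)}$, proving part~\eqref{enu:limit of distribution of cyc_L}. Part~\eqref{enu:cyc_i and cyc_j asymptoticallly independent} is then immediate: the sets $\H_{\gamma,L_1}$ and $\H_{\gamma,L_2}$ are disjoint for $L_1 \ne L_2$ since each $H$ has a unique $L(H)$, so the Poissons appearing in the limits of $\cyc_{\gamma,L_1}$ and $\cyc_{\gamma,L_2}$ are disjoint and hence independent. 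The quantitative product-of-expectations statement is obtained by expanding both factors through \eqref{eq:moebius-cyc} and applying the two-variable version of Theorem \ref{thm:asymptotic independence} to each $\mathbb{E}[\fix_{\gamma^{d_1}}(N)\,\fix_{\gamma^{d_2}}(N)]$; all ``non-independent'' contributions correspond to pairs $(d_1,d_2)$ for which $\gamma^{d_1}$ and $\gamma^{d_2}$ jointly lie in a common EC-zero subgroup, and the M\"obius weights cancel these in the same way that they produced part~\eqref{enu:limit of Exp=00005Bcyc_L=00005D}.

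\textbf{Main obstacle.} The combinatorial content---M\"obius over divisors and the $L(H)$-bookkeeping of subgroups---is routine once the joint convergence of the $\fix_{\gamma^d}(N)$'s, together with independence of the Poissons attached to distinct conjugacy classes of EC-zero subgroups, is in hand. The genuine work is therefore upgrading the method-of-moments proof of Theorem \ref{thm:limit distribution of fix} to a simultaneous statement for all powers of $\gamma$. This should be largely automatic, because joint factorial moments of $(\fix_{\gamma^{d_1}}(N),\ldots,\fix_{\gamma^{d_r}}(N))$ expand as sums of the same expected embedding counts that Theorem \ref{thm:E^emb is N^chi and asym expansion} controls, and the dominant ($\chi=0$) contributions factor as products indexed by conjugacy classes of EC-zero subgroups---precisely what is needed to extract independence. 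The only delicate point is uniform control of the $O(N^{-1/m})$ error as it propagates through the (finitely many) M\"obius sums and factorial-moment expansions, and this is supplied by the full asymptotic expansion in Theorem \ref{thm:E^emb is N^chi and asym expansion} rather than a mere leading-order bound.
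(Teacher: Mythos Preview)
Your M\"obius-inversion approach to parts~\ref{enu:limit of Exp=00005Bcyc_L=00005D} and~\ref{enu:limit of distribution of cyc_L} is correct and is just a repackaging of the paper's inductive argument (the paper writes $L\cdot\cyc_{\gamma,L}=\fix_{\gamma^L}-\sum_{d\mid L,\,d<L}d\cdot\cyc_{\gamma,d}$ and inducts, which is M\"obius inversion unfolded). Your upgrade of Theorem~\ref{thm:limit distribution of fix} to joint convergence of $\{\fix_{\gamma^d}(N)\}_d$ with one independent Poisson per conjugacy class of EC-zero subgroups is exactly what the method of moments delivers, and the M\"obius collapse you describe does produce the right limit for $\cyc_{\gamma,L}$.

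Part~\ref{enu:cyc_i and cyc_j asymptoticallly independent}, however, has a genuine gap. You argue that the Poissons in $\cyc_{\gamma,L_1}$ and $\cyc_{\gamma,L_2}$ are disjoint because the \emph{sets} $\H_{\gamma,L_1}$ and $\H_{\gamma,L_2}$ are disjoint (each $H$ has a unique $L(H)$). But the Poissons are indexed by \emph{conjugacy classes}, not by subgroups. What you need is that no conjugacy class $[H]$ meets both $\H_{\gamma,L_1}$ and $\H_{\gamma,L_2}$, i.e.\ that $L(H)$ is a conjugacy invariant. This is not automatic: for a conjugate $gHg^{-1}$, the condition $\gamma^{L'}\in gHg^{-1}$ reads $g^{-1}\gamma^{L'}g\in H$, which bears no a priori relation to $\gamma^{L(H)}\in H$. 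Your M\"obius-cancellation claim for the covariance runs into the same obstacle: after the M\"obius sums, the limiting covariance of $L_1\cyc_{\gamma,L_1}$ and $L_2\cyc_{\gamma,L_2}$ is $\sum_{[H]}\beta_{[H]}\cdot|\,[H]\cap\H_{\gamma,L_1}\,|\cdot|\,[H]\cap\H_{\gamma,L_2}\,|$, which vanishes only if no class meets both sets.

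The paper isolates exactly this point as Lemma~\ref{lem:different powers of gamma are not conjugate in EC 0} and proves it by a case analysis on $H\cong\mathbb{Z}$ versus $H\cong C_2*C_2$, using the non-conjugacy of distinct $\langle\gamma_0^{\,j}\rangle$ in the first case and a length-count in the graph-of-spaces structure in the second. This is the actual ``delicate point'' you are missing; the error propagation you flag is by comparison routine.
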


In the case of free groups, parts \ref{enu:limit of Exp=00005Bcyc_L=00005D}
and \ref{enu:limit of distribution of cyc_L} of Theorem \ref{thm:asymptotic number of cyclces of bounded size}
recover the full result of Nica \cite{nica1994number} which determines
the limit distribution of $\cyc_{\gamma,L}\left(N\right)$ when $\gamma$
is an element of a free group and shows the limit depends only on
$q$ where $\gamma=\gamma_{0}^{~q}$ with $\gamma_{0}$ a non-power
as above (see also \cite[Thm.~25]{linial2010word} and \cite[Thm.~1.3]{hanany2020word}).

\subsection{Overview of the paper\label{subsec:Overview}}

\subsubsection*{Outline of the proof of the main results}

Let us explain the ideas behind the proofs of the main results. First
we construct a CW-complex, denoted $X_{\Gamma}$, which is a graph
of spaces (in the sense of Scott and Wall \cite{scott1979topological})
with fundamental group $\Gamma$. The space $X_{\Gamma}$ consists
of a star with a central vertex $o$ and, for every free factor $G_{i}$
of $\Gamma$ in \eqref{eq:Gamma}, an edge $e_{i}$ with one end at
$o$ and the other the basepoint of some pointed CW-complex $X_{G_{i}}$
representing $G_{i}$. For $G$ a f.g.~free group, $X_{G}$ is a
bouquet of circles; for $G=\Lambda_{g}$ a surface group, $X_{G}$
is a pointed, genus-$g$ orientable surface with a given CW-structure
specified below; and for $G$ finite, $X_{G}$ is some finite presentation
$2$-complex of $G$. Clearly, $\pi_{1}\left(X_{\Gamma},o\right)\cong\Gamma$.
See Figure \ref{fig:X_Gamma}. 

Every covering space $p\colon\hat{X}\to X_{\Gamma}$ inherits a CW-structure
from $X_{\Gamma}$. Let $Y\subseteq\hat{X}$ be a sub-complex of $\hat{X}$
with finitely many cells (so if some open cell belongs to $Y$, then
so do all the cells of smaller dimension it is attached to). We call
such a sub-complex a compact sub-cover of $X_{\Gamma}$. It is equipped
with the restriction of the covering map $p=p|_{Y}\colon Y\to X_{\Gamma}$.
The main technical result of this paper is the following.
\begin{quote}
Let $p\colon Y\to X_{\Gamma}$ be a connected compact sub-cover of
$X_{\Gamma}$. Let $\plab\left(Y\right)\defi p_{*}\left(\pi_{1}\left(Y\right)\right)\le\pi_{1}\left(X_{\Gamma}\right)=\Gamma$
be the corresponding conjugacy class of subgroups of $\Gamma$, and
let $\chigrp\left(Y\right)\defi\chi\left(\plab\left(Y\right)\right)$.
Then the average number of \emph{injective lifts} of $Y$ to a random
$N$-cover of $X_{\Gamma}$, denoted $\emb_{Y}\left(N\right)$, satisfies
\begin{equation}
\emb_{Y}\left(N\right)=N^{\chigrp\left(Y\right)}\left(a_{0}\left(Y\right)+O\left(N^{-1/m}\right)\right).\label{eq:E^emb of Y - rough statement of the main technical result}
\end{equation}
\end{quote}
Here $m=m\left(\Gamma\right)$ and $a_{0}\left(Y\right)\in\mathbb{Z}_{\ge1}$
is a positive integer. Moreover, in many important cases $a_{0}\left(Y\right)=1$.
A more precise statement is given in Theorem \ref{thm:E^emb is N^chi and asym expansion}
below and applies to compact subcovers which are not necessarily connected.
We first prove \eqref{eq:E^emb of Y - rough statement of the main technical result}
for sub-covers of $X_{G}$ for each factor $G$ of $\Gamma$. This
part is straightforward when $G$ is free, it relies on \cite{muller1997finite}
when $G$ is finite, and on \cite{magee2020asymptotic} when $G=\Lambda_{g}$.
We then integrate these results to obtain \eqref{eq:E^emb of Y - rough statement of the main technical result}
for arbitrary sub-covers of $X_{\Gamma}$. 

To analyze $\mathbb{E}\left[\fix_{\gamma}\left(N\right)\right]$ for
some $\gamma\in\Gamma$, recall that $\gamma$ corresponds to some
loop $\overline{\gamma}\colon\left(S^{1},1\right)\to\left(X_{\Gamma},o\right)$,
and we may assume that the image of $\overline{\gamma}$ is a combinatorial
closed path in the $1$-skeleton of $X_{\Gamma}$. Given $\varphi\colon\Gamma\to S_{N}$,
the fixed points of $\varphi\left(\gamma\right)$ are in bijection
with the lifts of $\overline{\gamma}$ to the $N$-sheeted cover $\pi\colon X_{\varphi}\to X_{\Gamma}$
corresponding to $\varphi$. 
\[
\xymatrix{ & X_{\varphi}\ar[d]^{\pi}\\
S^{1}\ar[r]^{\overline{\gamma}}\ar@{-->}[ru]^{\hat{\gamma}} & X_{\Gamma}
}
\]
So we analyze the number of such lifts of $\overline{\gamma}$ into
a random $N$-cover of $X_{\Gamma}$. In every such lift $\text{\ensuremath{\hat{\gamma}}}\colon\left(S^{1},1\right)\to\left(X_{\varphi},y\right)$,
the image $\hat{\gamma}\left(S^{1}\right)$ is a subcomplex of (the
$1$-skeleton of) $X_{\varphi}$ and in particular a sub-cover $Y$
of $X_{\Gamma}$. As $\overline{\gamma}$ is a finite path, there
are finitely many such sub-covers. 
\[
\xymatrix{ & \left(Y,y\right)\ar[d]^{\pi}\\
\left(S^{1},1\right)\ar[r]^{\overline{\gamma}}\ar@{->>}[ru]^{\hat{\gamma}} & \left(X_{\Gamma},o\right)
}
\]
Denote by ${\cal R}_{\gamma}$ the finite set of all such possible
surjective lifts $\hat{\gamma}\colon S^{1}\twoheadrightarrow Y$ to
sub-covers. Such a set is called a resolution in the terminology of
\cite{magee2020asymptotic}. Figure \ref{fig:resolution} illustrates
such a resolution for an element of $C_{2}*C_{4}$. We obtain 
\begin{equation}
\mathbb{E}\left[\fix_{\gamma}\left(N\right)\right]=\sum_{Y\in\R_{\gamma}}\emb_{Y}\left(N\right),\label{eq:E=00005Bfix=00005D =00003D sum E^emn=00005BY=00005D}
\end{equation}
and using \eqref{eq:E^emb of Y - rough statement of the main technical result}
deduce that
\[
\mathbb{E}\left[\fix_{\gamma}\left(N\right)\right]=\sum_{Y\in{\cal R}_{\gamma}}N^{\chigrp\left(Y\right)}\left(a_{0}\left(Y\right)+O\left(N^{-1/m}\right)\right).
\]
It is clear that for every $Y\in{\cal R}_{\gamma}$, we have $\gamma\in\plab\left(Y,y\right)$.
It is not hard to show that if $\left|\gamma\right|=\infty$, then
the subgroups in ${\cal H}_{\gamma}$ are precisely the subgroups
$\plab\left(Y,y\right)$ for $Y\in{\cal R}_{\gamma}$ with $\chi\left(\plab\left(Y\right)\right)=0$.
Moreover, in all these elements of the resolution, $a_{0}\left(Y\right)=1$.
This leads to Theorem \ref{thm:limit expectation of fix}.

Theorem \ref{thm:asmptotic expansion} about the asymptotic expansion
of $\mathbb{E}\left[\fix_{\gamma}\left(N\right)\right]$ is proven
along the same lines: we first prove asymptotic expansion of $\emb_{Y}\left(N\right)$
for sub-covers of $X_{G}$ separately for every factor $G$ of $\Gamma$
(again, heavily relying on \cite{muller1997finite,magee2020asymptotic}),
then establish this expansion for arbitrary sub-covers of $X_{\Gamma}$,
and finally use \eqref{eq:E=00005Bfix=00005D =00003D sum E^emn=00005BY=00005D}
to establish the sought-after result of Theorem \ref{thm:asmptotic expansion}.

The proofs of the remaining results use similar techniques combined
with the method of moments. In particular, to establish Theorem \ref{thm:limit distribution of fix}
about the limit distribution of $\fix_{\gamma}\left(N\right)$, we
study the moments $\mathbb{E}\left[\fix_{\gamma}\left(N\right)^{r}\right]$
for every $r\in\mathbb{Z}_{\ge1}$ by constructing a resolution for
the union of $r$ disjoint copies of $\overline{\gamma}$. Every element
in this resolution corresponds to a finite multiset of f.g.~subgroups
of $\Gamma$. The limit $\lim_{N\to\infty}\mathbb{E}\left[\fix_{\gamma}\left(N\right)^{r}\right]$
is given by the number of elements in this resolution corresponding
to multisets of subgroup with total EC zero. 

\subsubsection*{Paper organization }

After mentioning some related works in Section \ref{subsec:Related-works},
we formally construct the graph of spaces $X_{\Gamma}$ and introduce
the notions of sub-covers and resolutions in Section \ref{sec:The-space-X_Gamma its covers and sub-covers}.
Section \ref{sec:Sub-coverings of X_G} studies sub-covers of a vertex-space
$X_{G}$ of $X_{\Gamma}$, and analyzes them separately for every
type of group $G$: free groups, finite groups, and surface groups.
In particular, it proves our main technical result, Theorem \ref{thm:E^emb is N^chi and asym expansion},
for all such sub-covers, and proves Proposition \ref{prop:exp-torsion-element}
concerning torsion elements of $\Gamma$. Then, Section \ref{sec:arbitrary sub-coverings}
incorporates the results from Section \ref{sec:Sub-coverings of X_G}
to prove Theorem \ref{thm:E^emb is N^chi and asym expansion} for
arbitrary sub-covers. In Section \ref{sec:limit-distr-and-asymptotic-expansion of fix}
we complete the proof of Theorems \ref{thm:limit expectation of fix},
\ref{thm:limit distribution of fix} and \ref{thm:asmptotic expansion},
and in Section \ref{sec:Asymptotic-independence-and-small-cycles}
of Theorems \ref{thm:asymptotic independence} and \ref{thm:asymptotic number of cyclces of bounded size}.
We end in Section \ref{sec:Open-questions} with two intriguing open
questions which arise from our results.

\subsubsection*{Notation}

We denote by $\left(N\right)_{t}$ the falling factorial, also known
as Pochhammer symbol,
\[
\left(N\right)_{t}\defi N\left(N-1\right)\cdots\left(N-t+1\right).
\]
We write $\left[N\right]$ for the set $\left\{ 1,\ldots,N\right\} $.
The notation $\asyexp$ marks asymptotic expansion and is defined
in Definition \ref{def:asymptotic expansion}. Many repeating notions
are formally defined in Section \ref{sec:The-space-X_Gamma its covers and sub-covers},
such as sub-covers (Definition \ref{def:sub-covering}), resolutions
(Definition \ref{def:resolution}), and $\mathbb{E}_{Y}\left(N\right)$
and $\emb_{Y}\left(N\right)$ (Definition \ref{def:E_Y and E^emb_Y}).

\subsection{Related works \label{subsec:Related-works}}

\subsubsection*{The number of homomorphisms $\Gamma\to S_{N}$}

As mentioned above, the size of $\Hom\left(\Gamma,S_{N}\right)$ is
well understood. Clearly, $\left|\Hom\left(\Gamma,S_{N}\right)\right|=\prod_{i=1}^{k}\left|\Hom\left(G_{i},S_{N}\right)\right|$.
If $G=\F_{r}$ is a rank-$r$ free group, then $\left|\Hom\left(\F_{r},S_{N}\right)\right|=\left|S_{N}\right|^{r}=\left(N!\right)^{r}$.
If $G$ is finite, an asymptotic formula for $\left|\Hom\left(G,S_{N}\right)\right|$
is given in \cite[Thm.~5]{muller1997finite}. Finally, if $G=\Lambda_{g}$
is a genus-$g$ surface group, then $\left|\Hom\left(G,S_{N}\right)\right|$
is related to the ``zeta function'' of irreducible representations
of $S_{N}$ and is equal to $\left|S_{N}\right|^{2g-1}\left(2+O\left(N^{2-2g}\right)\right)$
(see \cite{hurwitz1902ueber,Lulov96,LiebeckShalev}). 

\subsubsection*{Random Belyi surfaces}

In \cite{gamburd2006poisson}, Gamburd studies random Belyi surfaces
glued from $N$ ideal triangles from the hyperbolic plane. Closed
paths in these surfaces are related to closed paths in the dual graph,
which is cubical. This cubical graph is completely determined by the
cyclic order of half-edges around every vertex, so a permutation $\beta\in S_{3N}$
consisting of $N$ $3$-cycles, and the perfect matching of the half
edges which creates the edges, so a permutation $\alpha\in S_{3N}$
consisting of $1.5N$ transpositions. Gamburd analyzes the random
permutation $\alpha\beta$ when $\alpha$ and $\beta$ are chosen
uniformly at random, and proves it converges to the uniform distribution
in total variation distance as $N\to\infty$. In fact, he proves the
same when $3$ is replaced by an arbitrary $k\in\mathbb{N}$ \cite[Thm.~4.1]{gamburd2006poisson}.
Although similar, note that this model is different than ours, as
$\alpha$ and $\beta$ are not allowed to have fixed points.

\subsubsection*{Measures induced by elements of finitely generated groups}

There has been an extensive study of ``word measures'' -- measures
induced by elements of free groups -- on various families of groups.
As mentioned above, the asymptotics of word measures on $S_{N}$ were
studied in \cite{nica1994number}. A precise result about the leading
term of $\mathbb{E}\left[\fix_{\gamma}\left(N\right)\right]-1$ (for
$\gamma\in\F$) was found in \cite{puder2014primitive,PP15} -- see
Section \ref{sec:Open-questions}, and more general results about
all stable characters of $S_{N}$ were recently established in \cite{hanany2020word}.
Additional works studied word measures on $U\left(N\right)$, $O\left(N\right)$,
$Sp\left(N\right)$, $\mathrm{GL}_{N}\left(\mathbb{F}_{q}\right)$
and generalized symmetric groups -- see \cite[Sec.~1.6]{hanany2020word}
for a short survey. As for measures induced by elements of surface
groups, aside for the above mentioned work \cite{magee2020asymptotic},
the recent works \cite{magee2021surface-unitary1,magee2021surface-unitary2}
study measures induced by elements of $\Lambda_{g}$ on $U\left(N\right)$
and establish results about the expected trace: its limit and its
asymptotic expansion. Finally, Baker and Petri show in \cite{BakerPetri}
how one can use results as in the current paper about measures induced
on $S_{N}$ by elements of the free product $C_{p_{1}}*\ldots*C_{p_{k}}$
of cyclic group, in order to study such measures induced on $S_{N}$
by elements of the group
\[
\Gamma_{p_{1},\ldots,p_{k}}\defi\left\langle x_{1},\ldots,x_{m}\,\middle|\,x_{1}^{p_{1}}=x_{2}^{p_{2}}=\ldots=x_{k}^{p_{k}}\right\rangle .
\]
(Note that $C_{p_{1}}*\ldots*C_{p_{k}}$ is a quotient with kernel
$\mathbb{Z}$ of $\Gamma_{p_{1},\ldots,p_{k}}$ by the additional
relation $x_{1}^{p_{1}}=1$.) See also \cite{hanany2020some} for
a general discussion regarding ``profinitely rigid'' elements in
finitely generated groups and the relation to measures induced by
such elements on finite groups.

\subsubsection*{Spectral gap}

Some of the works in this line of research are motivated, inter alia,
by questions about expansion and spectral gap of random objects. For
$r\in\mathbb{Z}_{\ge2}$, a random $2r$-regular graph on $N$ vertices
can be obtained as a random $N$-cover of the bouquet of one vertex
and $r$ loops, thus corresponding to a random homomorphism $\F_{r}\to S_{N}$.
Relying on the trace method, word measures on $S_{N}$ can thus be
used to show that random graphs are expanders. Indeed, this is the
arranging idea in many works on the subject, starting from \cite{broder1987second}
(and also in \cite{friedman2008proof,puder2015expansion}). This also
stands in the background for works about expansion of more general
random Schreier graphs of $S_{N}$ \cite{friedman1998action,hanany2020word}.
Similarly, using Selberg's trace formula, the results of \cite{magee2020asymptotic}
about measures induced on $S_{N}$ by elements of $\Lambda_{g}$,
the fundamental group of a hyperbolic surface, were used in \cite{magee2020random}
to yield results about spectral gap in random covers of closed hyperbolic
surfaces. Results about $\Lambda_{g}$ stated in the current paper
are used in the recent paper \cite{naud2022random}, studying other
statistics of the spectrum of the Laplacian on random covers of surfaces.
Simpler techniques can also show that these random objects Benjamini-Schramm
converge to the corresponding universal cover: see \cite[Sec.~1.5]{magee2020asymptotic}
for random surfaces and \cite{BakerPetri} for random covers of torus-knot
complements.

\subsubsection*{Core graphs and sub-covers}

The notion of a sub-cover in this paper is very much related to Stallings
core graphs. The original Stallings core graphs \cite{stallings1983topology}
correspond to subgroups of a free group and they are only slightly
generalized in sub-covers of $X_{G}$ where $G$ is a free group.
Bass \cite{bass1993covering} extends Stallings' theory to a very
general theory about geometric presentation of subgroups of the fundamental
group of a graph of groups. Other authors developed a more specialized
version for more specialized families of groups such as amalgams of
finite groups \cite{markus2007stallings} or the mere modular group
$\mathrm{PSL}_{2}\left(\mathbb{Z}\right)\cong C_{2}*C_{3}$ \cite{bassino2021statistics}.
Some of the analysis in this paper is inspired by ideas in these works.
We heavily rely here also on a theory of core surfaces developed for
surface groups in \cite{MPcore,magee2020asymptotic}.

\subsection*{Acknowledgements}

We thank Nir Avni, Michael Magee, Chen Meiri, Thomas Müller and Ron
Peled for beneficial discussions. Michal Buran carried out the computation
leading to an estimate of $\mathbb{E}\left[\fix_{\left[a,b\right]}\left(N\right)\right]$
where $\left[a,b\right]\in\Lambda_{2}=\left\langle a,b,c,d\,\middle|\,\left[a,b\right]\left[c,d\right]\right\rangle $,
an estimate used in Section \ref{sec:Open-questions}. This project
has received funding from the European Research Council (ERC) under
the European Union’s Horizon 2020 research and innovation programme
(grant agreement No 850956).

\section{The space $X_{\Gamma}$, its covers and sub-covers\label{sec:The-space-X_Gamma its covers and sub-covers}}

\subsection{The graph of spaces representing $\Gamma$ \label{subsec:The-graph-of-spaces}}

Let us formally construct the space $X_{\Gamma}$ mentioned in Section
\ref{subsec:Overview}. If $\left(X,x_{0}\right)$ and $\left(Y,y_{0}\right)$
are two pointed connected CW-complexes (or nice enough topological
spaces), Seifert-Van-Kampen Theorem guarantees that their wedge sum
at the points $x_{0}$ and $y_{0}$ has fundamental group isomorphic
to the free product $\pi_{1}\left(X,x_{0}\right)*\pi_{1}\left(Y,y_{0}\right)$.
Relying on this basic fact, we construct a CW-complex whose fundamental
group is $\Gamma$ from smaller CW-complexes with fundamental groups
$G_{1},\ldots,G_{k}$ (we continue using here the notation from Assumption
\ref{assu:Gamma}). To get a clearer picture which is somewhat easier
to work with, we use a star-graph instead of a single wedge point.
This leads to the following construction of $X_{\Gamma}$ as a graph
of spaces. 

For every $i=1,\ldots,k$, let $X_{G_{i}}$ be a CW-complex with a
marked vertex (in fact, a single vertex) $v_{i}$, so that $\pi_{1}\left(X_{G_{i}},v_{i}\right)\cong G_{i}$.
Moreover, the edges ($1$-cells) of $X_{G_{i}}$ are directed and
labeled by a fixed set of generators of $G_{i}$. The construction
of $X_{G_{i}}$ is as follows:
\begin{itemize}
\item If $G_{i}=\F_{r}$ is a rank-$r$ free group, we fix a basis $B=\left\{ b_{1},\ldots,b_{r}\right\} $
and let $X_{G_{i}}$ be a bouquet made of one vertex, (named $v_{i}$)
and $r$ directed loops labeled\footnote{To make the notation complete, here and for the other types of factors,
one needs the notation of the generators to formally reflect the factor
they generate, for example $b_{1}^{i},\ldots,b_{r}^{i}$, as there
may be more than one factor which is a free group (or more than one
factor which is a surface group and so on). However, we prefer to
keep the notation a bit simpler and have the factor be understood
from the context. } $b_{1},\ldots,b_{r}$. This defines an isomorphism $\pi_{1}\left(X_{G_{i}},v_{i}\right)\cong\F_{r}$.
\item If $G_{i}$ is a finite group, we let $X_{G_{i}}$ be some finite
presentation complex of $G_{i}$: given some finite presentation $\left\langle S\,\middle|\,R\right\rangle $
of $G_{i}$, the complex $X_{G_{i}}$ is made of the vertex $v_{i}$
together with a directed loop for every $s\in S$ and a $2$-cell
attached to the loops for every $r\in R$. We fix some isomorphism
$\left\langle S\,\middle|\,R\right\rangle \cong G_{i}$ and for every
$s\in S$, label the $s$-loop by its image in $G_{i}$ through this
isomorphism. For example, if $G_{i}=C_{q}=\left\langle a\right\rangle $
is a cyclic group, then $G_{i}\cong\left\langle s\,\middle|\,s^{q}\right\rangle $
and so $X_{G_{i}}$ may consist of a single vertex, a single directed
loop labeled '$a$', and a single $2$-cell whose boundary wraps around
the $a$-loop $q$ times.
\item If $G_{i}=\Lambda_{g}$ is a surface group, we let $X_{G_{i}}$ be
a genus-$g$ orientable surface with a CW-structure obtained from
gluing the sides of a $4g$-gon according to the pattern\linebreak{}
$a_{1},b_{1},a_{1}^{-1},b_{1}^{-1},\ldots,a_{g},b_{g},a_{g}^{-1},b_{g}^{-1}$.
So $X_{G_{i}}$ consists of a single vertex $v_{i}$, $2g$ directed
loops labeled $a_{1},b_{1},\ldots,a_{g},b_{g}$ and a single $2$-cell.
The elements $a_{1},\ldots,b_{g}$ are elements of $\Lambda_{g}$
given by an isomorphism $\Lambda_{g}\cong\left\langle a_{1},\ldots,b_{g}\,\middle|\,\left[a_{1},b_{1}\right]\cdots\left[a_{g},b_{g}\right]\right\rangle $.
\end{itemize}
Finally, $X_{\Gamma}$ consists of the complexes $X_{G_{1}},\ldots,X_{G_{k}}$
together with a vertex $o$ and edges $e_{1},\ldots,e_{k}$ so that
$e_{i}$ connects $o$ and $v_{i}$. We have $\pi_{1}\left(X_{\Gamma},o\right)\cong\Gamma$.
Moreover, the labels of the $1$-cells inside $X_{G_{1}},\ldots,X_{G_{k}}$
form a generating set for $\Gamma$, and every word in this generating
set corresponds to a closed loop in $X_{\Gamma}^{\left(1\right)}$,
the $1$-skeleton of $X_{\Gamma}$, based at $o$: simply follow the
$1$-cells according to the given word (transversing the corresponding
edge backwards if the generator comes with a negative exponent), and
going through $o$ and $e_{1},\ldots,e_{k}$ to pass from one $X_{G_{i}}$
to another. This is illustrated in Figure \ref{fig:X_Gamma}.

\begin{figure}
\begin{centering}
\includegraphics[viewport=0bp 0bp 407.907bp 324.071bp,scale=0.7]{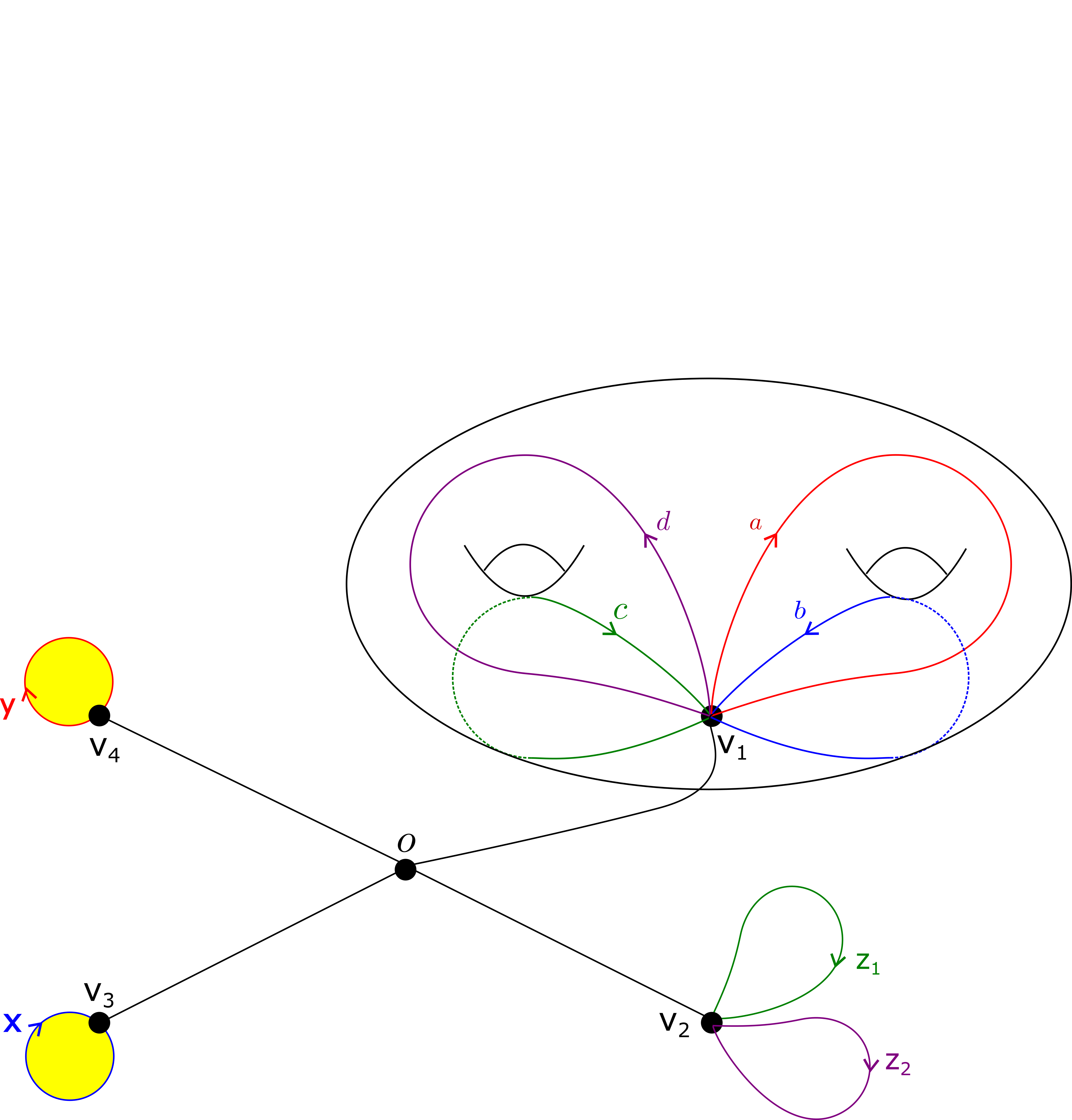}
\par\end{centering}
\caption{\label{fig:X_Gamma}The graph of spaces $X_{\Gamma}$ when $\Gamma=\Lambda_{2}*\protect\F_{2}*C_{2}*C_{4}$.
The middle vertex $o$ is connected by four edges to the four basepoints
of the spaces $X_{\Lambda_{2}}$, $X_{\protect\F_{2}}$, $X_{C_{4}}$
and $X_{C_{2}}$. The space $X_{\Lambda_{2}}$ is a genus-2 surface
consisting of the vertex $v_{1}$, four edges labeled $a,b,c$ and
$d$ and one disc. The space $X_{\protect\F_{2}}$ consists of one
vertex $v_{2}$ and two loops labeled $z_{1}$ and $z_{2}$. The spaces
$X_{C_{2}}$ and $X_{C_{4}}$ both consist of a single vertex ($v_{3}$
and $v_{4}$, respectively), a single edge (labeled $x$ and $y$,
respectively), and a single $2$-cell: the boundary of the $2$-cell
wraps around the edge twice (respectively, four times). Note that
the latter part of the construction is not well reflected in the figure.
Overall, $\left\{ a,b,c,d,z_{1},z_{2},x,y\right\} $ is a generating
set for $\Gamma$.}
\end{figure}

\subsection{Sub-covers and resolutions}

Covering spaces of $\text{X}_{\Gamma}$ inherit the $CW$-structure
from it (indeed, open cells are covered by disjoint homeomorphic sets),
and we consider $N$-sheeted covering spaces $p\colon\hat{X}\to X_{\Gamma}$
together with a bijection between $p^{-1}\left(o\right)$ and $\left\{ 1,\ldots,N\right\} $.
This yields a bijection between these $N$-covers and $\Hom\left(\Gamma,S_{N}\right)$
(see, for instance, \cite[pp.~68-70]{hatcher2005algebraic}). For
$\varphi\colon\Gamma\to S_{N}$, we denote the corresponding $N$-cover
of $X_{\Gamma}$ by $X_{\varphi}$.
\begin{defn}[Sub-covers]
\label{def:sub-covering} A \textbf{sub-cover} $Y$ of $X_{\Gamma}$
is a (not necessarily connected) sub-complex of a (not necessarily
finite degree) covering space of $X_{\Gamma}$. In particular, a sub-cover
is endowed with the restricted covering map $p\colon Y\to X_{\Gamma}$,
which is an immersion. Denote by $Y|_{G_{i}}\defi p^{-1}\left(X_{G_{i}}\right)$,
$i=1,\ldots,k$, the subcomplex of $Y$ lying above $X_{G_{i}}$,
the subspace of $X_{\Gamma}$ corresponding to the factor $G_{i}$
of $\Gamma$. Let $p_{i}\colon Y_{i}\to X_{\Gamma}$ be sub-covers
for $i=1,2$. A morphism of sub-covers $f\colon Y_{1}\to Y_{2}$ is
a combinatorial morphism of CW-complexes commuting with the restricted
covering maps, namely, such that the following diagram commutes.
\[
\xymatrix{Y_{1}\ar[rr]^{f}\ar[rd]_{p_{1}} &  & Y_{2}\ar[ld]^{p_{2}}\\
 & X_{\Gamma}
}
\]
\end{defn}

This definition extends the definition of a \emph{tiled surface} in
the case where $\Gamma=\Lambda_{g}$ \cite[Def.~3.1]{MPcore}. The
covering space of which $Y$ is a sub-complex is \emph{not} part of
the data attached to the sub-cover: indeed, the same $Y$ can be a
sub-cover of distinct covering spaces. In the following definition
we adapt the terminology from \cite{magee2020asymptotic}, extend
it to our more general setting and add a variant restricted to embeddings.

\begin{defn}[Resolutions]
\label{def:resolution} Let $p\colon Y\to X_{\Gamma}$ be a sub-cover
of $X_{\Gamma}$. A \textbf{resolution} $\R$ of $Y$ is a collection
of morphisms of sub-covers 
\[
\left\{ f\colon Y\to Z_{f}\right\} 
\]
so that every morphism of sub-covers $h\colon Y\to\hat{X}$ to a \emph{full}
covering space $\hat{X}$ of $X_{\Gamma}$ decomposes \uline{uniquely}
as 
\[
Y\stackrel{f}{\to}Z_{f}\stackrel{\overline{h}}{\hookrightarrow}\hat{X},
\]
with $f\in\R$ and $\overline{h}$ an \uline{embedding}.

Similarly, an \textbf{embedding-resolution} $\remb$ of $Y$ is a
collection of \emph{injective }morphisms of sub-covers $\left\{ f\colon Y\hookrightarrow Z_{f}\right\} $
so that every \emph{injective }morphism of sub-covers $h\colon Y\hookrightarrow\hat{X}$
to a \emph{full} covering space $\pi\colon\hat{X}\to X_{\Gamma}$
of $X_{\Gamma}$ decomposes \uline{uniquely} as $Y\stackrel{f}{\hookrightarrow}Z_{f}\stackrel{\overline{h}}{\hookrightarrow}\hat{X}$
with $f\in\remb$ and $\overline{h}$ an \uline{embedding}.
\end{defn}

Many of the resolutions we construct in this paper are the natural
resolutions which consist of all possible \emph{surjective }morphisms
of sub-covers with domain $Y$. When $Y$ is compact, this resolution
is finite. However, we sometimes need more involved resolutions. The
identity map $\left\{ \id\colon Y\to Y\right\} $ constitutes a trivial
embedding-resolution. But again, we will need below more involved
embedding-resolutions.
\begin{defn}[$\mathbb{E}_{Y}$ and $\emb_{Y}$]
\label{def:E_Y and E^emb_Y} Let $p\colon Y\to X_{\Gamma}$ be a
sub-cover of $X_{\Gamma}$, and let $\pi\colon X_{\varphi}\to X_{\Gamma}$
be an $N$-cover of $X_{\Gamma}$ corresponding to a uniformly random
$\varphi\colon\Gamma\to S_{N}$. Denote by $\mathbb{E}_{Y}\left(N\right)$
the expected number of lifts of $p$ to $X_{\varphi}$.
\[
\xymatrix{ & X_{\varphi}\ar[d]^{\pi}\\
Y\ar[r]^{p}\ar@{-->}[ru]^{\mathbb{E}\left[\#f\right]} & X_{\Gamma}
}
\]
Namely
\begin{equation}
\mathbb{E}_{Y}\left(N\right)\defi\mathbb{E}_{\varphi\in\Hom\left(\Gamma,S_{N}\right)}\left|\left\{ f\colon Y\to X_{\varphi}\,\middle|\,\pi\circ f=p\right\} \right|.\label{eq:def of E_Y}
\end{equation}
Similarly, denote by $\emb_{Y}\left(N\right)$ the expected number
of \emph{injective }lifts of $p$ to the random $N$-cover $X_{\varphi}$.
\[
\xymatrix{ & X_{\varphi}\ar[d]^{\pi}\\
Y\ar[r]^{p}\ar@{^{(}-->}[ru]^{\mathbb{E}\left[\#f\right]} & X_{\Gamma}
}
\]
Namely
\begin{equation}
\emb_{Y}\left(N\right)\defi\mathbb{E}_{\varphi\in\Hom\left(\Gamma,S_{N}\right)}\left|\left\{ f\colon Y\hookrightarrow X_{\varphi}\,\middle|\,\pi\circ f=p,~f~\mathrm{is~injective}\right\} \right|.\label{eq:def of E^emb_Y}
\end{equation}
\end{defn}

As reflected in Theorem \ref{thm:E^emb is N^chi and asym expansion}
below, the quantity $\emb_{Y}\left(N\right)$ has nice properties.
In contrast, the quantity $\mathbb{E}_{Y}\left(N\right)$ does not
share these properties, and we therefore study it via resolutions
together with the following obvious lemma. 
\begin{lem}
\label{lem:resolution-sum-of-probabilities}Let $Y$ be a compact
sub-cover of $X_{\Gamma}$. If $\R$ is a finite resolution of $Y$,
then 
\begin{equation}
\mathbb{E}_{Y}\left(N\right)=\sum_{f\in\mathcal{R}}\emb_{Z_{f}}\left(N\right).\label{eq:resolution}
\end{equation}
If $\remb$ is a finite embedding-resolution of $Y$ then 
\begin{equation}
\emb_{Y}\left(N\right)=\sum_{f\in\remb}\emb_{Z_{f}}\left(N\right).\label{eq:emb-resolution}
\end{equation}
\end{lem}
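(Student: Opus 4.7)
The plan is to unwind definitions and exploit the unique-factorization property that defines a (possibly embedding-)resolution. Both assertions are really the same statement applied to two different classes of morphisms $Y\to X_{\varphi}$, so I will describe the argument for \eqref{eq:resolution} and indicate the trivial modification for \eqref{eq:emb-resolution}.

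First I would fix a homomorphism $\varphi\in\Hom(\Gamma,S_{N})$ and work pointwise in $\varphi$, before taking the expectation. By Definition \ref{def:resolution}, since $\pi\colon X_{\varphi}\to X_{\Gamma}$ is a genuine covering space of $X_{\Gamma}$, every lift $h\colon Y\to X_{\varphi}$ of $p$ admits a \emph{unique} factorization $h=\overline{h}\circ f$ with $f\in\R$ and $\overline{h}\colon Z_{f}\hookrightarrow X_{\varphi}$ an embedding. Conversely, given $f\in\R$ and an embedded lift $\overline{h}\colon Z_{f}\hookrightarrow X_{\varphi}$ of $p_{Z_{f}}\colon Z_{f}\to X_{\Gamma}$, the composite $\overline{h}\circ f$ is a lift of $p$. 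Hence the assignment $h\mapsto (f,\overline{h})$ gives a bijection
\[
\bigl\{h\colon Y\to X_{\varphi}\,\bigm|\,\pi\circ h=p\bigr\}\;\longleftrightarrow\;\bigsqcup_{f\in\R}\bigl\{\overline{h}\colon Z_{f}\hookrightarrow X_{\varphi}\,\bigm|\,\pi\circ\overline{h}=p_{Z_{f}}\bigr\}.
\]

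Second I would take cardinalities on both sides and then take the expectation over $\varphi\in\Hom(\Gamma,S_{N})$ with its uniform measure. Because $\R$ is a finite index set that does \emph{not} depend on $\varphi$, linearity of expectation allows me to pull the sum outside the expectation, giving
\[
\mathbb{E}_{Y}(N)=\mathbb{E}_{\varphi}\!\!\sum_{f\in\R}\bigl|\{\overline{h}\colon Z_{f}\hookrightarrow X_{\varphi}\mid\pi\circ\overline{h}=p_{Z_{f}}\}\bigr|=\sum_{f\in\R}\emb_{Z_{f}}(N),
\]
where the last equality is exactly Definition \ref{def:E_Y and E^emb_Y} applied to each $Z_{f}$.

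For \eqref{eq:emb-resolution} I would repeat the same argument, but starting from the set of \emph{injective} lifts $h\colon Y\hookrightarrow X_{\varphi}$; the definition of an embedding-resolution provides precisely the analogous unique decomposition $h=\overline{h}\circ f$ with $f\in\remb$ injective and $\overline{h}$ an embedding, so the same bijection-plus-linearity argument applies. The only point worth verifying carefully is this unique factorization clause in Definition \ref{def:resolution}, and the fact that the family $\R$ (respectively $\remb$) is independent of $\varphi$, so no measurability or finiteness issue arises when interchanging the sum and the expectation; I expect this to be routine rather than a real obstacle.
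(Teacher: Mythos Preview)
Your proof is correct and is exactly the straightforward argument the paper has in mind; the paper itself calls the lemma ``obvious'' and does not provide a proof.
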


\begin{defn}[Subgroups associated with sub-covers]
\label{def:subgroups corresponding to a sub-covering} Let $p\colon Y\to X_{\Gamma}$
be a compact\footnote{Some of the notions here can be defined for arbitrary, not-necessarily-compact
sub-covers, but we only use them for compact ones.} sub-cover. If $Y$ is \emph{connected}, then $p_{*}\left(\pi_{1}\left(Y\right)\right)\le\pi_{1}\left(X_{\Gamma}\right)=\Gamma$
is a well-defined conjugacy class of f.g.~subgroups of $\Gamma$
we denote by\footnote{The notation $\plab\left(Y\right)$ is meant to hint that we consider
closed cycles in the $1$-skeleton of $Y$ (based at some vertex)
as \emph{labeled} cycles: every cycle represents the element of $\Gamma$
which is spelled by labels on the edges along the cycle. } $\plab\left(Y\right)$. If $y\in p^{-1}\left(o\right)\subseteq Y$
then $\plab\left(Y,y\right)$ is the corresponding particular subgroup
in the conjugacy class $\plab\left(Y\right)$. If $Y$ is not necessarily
connected, let $Y_{1},\ldots,Y_{\ell}$ denote its connected components,
and define
\[
\plab\left(Y\right)\defi\left\{ \plab\left(Y_{1}\right),\ldots,\plab\left(Y_{\ell}\right)\right\} 
\]
to be the \emph{multiset} of conjugacy classes of f.g.~subgroups
of $\Gamma$ corresponding to $Y_{1},\ldots,Y_{\ell}$. Finally, denote
by 
\[
\chigrp\left(Y\right)\defi\chi\left(\plab\left(Y\right)\right)\defi\sum_{i=1}^{\ell}\chi\left(\plab\left(Y_{i}\right)\right)
\]
the sum of Euler characteristics of the subgroups in the multiset. 
\end{defn}

The following theorem is at the heart of this paper.
\begin{thm}
\label{thm:E^emb is N^chi and asym expansion}Let $p\colon Y\to X_{\Gamma}$
be a compact, not necessarily connected, sub-cover of $X_{\Gamma}$.
Let $m=m\left(\Gamma\right)$. Then there are rational numbers $a_{t}=a_{t}\left(Y\right)$
for $t=0,-\frac{1}{m},-\frac{2}{m},-\frac{3}{m},\ldots$ so that 
\begin{equation}
\emb_{Y}\left(N\right)\asyexp N^{\chigrp\left(Y\right)}\cdot\left\{ a_{0}+a_{-1/m}N^{-1/m}+a_{-2/m}N^{-2/m}+\ldots\right\} .\label{eq:asym expansion of E^emb}
\end{equation}
Moreover, $a_{0}$ is a positive integer, so in particular,
\[
\emb_{Y}\left(N\right)=N^{\chigrp\left(Y\right)}\left(a_{0}+O\left(N^{-1/m}\right)\right)=\Theta\left(N^{\chigrp\left(Y\right)}\right).
\]
Furthermore, whenever there are no surface groups involved, $a_{0}=1$,
so
\[
\emb_{Y}\left(N\right)=N^{\chigrp\left(Y\right)}\left(1+O\left(N^{-1/m}\right)\right).
\]
\end{thm}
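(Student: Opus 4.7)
Following Section~\ref{subsec:Overview}, I would prove the theorem in three phases: first for connected sub-covers sitting in a single factor space $X_{G_i}$, then for general connected sub-covers by integrating over the graph-of-spaces structure of $X_\Gamma$, and finally for disconnected sub-covers by inclusion--exclusion.

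\textbf{Phase 1 (single-factor connected case).} When $Y$ is a connected compact sub-cover of some $X_{G_i}$, the theorem should reduce to a factor-type-specific computation. If $G_i$ is free, $\emb_Y(N) = (N)_{V(Y)}/N^{E(Y)}$, whose expansion has leading exponent $V-E = \chigrp(Y)$ and $a_0 = 1$. If $G_i$ is finite, I would express $\emb_Y(N)$ via orbit--stabilizer as a positive integer multiple of $|\Hom(G_i,S_{N-s})|/|\Hom(G_i,S_N)|$, then invoke Müller's Theorem~\ref{thm:Muller} to obtain the expansion in fractional powers of $N^{-1/m}$ with leading exponent $\chi(\plab(Y)) = \chigrp(Y)$ and positive integer leading coefficient. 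If $G_i = \Lambda_g$, the expansion follows from~\cite[Thm.~1.8]{magee2020asymptotic}; here $a_0$ is a positive integer that may exceed $1$.

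\textbf{Phase 2 (general connected case via the graph-of-spaces structure).} Let $V_o := p^{-1}(o) \cap Y$ with $|V_o| = s$ and $E_i := |p^{-1}(e_i) \cap Y|$. A uniformly random $\varphi\colon\Gamma \to S_N$ decomposes as a tuple of independent uniform $\varphi_i\colon G_i \to S_N$. I plan to parametrize an injective lift $f\colon Y \hookrightarrow X_\varphi$ by three data: (i) an injection $\iota\colon V_o \hookrightarrow [N]$, contributing a factor of $(N)_s \sim N^s$; (ii) the forced lifts of the $e_i$-edges of $Y$, which pre-assign $f$ on their $v_i$-endpoints; and (iii) for each $i$, an injective lift of $Y|_{G_i}$ to $X_{\varphi_i}$ extending those pre-assignments. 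Exploiting the $S_N$-symmetry of $\Hom(G_i,S_N)$, the conditional expected count in (iii) equals $\emb_{Y|_{G_i}}(N)/(N)_{E_i}$, which by Phase~1 expands as $N^{\chigrp(Y|_{G_i}) - E_i}\bigl(a_0^{(i)} + O(N^{-1/m})\bigr)$. Multiplying yields
\[
\emb_Y(N) \;\asyexp\; N^{\,s + \sum_i \chigrp(Y|_{G_i}) - \sum_i E_i}\Bigl(\textstyle\prod_i a_0^{(i)} + O(N^{-1/m})\Bigr).
\]
The exponent will be shown to equal $\chigrp(Y)$ via the Euler characteristic formula $\chi = \sum_v \chi(A_v) - |E|$ for a graph of groups with trivial edge groups, applied to the natural graph-of-groups decomposition of $\pi_1(Y)$ whose vertex groups are the $\plab$'s of connected components of the $Y|_{G_i}$'s (plus trivial groups at the $s$ copies of $o$) and whose edges are the $e_i$-edges of $Y$. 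Consequently $a_0(Y) = \prod_i a_0^{(i)}$ is a positive integer, equal to $1$ when no $G_i$ is a surface group.

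\textbf{Phase 3 (disconnected case) and main obstacle.} For $Y = Y_1 \sqcup \cdots \sqcup Y_\ell$, I note that $\mathbb{E}_Y(N) = \prod_j \mathbb{E}_{Y_j}(N)$ by independence of lifts of distinct components, while $\emb_Y(N)$ differs from $\mathbb{E}_Y(N)$ only by contributions from non-injective lifts, each identifying with $\emb_Z(N)$ for some sub-cover $Z$ obtained by gluing cells across components of $Y$ --- an operation that strictly decreases $\chigrp$. Inclusion--exclusion then transfers the expansion from the connected case to $Y$, with $a_0(Y) = \prod_j a_0(Y_j)$. The main obstacle will be step (iii) of Phase~2: the conditional expected count must inherit the correct leading exponent and positive-integer leading coefficient from its unconditional counterpart, which for surface factors demands re-indexing the Magee--Puder representation-theoretic framework to accommodate partial boundary data --- this is where I expect most of the technical work of Sections~\ref{sec:Sub-coverings of X_G} and~\ref{sec:arbitrary sub-coverings} to be expended.
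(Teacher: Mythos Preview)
Your Phase~2 is essentially the paper's argument (Section~\ref{sec:arbitrary sub-coverings}): the $S_N$-symmetry trick giving $\emb_{Y|_{G_i}}(N)=(N)_{\nu_i}\cdot p(Y|_{G_i})$, the product formula $\emb_Y(N)=(N)_{\nu_o}\prod_i(N)_{\varepsilon_i}^{-1}\emb_{Y|_{G_i}}(N)$, and the Euler-characteristic identity (Lemma~\ref{lem:plab(Y) same as graph of groups}) are all exactly right. But your three-phase architecture has a structural gap. Even when $Y$ is connected, the restrictions $Y|_{G_i}$ are typically \emph{disconnected}, so Phase~2 already needs the single-factor results for disconnected sub-covers --- your Phase~1, restricted to connected pieces, does not supply this. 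And Phase~3 cannot repair it: the claim $\mathbb{E}_Y(N)=\prod_j\mathbb{E}_{Y_j}(N)$ is false, since the lift-counts of different components are functions of the \emph{same} random $\varphi$ and are correlated (take $Y_1=Y_2$: this would force zero variance). Nor does $\emb$ over a single factor split as a product over components --- already for free groups $(N)_{v}$ does not factor. The paper's fix is simply to prove Phase~1 for \emph{arbitrary} (possibly disconnected) compact sub-covers of each $X_{G_i}$ --- Corollaries~\ref{cor:E^emb over free group}, \ref{cor:E^emb above finite}, \ref{cor:E^emb of arbitrary tiled surface} are all stated that way --- and then Phase~2 handles all $Y$ at once with no Phase~3.

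Two smaller points. First, for finite factors you need $a_0=1$, not just a positive integer, to get the ``Furthermore'' clause; the paper obtains this via the embedding-resolution $\remb_Y$ into full covers of $X_G$ (Proposition~\ref{prop:resolutions over finite groups}) and the observation that the universal lift is the unique element of maximal $\chigrp$ (proof of Corollary~\ref{cor:E^emb above finite}). Your orbit--stabilizer sketch does not obviously yield this. Second, the ``main obstacle'' you identify is a non-issue: the very $S_N$-symmetry you invoke makes the conditional expectation exactly $\emb_{Y|_{G_i}}(N)/(N)_{\nu_i}$ with no adaptation of the Magee--Puder machinery required; the genuine work for surface factors lies instead in establishing the single-factor expansion for non-BR tiled surfaces, which the paper does via the embedding-resolutions of Corollary~\ref{cor:injective resolution for a compact tiled surface} together with the bound $\chigrp(Z_f)\le\chigrp(Y)$ of Lemma~\ref{lem:in resolution of Y chigrp<=00003Dchigroup(Y)}.
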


More details about the value of $a_{0}$ are given in Proposition
\ref{prop:addendum - value of a0}. It is a product of positive integers
determined by the sub-complexes $Y|_{G_{i}}$ of $Y$ lying above
$X_{G_{i}}$ when $G_{i}$ is a surface group. In Section \ref{sec:Sub-coverings of X_G}
we prove Theorem \ref{thm:E^emb is N^chi and asym expansion} for
a compact sub-cover of $X_{G}$ for any factor $G$ of $\Gamma$.
In Section \ref{sec:arbitrary sub-coverings} we complete the proof
of Theorem \ref{thm:E^emb is N^chi and asym expansion} for arbitrary
compact sub-covers of $X_{\Gamma}$. 
\begin{rem}
\label{rem:point of confusion}There is a subtle issue in the central
notion of $\emb_{Y}\left(N\right)$. In case $Y$ has non-trivial
automorphisms, there may be different injective lifts with the same
image in the $N$-cover. We count them separately. To illustrate,
consider the somewhat degenerate case that $\Gamma=C_{2}=\left\langle x\right\rangle $,
$X_{\Gamma}$ is the presentation complex of $\left\langle x\,\middle|\,x^{2}\right\rangle $,
and $Y$ is the graph $\xymatrix@1{\bullet\ar@/^{0.7pc}/[r]_{x}&\bullet\ar@/^{0.7pc}/[l]_x}$.
Every $N$-cover of $X_{\Gamma}$ consists of connected components
corresponding to the trivial subgroup (copies of $Y$ together with
two discs attached, so 2-spheres), and connected components corresponding
to $\Gamma$ (copies of $X_{\Gamma}$). By Theorem \ref{thm:Muller},
a random $N$-cover has in expectation $\sqrt{N}+O\left(1\right)$
copies of $X_{\Gamma}$, and thus $\frac{N-\sqrt{N}}{2}+O\left(1\right)$
copies of the 2-sphere, and thus $\frac{N-\sqrt{N}}{2}+O\left(1\right)$
\emph{disjoint} embeddings of $Y$. However, there are two different
embeddings of $Y$ in every copy of the $2$-sphere, and so $\emb_{Y}\left(N\right)=N-\sqrt{N}+O\left(1\right)$.
This agrees with Theorem \ref{thm:E^emb is N^chi and asym expansion},
as $\plab\left(Y\right)=\left\{ 1\right\} $ and so $\chigrp\left(Y\right)=1$.
See also Remark \ref{rem:Muller - Schlage-Puchta}.
\end{rem}

Below we will repeatedly use the following standard fact from the
theory of covering spaces (e.g., \cite[Prop.~1.33 and 1.34]{hatcher2005algebraic}).
\begin{prop}[Lifting criterion ]
\label{prop:lifting criterion} Let $p\colon(\hat{X},\hat{x}_{0})\to\left(X,x_{o}\right)$
be a covering map and $f\colon\left(Y,y_{0}\right)\to\left(X,x_{0}\right)$
a map with $Y$ path-connected and locally path-connected. Then a
lift $\hat{f}\colon\left(Y,y_{0}\right)\to(\hat{X},\hat{x}_{0})$
of $f$ exists if and only if $f_{*}\left(\pi_{1}\left(Y,y_{0}\right)\right)\le p_{*}(\pi_{1}(\hat{X},\hat{x}_{0}))$.
In this case, the lift is unique. 
\end{prop}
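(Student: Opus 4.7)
The plan is to establish both directions separately, as is standard for this classical result. For the necessity of the $\pi_1$-containment condition, I would observe that if a lift $\hat f$ exists, then $p \circ \hat f = f$ implies $f_* = p_* \circ \hat f_*$ at the level of fundamental groups, so the image of $f_*$ is automatically contained in the image of $p_*$. This direction is essentially formal.

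For sufficiency, the plan is to construct $\hat f$ pointwise using path-lifting. Given $y \in Y$, pick a path $\gamma$ in $Y$ from $y_0$ to $y$; then $f \circ \gamma$ is a path in $X$ starting at $x_0$, and by the unique path-lifting property of the covering $p$, there is a unique lift $\widetilde{f \circ \gamma}$ starting at $\hat x_0$. Define $\hat f(y)$ to be the endpoint of this lifted path. The key step, where the hypothesis $f_*(\pi_1(Y,y_0)) \subseteq p_*(\pi_1(\hat X,\hat x_0))$ is actually used, is showing this definition is independent of the choice of $\gamma$: given two paths $\gamma, \gamma'$ from $y_0$ to $y$, the loop $\gamma \cdot \overline{\gamma'}$ represents an element of $\pi_1(Y,y_0)$, whose image under $f_*$ lies in $p_*(\pi_1(\hat X,\hat x_0))$ by hypothesis; this means the lift of $f\circ(\gamma \cdot \overline{\gamma'})$ starting at $\hat x_0$ is itself a loop, which forces the lifts of $f\circ\gamma$ and $f\circ\gamma'$ to share an endpoint.

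The main obstacle, and the step that requires genuine work, is proving continuity of $\hat f$; this is where the local path-connectedness of $Y$ enters. The plan is to fix $y \in Y$ and an open neighborhood $\hat U$ of $\hat f(y)$ in $\hat X$. Shrinking if necessary, I would choose an evenly covered open neighborhood $U \subseteq X$ of $f(y)$ such that the sheet of $p^{-1}(U)$ containing $\hat f(y)$ lies inside $\hat U$; then by continuity of $f$ together with local path-connectedness of $Y$, I can find a path-connected open neighborhood $V$ of $y$ with $f(V) \subseteq U$. For any $y' \in V$, concatenating a fixed path from $y_0$ to $y$ with a path inside $V$ from $y$ to $y'$ and lifting shows $\hat f(y') \in \hat U$, giving continuity.

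Finally, uniqueness follows from a standard open–closed argument: the set where two lifts $\hat f_1, \hat f_2$ agree is nonempty (it contains $y_0$), and in any evenly covered neighborhood it is both open (the two lifts remain in the same sheet) and closed (if they lie in different sheets at a limit point, they differ throughout a whole neighborhood), so by connectedness of $Y$ the two lifts coincide everywhere. Since the proposition is invoked as a black-box reference to Hatcher in the paper, I would expect the write-up to simply cite \cite[Prop.~1.33 and 1.34]{hatcher2005algebraic} rather than reproduce the full argument.
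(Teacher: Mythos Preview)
Your proposal is correct and, as you anticipated, the paper does not give any proof of this proposition: it is stated purely as a citation of \cite[Prop.~1.33 and 1.34]{hatcher2005algebraic}. Your sketch is the standard argument found there, so there is nothing to compare.
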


Along the proof of Theorem \ref{thm:E^emb is N^chi and asym expansion},
we need the following construction. Let $p\colon Y\to X_{\Gamma}$
be a \emph{connected} compact sub-cover. Choose an arbitrary vertex
$y\in Y$. The fundamental group $\pi_{1}\left(X_{\Gamma},p\left(y\right)\right)$
(isomorphic to $\Gamma$, of course) acts on the universal cover $(\widetilde{X_{\Gamma}},\tilde{x}_{0})\to\left(X_{\Gamma},p\left(y\right)\right)$
by deck transformations. Let $\left(\Upsilon,u\right)\defi\plab\left(Y,y\right)\backslash(\widetilde{X_{\Gamma}},\tilde{x}_{0})$
be the quotient of $\widetilde{X_{\Gamma}}$ by the action of the
subgroup $\plab\left(Y,y\right)$. So $\left(\Upsilon,u\right)$ is
the covering space of $\left(X_{\Gamma},p\left(y\right)\right)$ corresponding
to the subgroup $\plab\left(Y,y\right)$, and $\pi_{1}\left(\Upsilon,u\right)\cong\plab\left(\Upsilon,u\right)=\plab\left(Y,y\right)$
(see \cite[Thm.~1.38]{hatcher2005algebraic}). By Proposition \ref{prop:lifting criterion},
there exists a unique lift $\hat{p}\colon\left(Y,y\right)\to\left(\Upsilon,u\right)$
of $p$.
\begin{defn}[Universal lift]
\label{def:universal lift} Let $p\colon Y\to X_{\Gamma}$ be a compact
sub-cover. If $Y$ is connected, the lift $\hat{p}\colon Y\to\Upsilon=\plab\left(Y\right)\backslash\widetilde{X_{\Gamma}}$
of $p$ is called the\textbf{ universal lift of $Y$}. If $Y$ is
not necessarily connected, let $Y_{1},\ldots,Y_{\ell}$ be its connected
components. The \textbf{universal lift} of $Y$ is the map 
\[
\hat{p}\colon Y\to\plab\left(Y_{1}\right)\backslash\widetilde{X_{\Gamma}}\sqcup\ldots\sqcup\plab\left(Y_{\ell}\right)\backslash\widetilde{X_{\Gamma}}
\]
which maps every connected component of $Y$ to its own connected
cover of $X_{\Gamma}$.
\end{defn}

\begin{lem}
\label{lem:universal lift is injective}The universal lift of any
sub-cover is injective.
\end{lem}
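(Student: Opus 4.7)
The plan is to exhibit the universal lift $\hat{p}$ as a lift of a genuine sub-complex inclusion into an ambient cover; injectivity of that inclusion will then force injectivity of $\hat{p}$. Since the codomain in Definition \ref{def:universal lift} is by construction a disjoint union with each component $Y_i$ of $Y$ mapped into its own designated summand $\plab(Y_i) \backslash \widetilde{X_\Gamma}$, injectivity of $\hat{p}$ reduces to injectivity of its restriction to each component into the corresponding summand. I therefore reduce to the case of a connected sub-cover $Y$ and fix a basepoint $y \in Y$.

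Next, I would use the fact that $Y$ is a sub-complex of \emph{some} covering space of $X_\Gamma$. Let $\hat{X}_0$ be the connected component of that cover containing $Y$, let $\pi_0 : \hat{X}_0 \to X_\Gamma$ denote the restricted covering map, and set $H = (\pi_0)_* \pi_1(\hat{X}_0, y) \le \Gamma$, so that $\hat{X}_0$ corresponds to $H$ under the Galois correspondence. Because the restricted covering map $p : Y \to X_\Gamma$ factors as $p = \pi_0 \circ \iota$ for the sub-complex inclusion $\iota : Y \hookrightarrow \hat{X}_0$, functoriality yields
\[
\plab(Y, y) = p_* \pi_1(Y, y) = (\pi_0)_*\, \iota_* \pi_1(Y, y) \le (\pi_0)_* \pi_1(\hat{X}_0, y) = H.
\]
Invoking the Galois correspondence in the other direction, this inclusion of subgroups provides an intermediate connected covering $q : \Upsilon \to \hat{X}_0$ satisfying $\pi_0 \circ q = \pi$, where $\Upsilon = \plab(Y, y) \backslash \widetilde{X_\Gamma}$ and $\pi : \Upsilon \to X_\Gamma$ are those appearing in Definition \ref{def:universal lift}.

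The last step is to apply the lifting criterion (Proposition \ref{prop:lifting criterion}) to lift the inclusion $\iota$ along the covering $q$: the hypothesis $\iota_* \pi_1(Y, y) = \plab(Y, y) = \pi_1(\Upsilon, u)$ holds by construction of $\Upsilon$, so there is a unique based lift $\tilde{\iota} : (Y, y) \to (\Upsilon, u)$ with $q \circ \tilde{\iota} = \iota$. Composing with $\pi_0$ gives $\pi \circ \tilde{\iota} = \pi_0 \circ \iota = p$, so $\tilde{\iota}$ is itself a based lift of $p$ along $\pi$ and therefore coincides with $\hat{p}$ by the uniqueness clause of the lifting criterion. Consequently $q \circ \hat{p} = \iota$, and since $\iota$ is a sub-complex inclusion and hence injective, $\hat{p}$ must be injective as well. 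I do not foresee a serious obstacle in this argument — it is a direct application of the Galois correspondence and the lifting criterion — the only delicate point being coherent basepoint bookkeeping across $Y$, $\hat{X}_0$ and $\Upsilon$, which is handled by the uniqueness statements throughout.
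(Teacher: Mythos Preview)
Your proof is correct and follows essentially the same approach as the paper's: reduce to the connected case, use the embedding $\iota$ of $Y$ into an ambient cover, produce the covering map $q\colon\Upsilon\to\hat{X}_0$ from the subgroup inclusion $\plab(Y,y)\le H$, and factor $\iota=q\circ\hat{p}$ to deduce injectivity of $\hat{p}$. The paper phrases the final step slightly differently (it constructs the map $\phi\colon\Upsilon\to Z$ and checks $f=\phi\circ\hat{p}$ via uniqueness of lifts), but this is exactly your $q$; the only cosmetic blemish is the line ``$\iota_*\pi_1(Y,y)=\plab(Y,y)=\pi_1(\Upsilon,u)$'', which equates subgroups living in three different groups---what you mean, and what the lifting criterion requires, is $\iota_*\pi_1(Y,y)\le q_*\pi_1(\Upsilon,u)$ inside $\pi_1(\hat{X}_0,y)$, which holds because $(\pi_0)_*$ carries both sides isomorphically onto $\plab(Y,y)\le\Gamma$.
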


\begin{proof}
It is enough to show injectivity for every connected component of
$Y$ separately (as each is mapped to a different connected component
of the codomain). So we may assume that $Y$ is connected. Let $\hat{p}\colon Y\to\Upsilon$
be its universal lift. By the definition of a sub-cover, there is
an embedding $f\colon Y\hookrightarrow Z$ into a (full) covering
map $\pi\colon Z\to X_{\Gamma}$. Choose an arbitrary vertex $y\in Y$.
Then $\plab\left(\Upsilon,\hat{p}\left(y\right)\right)=\plab\left(Y,y\right)\le\plab\left(Z,f\left(y\right)\right)$,
and so there is a map $\phi\colon\left(\Upsilon,\hat{p}\left(y\right)\right)\to\left(Z,f\left(y\right)\right)$.
The uniqueness of lifts guarantees that $f=\phi\cdot\hat{p},$ and
we get the following commuting diagram. 
\[
\xymatrix{\left(\Upsilon,\hat{p}\left(y\right)\right)\ar[rr]^{\phi} &  & \left(Z,f\left(y\right)\right)\ar[d]^{\pi}\\
\left(Y,y\right)\ar[rr]^{p}\ar@{^{(}->}[rru]^{f}\ar[u]^{\hat{p}} &  & \left(X_{\Gamma},p\left(y\right)\right)
}
\]
The injectivity of $f$ now implies the one of $\hat{p}$.
\end{proof}

\section{Sub-covers of the factors of $\Gamma$\label{sec:Sub-coverings of X_G}}

\subsection{Sub-covers of $X_{G}$ for $G$ a free group\label{subsec:sub-coverings-free-groups}}

Consider a compact sub-cover $Y$ of $X_{\Gamma}$ projecting entirely
into $X_{G}=X_{G_{i}}$ where $G=G_{i}$ is a rank-$r$ free group
with basis $B=\left\{ b_{1},\ldots,b_{r}\right\} $. So $Y$ is a
finite directed graph, not necessarily connected, equipped with a
graph immersion to $X_{G}$, the bouquet with $r$ loops. Equivalently,
$Y$ is a directed finite graph with edges labeled by $b_{1},\ldots,b_{r}$,
and at every vertex, at most one incoming $b_{j}$-edge and at most
one outgoing $b_{j}$-edge for every $j$. We call such a graph \textbf{a
$B$-labeled graph}. Such graphs are closely related to Stallings
core graphs \cite{stallings1983topology} and more generally to multi
core graphs \cite{hanany2020word}, but they may contain leaves and\textbackslash or
isolated vertices. 

It is straight-forward to compute $\emb_{Y}\left(N\right)$, the expected
number of embeddings of $Y$ into a random $N$-sheeted cover of $X_{\Gamma}$,
and there is no need here in any fancy resolution. Recall that $\left(N\right)_{t}\defi N\left(N-1\right)\cdots\left(N-t+1\right)$
denotes the falling factorial.
\begin{lem}
\label{lem:E^emb over free groups}Let $Y$ be a finite $B$-labeled
graph, $v\left(Y\right)$ be the number of vertices in $Y$ and $e_{j}\left(Y\right)$
the number of $b_{j}$-edges. Then for every $N\ge\max_{j}e_{j}\left(Y\right)$,
\[
\emb_{Y}\left(N\right)=\frac{\left(N\right)_{v\left(Y\right)}}{\prod_{j=1}^{r}\left(N\right)_{e_{j}\left(Y\right)}}.
\]
\end{lem}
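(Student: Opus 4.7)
The plan is direct enumeration, since no resolution machinery is needed in this case. A uniformly random $\varphi\in\Hom\left(\Gamma,S_{N}\right)$ restricts independently on each free factor, and in particular, on $G=G_{i}=\F\left(b_{1},\ldots,b_{r}\right)$, the restriction $\varphi|_{G}$ is determined by the $r$ independent uniformly random permutations $\sigma_{j}\defi\varphi\left(b_{j}\right)\in S_{N}$, $j=1,\ldots,r$. Contributions from the other factors $G_{\ell}$, $\ell\ne i$, do not affect the count because $Y$ projects entirely into $X_{G}$. Moreover, as explained in Section \ref{subsec:The-graph-of-spaces}, the preimage $\pi^{-1}\left(X_{G}\right)\subseteq X_{\varphi}$ is the $B$-labeled graph with vertex set $\pi^{-1}\left(v_{i}\right)$ (canonically identified with $\left[N\right]$ via the contractible hub) whose $b_{j}$-edges form the permutation $\sigma_{j}$.

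Next I would translate embeddings into vertex data. An injective lift $f\colon Y\hookrightarrow X_{\varphi}$ with $\pi\circ f=p$ is completely determined by an injective vertex map $f_{V}\colon V\left(Y\right)\hookrightarrow\left[N\right]$ subject to the compatibility relations $\sigma_{j}\left(f_{V}\left(u\right)\right)=f_{V}\left(w\right)$ for every $b_{j}$-edge $\left(u,w\right)$ of $Y$ and every $j$. Conversely, any such $f_{V}$ extends uniquely (via $\sigma_{j}$) to an injective lift of $Y$, because in a $B$-labeled graph distinct $b_{j}$-edges have distinct endpoints, so injectivity of $f_{V}$ automatically upgrades to injectivity of $f$.

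The count is then immediate. By linearity of expectation,
\[
\emb_{Y}\left(N\right)=\sum_{f_{V}\colon V\left(Y\right)\hookrightarrow\left[N\right]}\prod_{j=1}^{r}\Pr_{\sigma_{j}}\!\left[\sigma_{j}\left(f_{V}\left(u\right)\right)=f_{V}\left(w\right)\text{ for every }b_{j}\text{-edge }\left(u,w\right)\in Y\right].
\]
There are $\left(N\right)_{v\left(Y\right)}$ choices of $f_{V}$. For each fixed $f_{V}$ and each fixed $j$, the $e_{j}\left(Y\right)$ prescribed input positions of $\sigma_{j}$ are pairwise distinct (each vertex of $Y$ has at most one outgoing $b_{j}$-edge, and $f_{V}$ is injective), and similarly the $e_{j}\left(Y\right)$ prescribed outputs are pairwise distinct. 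Hence exactly $\left(N-e_{j}\left(Y\right)\right)!$ permutations $\sigma_{j}\in S_{N}$ satisfy the $b_{j}$-constraints, which is where the hypothesis $N\ge\max_{j}e_{j}\left(Y\right)$ enters so that the denominator below is nonzero. Thus each $b_{j}$-probability equals $\frac{1}{\left(N\right)_{e_{j}\left(Y\right)}}$, and, combining with the vertex count and the independence of $\sigma_{1},\ldots,\sigma_{r}$, one obtains the claimed closed form.

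The only step that requires any real care is the verification that the input and output slots of the $b_{j}$-constraints on $\sigma_{j}$ are pairwise distinct -- that is, that the constraints are non-overlapping and so cut out a uniform coset of size $\left(N-e_{j}\left(Y\right)\right)!$ inside $S_{N}$. This is essentially the defining property of a $B$-labeled graph (at most one incoming and one outgoing $b_{j}$-edge at each vertex) combined with the injectivity of $f_{V}$, so there is no genuine obstacle.
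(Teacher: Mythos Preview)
Your proof is correct and follows essentially the same approach as the paper: count the $(N)_{v(Y)}$ injective vertex maps, then for each such map and each generator $b_{j}$ use the $B$-labeled condition (at most one incoming and one outgoing $b_{j}$-edge per vertex) to see that the $e_{j}(Y)$ constraints on $\sigma_{j}$ are consistent and satisfied with probability $1/(N)_{e_{j}(Y)}$, with independence across $j$. The paper's proof is slightly terser but structurally identical.
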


\begin{proof}
As $Y$ is a sub-cover of $X_{\Gamma}$ sitting exclusively above
a particular component $X_{G}=X_{G_{i}}$ where $G=G_{i}\cong\F_{r}$
is a free group, it is enough to consider random $N$-covers $\hat{X}$
of $X_{G}$. Then $\hat{X}$ is given by $N$ vertices, labeled $1,\ldots,N$,
above the unique vertex $v=v_{i}$ of $X_{G}$. Above the $b_{j}$-loop
at $v$, there are $N$ $b_{j}$-edges in $\hat{X}$, which are given
by a uniformly random permutation $\sigma_{j}\in S_{N}$. The random
permutations $\sigma_{1},\ldots,\sigma_{r}$ are independent. 

The number of possible embeddings of the \emph{vertices} of $Y$ into
$\hat{X}$ is precisely $\left(N\right)_{v\left(Y\right)}$. Any given
embedding of the vertices of $Y$ extends to an embedding of the entire
of $Y$ if and only if for every $j=1,\ldots,r$, the random permutation
$\sigma_{j}$ maps the image of the beginning point of every $b_{j}$-edge
in $Y$ to the image of its endpoint. As every vertex of $Y$ has
at most one incoming $b_{j}$-edge and at most one outgoing $b_{j}$-edge,
there are permutations satisfying these constraints. Finally, the
probability that a uniformly random permutation in $S_{N}$ satisfies
$e_{j}$ valid constraints is precisely 
\[
\frac{\left(N-e_{j}\right)!}{N!}=\frac{1}{\left(N\right)_{e_{j}}}.
\]
\end{proof}
\begin{cor}
\label{cor:E^emb over free group}Theorem \ref{thm:E^emb is N^chi and asym expansion}
holds for sub-covers of $X_{G}$ when $G$ is free. Namely, for a
compact sub-cover $Y$ of $X_{G}$, there are rational numbers $a_{t}=a_{t}\left(Y\right)$
for $t=-1,-2,-3,\ldots$ so that
\begin{equation}
\emb_{Y}\left(N\right)\asyexp N^{\chigrp\left(Y\right)}\cdot\left\{ 1+a_{-1}N^{-1}+a_{-2}N^{-2}+\ldots\right\} .\label{eq:asy-exp-of-E^emb above free}
\end{equation}
\end{cor}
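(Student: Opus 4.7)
The plan is to derive \eqref{eq:asy-exp-of-E^emb above free} directly from the exact formula in Lemma~\ref{lem:E^emb over free groups}, which already expresses $\emb_Y(N)$ as a rational function of $N$. There is no need for any resolution machinery here; the entire task is to (i)~identify the leading exponent with $\chigrp(Y)$, (ii)~verify that the leading coefficient equals $1$, and (iii)~convert the closed-form rational expression into an asymptotic series in $N^{-1}$.

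First I would compute $\chigrp(Y)$ in graph-theoretic terms. Decompose $Y$ into its connected components $Y_1,\ldots,Y_\ell$. Each $Y_i$ is a finite $B$-labeled graph, so $\pi_1(Y_i)$ is free of rank $e(Y_i)-v(Y_i)+1$, where $e(Y_i)=\sum_j e_j(Y_i)$. Since the universal lift $\hat p$ is injective by Lemma~\ref{lem:universal lift is injective}, $\pi_1(Y_i)$ maps isomorphically onto $\plab(Y_i)$, and therefore
\[
\chi\left(\plab(Y_i)\right)=1-\left(e(Y_i)-v(Y_i)+1\right)=v(Y_i)-e(Y_i).
\]
Summing over components gives
\[
\chigrp(Y)=\sum_{i=1}^{\ell}\bigl(v(Y_i)-e(Y_i)\bigr)=v(Y)-\sum_{j=1}^{r}e_j(Y).
\]

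Next I would expand the Lemma~\ref{lem:E^emb over free groups} formula around $N=\infty$. Using $(N)_t=N^t\prod_{i=1}^{t-1}\bigl(1-\tfrac{i}{N}\bigr)$, each factor admits a convergent (for $N$ large) expansion in $N^{-1}$:
\[
(N)_t=N^t\left(1-\binom{t}{2}N^{-1}+\cdots\right).
\]
Taking the ratio of the numerator and the product in the denominator and multiplying out the resulting series in $N^{-1}$ yields
\[
\emb_Y(N)=\frac{(N)_{v(Y)}}{\prod_j(N)_{e_j(Y)}}=N^{v(Y)-\sum_j e_j(Y)}\bigl(1+a_{-1}N^{-1}+a_{-2}N^{-2}+\cdots\bigr),
\]
with rational coefficients $a_{-i}$. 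Substituting the identity $\chigrp(Y)=v(Y)-\sum_je_j(Y)$ established above gives \eqref{eq:asy-exp-of-E^emb above free}, and in particular confirms that the leading coefficient is $1$, matching the statement of Theorem~\ref{thm:E^emb is N^chi and asym expansion} in the ``no surface groups'' case.

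There is no serious obstacle: the only points deserving care are the justification that every connected component of $Y$ has free fundamental group of the expected rank (which is automatic for a finite graph), and the verification that the formal Laurent series in $N^{-1}$ obtained by expanding a rational function is genuinely an asymptotic expansion in the sense of Definition~\ref{def:asymptotic expansion}; the latter follows because the rational function $\emb_Y(N)$ has only finitely many poles (all at small nonnegative integers, by the condition $N\ge\max_j e_j(Y)$ in Lemma~\ref{lem:E^emb over free groups}), so the expansion converges for $N$ large enough and truncation errors are of the required order.
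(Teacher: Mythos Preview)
Your proposal is correct and follows essentially the same approach as the paper: both arguments identify $\chigrp(Y)=v(Y)-\sum_j e_j(Y)$ and then read off the asymptotic expansion from the exact rational expression in Lemma~\ref{lem:E^emb over free groups}. The only cosmetic difference is that the paper justifies $\pi_1(Y_i)\cong\plab(Y_i)$ directly from the fact that $p\colon Y\to X_G$ is a graph immersion (hence $\pi_1$-injective), whereas you route the same conclusion through the injectivity of the universal lift; both arguments are valid and equivalent in this one-dimensional setting.
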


\begin{proof}
If $Y$ is a $B$-labeled graph, then the restricted covering map
$p\colon Y\to X_{G}$ is an immersion of graphs, and therefore $p_{*}$
is injective. In particular, in every connected component $Y_{j}$
of $Y$, we have $\pi_{1}\left(Y_{j}\right)\cong p_{*}\left(\pi_{1}\left(Y_{j}\right)\right)=\plab\left(Y_{j}\right)$,
and so $\chigrp\left(Y_{i}\right)=\chi\left(Y_{i}\right)$. Thus $\chigrp\left(Y\right)=\chi\left(Y\right)=v\left(Y\right)-\sum_{j=1}^{r}e_{j}\left(Y\right)$,
and \eqref{eq:asy-exp-of-E^emb above free} follows from Lemma \ref{lem:E^emb over free groups}.
\end{proof}

\subsection{Sub-covers of $X_{G}$ for $G$ a finite group\label{subsec:sub-coverings-finite-groups}}

Here we prove Proposition \ref{prop:exp-torsion-element} about $\mathbb{E}\left[\fix_{\gamma}\left(N\right)\right]$
for a \emph{torsion} element $\gamma$, as well as the special case
of Theorem \ref{thm:E^emb is N^chi and asym expansion} concerning
a sub-cover $Y$ of $X_{\Gamma}$ projecting entirely into $X_{G}=X_{G_{i}}$
where $G=G_{i}$ is some finite group. Recall that $X_{G}$ is some
finite presentation complex of $G$. For every sub-cover $p\colon Y\to X_{G}$,
define the set 
\begin{equation}
\R_{Y}\defi\left\{ f\colon Y\to Z_{f}\,\middle|\,\begin{gathered}Z_{f}~\mathrm{is~a~(full)~covering~of~}X_{G},~\mathrm{and}\\
f\left(Y\right)~\mathrm{meets~every~connected~component~of}~Z_{f}
\end{gathered}
\right\} ,\label{eq:resolution of finite group sub-coverings}
\end{equation}
where $f$ is a morphism of sub-covers, namely, it commutes with the
immersions into $X_{G}$. We also denote
\begin{equation}
\R_{Y}^{\mathrm{emb}}\defi\left\{ f\colon Y\hookrightarrow Z_{f}\,\middle|\,\begin{gathered}Z_{f}~\mathrm{is~a~(full)~covering~of~}X_{G},~f~\mathrm{is~injective,~and}\\
f\left(Y\right)~\mathrm{meets~every~connected~component~of}~Z_{f}
\end{gathered}
\right\} \subseteq{\cal R}_{Y},\label{eq:injective resolution of finite group sub-coverings}
\end{equation}
Note that there may be distinct elements of $\R_{Y}$ or of ${\cal R}_{Y}^{\mathrm{emb}}$
with the same codomain $Z_{f}$.
\begin{prop}
\label{prop:resolutions over finite groups}Let $G$ be a finite group
and $Y$ a compact sub-cover of $X_{G}$. Then,
\begin{enumerate}
\item the set $\R_{Y}$ from \eqref{eq:resolution of finite group sub-coverings}
is a finite resolution of $Y$, and 
\item the set ${\cal R}_{Y}^{\mathrm{emb}}$ from \eqref{eq:injective resolution of finite group sub-coverings}
is a finite embedding-resolution of $Y$.
\end{enumerate}
\end{prop}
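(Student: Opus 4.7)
The plan is to verify the two defining properties — the factorization/uniqueness condition and finiteness — directly from the definitions, exploiting the hypothesis $|G|<\infty$ together with the compactness of $Y$.

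For the factorization, given a morphism $h\colon Y\to\hat X$ to a full covering $\hat X$ of $X_G$, I would let $Z_f\subseteq\hat X$ be the union of those connected components of $\hat X$ that meet $h(Y)$, take $f\colon Y\to Z_f$ to be $h$ corestricted to $Z_f$, and let $\bar h\colon Z_f\hookrightarrow\hat X$ be the inclusion. Since $X_G$ is connected, each connected component of $\hat X$ is itself a full covering of $X_G$, so $Z_f$ is a full covering of $X_G$; and $f(Y)=h(Y)$ meets every component of $Z_f$ by construction, so $f\in\R_Y$. If $h$ is injective, then so is $f$, which handles the embedding-resolution case.

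For uniqueness, suppose $h=\bar h'\circ f'$ is a second such decomposition with $f'\in\R_Y$ and $\bar h'$ an embedding of sub-covers. The standard fact that an injective morphism of full coverings of $X_G$ embeds its domain as a union of connected components of the target forces $\bar h'(Z_{f'})$ to be a union of components of $\hat X$; the clause that $f'(Y)$ meet every component of $Z_{f'}$ then forces this union to equal $\bar h(Z_f)$. The map $\bar h^{-1}\circ\bar h'$ therefore provides a canonical isomorphism $Z_{f'}\to Z_f$ commuting with the projections to $X_G$ and intertwining $f'$ with $f$, which is the sense in which the decomposition of Definition \ref{def:resolution} is unique.

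Finiteness is a counting argument. Since $|G|<\infty$, the complex $X_G$ is finite and every connected covering of $X_G$ has degree at most $|G|$, hence is a finite complex; there are therefore only finitely many isomorphism classes of connected coverings of $X_G$. Compactness of $Y$ makes it a finite complex with finitely many components, so the ``meets every component'' clause bounds the number of components of any $Z_f\in\R_Y$ and draws each component from this finite list, leaving finitely many isomorphism classes of target $Z_f$. Between the finite complex $Y$ and any such finite $Z_f$ there are only finitely many cellular maps commuting with the projection to $X_G$, so $\R_Y$, and a fortiori its subset $\R_Y^{\mathrm{emb}}$, is finite.

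The most delicate point I anticipate is pinning down the sense in which ``uniqueness'' is being claimed: two decompositions whose intermediate data are related by an isomorphism of sub-covers intertwining everything in sight must be regarded as the same. The construction above produces a canonical representative, and the uniqueness paragraph shows any competing factorization differs from it by exactly such an isomorphism, so beyond this bookkeeping no substantive obstacle appears.
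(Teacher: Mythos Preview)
Your proposal is correct and follows essentially the same approach as the paper: decompose $h$ by taking $Z_f$ to be the union of components of $\hat X$ meeting $h(Y)$, and obtain finiteness by bounding the number of components of $Z_f$ by that of $Y$ and invoking the finiteness of connected covers of $X_G$. Your treatment is in fact more careful than the paper's on the uniqueness step---the paper simply asserts the decomposition is unique, whereas you spell out why an injective morphism of full covers must land on a union of components and address the up-to-isomorphism bookkeeping.
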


\begin{proof}
For every element $f\colon Y\to Z_{f}$ in $\R_{Y}$, the number of
components in $Z_{f}$ is bounded by the number of components of $Y$,
and because the number of connected covers of $X_{G}$ is finite (equal
to the number of conjugacy classes of subgroups of $G$), we get that
there are finitely many possibilities for $Z_{f}$. As there are finitely
many morphisms of sub-covers between two given compact sub-covers,
we conclude that $\R_{Y}$ is finite, and thus so is its subset $\R_{Y}^{\mathrm{emb}}$.

The set $\R_{Y}$ is a resolution because every morphism $h\colon Y\to\hat{X}$
to a covering space $\pi\colon\hat{X}\to X_{G}$ decomposes uniquely
to a map from $Y$ to the connected components of $\hat{X}$ that
$h\left(Y\right)$ meets, followed by the embedding of these components
in $\hat{X}$. The same argument shows that $\R_{Y}^{\mathrm{emb}}$
is an embedding-resolution.
\end{proof}
\begin{prop}
\label{prop:E^emb of full coverings above a finite group}Theorem
\ref{thm:E^emb is N^chi and asym expansion} holds for (full) covers
of $X_{G}$ when $G$ is a finite group. Namely, let $Z$ be a compact
(full) covering space of $X_{G}$. Denote $\mu=\left|G\right|$. Then
there are rational numbers $a_{t}=a_{t}\left(Z\right)$ for $t=-1/\mu,-2/\mu$,$\ldots$
so that
\begin{equation}
\emb_{Z}\left(N\right)\asyexp N^{\chigrp\left(Z\right)}\cdot\left\{ 1+a_{-1/\mu}N^{-1/\mu}+a_{-2/\mu}N^{-2/\mu}+\ldots\right\} .\label{eq:asy expansion of E^emb(Z)}
\end{equation}
Furthermore, $a_{t}=0$ for $0>t>-\frac{1}{2}$, so 
\[
\emb_{Z}\left(N\right)=N^{\chigrp\left(Z\right)}\left(1+O\left(N^{-1/2}\right)\right).
\]
\end{prop}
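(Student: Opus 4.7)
The plan is to derive an exact formula for $\emb_{Z}(N)$ in terms of homomorphism counts, and then plug in Müller's expansion \eqref{eq:mullers expansion}.

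Let $n$ denote the number of $0$-cells of $Z$ lying above $v_{G}$. Since $Z$ is a \emph{full} cover of $X_{G}$, any injective sub-cover morphism $Z\hookrightarrow X_{\varphi}$ is determined by its vertex map $\iota\colon V(Z)\hookrightarrow[N]$, and, conversely, any injection $\iota$ yields such an embedding provided that the restriction of the $\varphi$-action to $\iota(V(Z))$ agrees with the transport via $\iota$ of the $G$-action on $V(Z)$ coming from the cover $Z\to X_{G}$. Counting the pairs $(\iota,\varphi)$ in two ways---first pick $\iota$ freely (in $(N)_{n}$ ways) and then extend $\varphi$ arbitrarily on $[N]\setminus\iota(V(Z))$ (in $|\Hom(G,S_{N-n})|$ ways)---and dividing by $|\Hom(G,S_{N})|$ gives the closed form
\begin{equation}
\emb_{Z}(N)\;=\;\frac{(N)_{n}\cdot|\Hom(G,S_{N-n})|}{|\Hom(G,S_{N})|}\;=\;(N)_{n}\prod_{k=0}^{n-1}\frac{|\Hom(G,S_{N-k-1})|}{|\Hom(G,S_{N-k})|}.\label{eq:emb-Z-plan}
\end{equation}

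Substituting Müller's expansion \eqref{eq:mullers expansion} (with leading term $(N-k)^{-(1-1/\mu)}$ and leading coefficient $1$) into each of the $n$ factors of \eqref{eq:emb-Z-plan}, together with $(N)_{n}=N^{n}(1+O(1/N))$ and $(N-k)^{s}=N^{s}(1-k/N)^{s}$, yields an asymptotic expansion of $\emb_{Z}(N)$ in powers of $N^{-1/\mu}$ with rational coefficients; the leading term is $N^{n}\cdot N^{-n(1-1/\mu)}=N^{n/\mu}$ with coefficient $1$. To identify the exponent with $\chigrp(Z)$, decompose $Z$ into connected components $Z_{1},\ldots,Z_{\ell}$ of degrees $d_{1},\ldots,d_{\ell}$ over $X_{G}$; each $Z_{j}$ corresponds to a subgroup $H_{j}\le G$ of index $d_{j}$, so Definition~\ref{def:EC} gives $\chi(H_{j})=1/|H_{j}|=d_{j}/\mu$ and hence $\chigrp(Z)=\sum_{j}d_{j}/\mu=n/\mu$. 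This establishes \eqref{eq:asy expansion of E^emb(Z)} with $a_{0}=1$.

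The main obstacle is the \emph{Furthermore} assertion that $a_{t}=0$ for $-1/2<t<0$, which does not follow from formal manipulation of \eqref{eq:emb-Z-plan} alone. It reflects the structural vanishing $Q_{-j/\mu}=0$ for $1\le j<\mu/2$ of the Müller coefficients appearing in \eqref{eq:mullers expansion}; this vanishing can be read off from the explicit formulas for the first $\mu+3$ coefficients given in \cite[Thm.~6]{muller1997finite}. Granting it, a bookkeeping argument on \eqref{eq:emb-Z-plan} finishes the proof: any contribution to an exponent $t\in(-1/2,0)$ in the $n$-fold product must come from choosing a single Müller correction of order $(N-k)^{-j/\mu}$ with $j<\mu/2$, because the corrections from $(N)_{n}$ occur only at integer powers $N^{-1},N^{-2},\ldots$, while picking corrections in two or more distinct factors pushes the exponent below $-1/2$. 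Since these single-correction contributions all vanish by the Müller vanishing, $a_{t}=0$ throughout the stated range.
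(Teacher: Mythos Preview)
Your proof is correct and follows essentially the same route as the paper: the closed formula \eqref{eq:emb-Z-plan} coincides with the paper's \eqref{eq:first expression for E^emb over finite group}, the identification $\chigrp(Z)=n/\mu$ is argued identically, and both deduce the ``Furthermore'' from the vanishing $Q_{-j/\mu}=0$ for $1\le j<\mu/2$ extracted from the explicit coefficients in \cite[Thm.~6]{muller1997finite}.

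One small slip in your bookkeeping: the assertion that ``picking corrections in two or more distinct factors pushes the exponent below $-1/2$'' is false when $\mu$ is large (e.g., for $\mu>4$, two corrections of order $N^{-1/\mu}$ combine to $N^{-2/\mu}$, still in $(-1/2,0)$). The correct argument, which the paper carries out via the shifted coefficients $\beta_t^{(j)}$, is that any product of Müller corrections contributing to an exponent in $(-1/2,0)$ has total index $<\mu/2$, so \emph{each} individual index is $<\mu/2$ and hence each factor already vanishes.
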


\begin{proof}
Denote by $v=v\left(Z\right)$ the number of vertices in $Z$. Recall
that $X_{G}$ contains a single vertex and so every $N$-cover of
it contains exactly $N$ vertices. In every embedding $h\colon Z\hookrightarrow\hat{X}$
of $Z$ into an $N$-cover $\hat{X}$ of $X_{G}$, $h\left(Z\right)$
contains $v$ out of the $N$ vertices of $\hat{X}$. Moreover, as
$Z$ is a full cover of $X_{G}$, every embedding of $h\colon Z\hookrightarrow\hat{X}$
into an $N$-cover of $X_{G}$ has the property that $h\left(Z\right)$
and its complement are disconnected. So the embeddings of $Z$ in
all the $N$-covers of $X_{G}$ are in bijection with the embeddings
of the vertices of $Z$ into $\left[N\right]$ along with an arbitrary
$\left(N-v\right)$-cover which ``uses'' the remaining vertices
in $\left[N\right]$. As the number of embeddings of the vertices
of $Z$ in $\left[N\right]$ is $\left(N\right)_{v}$, we obtain
\begin{equation}
\emb_{Z}\left(N\right)=\frac{\left(N\right)_{v}\cdot\left|\Hom\left(G,S_{N-v}\right)\right|}{\left|\Hom\left(G,S_{N}\right)\right|}.\label{eq:first expression for E^emb over finite group}
\end{equation}
By \cite[Thm.~6]{muller1997finite} (stated as Theorem \ref{thm:Muller}
above), we have that $\frac{\left|\Hom\left(G,S_{N}\right)\right|}{\left|\Hom\left(G,S_{N-1}\right)\right|}$
has asymptotic expansion with rational coefficients
\[
\frac{\left|\Hom\left(G,S_{N}\right)\right|}{\left|\Hom\left(G,S_{N-1}\right)\right|}\asyexp N^{1-1/\mu}\cdot\left\{ 1+Q_{-1/\mu}N^{-1/\mu}+Q_{-2/\mu}N^{-2/\mu}+\ldots\right\} .
\]
Moreover, \cite[pp.~552]{muller1997finite} specifies the precise
values of the $\mu+3$ first coefficients $Q_{-1/\mu},\ldots,Q_{-\left(\mu+3\right)/\mu}$
in this asymptotic expansion. In particular, for $1\le\nu\le\mu-1$,
if $\mu-v\nmid\mu$ then $Q_{-\nu/\mu}=0$. Therefore, $Q_{t}=0$
for $0>t>-\frac{1}{2}$. We conclude that the inverse has asymptotic
expansion with rational coefficients:
\[
\frac{\left|\Hom\left(G,S_{N-1}\right)\right|}{\left|\Hom\left(G,S_{N}\right)\right|}\asyexp N^{-1+1/\mu}\cdot\left\{ 1+\beta_{-1/\mu}N^{-1/\mu}+\beta_{-2/\mu}N^{-2/\mu}+\ldots\right\} ,
\]
where, here too, $\beta_{t}=0$ for $0>t>-\frac{1}{2}$. For any $j\in\mathbb{Z}$,
we have 
\begin{eqnarray*}
\frac{\left|\Hom\left(G,S_{N-j-1}\right)\right|}{\left|\Hom\left(G,S_{N-j}\right)\right|} & \asyexp & \left(N-j\right)^{-1+1/\mu}\cdot\left\{ 1+\beta_{-1/\mu}\left(N-j\right)^{-1/\mu}+\beta_{-2/\mu}\left(N-j\right)^{-2/\mu}+\ldots\right\} \\
 & \asyexp & N^{-1+1/\mu}\cdot\left\{ 1+\beta_{-1/\mu}^{\left(j\right)}N^{-1/\mu}+\beta_{-2/\mu}^{\left(j\right)}N^{-2/\mu}+\ldots\right\} 
\end{eqnarray*}
where the second equality follows from Taylor's theorem, applied to
the function $\left(N-j\right)^{t}$ at the point $N$, and the $\beta_{t}^{\left(j\right)}$'s
are rational numbers. Moreover, the Taylor expansion of $\left(N-j\right)^{t}$
at $N$ is of the form $N^{t}+c_{1}N^{t-1}j+c_{2}N^{t-2}j^{2}+\ldots$,
so for $0>t>-1$, $\beta_{t}^{\left(j\right)}=\beta_{t}$. In particular,
$\beta_{t}^{\left(j\right)}=0$ for $0>t>-\frac{1}{2}$. Therefore,
\begin{eqnarray*}
\frac{\left|\Hom\left(G,S_{N-v}\right)\right|}{\left|\Hom\left(G,S_{N}\right)\right|} & = & \frac{\left|\Hom\left(G,S_{N-1}\right)\right|}{\left|\Hom\left(G,S_{N}\right)\right|}\cdot\frac{\left|\Hom\left(G,S_{N-2}\right)\right|}{\left|\Hom\left(G,S_{N-1}\right)\right|}\cdots\frac{\left|\Hom\left(G,S_{N-v}\right)\right|}{\left|\Hom\left(G,S_{N-v+1}\right)\right|}\\
 & \asyexp & N^{-1+1/\mu}\cdot\left\{ 1+\beta_{-1/\mu}^{\left(0\right)}N^{-1/\mu}+\beta_{-2/\mu}^{\left(0\right)}N^{-2/\mu}+\ldots\right\} \cdot\\
 &  & N^{-1+1/\mu}\cdot\left\{ 1+\beta_{-1/\mu}^{\left(1\right)}N^{-1/\mu}+\beta_{-2/\mu}^{\left(1\right)}N^{-2/\mu}+\ldots\right\} \cdot\\
 &  & \vdots\\
 &  & N^{-1+1/\mu}\cdot\left\{ 1+\beta_{-1/\mu}^{\left(v-1\right)}N^{-1/\mu}+\beta_{-2/\mu}^{\left(v-1\right)}N^{-2/\mu}+\ldots\right\} \\
 & \asyexp & N^{-v+v/\mu}\cdot\left\{ 1+\delta_{-1/\mu}N^{-1/\mu}+\delta_{-2/\mu}N^{-2/\mu}+\ldots\right\} ,
\end{eqnarray*}
where the $\delta_{t}$'s are rational constants depending on $G$
and on $v$. Because $\beta_{t}^{\left(j\right)}=0$ for $0>t>-\frac{1}{2}$,
then so does $\delta_{t}=0$ for $0>t>-\frac{1}{2}$. Together with
\eqref{eq:first expression for E^emb over finite group}, this proves
there is an asymptotic expansion for $\emb_{Z}\left(N\right)$ as
in the statement of the proposition and with leading term $N^{v/\mu}$.
It remains to prove that $\chigrp\left(Z\right)=\frac{v}{\mu}$.

Let $Z_{1},\ldots,Z_{\ell}$ denote the connected components of $Z$,
and let $J_{i}\in\plab\left(Z_{i}\right)$ be a subgroup of $G$ inside
the conjugacy class of subgroups corresponding to $Z_{i}$. Then $Z_{i}$
is a $\left[G:J_{i}\right]$-sheeted cover of $X_{G}$, and in particular
has $\left[G:J_{i}\right]$ vertices. Recall that, by definition,
$\chi\left(J_{i}\right)=\frac{1}{\left|J_{i}\right|}$, and note that
$\frac{1}{\left|J_{i}\right|}=\frac{\left[G:J_{i}\right]}{\left|G\right|}=\frac{\left[G:J_{i}\right]}{\mu}$.
Thus
\[
\chigrp\left(Z\right)=\chi\left(J_{1}\right)+\ldots\chi\left(J_{\ell}\right)=\frac{\left[G:J_{1}\right]}{\mu}+\ldots+\frac{\left[G:J_{\ell}\right]}{\mu}=\frac{v}{\mu}.
\]
\end{proof}
\begin{cor}
\label{cor:E^emb above finite}Theorem \ref{thm:E^emb is N^chi and asym expansion}
holds for sub-covers of $X_{G}$ when $G$ is finite. Namely, denoting
$\mu=\left|G\right|$, for a compact sub-cover $Y$ of $X_{G}$, there
are rational numbers $a_{t}=a_{t}\left(Y\right)$ for $t=-1/\mu,-2/\mu,\ldots$
so that
\begin{equation}
\emb_{Y}\left(N\right)\asyexp N^{\chigrp\left(Y\right)}\cdot\left\{ 1+a_{-1/\mu}N^{-1/\mu}+a_{-2/\mu}N^{-2/\mu}+\ldots\right\} .\label{eq:asy-exp-of-E^emb above finite}
\end{equation}
\end{cor}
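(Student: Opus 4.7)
The plan is to reduce to Proposition \ref{prop:E^emb of full coverings above a finite group} via the finite embedding-resolution $\R_Y^{\mathrm{emb}}$ constructed in Proposition \ref{prop:resolutions over finite groups}. By Lemma \ref{lem:resolution-sum-of-probabilities},
\[
\emb_Y(N) = \sum_{f \in \R_Y^{\mathrm{emb}}} \emb_{Z_f}(N),
\]
and each summand has an asymptotic expansion $\emb_{Z_f}(N) \asyexp N^{\chigrp(Z_f)} \cdot \{1 + O(N^{-1/\mu})\}$ with rational coefficients, by Proposition \ref{prop:E^emb of full coverings above a finite group}. All the exponents $\chigrp(Z_f)$, as well as $\chigrp(Y)$, lie in $\tfrac{1}{\mu}\mathbb{Z}_{\ge 0}$ since every subgroup of $G$ has order dividing $\mu$, so combining finitely many such expansions yields an asymptotic expansion in powers of $N^{-1/\mu}$ with rational coefficients. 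What remains is to pin down the leading exponent and the value of the leading coefficient.

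The central step is to establish the inequality $\chigrp(Z_f) \le \chigrp(Y)$ for all $f \in \R_Y^{\mathrm{emb}}$, with equality attained uniquely by the universal lift $\hat{p}$ of Definition \ref{def:universal lift}. Let $Y_1, \ldots, Y_\ell$ be the connected components of $Y$, with $H_i \defi \plab(Y_i)$. Given $f \in \R_Y^{\mathrm{emb}}$, each $Y_i$ embeds into a single connected component $C_i$ of $Z_f$, and injectivity of $f$ forces $H_i \le \plab(C_i)$ up to conjugation; hence $\chi(H_i) = 1/|H_i| \ge 1/|\plab(C_i)| = \chi(\plab(C_i))$. For each connected component $C$ of $Z_f$, let $k_C \ge 1$ count the number of $Y_i$'s mapping into $C$ (at least one, since $f(Y)$ meets every component of $Z_f$). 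Then
\[
\chigrp(Y) = \sum_C \sum_{i:\, Y_i \to C} \chi(H_i) \ge \sum_C k_C \cdot \chi(\plab(C)) \ge \sum_C \chi(\plab(C)) = \chigrp(Z_f),
\]
with equality throughout iff $k_C = 1$ and $H_i = \plab(C_i)$ for every $i$. This characterizes precisely the universal lift $\hat{p}\colon Y \hookrightarrow \bigsqcup_i H_i \backslash \widetilde{X_G}$, which lies in $\R_Y^{\mathrm{emb}}$ by Lemma \ref{lem:universal lift is injective} and the fact that its image meets every connected component of the codomain by construction.

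Since all $\chigrp$-values lie in $\tfrac{1}{\mu}\mathbb{Z}$, every non-maximal $\chigrp(Z_f)$ satisfies $\chigrp(Z_f) \le \chigrp(Y) - 1/\mu$. Consequently, the universal lift contributes $N^{\chigrp(Y)}(1 + O(N^{-1/\mu}))$ with leading coefficient $1$ (the $1$ coming from Proposition \ref{prop:E^emb of full coverings above a finite group}), while every other element of $\R_Y^{\mathrm{emb}}$ is absorbed into lower-order corrections of the form $N^{\chigrp(Y) - j/\mu}$ with $j \ge 1$. Combining the finitely many expansions produces \eqref{eq:asy-exp-of-E^emb above finite} with $a_0 = 1$. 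The main technical point is the combinatorial comparison of $\chigrp(Z_f)$ and $\chigrp(Y)$, in particular the careful bookkeeping when several components of $Y$ share the same subgroup $\plab(Y_i)$ or are allowed to embed into a common component of $Z_f$; this is what rules out any other element of the resolution from matching the top exponent.
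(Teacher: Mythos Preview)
Your proof is correct and follows essentially the same approach as the paper: reduce to Proposition~\ref{prop:E^emb of full coverings above a finite group} via the finite embedding-resolution $\R_Y^{\mathrm{emb}}$, then show that $\chigrp(Z_f)\le\chigrp(Y)$ with equality only for the universal lift. The only organizational difference is that the paper first reduces to the case where each component of $Y$ maps to its own component of $Z_f$ (by a duplication trick) and then treats the connected case, whereas you handle the disconnected case in one pass via the counts $k_C$; both arguments exploit the same facts, namely positivity of $\chi$ on finite subgroups and the subgroup inclusion $H_i\le\plab(C_i)$.
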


\begin{proof}
Recall the set ${\cal R}_{Y}^{\mathrm{emb}}$ defined in \eqref{eq:injective resolution of finite group sub-coverings},
which is a finite embedding-resolution of $Y$ by Proposition \ref{prop:resolutions over finite groups}.
By Lemma \ref{lem:resolution-sum-of-probabilities}, 
\[
\emb_{Y}\left(N\right)=\sum_{f\in{\cal R}_{Y}^{\mathrm{emb}}}\emb_{Z_{f}}\left(N\right),
\]
and so we conclude from Proposition \ref{prop:E^emb of full coverings above a finite group}
that $\emb_{Y}\left(N\right)$ has asymptotic expansion $\sum_{t\in\frac{1}{\mu}\mathbb{Z}}a_{t}N^{t}$
with leading term $\max_{f\in{\cal R}_{Y}^{\mathrm{emb}}}\chigrp\left(Z_{f}\right)$.
Consider the universal lift $\hat{p}\colon Y\to\Upsilon$ from Definition
\ref{def:universal lift} (where here $X_{G}$ is in the role of $X_{\Gamma}$,
so $\Upsilon$ is a full cover of $X_{G}$). By definition, $\hat{p}\left(Y\right)$
intersects every component of $\Upsilon$, and by Lemma \ref{lem:universal lift is injective},
$\hat{p}$ is an embedding. Thus $\hat{p}\in\remb_{Y}$. As $\plab\left(\Upsilon\right)\cong\plab\left(Y\right)$,
we get $\chigrp\left(\Upsilon\right)=\chigrp\left(Y\right)$. It remains
to show that for any other element $\hat{p}\ne f\in\remb_{Y}$ we
have $\chigrp\left(Z_{f}\right)\lvertneqq\chigrp\left(Y\right)$.

First, we may reduce to the case where each connected component of
$Y$ is mapped to its own connected component of $Z_{f}$. Indeed,
if there are two distinct components $Y_{1}$ and $Y_{2}$ of $Y$
which are mapped to the same component $Z_{o}$ of $Z_{f}$, we may
reduce to some $f'\in{\cal R}_{Y}^{\mathrm{emb}}$ with $Z_{f'}$
having more connected components by duplicating $Z_{o}$ to two copies
and mapping $Y_{1}$ to one copy and $Y_{2}$ to another copy. Using
the fact that the EC of finite groups is positive, we obtain $\chigrp\left(Z_{f}\right)<\chigrp\left(Z_{f'}\right)$.

So now it is enough to assume that $Y$ is connected and prove that
for $f\in\remb_{Y}$, we have $\chigrp\left(Z_{f}\right)\le\chigrp\left(Y\right)$
with equality if and only if $f$ is the universal lift of $p\colon Y\to X_{G}$.
Choose an arbitrary vertex $y\in Y$ and denote $J=\plab\left(Y,y\right)=\plab\left(\Upsilon,\hat{p}\left(y\right)\right)\le G$.
The existence of $f$ yields that $J\le\plab\left(Z_{f},f\left(y\right)\right)$,
and thus there is a morphism of covering spaces $\tau\colon\left(\Upsilon,\hat{p}\left(y\right)\right)\to\left(Z_{f},f\left(y\right)\right)$.
By the classification of covering spaces (e.g.~\cite[Thm.~1.38]{hatcher2005algebraic}),
$\tau$ is an isomorphism if and only if $J=\plab\left(Z_{f},f\left(y\right)\right)$.
So if $f\ne\hat{p}$ we obtain $J\lvertneqq\plab\left(Z_{f},f\left(y\right)\right)$,
and 
\[
\chigrp\left(Y\right)=\chi\left(J\right)=\frac{1}{\left|J\right|}>\frac{1}{\left|\plab\left(Z_{f}\right)\right|}=\chi\left(\plab\left(Z_{f}\right)\right)=\chigrp\left(Z_{f}\right).
\]
\end{proof}
We can now also prove Proposition \ref{prop:exp-torsion-element}
stating that for any \emph{torsion} element $\gamma\in\Gamma$, we
have $\mathbb{E}\left[\fix_{\gamma}\left(N\right)\right]=N^{1/\left|\gamma\right|}+O\left(N^{1/\left(2\left|\gamma\right|\right)}\right)$.
Along the way we also prove the existence of asymptotic expansion,
as in Theorem \ref{thm:asmptotic expansion}, for torsion elements
of $\gamma$.
\begin{proof}[Proof of Theorem \ref{thm:asmptotic expansion} for torsion elements
and of Proposition \ref{prop:exp-torsion-element}]
 Assume that $G$ is finite, that $\gamma\in G$ and that $\varphi\in\Hom\left(G,S_{N}\right)$
is uniformly random. Let $p\colon(\hat{X}_{\left\langle \gamma\right\rangle },x)\to\left(X_{G},v\right)$
be the connected covering space with $\plab(\hat{X}_{\left\langle \gamma\right\rangle },x)=\left\langle \gamma\right\rangle \le G$.
Consider the $N$-cover $X_{\varphi}$ of $X_{G}$ corresponding to
$\varphi$, with vertices labeled by $\left[N\right]$. Then $\varphi\left(\gamma\right)$
fixes $i\in\left[N\right]$ if and only if there is a lift of $p$
to $\left(X_{\varphi},i\right)$. Thus $\mathbb{E}\left[\fix_{\gamma}\left(N\right)\right]=\mathbb{E}_{\hat{X}_{\left\langle \gamma\right\rangle }}\left(N\right)$.
By Proposition \ref{prop:resolutions over finite groups}, the set
$\R={\cal R}_{\hat{X}_{\left\langle \gamma\right\rangle }}$ from
\eqref{eq:resolution of finite group sub-coverings} is a finite resolution
for $\hat{X}_{\left\langle \gamma\right\rangle }$, and by Lemma \ref{lem:resolution-sum-of-probabilities},
\begin{equation}
\mathbb{E}\left[\fix_{\gamma}\left(N\right)\right]=\mathbb{E}_{\hat{X}_{\left\langle \gamma\right\rangle }}\left(N\right)=\sum_{f\in{\cal R}}\mathbb{E}_{Z_{f}}^{\mathrm{emb}}\left(N\right).\label{eq:E=00005Bfix=00005D for torsion as sum of E^emb}
\end{equation}
Theorem \ref{thm:asmptotic expansion} for torsion elements, namely,
the fact that $\mathbb{E}\left[\fix_{\gamma}\left(N\right)\right]$
has asymptotic expansion as in \eqref{eq:asym expansion of E=00005Bfix=00005D},
now follows from \eqref{eq:E=00005Bfix=00005D for torsion as sum of E^emb}
together with Proposition \ref{prop:E^emb of full coverings above a finite group}.

Note that the identity map $\mathrm{id}\colon\hat{X}_{\left\langle \gamma\right\rangle }\to\hat{X}_{\left\langle \gamma\right\rangle }$
belongs to ${\cal R}$. As $\chi\left(\left\langle \gamma\right\rangle \right)=\frac{1}{\left|\gamma\right|}$,
this element of ${\cal R}$ satisfies
\[
\emb_{\hat{X}_{\left\langle \gamma\right\rangle }}\left(N\right)=N^{1/\left|\gamma\right|}\left(1+O\left(N^{-1/2}\right)\right)=N^{1/\left|\gamma\right|}+O\left(N^{1/\left|\gamma\right|-1/2}\right)
\]
by Proposition \ref{prop:E^emb of full coverings above a finite group}.
Note that $\frac{1}{\left|\gamma\right|}-\frac{1}{2}\le\frac{1}{2\left|\gamma\right|}$.
It is left to show that for every other element $\id\ne f\in{\cal R}$,
$\chigrp\left(Z_{f}\right)\le\frac{1}{2\left|\gamma\right|}$. Indeed,
as $\hat{X}_{\left\langle \gamma\right\rangle }$ is connected, so
is $Z_{f}$ for every $f\in{\cal R}$. As $f$ is a lift of $p$ but
$f\ne\id$, we have $\left\langle \gamma\right\rangle \lvertneqq\plab\left(Z_{f},f\left(x\right)\right)$.
Thus 
\[
\chigrp\left(Z_{f}\right)=\frac{1}{\left|\plab\left(Z_{f},f\left(x\right)\right)\right|}=\frac{1}{\left|\gamma\right|\cdot\left[\plab\left(Z_{f},f\left(x\right)\right):\left\langle \gamma\right\rangle \right]}\le\frac{1}{2\left|\gamma\right|}.
\]
\end{proof}
\begin{rem}
\label{rem:another version for torsion elements}In Section \ref{sec:Introduction}
we claimed that torsion elements $\gamma\in\Gamma$ satisfy also \eqref{eq:leading order of E=00005Bfix=00005D for torsion and non-torsion},
namely, that
\begin{equation}
\mathbb{E}\left[\fix_{\gamma}\left(N\right)\right]=N^{1/\left|\gamma\right|}\left(1+O\left(N^{-1/m}\right)\right).\label{eq:another version for torsion elements}
\end{equation}
Indeed, if $\gamma$ is conjugated into the finite group $G$ (one
of the factors of $\Gamma$), then $m\left(\Gamma\right)\ge m\left(G\right)=\left|G\right|$,
and we may thus assume that $m=\left|G\right|$. If $\left|\gamma\right|=\left|G\right|$
then $\left\langle \gamma\right\rangle =G$, namely, $\gamma$ does
not belong to any proper subgroup of $G$. Thus, the only element
of the resolution $\R$ from the last proof is $\mathrm{id}\colon\hat{X}_{\left\langle \gamma\right\rangle }\to\hat{X}_{\left\langle \gamma\right\rangle }$,
and $\mathbb{E}\left[\fix_{\gamma}\left(N\right)\right]=\emb_{\hat{X}_{\left\langle \gamma\right\rangle }}\left(N\right)=N^{1/\left|\gamma\right|}\left(1+O\left(N^{-1/2}\right)\right)$.
This yields \eqref{eq:another version for torsion elements} as $m\ge2$.
Finally, if $\left|\gamma\right|\lvertneqq\left|G\right|$, then $\left|\gamma\right|\le\frac{m}{2}$,
and \eqref{eq:another version for torsion elements} follows immediately
from Proposition \ref{prop:exp-torsion-element}.
\end{rem}

\begin{example}
\label{exa:x^2  in C_4}Let $G=C_{4}=\left\langle x\right\rangle $
be the cyclic group of size $4$ generated by $x$, and consider the
element $x^{2}$. There are two subgroups containing $x^{2}$: $\left\langle x\right\rangle $
and $\left\langle x^{2}\right\rangle $, with corresponding coverings
spaces $\hat{X}_{\left\langle x\right\rangle }$ and $\hat{X}_{\left\langle x^{2}\right\rangle }$.
The computations appearing above together with the some precise values
of coefficients from \cite[p.~552]{muller1997finite}, yield 
\begin{eqnarray*}
\emb_{\hat{X}_{\left\langle x\right\rangle }} & \asyexp & N^{1/4}\cdot\left\{ 1-\frac{1}{4}N^{-1/2}-\frac{1}{4}N^{-3/4}+\ldots\right\} \\
\emb_{\hat{X}_{\left\langle x^{2}\right\rangle }} & \asyexp & N^{1/2}\cdot\left\{ 1-\frac{1}{2}N^{-1/2}-\frac{1}{2}N^{-3/4}+\ldots\right\} ,
\end{eqnarray*}
so 
\[
\mathbb{E}\left[\fix_{x^{2}}\left(N\right)\right]\asyexp N^{1/2}+N^{1/4}-\frac{1}{2}-\frac{3}{4}N^{-1/4}+\ldots.
\]
\end{example}

\begin{rem}
\label{rem:Muller - Schlage-Puchta}Some of the results of this subsection
\ref{subsec:sub-coverings-finite-groups} also follow from \cite{muller2010statistics}.
Let $p\colon Z\to X_{G}$ be a \emph{connected} (full) cover with
$m_{i}$ vertices, and let $m=\left|G\right|$, so $\chigrp\left(Z\right)=\frac{m_{i}}{m}$.
Then \cite[Lem.~4]{muller2010statistics} states that $\frac{\emb_{Z}\left(N\right)-N^{m_{i}/m}}{N^{m_{i}/\left(2m\right)}}$
converges in distribution to a standard Gaussian ${\cal N}\left(0,1\right)$.
Note that the statement of that lemma wrongly implies that what is
being counted is the number of \emph{disjoint} copies of $Z$ in a
random $N$-cover, whereas what is actually being counted there is
$\emb_{Z}\left(N\right)$, namely the number of disjoint copies times
$\left|\mathrm{Aut}\left(Z\right)\right|$, the number of automorphisms
of $Z$ as a covering map. See also Remark \ref{rem:point of confusion}.
\end{rem}

\subsection{Sub-covers of $X_{G}$ for $G$ a surface group\label{subsec:sub-coverings-surface-groups}}

We now assume that $G=\Lambda_{g}=\left\langle a_{1},b_{1},\ldots,a_{g},b_{g}\,\middle|\,\left[a_{1},b_{1}\right]\cdots\left[a_{g},b_{g}\right]\right\rangle $
with $g\ge2$. Recall from Section \ref{subsec:The-graph-of-spaces}
that $X_{G}$ is an orientable surface of genus $g$ endowed with
a CW-structure of a single vertex $v$, $2g$ edges labeled $a_{1},\ldots,b_{g}$,
and a single $2$-cell, which we think of as a $4g$-gon as its boundary
is attached to a cycle of $4g$ edges. A sub-cover of $X_{G}$ is
also called a \emph{tiled surface }in \cite{MPcore,magee2020asymptotic}.
See also \cite[Prop.~3.3]{MPcore} for an intrinsic definition of
a tiled surface.

We now introduce some further terminology from \cite{MPcore,magee2020asymptotic}.
The definitions are laconic as they are only used in order to state
some results from these two papers, and Let $Y\subseteq Z$ be a sub-cover
which is a subcomplex of the (full) covering space $p\colon Z\to X_{G}$.
As $Y$ is embedded in a surface, we may take a small closed regular
neighborhood of $Y$ in $Z$ and obtain the ``thick version'' of
$Y$ which is a surface, possibly with boundary. The thick version
of $Y$, which we sometimes denote by $\mathbb{Y}$, is a feature
of $Y$ as a sub-cover, and does not depend on the particular $Z$
it is embedded in -- see \cite[Sec.~3.1]{MPcore}. We write $\partial Y$
for the boundary of the thick version of $Y$. This boundary is a
finite collection of cycles. We pick an orientation on every boundary
component (see below) to obtain a \textbf{boundary cycle} of $Y$,
and using the edge-labels along a boundary cycle, it corresponds to
some cyclic word in the generators of $G$. 

Every full cover $Z$ of $X_{G}$ consists of vertices, directed edges
labeled by $a_{1},\ldots,b_{g}$, and $4g$-gons. The cycle around
every $4g$-gon reads the relation -- the cyclic word $R=\left[a_{1},b_{1}\right]\cdots\left[a_{g},b_{g}\right]$.
A boundary cycle of a sub-cover $Y$ is always oriented so that if
$Y$ is embedded in the full cover $Z$, the cycle reads successive
segments of the boundaries of the neighboring $4g$-gons (in $Z\setminus Y$)
with the orientation of each $4g$-gon coming from $\left[a_{1},b_{1}\right]\cdots\left[a_{g},b_{g}\right]$
(and not from the inverse word). 

If a boundary cycle of a sub-cover $Y$ contains a subword of $R$
of length $>\frac{1}{2}\left|R\right|=2g$, then in \emph{every} full
cover $Z$ in which $Y$ is embedded, one may shorten the total boundary
of $Y$ by annexing the $4g$-gon neighboring this subword. In this
sense the boundary of $Y$ is not ``reduced''. We call a subword
of $R$ of length $\ge2g+1$ a \textbf{long block}. 

There are further, more involved cases involving a sequence of a few
consecutive $4g$-gons where $\partial Y$ is not reduced. For example,
if $g=2$ and $\partial Y$ contains the subword $a_{1}b_{1}a_{1}^{-1}b_{1}^{-1}b_{2}^{-1}a_{1}b_{1}b_{1}a_{1}^{-1}b_{1}^{-1}a_{2}$
then there are three consecutive octagons neighboring this subword,
and annexing them strictly reduces the total length of $\partial Y$.
Such subwords are called \textbf{long chains} -- see \cite[Sec.~3.2]{MPcore}
for the precise definition.\textbf{ }This leads to the following definition.
\begin{defn}[Boundary reduced]
\cite[Def. 4.1]{MPcore} Let $G=\Lambda_{g}$ with $g\ge2$. A sub-cover
$Y$ of $X_{G}$ is called \textbf{boundary reduced}, or \textbf{BR}
for short, if $\partial Y$ contains no long blocks nor long chains.
\end{defn}

If $\partial Y$ contains a subword which constitutes half of the
relation $R$, called a \textbf{half-block}, then in every full cover
$Z$ in which $Y$ is embedded, the neighboring $4g$-gon can be annexed
to $Y$ without increasing the total length of $\partial Y$. Likewise,
there are cycles called \textbf{half-chains }so that annexing the
sequence of consecutive $4g$-gons along them does not increase the
length of the boundary. Again, see \cite[Sec.~3.2]{MPcore} for the
precise definitions. 
\begin{defn}[Strongly boundary reduced]
\cite[Def. 4.2]{MPcore} Let $G=\Lambda_{g}$ with $g\ge2$. A sub-cover
$Y$ of $X_{G}$ is called \textbf{strongly boundary reduced}, or
\textbf{SBR} for short, if $\partial Y$ contains no half-blocks nor
half-chains.
\end{defn}

As explained in \cite[Sec.~4]{MPcore}, every SBR sub-cover is, in
particular, BR. The case of Theorem \ref{thm:E^emb is N^chi and asym expansion}
dealing with sub-covers of $X_{G}$ (where $G=\Lambda_{g}$ is a surface
group), crucially relies on the following results from \cite{magee2020asymptotic}.
\begin{thm}
\cite{magee2020asymptotic}\label{thm:E^emb of BR and SBR } Let $Y$
be a compact sub-cover of $X_{G}$ where $G=\Lambda_{g}$ with $g\ge2$. 
\begin{enumerate}
\item If $Y$ is BR, there are rational number $a_{t}=a_{t}\left(Y\right)$
for $t=0,-1,-2,\ldots$ with $a_{0}>0$, so that
\begin{equation}
\emb_{Y}\left(N\right)\asyexp N^{\chigrp\left(Y\right)}\cdot\left\{ a_{0}+a_{-1}N^{-1}+a_{-2}N^{-2}+\ldots\right\} .\label{eq:E^emb of BR}
\end{equation}
\item \label{enu:E^emb for SBR}If $Y$ is moreover SBR, then $a_{0}\left(Y\right)=1$.
\end{enumerate}
\end{thm}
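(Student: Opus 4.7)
The plan is to compute $\emb_Y(N)$ as a character sum and extract the asymptotic expansion term by term. Fix an injection $\iota\colon V(Y)\hookrightarrow[N]$; by symmetry across such injections we can write
\[
\emb_Y(N) \;=\; (N)_{v(Y)} \cdot \frac{A_Y(N)}{|\Hom(\Lambda_g,S_N)|},
\]
where $A_Y(N)$ counts $2g$-tuples $(\sigma_{a_1},\sigma_{b_1},\ldots,\sigma_{a_g},\sigma_{b_g})\in S_N^{2g}$ satisfying the surface relation $\prod_i[\sigma_{a_i},\sigma_{b_i}]=1$ together with the edge-incidence constraints on the image of $\iota$ prescribed by the edges of $Y$. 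Both the numerator and denominator can be expanded via the Frobenius--Mednykh character formula for $\Hom(\Lambda_g,S_N)$, producing a ratio indexed by irreducible representations $\lambda\vdash N$ whose summands involve $(\dim\lambda)^{2-2g}$ and characters evaluated at permutations read off from the boundary cycles of $Y$.

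The trivial representation contributes the leading term. Combining the vertex factor $(N)_{v(Y)}$ with edge and face counts through $\chi(Y)=v(Y)-e(Y)+f(Y)$, and using that, for a connected sub-cover, $\chigrp(Y)=\chi(\plab(Y))$ coincides with the combinatorial Euler characteristic of the universal lift, one obtains a leading contribution $N^{\chigrp(Y)}\cdot a_0$ where $a_0$ is a positive integer counting valid local extensions of $Y$ near its boundary. The BR hypothesis is precisely what makes this leading count well-defined: if $\partial Y$ contained a long block or long chain, one could annex the corresponding $4g$-gon(s) without changing $\chigrp$, producing a surjective morphism $Y\twoheadrightarrow Y'$ of the same Euler characteristic and a non-trivial embedding-resolution in the sense of Definition~\ref{def:resolution}; applying that resolution reduces the claim to a finite collection of BR sub-covers with the same $\chigrp$, each of which is rigid at the boundary. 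Subleading coefficients $a_{-1},a_{-2},\ldots\in\mathbb{Q}$ then arise from expanding $(\dim\lambda)^{-(2g-2)}$ via hook-length asymptotics for both the trivial representation and for nearby low-dimensional irreps (sign, standard, etc.), each of which contributes a rational function in $N$.

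For the SBR refinement in part~(\ref{enu:E^emb for SBR}), the additional prohibition of half-blocks and half-chains rules out precisely those $\chigrp$-preserving annexations that are not annexations of long blocks/chains but still produce a non-identity morphism of sub-covers at the leading order. If such a half-block or half-chain were present, it would contribute an additional copy of the leading $N^{\chigrp(Y)}$ term, inflating $a_0$ beyond $1$; excluding them leaves only the identity extension at leading order, so $a_0(Y)=1$. The main obstacle is the uniform control of the character sum away from the trivial representation: one needs sharp bounds on $|\chi_\lambda(w)|/(\dim\lambda)^{2g-2}$, uniformly in $N$, $\lambda\neq\mathrm{triv}$, and $w\in S_N$ running over the words spelled by boundary cycles of $Y$, so as to guarantee that non-trivial $\lambda$ contribute only at the claimed subleading orders. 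Establishing this demands tight character-ratio bounds of Larsen--Shalev type, and it is at this step that the geometric BR/SBR conditions (which bound the effective ``length'' of $\partial Y$ in a suitable sense) are essential for taming the characters that appear.
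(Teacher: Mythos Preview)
This theorem is not proved in the paper at all: it is quoted from \cite{magee2020asymptotic}. The only argument the paper supplies is a two-line reduction---the results of \cite[Sec.~1.6 and 5.1]{magee2020asymptotic} give the asymptotic expansion \eqref{eq:E^emb of BR} with the exponent $\chi(Y)$ in place of $\chigrp(Y)$, and then \cite[Lem.~5.6]{MPcore} shows $\chi(Y)=\chigrp(Y)$ whenever $Y$ is compact and BR. So there is no ``paper's own proof'' to compare against beyond this black-box invocation.

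Your proposal, by contrast, tries to redevelop the content of \cite{magee2020asymptotic} from scratch. The overall shape you describe (express $\emb_Y(N)$ via the Frobenius--Mednykh formula as a sum over $\lambda\vdash N$ of terms involving $(\dim\lambda)^{2-2g}$ and normalized characters at boundary words, then isolate the trivial representation as the leading term) is indeed the architecture of that paper. But your sketch is not a proof: you yourself flag that ``the main obstacle is the uniform control of the character sum away from the trivial representation'' and that this ``demands tight character-ratio bounds.'' That obstacle is not a technicality---it is essentially the entire content of \cite{magee2020asymptotic}, which develops a bespoke representation-theoretic machinery over many pages precisely to carry it out. Invoking ``Larsen--Shalev type'' bounds off the shelf does not suffice; the bounds needed are specific to the combinatorics of tiled surfaces and are proved there, not imported.

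There is also some confusion in your treatment of BR and SBR. You describe the BR condition as enabling a reduction via surjective morphisms $Y\twoheadrightarrow Y'$ to BR sub-covers, but this inverts the logic: the paper's resolutions (Theorem~\ref{thm:resolution for a compact tiled surface}) go the other direction, embedding a general $Y$ into larger BR (or SBR) sub-covers, and it is for these enlarged targets that the character-sum estimates are tractable. Likewise, your explanation of why SBR forces $a_0=1$ (that half-blocks would ``inflate $a_0$ beyond $1$'') is heuristic; the actual mechanism, spelled out in \cite{magee2020asymptotic} and in Proposition~\ref{prop:no matching boundary cycles =00003D=00003D> a_0=00003D1} here, goes through the structure of the embedding-resolution and the absence of annuli in the complement (Lemma~\ref{lem:SBR does not bound an annulus}), not through a direct count of ``extensions.''
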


Although they probably should have been, these results are not written
explicitly in \cite{magee2020asymptotic}. However, they follow immediately
from the results therein. In fact, as explained in \cite[Sec.~1.6 and 5.1]{magee2020asymptotic},
the results of that paper immediately give \eqref{eq:E^emb of BR}
with $\chigrp\left(Y\right)$ replaced with $\chi\left(Y\right)$.
But then \cite[Lem.~5.6]{MPcore} shows that if $Y$ is compact and
BR, then $\chigrp\left(Y\right)=\chi\left(Y\right)$.
\begin{thm}
\cite[Thm.~2.14]{magee2020asymptotic}\label{thm:resolution for a compact tiled surface}
Let $Y$ be a compact sub-cover of $X_{G}$ where $G=\Lambda_{g}$
and let $\chi_{0}\in\mathbb{Z}$. Then $Y$ admits a finite resolution
${\cal R}={\cal R}\left(Y,\chi_{0}\right)$ such that for every $f\colon Y\to Z_{f}$
in ${\cal R}$, the following properties holds:

$\left(i\right)$ the sub-cover $Z_{f}$ is compact and BR, 

$\left(ii\right)$ if $\chigrp\left(Z_{f}\right)\ge\chi_{0}$, then
$Z_{f}$ is SBR, and 

$\left(iii\right)$ the image of $f$ meets every connected component
of $Z_{f}$.
\end{thm}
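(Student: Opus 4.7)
The plan is to build the resolution $\mathcal{R}=\mathcal{R}(Y,\chi_0)$ via an iterative closure procedure. The basic operation, an \emph{annexation move} on a compact sub-cover $Y'$, locates a long block, long chain, half-block, or half-chain on $\partial Y'$, attaches the unique corresponding sequence of adjacent $4g$-gons, and records every possible identification pattern of the newly introduced $0$- and $1$-cells with cells of $Y'$ lying over the same cells of $X_G$. Each identification pattern yields a different morphism $Y' \to Y''$ of sub-covers, and since $Y'$ is compact there are only finitely many outcomes, so the tree of iterated moves has finite branching.

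Starting from $Y$, one first enumerates all sub-cover quotients $Y \twoheadrightarrow Y_0$, a finite set by compactness of $Y$. For each such $Y_0$ we run annexations against long blocks and long chains until none remain, and then continue with half-block and half-chain annexations \emph{only as long as} $\chigrp$ of the current sub-cover remains $\ge \chi_0$, halting along each branch either once SBR is achieved or once $\chigrp$ would drop below $\chi_0$. Each leaf gives a morphism $f \colon Y \to Z_f$; these constitute $\mathcal{R}$. Conditions $(i)$ and $(ii)$ are enforced by the stopping rule, and condition $(iii)$ holds because every newly attached cell is adjacent to the image of $f(Y)$, so no annexation can create a component disjoint from $f(Y)$.

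The resolution property reduces to a forcing statement: given a morphism $h \colon Y \to \hat X$ into a full cover, the image $Y_0 \defi h(Y)$ determines a unique sub-cover quotient of $Y$, and $\hat X$ then dictates every subsequent annexation -- it prescribes which $4g$-gons are attached and which identifications of newly exposed cells occur -- so $h$ factors as $Y \to Z_f \hookrightarrow \hat X$ through a unique leaf $f$ of the tree, exactly as required by Definition \ref{def:resolution}.

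The main obstacle will be termination. BR-moves are easy: a long block of length $\ge 2g+1$ is replaced by a complementary segment of length $\le 2g-1$, and an analogous bound holds for long chains, so $|\partial Y'|$ strictly decreases and the number of BR-moves is bounded by $|\partial Y|$. SBR-moves, on the other hand, preserve $|\partial Y'|$, and their iterated behaviour is considerably subtler: identifications may drive $\chigrp$ up or down, new half-blocks and half-chains can be created adjacent to the annexed $4g$-gon, and no single monovariant decreases uniformly. The role of the cutoff $\chi_0$ is to provide an a priori lower bound on $\chigrp$ throughout the SBR stage, which, combined with a careful bookkeeping of how half-defects are produced and consumed under annexation and a bound on how much a BR sub-cover with $\chigrp \ge \chi_0$ can grow, yields finite termination. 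Making this bookkeeping precise is the most technical part of the argument and relies on the combinatorial geometry of $\partial Y$ developed in \cite{MPcore}.
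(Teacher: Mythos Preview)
This theorem is not proved in the present paper: it is quoted from \cite[Thm.~2.14]{magee2020asymptotic}, and the only commentary here is the short remark following the statement explaining why the phrasing with $\chigrp$ in place of $\chi$, and the additional clause $(iii)$, are consistent with the original. So there is no proof in this paper to compare against; what you have written is effectively an attempted sketch of the argument from \cite{magee2020asymptotic}.

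That said, your outline does follow the construction of \cite[Def.~2.13]{magee2020asymptotic} in broad strokes: take surjective quotients, then iteratively annex $4g$-gons along blocks and chains, branching over identification patterns, with the embedding into a full cover forcing a unique branch through the tree. The place where your sketch is genuinely incomplete is exactly where you say it is --- termination of the SBR stage --- and the hand-wave (``careful bookkeeping\ldots and a bound on how much a BR sub-cover with $\chigrp\ge\chi_0$ can grow'') does not supply the missing bound. The concrete mechanism is that for a compact BR sub-cover $Z$ one has $\chi(Z)=\chigrp(Z)$ by \cite[Lem.~5.6]{MPcore}, the total boundary length $|\partial Z|$ never increases once BR is first achieved, and a compact tiled surface with $|\partial Z|$ bounded and $\chi(Z)\ge\chi_0$ has a bounded number of cells; hence only finitely many sub-covers can be visited. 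You should also be aware that identifications made during an SBR annexation can create new long blocks or long chains, so the BR and SBR stages interleave rather than running once each in sequence; your two-phase description glosses over this, and termination has to be argued for the interleaved process.
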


The original statement of \cite[Thm.~2.14]{magee2020asymptotic} states
the second condition as $\chi\left(Z_{f}\right)\ge\chi_{0}$, but
as mentioned above, for compact BR sub-covers, $\chi\left(Z_{f}\right)=\chigrp\left(Z_{f}\right)$.
Part $\left(iii\right)$ is not mentioned in ibid, but it follows
from the specific construction of $\R$ in \cite[Def.~2.13]{magee2020asymptotic}.
\begin{cor}
\label{cor:injective resolution for a compact tiled surface} Let
$Y$ be compact sub-cover of $X_{G}$ where $G=\Lambda_{g}$ and let
$\chi_{0}\in\mathbb{Z}$. Then $Y$ admits a finite embedding-resolution
${\cal R}^{\mathrm{emb}}={\cal R}^{\mathrm{emb}}\left(Y,\chi_{0}\right)$
for the \emph{injective} lifts of $Y$ to (full) covers of $X_{G}$,
and with the same three properties as in Theorem \ref{thm:resolution for a compact tiled surface}.
\end{cor}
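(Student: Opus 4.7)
The plan is to extract the embedding-resolution directly from the resolution $\mathcal{R}=\mathcal{R}(Y,\chi_0)$ provided by Theorem \ref{thm:resolution for a compact tiled surface}, by simply keeping only those elements whose underlying morphism is injective. Concretely, I would define
\[
\mathcal{R}^{\mathrm{emb}}\defi\left\{\,f\colon Y\to Z_f\;\middle|\;f\in\mathcal{R}\text{ and }f\text{ is injective}\,\right\}.
\]
This set is finite since $\mathcal{R}$ is, and properties $(i)$, $(ii)$, $(iii)$ are immediate consequences of keeping the same codomains $Z_f$ and the same morphisms $f$: the sub-covers $Z_f$ remain compact and BR, the ones with $\chigrp(Z_f)\ge\chi_0$ remain SBR, and $f(Y)$ still meets every connected component of $Z_f$.

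The only thing to verify is that $\mathcal{R}^{\mathrm{emb}}$ satisfies the defining property of an embedding-resolution. Let $h\colon Y\hookrightarrow\hat{X}$ be an injective morphism of sub-covers into a full covering space $\pi\colon\hat{X}\to X_G$. Since $\mathcal{R}$ is a resolution of $Y$, there is a unique decomposition
\[
Y\stackrel{f}{\to}Z_f\stackrel{\bar{h}}{\hookrightarrow}\hat{X}
\]
with $f\in\mathcal{R}$ and $\bar{h}$ an embedding. The key observation is that $\bar{h}\circ f=h$ is injective and $\bar{h}$ is injective, so $f$ must be injective as well. Therefore $f\in\mathcal{R}^{\mathrm{emb}}$, which gives the existence of the required decomposition for $\mathcal{R}^{\mathrm{emb}}$. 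Uniqueness of the decomposition inside $\mathcal{R}^{\mathrm{emb}}$ is inherited from uniqueness inside $\mathcal{R}$: any decomposition of $h$ with $f'\in\mathcal{R}^{\mathrm{emb}}\subseteq\mathcal{R}$ and $\bar{h}'$ an embedding is in particular a decomposition valid for $\mathcal{R}$, hence coincides with the one above.

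There is no real obstacle here; the statement is a formal consequence of Theorem \ref{thm:resolution for a compact tiled surface} together with the elementary fact that a composition $\bar{h}\circ f$ of morphisms of sub-covers with $\bar{h}$ an embedding is injective if and only if $f$ is. The only conceptual point worth being attentive to is that the notion of embedding-resolution quantifies only over injective lifts $h$, so we are not required to handle non-injective morphisms $h$ at all, and this is precisely what allows us to discard the non-injective elements of $\mathcal{R}$ without losing any information.
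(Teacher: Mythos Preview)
Your proof is correct and follows exactly the paper's approach: the paper simply says to take the subset of $\mathcal{R}(Y,\chi_0)$ consisting of all injective morphisms, and you have spelled out the routine verification that this subset is indeed an embedding-resolution.
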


\begin{proof}
Take the subset of ${\cal R}\left(Y,\chi_{0}\right)$ from Theorem
\ref{thm:resolution for a compact tiled surface} consisting of all
injective morphisms. 
\end{proof}
Let $\tsg$ be the universal cover of the genus-$g$ orientable closed
surface $\Sigma_{g}$, endowed with the CW-complex structure pulled-back
from $X_{G}\cong\Sigma_{g}$. For every subgroup $J\le\Lambda_{g}$,
the corresponding covering space is $J\backslash\tsg$ (see \cite[Example 3.5]{MPcore}).

\begin{lem}
\label{lem:in resolution of Y chigrp<=00003Dchigroup(Y)} If $f\colon Y\hookrightarrow Z$
is an embedding of compact sub-covers of $X_{G}$ with $G=\Lambda_{g}$
such that $f\left(Y\right)$ meets every component of $Z$, then $\chigrp\left(Z\right)\le\chigrp\left(Y\right)$.
\end{lem}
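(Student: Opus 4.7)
The strategy is to reduce to boundary-reduced (BR) sub-covers and then apply the topology of incompressible subsurfaces. First, I would reduce to the case where $Z$ is connected: $\chigrp$ is additive over connected components, and since $f(Y)$ meets every component of $Z$, the inequality decomposes over the components of $Z$ (with the components of $Y$ grouped accordingly). So assume $Z$ is connected, set $K=\plab(Z)\le\Lambda_g$ (up to conjugacy), and write $Y=\bigsqcup_{i=1}^{n}Y_i$ for the component decomposition with $H_i=\plab(Y_i)\le K$. The goal becomes $\chi(K)\le\sum_{i=1}^{n}\chi(H_i)$.

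Next I would lift everything into the covering space $\Upsilon_K\to X_G$ corresponding to $K$, using the universal lift of $Z$ (Definition~\ref{def:universal lift} and Lemma~\ref{lem:universal lift is injective}); each $Y_i$ then embeds in $\Upsilon_K$ via $Y_i\subseteq Z\hookrightarrow\Upsilon_K$. Inside $\Upsilon_K$, iteratively annex long blocks and long chains on $\partial Z$ to produce a compact BR sub-cover $Z^+$ with $Z\subseteq Z^+\subseteq\Upsilon_K$ and $\plab(Z^+)=K$ (annexing preserves $\plab$); by Lemma~5.6 of \cite{MPcore}, $\chi(Z^+)=\chigrp(Z^+)=\chi(K)$. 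Similarly enlarge each $Y_i$ to a compact BR sub-cover $Y_i^+$ with $Y_i\subseteq Y_i^+\subseteq Z^+$ and $\plab(Y_i^+)=H_i$, so $\chi(Y_i^+)=\chi(H_i)$. The key combinatorial point is that the BR-enlargement of $Y_i$ can indeed be carried out inside $Z^+$: any long block (or long chain) on $\partial Y_i$ must lie in the interior of $Z^+$, because otherwise it would lie on $\partial Z^+$, contradicting the BR property of $Z^+$; hence the $2$-cells needed for the annexations are available in $Z^+$.

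Finally I would invoke surface topology. The BR property of each $Y_i^+$ guarantees that the thickening $\mathbb{Y}_i^+\hookrightarrow\mathbb{Z}^+$ is an incompressible subsurface: $\pi_1$-injectivity into $\Lambda_g$ and hence into $\pi_1(\mathbb{Z}^+)$ follows from BR-ness, and every boundary circle of $\mathbb{Y}_i^+$ is essential in $\mathbb{Y}_i^+$ and thus essential in $\mathbb{Z}^+$. Consequently, no complementary region of $\bigsqcup_{i}\mathbb{Y}_i^+$ in $\mathbb{Z}^+$ is a disk, so each such region has non-positive Euler characteristic. Since the shared boundary circles contribute $\chi=0$, additivity of $\chi$ along the decomposition yields
\[
\chi(\mathbb{Z}^+)\;\le\;\sum_{i=1}^{n}\chi(\mathbb{Y}_i^+),
\]
which translates via $\chi(W^+)=\chigrp(W)$ to the desired inequality $\chigrp(Z)\le\sum_{i}\chigrp(Y_i)=\chigrp(Y)$.

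The main obstacle I anticipate is the bookkeeping around BR-enlargement inside $Z^+$, in particular the disjointness of the $\mathbb{Y}_i^+$ within $\mathbb{Z}^+$. If two enlargements $\mathbb{Y}_i^+,\mathbb{Y}_j^+$ happen to meet, one must either argue (using the combinatorics of long blocks and long chains) that such an overlap would force the original components $Y_i,Y_j$ to share a cell already in $Z$ (a contradiction), or replace the disjoint union by the honest union $\bigcup_i\mathbb{Y}_i^+$ and adapt the incompressibility/Euler-characteristic computation via inclusion–exclusion on the (necessarily $\chi\le 0$) intersection pattern.
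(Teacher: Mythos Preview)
Your approach via BR-enlargement introduces a genuine gap that you correctly identify but do not resolve: the disjointness of the enlarged pieces $\mathbb{Y}_i^+$ inside $\mathbb{Z}^+$. The BR-enlargement process annexes $4g$-gons along long blocks and long chains, and there is no a priori reason why the enlargements of distinct components $Y_i$, $Y_j$ should remain disjoint---indeed, a single $4g$-gon in $Z^+$ may border long blocks of two different $Y_i$'s. Your proposed fixes are both nontrivial: the claim that overlap would force $Y_i$ and $Y_j$ to already share a cell in $Z$ is simply false in general, and an inclusion--exclusion argument would require control over the Euler characteristics of the intersections, which you do not establish. There is also a smaller gap in your claim that BR-enlargement of $Y_i$ can always be carried out inside $Z^+$: a long block on $\partial Y_i$ may lie only \emph{partially} on $\partial Z^+$, so the dichotomy ``either in the interior of $Z^+$ or on $\partial Z^+$'' does not immediately force the needed $4g$-gon to belong to $Z^+$.

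The paper's proof avoids BR-enlargement entirely. It embeds $Z$ (and hence $Y$) into the covering space $\Upsilon$ corresponding to $\plab(Z)$ via the universal lift, and works directly with the thick version $\mathbb{Y}$ of $f(Y)$ inside $\Upsilon$---whose components $\mathbb{Y}_1,\ldots,\mathbb{Y}_\ell$ are automatically disjoint. After a short reduction disposing of any $\mathbb{Y}_i$ or complementary region $\overline{C_j}$ that happens to be a disc, one obtains a graph-of-spaces decomposition of $\Upsilon$ with vertex spaces the $\mathbb{Y}_i$ and the $\overline{C_j}$ and all edge groups isomorphic to $\mathbb{Z}$. Bass--Serre theory then gives $\pi_1(\mathbb{Y}_i)\hookrightarrow\pi_1(\Upsilon)$, whence $\chigrp(Y_i)=\chi(\pi_1(\mathbb{Y}_i))$, and additivity of Euler characteristic over the graph of spaces yields the inequality directly since each $\chi(\pi_1(\overline{C_j}))\le 0$. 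This is both shorter and sidesteps all the bookkeeping your approach would require.
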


\begin{proof}
Let $Z_{1},\ldots,Z_{s}$ be the connected components of $Z$ with
$z_{j}\in Z_{j}$ some vertex. Denote $H_{j}=\plab\left(Z_{j},z_{j}\right)$,
$\Upsilon_{j}=H_{j}\backslash\tsg$ and $\Upsilon=\Upsilon_{1}\sqcup\ldots\sqcup\Upsilon_{s}$.
Then the universal lift of $Z$ has codomain $\Upsilon$ and $Z$
is embedded in $\Upsilon$ by Lemma \ref{lem:universal lift is injective}.
We may think of $f$ as an embedding of $Y$ inside $\Upsilon$. Consider
the thick part $\mathbb{Y}$ of $f\left(Y\right)$ in $\Upsilon$,
with $\mathbb{Y}_{1},\ldots,\mathbb{Y}_{\ell}$ its connected components,
and denote by $C_{1},\ldots,C_{q}$ the connected components of the
complement $\Upsilon-\mathbb{Y}$. We denote by $\overline{C_{i}}$
the closure of the component $C_{i}$, and the fact it is a component
in the complement of $\mathbb{Y}$ and not of $f\left(Y\right)$ guarantees
that $\overline{C_{i}}^{\circ}=C_{i}$. As $\plab\left(Z_{i}\right)=\plab\left(\Upsilon_{i}\right)$,
it is enough to prove that $\chigrp\left(\Upsilon\right)\le\chigrp\left(Y\right)$.

We may assume that none of the $\mathbb{Y}_{i}$'s and none of the
$\overline{C_{j}}$'s are discs. Indeed, if some $\overline{C_{i}}$
is a disc, then we can replace $\mathbb{Y}$ with $\mathbb{Y}\cup C_{i}$:
this does not change $\plab\left(Y\right)$ nor $\chigrp\left(Y\right)$.
If any $\mathbb{Y}_{i}$ is a disc, then it is connected to a single
$\overline{C_{j}}$. Assume that $\mathbb{Y}_{i}\subseteq\Upsilon_{t}$.
If $\mathbb{Y}_{i}\cup\overline{C_{j}}=\Upsilon_{t}$, we may reduce
to the case where this part is ignored completely, for $H_{t}\le\Lambda_{g}$
and so $\chigrp\left(Y_{i}\right)=1\ge\chi\left(H_{t}\right)$. If
there are additional parts in $\Upsilon_{t}$, we may reduce to the
case where we remove $\mathbb{Y}_{i}$ from $\mathbb{Y}$ and replace
$\overline{C_{j}}$ with $\mathbb{Y}_{i}\cup\overline{C_{j}}$, for
then $\chigrp\left(Y\right)$ is decreased by one and $\chigrp\left(\Upsilon\right)$
does not change.

We obtained a decomposition of the space $\Upsilon$ to a graph of
spaces with vertex-spaces $\mathbb{Y}_{1},\ldots\mathbb{Y}_{\ell},\overline{C_{1}},\ldots,\overline{C_{q}}$
and all edge groups isomorphic to $\mathbb{Z}$ (every edge connects
some $\mathbb{Y}_{i}$ with some $\overline{C}_{j}$ and corresponds
to some boundary component of $\mathbb{Y}$). As the vertex-spaces
are not-a-disc surfaces and are embedded in hyperbolic surfaces, they
have non-trivial fundamental groups. Furthermore, in every connected
surface $S$ with boundary which is not a disc, the cyclic fundamental
group of every boundary component is \emph{embedded} in $\pi_{1}\left(S\right)$.
Thus, all edge-groups (which are infinite cyclic) are embedded in
the corresponding vertex groups. By Bass-Serre theory of graph of
groups, this means that $\pi_{1}\left(\mathbb{Y}_{i}\right)$ is embedded
in $H_{t}$ whenever $\mathbb{Y}_{i}\subseteq\Upsilon_{t}$. If $\pi\colon\Upsilon_{t}\to X_{G}$
is the covering map, then $\pi_{*}\colon\pi_{1}\left(\Upsilon_{t}\right)\to\pi_{1}\left(X_{G}\right)$
is injective, which yields that so is $\pi_{*}\circ f_{*}\colon\pi_{1}\left(\mathbb{Y}_{i}\right)\to\pi_{1}\left(X_{G}\right)$.
Thus $\plab\left(\mathbb{Y}_{i}\right)\cong\pi_{1}\left(\mathbb{Y}_{i}\right)$
and $\chigrp\left(Y_{i}\right)=\chi\left(\pi_{1}\left(\mathbb{Y}_{i}\right)\right)$.
Finally, because all edge groups are $\mathbb{Z}$ and all vertex
groups of the graph of spaces are non-trivial groups with non-positive
EC, we get
\begin{eqnarray}
\chigrp\left(Z\right) & = & \chi\left(H_{1}\right)+\ldots+\chi\left(H_{s}\right)=\chi\left(\pi_{1}\left(\Upsilon\right)\right)=\nonumber \\
 & = & \sum_{i=1}^{\ell}\chi\left(\pi_{1}\left(\mathbb{Y}_{i}\right)\right)+\sum_{j=1}^{q}\chi\left(\pi_{1}\left(\overline{C_{i}}\right)\right)-\sum_{e~\mathrm{edge~of~graph~of~spaces}}\chi\left(\mathbb{Z}\right)\\
 & = & \chigrp\left(Y\right)+\sum_{i=1}^{q}\chi\left(\pi_{1}\left(\overline{C_{i}}\right)\right)\le\chigrp\left(Y\right).\label{eq:chi of graph of spaces-1}
\end{eqnarray}
\end{proof}
We can now extend \eqref{eq:E^emb of BR} to arbitrary sub-covers
of $X_{G}$.
\begin{cor}
\label{cor:E^emb of arbitrary tiled surface}Theorem \ref{thm:E^emb is N^chi and asym expansion}
holds for sub-covers of $X_{G}$ when $G=\Lambda_{g}$ is a surface
group. Namely, for every compact sub-cover $Y$ of $X_{G}$ there
are rational numbers $a_{t}=a_{t}\left(Y\right)$ for $t=0,-1,-2,\ldots$
so that 
\begin{equation}
\emb_{Y}\left(N\right)\asyexp N^{\chigrp\left(Y\right)}\cdot\left\{ a_{0}+a_{-1}N^{-1}+a_{-2}N^{-2}+\ldots\right\} ,\label{eq:asy-exp-of-E^emb above surface}
\end{equation}
where $a_{0}\in\mathbb{Z}_{\ge1}$ is a positive integer.
\end{cor}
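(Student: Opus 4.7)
The plan is to reduce the statement about an arbitrary compact sub-cover $Y$ to the BR/SBR case handled by Theorem \ref{thm:E^emb of BR and SBR}, via the embedding-resolution provided by Corollary \ref{cor:injective resolution for a compact tiled surface}. Concretely, choose the threshold $\chi_{0}=\chigrp(Y)$ and apply Corollary \ref{cor:injective resolution for a compact tiled surface} to obtain a \emph{finite} embedding-resolution $\remb=\remb(Y,\chigrp(Y))$. By Lemma \ref{lem:resolution-sum-of-probabilities} we have
\[
\emb_{Y}(N)=\sum_{f\in\remb}\emb_{Z_{f}}(N),
\]
and each $Z_{f}$ is compact and BR. Hence Theorem \ref{thm:E^emb of BR and SBR}(1) gives, for each $f$, an asymptotic expansion $\emb_{Z_{f}}(N)\asyexp N^{\chigrp(Z_{f})}\bigl(a_{0}(Z_{f})+a_{-1}(Z_{f})N^{-1}+\ldots\bigr)$ with $a_{0}(Z_{f})\in\mathbb{Q}_{>0}$.

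Next, since each $f\in\remb$ is an embedding whose image meets every component of $Z_{f}$ (by part $(iii)$ of Corollary \ref{cor:injective resolution for a compact tiled surface}), Lemma \ref{lem:in resolution of Y chigrp<=00003Dchigroup(Y)} gives $\chigrp(Z_{f})\le\chigrp(Y)$ for every $f\in\remb$. Moreover, each $\chigrp(Z_{f})$ is an integer, because BR compact sub-covers have labeled fundamental groups which are finitely generated subgroups of $\Lambda_{g}$ (hence free or surface groups, whose EC is an integer). Summing finitely many such expansions, we obtain an asymptotic expansion for $\emb_{Y}(N)$ in integer powers of $N$, with top power at most $N^{\chigrp(Y)}$, which establishes \eqref{eq:asy-exp-of-E^emb above surface}.

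Finally, we identify the leading coefficient. By the second part of our choice, any $f\in\remb$ with $\chigrp(Z_{f})\ge\chi_{0}=\chigrp(Y)$ has $Z_{f}$ SBR, so Theorem \ref{thm:E^emb of BR and SBR}(2) yields $a_{0}(Z_{f})=1$. Combined with the upper bound $\chigrp(Z_{f})\le\chigrp(Y)$ from the previous step, the coefficient of $N^{\chigrp(Y)}$ in $\emb_{Y}(N)$ equals
\[
a_{0}=\bigl|\{f\in\remb\,:\,\chigrp(Z_{f})=\chigrp(Y)\}\bigr|\in\mathbb{Z}_{\ge0}.
\]
To see $a_{0}\ge1$, apply the defining property of the embedding-resolution to the universal lift $\hat{p}\colon Y\hookrightarrow\Upsilon$ of Definition \ref{def:universal lift}, which is injective by Lemma \ref{lem:universal lift is injective}: this yields a factorization $Y\stackrel{f}{\hookrightarrow}Z_{f}\stackrel{\bar{h}}{\hookrightarrow}\Upsilon$ with $f\in\remb$, forcing $\plab(Y)\le\plab(Z_{f})\le\plab(\Upsilon)=\plab(Y)$ componentwise, hence $\plab(Z_{f})=\plab(Y)$ and $\chigrp(Z_{f})=\chigrp(Y)$.

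The one subtle point (and the main thing to verify carefully) is the last step: that Definition \ref{def:resolution} genuinely applies to the universal lift even when $Y$ and $\Upsilon$ are disconnected, so that $\Upsilon$ is a (possibly disconnected, possibly infinite-degree) covering of $X_{G}$. This is immediate from the definition, which places no compactness or connectedness restriction on the target cover; the only thing one must check is that the factorization respects the decomposition into connected components, which follows from the uniqueness clause in the definition.
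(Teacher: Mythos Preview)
Your proof is correct and follows essentially the same approach as the paper: choose $\chi_{0}=\chigrp(Y)$, apply the embedding-resolution of Corollary \ref{cor:injective resolution for a compact tiled surface}, use Lemma \ref{lem:in resolution of Y chigrp<=00003Dchigroup(Y)} to bound the exponents, invoke the SBR property at the top level to get integrality of $a_{0}$, and exhibit the factor of the universal lift as a witness that $a_{0}\ge1$. The paper makes the final step slightly more explicit by tracking basepoints in each component (writing $H_{i}=\plab(Y_{i},y_{i})\le\plab(Z_{f},f(y_{i}))\le\plab(\Upsilon,\hat{p}(y_{i}))=H_{i}$) rather than speaking of containment of conjugacy classes ``componentwise'', but your argument is the same in substance.
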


\begin{proof}
Let $Y$ be an arbitrary compact sub-cover of $X_{G}$. Set $\chi_{0}=\chigrp\left(Y\right)$
and let $\remb{\cal =\remb}\left(Y,\chi_{0}\right)$ be a finite embedding-resolution
as in Corollary \ref{cor:injective resolution for a compact tiled surface}.
By Lemma \ref{lem:in resolution of Y chigrp<=00003Dchigroup(Y)},
$\chigrp\left(Z_{f}\right)\le\chigrp\left(Y\right)$, and as $\emb_{Y}\left(N\right)=\sum_{f\in\remb}\emb_{Z_{f}}\left(N\right)$,
it follows from Theorem \ref{thm:E^emb of BR and SBR } that $\emb_{Y}\left(N\right)$
admits an asymptotic expansion as in \eqref{eq:asy-exp-of-E^emb above surface},
with some $a_{0}\in\mathbb{Q}_{\ge0}$. As every $f\in\remb$ with
$\chigrp\left(Z_{f}\right)\ge\chi_{0}=\chigrp\left(Y\right)$ is SBR,
we get from Theorem \ref{thm:E^emb of BR and SBR }\eqref{enu:E^emb for SBR}
that each such $f$ contributes 1 to $a_{0}$ and so, in fact, $a_{0}\in\mathbb{Z}_{\ge0}$.
It is thus left to show that there is an element of $\remb$ with
$\chigrp\left(Z_{f}\right)=\chigrp\left(Y\right)$.

Let $\hat{p}\colon Y\hookrightarrow\Upsilon$ be the universal lift
from Definition \ref{def:universal lift}, which is injective by Lemma
\ref{lem:universal lift is injective}. By the definition of an embedding-resolution,
this embedding $\hat{p}$ decomposes as 
\[
Y\stackrel{f}{\hookrightarrow}Z_{f}\stackrel{}{\hookrightarrow}\Upsilon,
\]
for some $f\in\remb$. Of course, $Z_{f}$ has the same number of
connected components as $Y$ (and $\Upsilon$). For each connected
component $Y_{i}$ of $Y$ with $y_{i}\in Y_{i}$ a vertex, we have 

\[
H_{i}\defi\plab\left(Y_{i},y_{i}\right)\le\plab\left(Z_{f},f\left(y_{i}\right)\right)\le\plab\left(\Upsilon,\hat{p}\left(y_{i}\right)\right)=H_{i}
\]
and so $\plab\left(Z_{f},f\left(y_{i}\right)\right)=H_{j}$. In particular,
$\chigrp\left(Z_{f}\right)=\chi\left(H_{1}\right)+\ldots+\chi\left(H_{\ell}\right)=\chigrp\left(Y\right)$.
\end{proof}
\begin{example}
Figure \ref{fig:BR with a_0=00003D2} illustrates two different SBR
sub-covers $Z_{1}$ and $Z_{2}$ in a possible resolution of a particular
(BR) sub-cover $Y$. One of them is a torus with one boundary component,
while the other is a pair of pants. In this example, $\plab\left(Y\right)\cong\F_{2}$
and $\chigrp\left(Y\right)=-1$. Both $Z_{1}$ and $Z_{2}$ have,
too, $\chi\left(Z_{1}\right)=\chi\left(Z_{2}\right)=-1$. In fact,
in an embedding-resolution $\remb$ of $Y$ which contains $Z_{1}$
and $Z_{2}$, they must be the only elements of $EC$ $-1$. This
shows that $a_{0}\left(Y\right)=2$, namely, 
\[
\emb_{Y}\left(N\right)\asyexp N^{-1}\cdot\left\{ 2+a_{-1}N^{-1}+a_{-2}N^{-2}+\ldots\right\} .
\]
\end{example}

\begin{figure}
\begin{centering}
\includegraphics[viewport=0bp 0bp 599.718bp 323.821bp,scale=0.7]{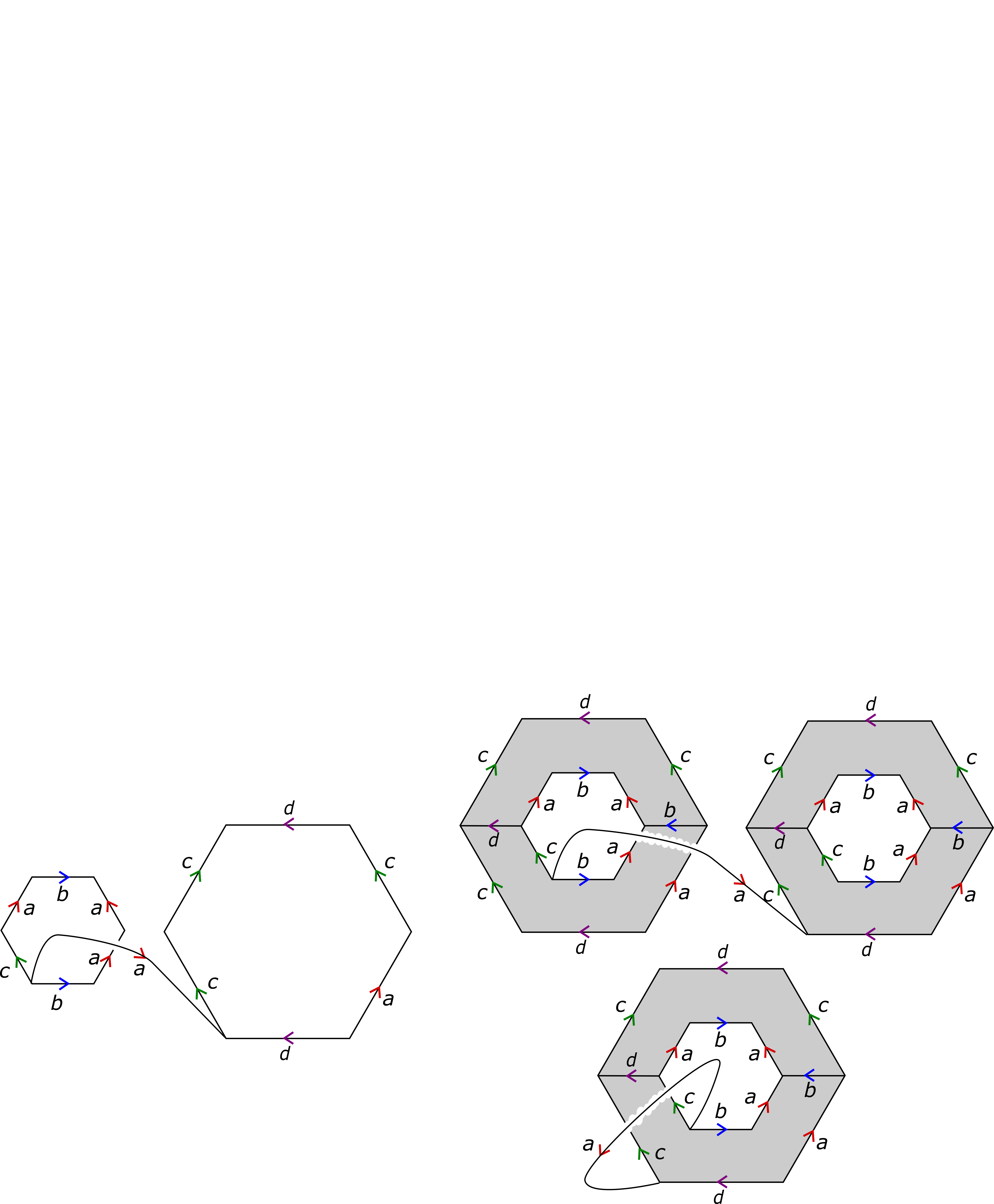}
\par\end{centering}
\caption{\label{fig:BR with a_0=00003D2}On the left hand side is a BR sub-cover
$Y$ of $X_{G}$ where $G=\Lambda_{2}=\left\langle a,b,c,d\,\middle|\,\left[a,b\right]\left[c,d\right]\right\rangle $.
This sub-cover consists of two hexagons connected by an additional
edge, and it satisfies $\chi\left(Y\right)=\protect\chigrp\left(Y\right)=-1$.
On the right there are two distinct SBR sub-covers in which $Y$ is
embedded, and which can serve as part of an embedding-resolution of
$Y$. Both of these have $\chi=\protect\chigrp=-1$ and $\protect\plab\protect\cong\protect\F_{2}$,
yet the upper one has $\protect\plab=\protect\plab\left(Y\right)$,
while the bottom one has $\protect\plab$ which is an HNN-extension
of $\protect\plab\left(Y\right)$.}
\end{figure}

To end this section, we characterize sub-covers $Y$ where $a_{0}\left(Y\right)=1$.
The characterization is stated in Proposition \ref{prop:no matching boundary cycles =00003D=00003D> a_0=00003D1},
and uses the following lemma (which could have fit in the paper \cite{MPcore}
better than the current one).
\begin{lem}
\label{lem:SBR does not bound an annulus}Let $Y\hookrightarrow Z$
be a SBR sub-cover $Y$ embedded in a full cover $Z$. Then there
is no boundary component of $Y$ bounding a disc in $Z\setminus Y$,
nor is there a pair of boundary components of $Y$ bounding an annulus
in $Z\setminus Y$.
\end{lem}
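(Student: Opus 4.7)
The plan is to proceed by contradiction, treating the disc and annulus cases separately, each by a small-cancellation--style combinatorial argument for the presentation of $\Lambda_{g}$. For the disc case, suppose some boundary component $\partial_{0}$ of $Y$ bounds a disc $D \subseteq Z \setminus Y$. Lifting $Y \cup D$ to $\tsg$, the image of $D$ is a reduced disc diagram consisting of some $k \ge 1$ of the $4g$-gons; reducedness is automatic in a cover, since $R = [a_{1},b_{1}]\cdots[a_{g},b_{g}]$ is cyclically reduced and $\tsg$ tiles injectively. Annexing $D$ to $Y$ yields a sub-cover $Y' = Y \cup D$ with $|\partial Y'| = |\partial Y| - |\partial_{0}|$, a \emph{strict} decrease in total boundary length.

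The heart of the argument is then a Dehn/Greendlinger-type lemma tailored to this presentation: in any reduced, nontrivial disc diagram over $\langle a_{1},b_{1},\ldots,a_{g},b_{g} \mid R \rangle$, either some single $4g$-gon meets the outer boundary along a consecutive sub-arc of length at least $2g+1$ (a long block), or some chain of consecutive polygons realizes a long-chain configuration in the sense of \cite[Sec.~3.2]{MPcore}. Either conclusion places a long block or long chain inside $\partial_{0} \subseteq \partial Y$, contradicting boundary reducedness, and hence SBR. The degenerate case $k=1$ is immediate: $\partial_{0}$ then reads the full cyclically reduced relator $R$ of length $4g$ and thereby contains a long block directly.

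For the annulus case, suppose $\partial_{1}, \partial_{2}$ are boundary components of $Y$ bounding an annulus $A \subseteq Z \setminus Y$. If $A$ contains at least one $4g$-gon, an annular-diagram version of the Greendlinger lemma applies and the argument is parallel to the disc case; the substantive difference is that, because annexing an annulus can a priori give boundary change equal to zero, the relevant conclusion one can guarantee is a half-block or half-chain along $\partial_{1} \cup \partial_{2}$, and these are forbidden by SBR outright. If $A$ contains no $4g$-gons, then $A$ is a cylindrical strip of edges, forcing $\partial_{1}$ and $\partial_{2}$ to trace identical boundary subwords with opposite orientations; this configuration falls into the half-chain definition and again contradicts SBR.

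The main obstacle will be establishing the combinatorial Greendlinger-type lemmas in both disc and annular forms and cleanly handling the various chain configurations, which is where the precise definitions of long/half blocks and long/half chains from \cite{MPcore} become essential. I expect the needed machinery to be available, or at least readily adaptable, from the classification of boundary-reducing annexations developed there.
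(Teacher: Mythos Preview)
For the disc case, your argument via Dehn/Greendlinger matches the paper's: a boundary cycle spelling the trivial element must, by Dehn's algorithm, contain a subword of length $\ge 2g+1$ of a cyclic conjugate of $R^{\pm 1}$, i.e.\ a long block, already contradicting BR. You need not invoke long chains here.

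For the annulus case your approach diverges from the paper's. You propose an annular small-cancellation lemma to locate a half-block or half-chain on $\partial_{1}\cup\partial_{2}$. The paper instead invokes a structural property of SBR sub-covers established in \cite[Sec.~4]{MPcore}: in an SBR $Y$, any non-nullhomotopic loop in $Y^{(1)}$ can be greedily shortened inside $Y$, and any two shortest representatives of the same free-homotopy class in $Z$ are related by half-block and half-chain switches, all of which take place inside $Y$. Since $\mathcal{C}_{1}$ and $\mathcal{C}_{2}$ bound an annulus in $Z$ they are freely homotopic in $Z$, hence already freely homotopic inside $Y$; as they are distinct boundary components of the surface $\mathbb{Y}$, this forces $\mathbb{Y}$ itself to be an annulus and $Z=\mathbb{Y}\cup A$ to be a torus, contradicting $g\ge 2$.

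Your annular route has a gap you have flagged but not closed, and it is sharper than you suggest. The notions of half-block and half-chain refer to an arc on a \emph{single} boundary cycle of $Y$, whereas a $4g$-gon inside $A$ typically meets \emph{both} $\partial_{1}$ and $\partial_{2}$. In the model case where $A$ is a cyclic chain of $4g$-gons, each sharing one edge with each neighbour, every $4g$-gon may contribute only $2g-1$ consecutive edges to each of $\partial_{1}$ and $\partial_{2}$---strictly less than a half-block on either side---and it is not evident that the concatenated pattern along $\partial_{1}$ alone constitutes a half-chain in the sense of \cite{MPcore}. So the implication ``nontrivial annular diagram $\Rightarrow$ half-block or half-chain on $\partial Y$'' is not a routine Greendlinger statement; if it holds, it requires a dedicated argument engaging the specific combinatorics of the relator. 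Finally, your ``$A$ contains no $4g$-gons'' case is vacuous: a component of $Z\setminus\mathbb{Y}$ that is an annulus has positive area and therefore contains at least one $2$-cell of $Z$.
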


\begin{proof}
If some boundary cycle $\mathcal{C}$ of $Y$ bounds a disc, then
$\mathcal{C}$ spells a word which is equal to the trivial word in
$\Lambda_{g}$. By the classical results of Dehn \cite{Dehn}, $\mathcal{C}$
must contain a long block, contradicting the assumption that $Y$
is SBR.

Now assume that $\mathcal{C}_{1}$ and $\C_{2}$ are two boundary
cycles of $Y$ bounding an annulus of $Z\setminus Y$. They both represent
the same free-homotopy class in $Z$, and they are not null-homotopic
(otherwise we get once again a contradiction to \cite{Dehn}). One
of the key features of SBR sub-covers such as $Y$ is that given a
non-nullhomotopic loop $\C$ in its $1$-skeleton $Y^{\left(1\right)}$,
one can greedily shorten $\C$ by replacing a long block along some
$4g$-gon with its complement on the other side of this $4g$-gon.
Then, any two shortest representatives of the free homotopy class
of $\C$ can be obtained one from the other by ``half-block switches''
or a ``half-chain switch'' (see \cite[Sec.~4]{MPcore}). All these
switches take place inside $Y$. We conclude that $\C_{1}$ and $\C_{2}$
are freely-homotopic \emph{inside $Y$}, which means that $Y$ is
topologically an annulus, and $Z$ a genus-1 torus (or a Klein bottle).
This contradicts the fact that $Z$ is a covering space of a genus-$g$
surface with $g\ge2$.
\end{proof}
\begin{defn}[Matching boundary cycles]
\label{def:matching bdy cycles} We say that two different boundary
cycles $\gamma_{1}$ and $\gamma_{2}$ of a sub-cover $Y$ are \textbf{matching}
if $\left(i\right)$ there is an embedding $f\colon Y\hookrightarrow Z$
into a (full) cover $Z$ of $X_{G}$ such that one of the connected
components of $Z-f\left(Y\right)$ is an annuls bounded by $\gamma_{1}$
and $\gamma_{2}$, and $\left(ii\right)$ $\gamma_{1}$ (and $\gamma_{2}$)
do not represent the trivial element of $\Lambda_{g}$.
\end{defn}

\begin{prop}
\label{prop:no matching boundary cycles =00003D=00003D> a_0=00003D1}
Let $G=\Lambda_{g}$ and let $p\colon Y\to X_{G}$ be a compact sub-cover.
Then in the asymptotic expansion \eqref{eq:asy-exp-of-E^emb above surface},
$a_{0}\left(Y\right)=1$ if and only if $Y$ does \emph{not} admit
matching boundary cycles. 
\end{prop}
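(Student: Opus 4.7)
The plan is to compute $a_0(Y)$ as the number of top-order-contributing elements of a suitable embedding-resolution, and then match those elements against matching-cycle data on $\partial Y$.

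\medskip
\noindent\textbf{Step 1 (Setup).} Let $\remb=\remb(Y,\chigrp(Y))$ be the embedding-resolution furnished by Corollary \ref{cor:injective resolution for a compact tiled surface}. Following the proof of Corollary \ref{cor:E^emb of arbitrary tiled surface}, and using the fact that each $Z_f$ in $\remb$ with $\chigrp(Z_f)\ge\chigrp(Y)$ is SBR and therefore has leading coefficient $1$ by Theorem \ref{thm:E^emb of BR and SBR }\eqref{enu:E^emb for SBR},
\[
a_0(Y)=\bigl|\{f\in\remb:\chigrp(Z_f)=\chigrp(Y)\}\bigr|.
\]
For such $f$, Lemma \ref{lem:in resolution of Y chigrp<=00003Dchigroup(Y)}, applied along $Y\xhookrightarrow{f}Z_f\hookrightarrow\Upsilon_{Z_f}$ (with $\Upsilon_{Z_f}$ the universal lift of $Z_f$), forces every complementary region $\overline{C_j}$ of the thick part of $f(Y)$ in $\Upsilon_{Z_f}$ to be an annulus (compact or non-compact).

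\medskip
\noindent\textbf{Step 2 (Matching cycles $\Rightarrow a_0(Y)\ge 2$).} Given matching cycles $\gamma_1,\gamma_2$ via an embedding $g\colon Y\hookrightarrow W$ and an annulus $A\subseteq W\setminus g(Y)$ with $\partial A=\gamma_1\cup\gamma_2$, form the compact sub-cover $Z':=g(Y)\cup A$. Then $\chigrp(Z')=\chigrp(Y)+\chi(A)=\chigrp(Y)$, while $\plab(Z')$ is a proper HNN-extension or amalgamation of $\plab(Y)$ across the non-trivial cyclic subgroups generated by the $\gamma_i$'s, hence $\plab(Z')\supsetneq\plab(Y)$. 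Its SBR closure $Z''$ (an SBR-preserving, $\chigrp$-preserving enlargement) still has $\plab(Z'')\supsetneq\plab(Y)$. The resolution property decomposes $Y\hookrightarrow Z''$ uniquely as $Y\xhookrightarrow{f_1}Z_{f_1}\hookrightarrow Z''$ with $f_1\in\remb$; applying Lemma \ref{lem:in resolution of Y chigrp<=00003Dchigroup(Y)} to both legs of this factorization sandwiches $\chigrp(Y)\le\chigrp(Z_{f_1})\le\chigrp(Y)$, and forces $\plab(Z_{f_1})\supsetneq\plab(Y)$. On the other hand, the universal lift $\hat p\colon Y\hookrightarrow\Upsilon_Y$ from Definition \ref{def:universal lift} decomposes through a different element $f_0\in\remb$ with $\plab(Z_{f_0})=\plab(Y)$. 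Thus $f_0\neq f_1$ and $a_0(Y)\ge 2$.

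\medskip
\noindent\textbf{Step 3 (No matching cycles $\Rightarrow a_0(Y)=1$).} For any $f\in\remb$ with $\chigrp(Z_f)=\chigrp(Y)$, suppose some $\overline{C_j}$ in Step 1 were a compact annulus. Both of its boundary circles would lie on $\partial Y$ and would represent a non-trivial element of $\plab(Z_f)\hookrightarrow\Lambda_g$: if the generator of $\pi_1(\overline{C_j})\cong\mathbb{Z}$ were trivial in $\Upsilon_{Z_f}$, each boundary circle would also bound a disc there, and together with the annulus one would get an embedded $S^2\subseteq\Upsilon_{Z_f}$, impossible since $\Upsilon_{Z_f}$ is covered by the hyperbolic plane. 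Hence the pair $(\gamma_1,\gamma_2)$ would match, contradicting the hypothesis. So every $\overline{C_j}$ is a non-compact annulus, which implies $\Upsilon_{Z_f}$ deformation-retracts onto the thick part of $f(Y)$, giving $\plab(Z_f)=\plab(Y)$ and therefore $\Upsilon_{Z_f}=\Upsilon_Y$. Consequently every candidate $Z_f$ is a compact SBR sub-cover of $\Upsilon_Y$ containing $f(Y)$ with $\chigrp(Z_f)=\chigrp(Y)$.

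\medskip
\noindent\textbf{Main obstacle.} The remaining delicate step is the \emph{uniqueness} of $Z_f$ in Step 3. I would prove that any two such compact SBR sub-covers of $\Upsilon_Y$ containing $Y$ with the same $\chigrp$ and the same $\plab$ as $Y$ must coincide, by inspecting the explicit SBR-reduction procedure used to construct $\remb$ in \cite{magee2020asymptotic}: without matching-cycle data, the forced annexations of $4g$-gons along long blocks, long chains, half-blocks and half-chains are each uniquely determined, so the resulting SBR closure of $Y$ inside $\Upsilon_Y$ is canonical. This step is where the main technical work sits, as it requires showing that every potential ambiguity in the SBR reduction comes from the existence of a compact annular complement component, i.e., precisely from matching boundary cycles.
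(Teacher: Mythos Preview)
Your approach mirrors the paper's, but two points need attention.

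The ``Main obstacle'' you flag is not an obstacle: it dissolves via the uniqueness clause built into Definition \ref{def:resolution}. Once Step 3 gives $\plab(Z_f)=\plab(Y)$, the universal lift of $Z_f$ embeds $Z_f\hookrightarrow\Upsilon_Y$, and the composite $Y\stackrel{f}{\hookrightarrow}Z_f\hookrightarrow\Upsilon_Y$ must equal $\hat p$ by uniqueness of lifts (Proposition \ref{prop:lifting criterion}). But $\hat p$ also decomposes through $\hat f\in\remb$, so by the uniqueness in the definition of an embedding-resolution, $f=\hat f$. No inspection of the SBR-reduction procedure from \cite{magee2020asymptotic} is required. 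The paper runs exactly this argument in contrapositive form: if $g\ne\hat f$ then $Z_g$ cannot embed in $\Upsilon_Y$ compatibly with $\hat p$, which forces $\plab(Z_g)\supsetneq\plab(Y)$ and hence, via the complementary-region analysis, produces a compact annular complementary piece with non-trivial boundary, i.e.\ matching cycles.

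Step 2 has genuine gaps. First, the embedding-resolution only factors embeddings into \emph{full} covers, so you cannot decompose $Y\hookrightarrow Z''$ through $\remb$ when $Z''$ is merely a compact SBR sub-cover; you should instead embed $Y$ into the universal lift $\Upsilon'$ of $Y'=g(Y)\cup A$ (this is what the paper does). Second, Lemma \ref{lem:in resolution of Y chigrp<=00003Dchigroup(Y)} only controls $\chigrp$ and does not by itself ``force $\plab(Z_{f_1})\supsetneq\plab(Y)$'': a priori $Z_{f_1}$ could satisfy $\plab(Z_{f_1})=\plab(Y)$ while still sitting inside something with larger $\plab$. The missing ingredient is Lemma \ref{lem:SBR does not bound an annulus}: since $Z_{f_1}$ is SBR and embedded in the full cover $\Upsilon'$, no pair of its boundary components bounds an annulus in $\Upsilon'\setminus Z_{f_1}$, which forces $Z_{f_1}\supseteq Y'$ and hence $\plab(Z_{f_1})\supseteq\plab(Y')\supsetneq\plab(Y)$.
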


\begin{proof}
Let $f\colon Y\hookrightarrow Z$ be an embedding of $Y$ into a full
cover of $X_{G}$ with $f\left(Y\right)$ meeting every component
of $Z$. As in the proof of Lemma \ref{lem:in resolution of Y chigrp<=00003Dchigroup(Y)},
denote by $\overline{C_{1}},\ldots,\overline{C_{q}}$ the connected
components of $Z-\mathbb{Y}$. By that same proof, $\chigrp\left(Z\right)=\chigrp\left(Y\right)$
if and only if $\left(i\right)$ every connected component $Y_{i}$
of $Y$ with trivial $\plab\left(Y_{i}\right)$ is embedded in its
own connected component $Z_{j}$ of $Z$ with $\plab\left(Z_{j}\right)=1$,
and $\left(ii\right)$ every $\overline{C_{t}}$ is either a disc
or an annulus.

Now let $\remb=\remb\left(Y,\chigrp\left(Y\right)\right)$ be the
embedding-resolution from Corollary \ref{cor:injective resolution for a compact tiled surface}.
Let $\hat{f}\in\remb$ be the element taking part in the decomposition
of the universal lift $\hat{p}\colon Y\hookrightarrow\Upsilon$ from
Definition \ref{def:universal lift}. As in the proof of Corollary
\ref{cor:E^emb of arbitrary tiled surface}, $\plab\left(Z_{\hat{f}}\right)=\plab\left(Y\right)$
and $\chigrp\left(Z_{\hat{f}}\right)=\chigrp\left(Y\right)$. So this
$\hat{f}$ contributes $1$ to the coefficient $a_{0}$ from \eqref{eq:asy-exp-of-E^emb above surface}.
Note that if a pair of matching boundary cycles is realized in some
embedding of $Y$ in a full cover, then $\plab$ of the codomain strictly
contains that of $Y$ (it contains a non-trivial amalgamated product
or HNN extension of $\plab\left(Y\right)$). In particular, $\hat{p}\colon Y\hookrightarrow\Upsilon$
does not realize any pair of matching boundary cycles. 

Now assume that $a_{0}\ge2,$ namely, that there exists another element
$\hat{f}\ne g\in\remb$ with $\chigrp\left(Z_{g}\right)=\chigrp\left(Y\right)$.
Let $\overline{g}\colon Y\to\Upsilon_{g}$ be the composition of $g$
with the universal lift of $Z_{g}$ (we let $\Upsilon_{g}$ denote
the codomain of this lift) . By the uniqueness in the definition of
a resolution, $Z_{g}$ does not embed into $\Upsilon$ in a way compatible
with the universal lift $\hat{p}$. So for some component $\left(Y_{i},y_{i}\right)$
of $Y$, we must have $\plab\left(\Upsilon_{g},f\left(y_{i}\right)\right)=\plab\left(Z_{g},f\left(y_{i}\right)\right)\gneqq\plab\left(Y_{i},y_{i}\right)$.
But this can only happen, by the first paragraph of this proof applied
to $\overline{g}$, if some $\overline{C_{i}}$ is an annulus which
does not border any components of $Y$ with trivial $\plab$. This
precisely means that $\overline{g}$ realizes some pair of matching
boundary cycles.

Conversely, if $Y$ admits a pair of matching boundary cycles, we
may consider the embedding $f\colon Y\hookrightarrow Z$ from Definition
\ref{def:matching bdy cycles} that realizes this pair. Let $C$ be
the connected component of $Z-f\left(Y\right)$ which is an annulus
bounded by the matching pair. Let $Y'=f\left(Y\right)\cup C\subseteq Z$.
Then $\chigrp\left(Y'\right)=\chigrp\left(Y\right)$. Let $h'\colon Y'\hookrightarrow\Upsilon'$
be the universal lift of $Y'$, and $j\colon Y\hookrightarrow\Upsilon'$
the resulting embedding of $Y$ in $\Upsilon'$. Then by the definition
of $\remb$, $j$ decomposes through some $g\in\remb$, 
\[
Y\stackrel{g}{\hookrightarrow}Z_{g}\hookrightarrow\Upsilon'.
\]
By Lemma \ref{lem:SBR does not bound an annulus}, $Z_{g}$ must contain
$Y'$. Thus $\plab\left(Y,y\right)\lvertneqq\plab\left(Z_{g},j\left(y\right)\right)$
for any vertex $y\in Y$. But $\plab\left(Y,y\right)=\plab\left(Z_{\hat{f}},\hat{f}\left(y\right)\right)$,
and so $g\ne\hat{f}$. As $\chigrp\left(Z_{g}\right)=\chigrp\left(Y\right)$,
we obtain that $a_{0}\left(Y\right)\ge2$.
\end{proof}
\begin{cor}
\label{cor:Y with a_0=00003D1}In the following cases, a compact sub-cover
$Y$ of $X_{G}$ satisfies $a_{0}\left(Y\right)=1$:
\begin{enumerate}
\item \label{enu:a_0=00003D1 if p_1(Y) is trivial}$\plab\left(Y_{i}\right)$
is trivial for every connected component $Y_{i}$ of $Y$.
\item \label{enu:a_0=00003D1 if Y is annulus}$Y$ is a single topological
annulus.
\item \label{enu:a_0=00003D1 if Y is a union of isomorphic annuli}Y is
a disjoint union of several copies of the same topological annulus.
\item \label{enu:a_0=00003D1 if no bdy cycles are conjugate}No two different
boundary cycles $\gamma_{1}$ and $\gamma_{2}$ of $Y$ satisfy that
$\gamma_{1}$ is conjugate to $\gamma_{2}$ or to $\gamma_{2}^{-1}$.
\item \label{enu:a_0=00003D1 if Y is a union of several isomorphic annuli, no two are conjugates}Y
is a disjoint union of topological annuli, where every two are either
identical or have non-conjugate boundary cycles.
\item \label{enu:a_0=00003D1 if Y is-SBR.}$Y$ is SBR.
\end{enumerate}
\end{cor}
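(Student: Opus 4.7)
The plan is to deduce every item from Proposition \ref{prop:no matching boundary cycles =00003D=00003D> a_0=00003D1}, which reduces the equality $a_0(Y)=1$ to the verification that $Y$ carries no pair of matching boundary cycles in the sense of Definition \ref{def:matching bdy cycles}. So for each case I would inspect the topology of possible embeddings $Y\hookrightarrow Z$ into full covers and rule out any annular complementary region bounded by a pair of non-nullhomotopic boundary cycles.

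Item \ref{enu:a_0=00003D1 if p_1(Y) is trivial} is immediate: if $\plab(Y_i)$ is trivial for every component $Y_i$, then every closed edge-path in $Y$, and in particular every boundary cycle, represents the trivial element of $\Lambda_g$, so condition $(ii)$ of Definition \ref{def:matching bdy cycles} fails universally. For item \ref{enu:a_0=00003D1 if Y is annulus} I would argue by contradiction: if the two boundary cycles $\gamma_1,\gamma_2$ of an annulus $Y$ match via $f\colon Y\hookrightarrow Z$ with annular complementary region $C$, then $f(Y)\cup C$ is obtained by gluing two annuli along both pairs of boundary circles, producing a closed orientable surface with $\chi=0$, i.e.\ a torus, which must be a connected component of $Z$. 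But then $\mathbb{Z}^2\le\pi_1(X_G)=\Lambda_g$, contradicting the hyperbolicity of $\Lambda_g$ for $g\ge 2$ (this is essentially the same obstruction used in the second half of the proof of Lemma \ref{lem:SBR does not bound an annulus}).

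For item \ref{enu:a_0=00003D1 if no bdy cycles are conjugate}, an annulus cobordism inside $Z\setminus f(Y)$ between $\gamma_1$ and $\gamma_2$ exhibits them as freely homotopic in $Z$, hence $\gamma_1$ and $\gamma_2^{-1}$ (depending on orientation) are conjugate in $\pi_1(Z)$, and therefore conjugate in $\Lambda_g$ since the covering projection induces an injection on fundamental groups; thus the non-conjugacy hypothesis rules out matching. Item \ref{enu:a_0=00003D1 if Y is a union of several isomorphic annuli, no two are conjugates} combines this with item \ref{enu:a_0=00003D1 if Y is annulus}: intra-copy matching between the two boundaries of the same annular component produces a torus as above, while inter-copy matching between non-identical annular components requires conjugate boundary labels, contrary to assumption. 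Item \ref{enu:a_0=00003D1 if Y is a union of isomorphic annuli} is the special case where all annular components are identical, and follows from the same two ideas together with the observation that the ``new'' embedding built from inter-copy matching produces a single annulus whose $\plab$ agrees with that of each individual copy, so that a careful comparison with the universal lift shows it does not yield a genuinely new element of $\remb$. Finally, item \ref{enu:a_0=00003D1 if Y is-SBR.} is the cleanest: Lemma \ref{lem:SBR does not bound an annulus} directly asserts that an SBR sub-cover has no two boundary cycles bounding an annulus in any ambient cover, which is exactly the absence of matching pairs.

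The main obstacle is items \ref{enu:a_0=00003D1 if Y is a union of isomorphic annuli} and \ref{enu:a_0=00003D1 if Y is a union of several isomorphic annuli, no two are conjugates}, where naive matching between distinct identical copies appears to produce a fresh annular extension; the delicate point is to show that in the embedding-resolution $\remb$ such an extension is already represented by the universal-lift element, because identifying the copies preserves $\plab$ per connected component and hence does not contribute an additional term to the coefficient $a_0$ in the asymptotic expansion \eqref{eq:asy-exp-of-E^emb above surface}.
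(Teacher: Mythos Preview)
Your treatment of items \ref{enu:a_0=00003D1 if p_1(Y) is trivial}, \ref{enu:a_0=00003D1 if Y is annulus}, \ref{enu:a_0=00003D1 if no bdy cycles are conjugate} and \ref{enu:a_0=00003D1 if Y is-SBR.} matches the paper's proof essentially verbatim.

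For items \ref{enu:a_0=00003D1 if Y is a union of isomorphic annuli} and \ref{enu:a_0=00003D1 if Y is a union of several isomorphic annuli, no two are conjugates}, however, there is a genuine gap. You propose that an inter-copy matching between two identical annuli ``does not yield a genuinely new element of $\remb$'' because ``identifying the copies preserves $\plab$ per connected component''. This is false: if two disjoint copies of $A$ are joined by a complementary annulus inside some cover $Z$, the resulting \emph{connected} sub-cover is a legitimate element of the embedding-resolution with the same $\chigrp$ but with a single component, whereas the universal lift has two separate components --- these are distinct elements of $\remb$, and by the proof of Proposition \ref{prop:no matching boundary cycles =00003D=00003D> a_0=00003D1} this forces $a_0\ge 2$. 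Since that proposition is an if-and-only-if, you cannot salvage the conclusion by arguing about the resolution directly; you must show that no matching pair exists at all.

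The paper's argument does exactly this. Write $\gamma_1,\gamma_2$ for the two boundary cycles of $A$. If a copy of $\gamma_1$ matches a copy of $\gamma_2$ (possibly from different copies of $A$), then the labeled complementary annulus can be re-glued to the two boundaries of a \emph{single} copy of $A$, producing a torus that is a genuine cover of $X_G$ --- the same contradiction as in item \ref{enu:a_0=00003D1 if Y is annulus}. If instead two copies of $\gamma_1$ match, one uses that no non-trivial element of $\Lambda_g$ is conjugate to its own inverse: the two $\gamma_1$'s must then bound the annulus with matching orientations, so either $A$ has mirror symmetry (reducing to the $\gamma_1$--$\gamma_2$ case) or the two embedded copies of $A$ sit with opposite orientations in the same cover, which is impossible because the cyclic order of half-edges at each vertex is determined by the edge labels alone. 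Item \ref{enu:a_0=00003D1 if Y is a union of several isomorphic annuli, no two are conjugates} then combines this with item \ref{enu:a_0=00003D1 if no bdy cycles are conjugate}. The key ingredients you are missing are the ``transplant'' of the complementary annulus onto a single copy of $A$, and the orientation argument via non-self-conjugacy in $\Lambda_g$.
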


\begin{proof}
Any matching pair of boundary cycles consists of boundary cycles corresponding
to a non-trivial element of $\Lambda_{g}$, so part \ref{enu:a_0=00003D1 if p_1(Y) is trivial}
follows immediately from Proposition \ref{prop:no matching boundary cycles =00003D=00003D> a_0=00003D1}.
If an annulus has a matching pair of boundary cycles, then by definition,
it can be embedded in a genus-one torus, which is impossible as a
torus cannot cover $X_{G}$, and part \ref{enu:a_0=00003D1 if Y is annulus}
follows. 

Now assume that $Y$ is a disjoint union of several copies of $A$,
where the thick version $\mathbb{A}$ of $A$ is an annulus with boundary
cycles $\gamma_{1}$ and $\gamma_{2}$. If $\plab\left(A\right)=1$
we reduce to part \ref{enu:a_0=00003D1 if p_1(Y) is trivial}, so
assume otherwise. Note that $A$ has a mirror symmetry swapping $\gamma_{1}$
and $\gamma_{2}$ if and only if it is a 1-dimensional simple cycle.
Assume towards contradiction that $Y$ admits a pair of matching boundary
cycles. If this pair involves one copy of $\gamma_{1}$ and one of
$\gamma_{2}$, we can use the same annulus bounded between them to
connect the two boundary components of the same copy of $A$ and thus
obtain a torus which is a legitimate covering space of $X_{G}$ (see
\cite[Prop.~4.3]{MPcore}), which, as before, is impossible. If the
matching involves two copies of $\gamma_{1}$ then, as no non-trivial
element of $\Lambda_{g}$ is conjugate to its inverse, these two copies
of $\gamma_{1}$ bound an annulus so that they have matching orientations.
If $A$ has mirror symmetry, we return to the previous case where
the matching involves $\gamma_{1}$ and $\gamma_{2}$. Otherwise,
we get that on the same covering space of $X_{G}$ we have two copies
of $A$ with opposite orientations, which is impossible (for example,
the cyclic order of the half-edges at every vertex is determined by
the edge-labels alone -- see \cite[Prop.~3.4]{MPcore}). This shows
part \ref{enu:a_0=00003D1 if Y is a union of isomorphic annuli}.

Part \ref{enu:a_0=00003D1 if no bdy cycles are conjugate} is immediate
from Proposition \ref{prop:no matching boundary cycles =00003D=00003D> a_0=00003D1}
and the fact that the two boundary components of an annulus inside
a cover of $X_{G}$ must represent conjugates in $\Lambda_{g}$. Part
\ref{enu:a_0=00003D1 if Y is a union of several isomorphic annuli, no two are conjugates}
follows from combining the arguments of parts \ref{enu:a_0=00003D1 if Y is a union of isomorphic annuli}
and \ref{enu:a_0=00003D1 if no bdy cycles are conjugate}. Finally,
part \ref{enu:a_0=00003D1 if Y is-SBR.} is Theorem \ref{thm:E^emb of BR and SBR }\eqref{enu:E^emb for SBR}.
\end{proof}
Let us stress that part \ref{enu:a_0=00003D1 if Y is-SBR.} also falls
under the content of Proposition \ref{prop:no matching boundary cycles =00003D=00003D> a_0=00003D1}.
Indeed, if $Y$ is SBR and is embedded in a full cover $Z$, then
no connected component of $Z\setminus Y$ is an annulus bounded by
two non-nullhomotopic cycles of $Z$, by Lemma \ref{lem:SBR does not bound an annulus}.

\section{Sub-covers of $X_{\Gamma}$: expectations and asymptotic expansion
\label{sec:arbitrary sub-coverings}}

We can now prove Theorem \ref{thm:E^emb is N^chi and asym expansion}
for an arbitrary compact sub-cover $Y$ of $X_{\Gamma}$. Namely,
for $m=m\left(\Gamma\right)$, we show that there are rational numbers
$a_{t}=a_{t}\left(Y\right)$ for $t=0,-\frac{1}{m},-\frac{2}{m},-\frac{3}{m},\ldots$
so that 
\begin{equation}
\emb_{Y}\left(N\right)\asyexp N^{\chigrp\left(Y\right)}\cdot\left\{ a_{0}+a_{-1/m}N^{-1/m}+a_{-2/m}N^{-2/m}+\ldots\right\} ,\label{eq:asy exp ofarbitrary  E^emb, again}
\end{equation}
with $a_{0}\in\mathbb{Z}_{\ge1}$. 

Moreover, we need to show that whenever there are no surface groups
involved, $a_{0}=1$. We show a bit more. Recall from Definition \ref{def:sub-covering}
that $Y|_{G_{i}}$ denotes the subcomplex of $Y$ sitting above $X_{G_{i}}$
for $i=1,\ldots,k$. Also recall that Corollaries \ref{cor:E^emb over free group},
\ref{cor:E^emb above finite} and \ref{cor:E^emb of arbitrary tiled surface}
already established Theorem \ref{thm:E^emb is N^chi and asym expansion}
for sub-covers of $X_{G}$ where $G$ is any single factor of $\Gamma$. 
\begin{prop}[Addendum to Theorem \ref{thm:E^emb is N^chi and asym expansion}]
\label{prop:addendum - value of a0} For a compact sub-cover $Y$
of $X_{\Gamma}$, let $a_{0}^{\left(i\right)}\in\mathbb{Z}_{\ge1}$
denote the leading coefficient (the coefficient of $N^{\chigrp\left(Y|_{G_{i}}\right)}$)
in the asymptotic expansion of $\emb_{Y|_{G_{i}}}\left(N\right)$.
Then 
\[
a_{0}\left(Y\right)=\prod_{i=1}^{k}a_{0}^{\left(i\right)}.
\]
In particular, $a_{0}\left(Y\right)=1$ if and only if, whenever $G_{i}$
is a surface group, the subcomplex $Y|_{G_{i}}$ does not admit matching
pairs of boundary cycles. 
\end{prop}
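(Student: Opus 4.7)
The proof strategy exploits the free-product structure of $\Gamma = G_{1}*\ldots*G_{k}$: a uniformly random $\varphi \in \Hom(\Gamma, S_N)$ is equivalent to a $k$-tuple of independent uniformly random $\varphi_i \in \Hom(G_i, S_N)$. Geometrically, in the $N$-cover $X_\varphi$ the $N$ vertices above $o$ are canonically identified with those above each $v_i$ via the $e_i$-edges, while the rest of the cover above $X_{G_i}$ is determined independently by $\varphi_i$. The plan is to leverage this independence to factorize $\emb_{Y}(N)$ as a product indexed by the factors $G_i$, and then invoke Theorem \ref{thm:E^emb is N^chi and asym expansion} as already established for sub-covers sitting above a single factor in Corollaries \ref{cor:E^emb over free group}, \ref{cor:E^emb above finite}, and \ref{cor:E^emb of arbitrary tiled surface}.

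To set up the factorization, let $v_o = v_o(Y)$ denote the number of vertices of $Y$ above $o$ and let $E_i = |E_i(Y)|$ denote the number of $e_i$-edges in $Y$. An injective lift of $Y$ into $X_\varphi$ amounts to an injection $f_o \colon V_o(Y) \hookrightarrow [N]$ (there are $(N)_{v_o}$ such) together with, for each $i$, an injective lift of $Y|_{G_i}$ into $X_\varphi|_{G_i}$ whose restriction to the $E_i$ vertices of $V_{v_i}(Y)$ attached to $V_o(Y)$ by $e_i$-edges is forced by $f_o$ through the canonical vertex identification. By independence of the $\varphi_i$'s, the conditional expectation given $f_o$ factors as a product over $i$. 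A standard double-counting argument using the $S_N$-symmetry of $\varphi_i$ shows that the expected number of extensions of any prescribed injection on an $E_i$-element subset of $V_{v_i}(Y)$ to a full injective lift of $Y|_{G_i}$ equals $\emb_{Y|_{G_i}}(N)/(N)_{E_i}$. Summing over $f_o$ yields
\begin{equation*}
\emb_{Y}(N) \;=\; (N)_{v_o} \cdot \prod_{i=1}^{k} \frac{\emb_{Y|_{G_i}}(N)}{(N)_{E_i}}.
\end{equation*}

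The last ingredient is the Euler-characteristic additivity
\begin{equation*}
\chigrp(Y) \;=\; v_o(Y) \;-\; \sum_{i=1}^{k} E_i(Y) \;+\; \sum_{i=1}^{k} \chigrp(Y|_{G_i}),
\end{equation*}
which, for connected $Y$, follows from Bass-Serre theory applied to the canonical graph-of-spaces decomposition of $Y$ whose vertex-spaces are the $v_o$ isolated points above $o$ (trivial groups) and the connected components of the $Y|_{G_i}$'s, and whose edge-spaces are the contractible $e_i$-edges (trivial edge groups); the general case reduces by summing over the connected components of $Y$ and noting that the components of $Y|_{G_i}$ refine those of $Y$. Substituting this into the factorization and expanding $(N)_k = N^k(1 + O(1/N))$ against the asymptotic expansions of the $\emb_{Y|_{G_i}}(N)$'s---all with rational coefficients in powers of $N^{-1/m}$, since $|G_i|$ divides $m$ whenever $G_i$ is finite---yields the asymptotic expansion of $\emb_{Y}(N)$ with leading coefficient $a_0(Y) = \prod_{i=1}^{k} a_0^{(i)} \in \mathbb{Z}_{\ge 1}$. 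The ``in particular'' claim then follows: $a_0(Y) = 1$ iff every $a_0^{(i)} = 1$; this holds automatically when $G_i$ is free or finite by Corollaries \ref{cor:E^emb over free group} and \ref{cor:E^emb above finite}, and for a surface-group factor $G_i$ it is equivalent to $Y|_{G_i}$ admitting no matching pair of boundary cycles by Proposition \ref{prop:no matching boundary cycles =00003D=00003D> a_0=00003D1}. The most delicate step is setting up the $S_N$-symmetry argument carefully when the $E_i$ constrained vertices lie across several connected components of $Y|_{G_i}$, but this reduces cleanly to the $S_N$-action on the vertex set of $X_\varphi|_{G_i}$ above $v_i$.
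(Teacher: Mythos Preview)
Your proof is correct and follows essentially the same approach as the paper: you derive the identical factorization
\[
\emb_{Y}(N)=(N)_{v_o}\prod_{i=1}^{k}\frac{\emb_{Y|_{G_i}}(N)}{(N)_{E_i}},
\]
via the same independence-plus-$S_N$-symmetry argument (the paper phrases the symmetry step as ``$p(Z)$ is independent of the chosen vertex embedding'', which is exactly your double-counting), and then read off $a_0(Y)=\prod_i a_0^{(i)}$ from the known expansions over each factor. The one place where your write-up is thinner than the paper's is the Euler-characteristic additivity: your appeal to ``Bass--Serre theory applied to the graph-of-spaces decomposition of $Y$'' literally computes $\chi(\pi_1(Y))$ with vertex groups $\pi_1(C)$, whereas what you need is $\chi(\plab(Y))$ with vertex groups $\plab(C)$. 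One must check that $p_*$ kills no relations beyond those already present in the vertex groups (equivalently, that $\plab(Y)$ really is the fundamental group of the graph of groups with vertex groups $\plab(C)$). The paper handles this in Lemma~\ref{lem:plab(Y) same as graph of groups} by building an auxiliary space $Z\supseteq Y$ with $\pi_1(Z)\cong\plab(Y)$ and verifying injectivity of $\phi_*$ on irreducible paths; your sketch would be complete once this point is made explicit.
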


We will need the following lemma. Recall that $o$ is the basepoint
of $X_{\Gamma}$ and $e_{i}$ is the edge connecting $o$ to $X_{G_{i}}$.
\begin{lem}
\label{lem:plab(Y) same as graph of groups} For any compact sub-cover
$p\colon Y\to X_{\Gamma}$ we have 
\begin{equation}
\chigrp\left(Y\right)=\left|p^{-1}\left(o\right)\right|+\sum_{i=1}^{k}\left(\chigrp\left(Y|_{G_{i}}\right)-\left|p^{-1}\left(e_{i}\right)\right|\right).\label{eq:chigrp(y) from chigrp of Y^(i)}
\end{equation}

\end{lem}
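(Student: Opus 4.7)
The plan is to realize $Y$ as a graph of spaces obtained by pulling back the graph-of-spaces structure of $X_{\Gamma}$, and then apply the standard Euler-characteristic formula for graphs of groups. Concretely, let $\mathcal{T}$ be the graph whose vertices are the vertices in $p^{-1}(o)$ together with the connected components of $Y|_{G_i}$ for every $i$, and whose edges are the edges in $p^{-1}(e_i)$ for every $i$; each such edge joins a vertex in $p^{-1}(o)$ to the (unique) component of $Y|_{G_i}$ containing its other endpoint. The associated vertex spaces are points (above $o$) or components of $Y|_{G_i}$, and every edge space is a point.

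First I would fix a single connected component $Y'$ of $Y$ and let $\mathcal{T}'$ be the induced sub-graph. By iterated Seifert--Van Kampen (equivalently, by Bass--Serre theory), $\pi_1(Y')$ is the fundamental group of the associated graph of groups, all of whose edge groups are trivial. Applying Lemma~\ref{lem:universal lift is injective} to $Y'$ itself and separately to each connected component $C$ of $Y'|_{G_i}$ (every such $C$ is a sub-cover of $X_{\Gamma}$, being a subcomplex of $Y$), the maps $\pi_1(Y') \to \plab(Y')$ and $\pi_1(C) \to \plab(C)$ are isomorphisms. Assigning the trivial group to the vertices above $o$ and $\plab(C)$ to the vertex corresponding to a component $C$ of $Y'|_{G_i}$ therefore identifies the graph-of-groups fundamental group with $\plab(Y')$.

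The main step is the identity
\[
\chi(\pi_1(\mathcal{G})) \;=\; \sum_{v} \chi(G_v) \;-\; \sum_{e} \chi(G_e),
\]
which holds for any finite graph of groups whose vertex and edge groups admit a rational Euler characteristic --- all of ours do, by Definition~\ref{def:EC}. Every edge group here is trivial and so contributes $1$; each trivial vertex group above $o$ contributes $1$; and a vertex coming from a component $C$ of $Y'|_{G_i}$ contributes $\chi(\plab(C))$. Assembling these contributions and recalling that $\sum_{C}\chi(\plab(C)) = \chigrp(Y'|_{G_i})$ where $C$ ranges over components of $Y'|_{G_i}$, one obtains
\[
\chi(\plab(Y')) \;=\; |p^{-1}(o)\cap Y'| \;+\; \sum_{i=1}^{k} \chigrp(Y'|_{G_i}) \;-\; \sum_{i=1}^{k} |p^{-1}(e_i)\cap Y'|.
\]
Summing over the connected components $Y'$ of $Y$ yields \eqref{eq:chigrp(y) from chigrp of Y^(i)}.

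The main obstacle is verifying that the graph-of-spaces decomposition really computes the correct subgroup on the nose; a priori one might worry that $\plab$ of two different components of $Y|_{G_i}$ could identify in $\Gamma$ and cause collapse. But because the edge groups are all trivial, the graph-of-groups fundamental group is simply a free product of the vertex groups with a free group (coming from $H_1(\mathcal{T}')$), so no such collapse occurs and the Euler-characteristic formula applies cleanly. As a sanity check, taking $Y = X_{\Gamma}$ (a single vertex above $o$, one edge above each $e_i$, and $Y|_{G_i} = X_{G_i}$) recovers $\chi(\Gamma) = 1 + \sum_{i=1}^{k}(\chi(G_i) - 1)$, consistent with Definition~\ref{def:EC}.
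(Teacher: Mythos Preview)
There is a genuine gap: your appeal to Lemma~\ref{lem:universal lift is injective} does not give what you claim. That lemma says the universal lift $\hat{p}\colon Y\hookrightarrow\Upsilon$ is injective \emph{as a map of CW-complexes}; it says nothing about $p_{*}\colon\pi_{1}(Y)\to\Gamma$ being injective, and in general it is not. Concretely, take $G_{i}$ finite and let $C\subseteq Y|_{G_{i}}$ be the $1$-skeleton of $X_{G_{i}}$: then $\pi_{1}(C)$ is a free group while $\plab(C)=G_{i}$ is finite, so $\pi_{1}(C)\to\plab(C)$ is far from an isomorphism and $\chi(\pi_{1}(C))\ne\chigrp(C)$. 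The same failure occurs for surface factors, where a sub-cover is $\pi_{1}$-injective only under extra hypotheses (boundary-reduced), not automatically. Consequently, your graph-of-groups computation returns $\chi(\pi_{1}(Y'))$ expressed via the $\chi(\pi_{1}(C))$'s, which is neither $\chigrp(Y')$ nor the right-hand side of \eqref{eq:chigrp(y) from chigrp of Y^(i)}.

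The paper handles exactly this obstacle: rather than decomposing $Y$ itself, it first embeds each $Y|_{G_{i}}$ into a larger piece $Z|_{G_{i}}$ (the universal lift when $G_{i}$ is finite, a boundary-reduced tiled surface when $G_{i}$ is a surface group, and $Y|_{G_{i}}$ unchanged when $G_{i}$ is free) chosen so that $\plab(Z|_{G_{i}})=\plab(Y|_{G_{i}})$ \emph{and} $\pi_{1}(Z|_{G_{i}})\cong\plab(Z|_{G_{i}})$. Only then does the graph-of-spaces argument go through, and one must further check that the global map $\pi_{1}(Z)\to\Gamma$ is injective and that $\pi_{1}(Y)\to\pi_{1}(Z)$ is surjective. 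Your outline is the right shape, but the replacement $Y\rightsquigarrow Z$ is the essential missing step.
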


\begin{proof}
We may assume that $Y$ is connected: the general case follows immediately.
We embed $Y$ in a larger sub-cover $Z$, where for every $i=1,\ldots,k$,
$Y|_{G_{i}}$ is embedded in a space $Z|_{G_{i}}$ according to the
following rules:
\begin{itemize}
\item if $G_{i}$ is free, $Z|_{G_{i}}=Y|_{G_{i}}$,
\item if $G_{i}$ is finite, $f|_{G_{i}}\colon Y|_{G_{i}}\hookrightarrow Z|_{G_{i}}$
is the universal lift (Definition \ref{def:universal lift}) of $Y|_{G_{i}}$
as a sub-cover of $X_{G_{i}}$ (in particular, $Z|_{G_{i}}$ is a
compact full cover of $X_{G_{i}}$),
\item if $G_{i}$ is a surface group, $f|_{G_{i}}\colon Y|_{G_{i}}\hookrightarrow Z|_{G_{i}}$
is the element of $\remb_{Y|_{G_{i}}}=\remb\left(Y|_{G_{i}},\chigrp\left(Y|_{G_{i}}\right)\right)$
from Corollary \ref{cor:injective resolution for a compact tiled surface}
through which the universal lift of $Y|_{G_{i}}$ factors. 
\end{itemize}
We let $Z$ be the union of the $Z|_{G_{i}}$'s together with $Y-\bigcup Y|_{G_{i}}$
(attached in the obvious manner), and $f\colon Y\hookrightarrow Z$
be the embedding obtained from the identity on $Y-\bigcup Y|_{G_{i}}$
and $f|_{G_{i}}$ on $Y|_{G_{i}}$.

For every vertex $y$ in $Y|_{G_{i}}$, we claim that 
\begin{equation}
\plab\left(Y|_{G_{i}},y\right)=\plab\left(Z|_{G_{i}},f\left(y\right)\right)\cong\pi_{1}\left(Z|_{G_{i}},f\left(y\right)\right).\label{eq:plab(Y) isom pi_1(Z)}
\end{equation}
Indeed, in the free case \eqref{eq:plab(Y) isom pi_1(Z)} is trivial.
In the finite case the first equality in \eqref{eq:plab(Y) isom pi_1(Z)}
follows from the definition of the universal lift and the second one
from that $Z|_{G_{i}}$ is a full cover. Finally, in the surface case,
the same argument gives \eqref{eq:plab(Y) isom pi_1(Z)} with $Z|_{G_{i}}$
replaced with the codomain of the universal lift $\Upsilon$ of the
connected component of $y$ in $Y|_{G_{i}}$, but this implies \eqref{eq:plab(Y) isom pi_1(Z)}
as $Z|_{G_{i}}$ is BR and so its embedding in $\Upsilon$ is $\pi_{1}$-injective
\cite[Cor.~4.11]{MPcore}. 

Now $Z$ has the structure of a graph of spaces with the edge-spaces
being the ordinary edges in $\bigcup_{i=1}^{k}p^{-1}\left(e_{i}\right)$.
Fix a vertex $y_{o}\in p^{-1}\left(o\right)$. The sub-covering map
$\phi\colon Z\to X_{\Gamma}$ induces an embedding on the fundamental
group 
\[
\phi_{*}\colon\pi_{1}\left(Z,y_{o}\right)\hookrightarrow\pi_{1}\left(X_{\Gamma},o\right)=\Gamma.
\]
Indeed, every non-trivial element $g\in\pi_{1}\left(Z,y_{o}\right)$
can be described by an irreducible combinatorial path in the $1$-skeleton
of $Z$ based at $y_{o}$: this is a closed path where at each vertex-space
it may ``accumulate'' an element of that vertex group, and if the
path backtracks, the element of the vertex-group in the middle must
be non-trivial. But then the $\phi$-image of this path is irreducible
and thus non-trivial in $X_{\Gamma}$ by \eqref{eq:plab(Y) isom pi_1(Z)}.

Finally, the embedding $f\colon Y\hookrightarrow Z$ induces a \emph{surjective
}homomorphism $f_{*}\colon\pi_{1}\left(Y,y_{o}\right)\twoheadrightarrow\pi_{1}\left(Z,y_{o}\right)$:
this follows again from \eqref{eq:plab(Y) isom pi_1(Z)}. As $p_{*}=\phi_{*}\circ f_{*}$,
we conclude that $\plab\left(Y,y_{o}\right)=p_{*}\left(\pi_{1}\left(Y,y_{o}\right)\right)\cong\pi_{1}\left(Z,y_{o}\right)$.
Hence $\chigrp\left(Y\right)=\chi\left(\pi_{1}\left(Z\right)\right)$,
and the latter is equal to the right hand side of \eqref{eq:chigrp(y) from chigrp of Y^(i)}. 
\end{proof}

\begin{proof}[Proof of Theorem \ref{thm:E^emb is N^chi and asym expansion} and
of Proposition \ref{prop:addendum - value of a0}]
 Let $p\colon Y\to X_{\Gamma}$ denote the sub-covering map. Denote
by $\nu_{o}=\left|p^{-1}\left(o\right)\right|$ the number of vertices
above the vertex $o\in X_{\Gamma}$. For $i=1,\ldots,k$ denote by
$\nu_{i}=v\left(Y|_{G_{i}}\right)$ the number of vertices in $Y|_{G_{i}}$,
and by $\varepsilon_{i}=\left|p^{-1}\left(e_{i}\right)\right|$ the
number of edges projecting to the edge $e_{i}\in X_{\Gamma}$ which
connects $o$ and $X_{G_{i}}$. In an $N$-cover $\hat{X}$ of $X_{\Gamma}$
in our model, the vertices above $o$ are labeled by $\left[N\right]=\left\{ 1,\ldots,N\right\} $.
Every other vertex $u$ in $\hat{X}$ is a neighbor (in the $1$-skeleton
of $\hat{X}$) of exactly one vertex $u'$ in the fiber above $o$,
and we label $u$ by the same label from $\left[N\right]$ as $u'$.

Let $q\colon Z\to X_{G_{i}}$ be a sub-cover of some $X_{G_{i}}$.
Every embedding of the vertices of $Z$ to a cover of $X_{G_{i}}$
can be extended to an embedding of $Z$ in at most one way. Because
we identified the vertices of an $N$-cover of $X_{G_{i}}$ with $\left[N\right]$,
such an embedding of the vertices of $Z$ into an $N$-cover is an
embedding into $\left[N\right]$. Let $p\left(Z\right)$ be the probability
that a given embedding of the \emph{vertices} of $Z$ to $\left[N\right]$
can be extended to an embedding of $Z$ to a random $N$-cover with
vertices $\left[N\right]$. By symmetry, $p\left(Z\right)$ is independent
of the embedding of vertices, so 
\begin{equation}
\emb_{Z}\left(N\right)=\left(N\right)_{v\left(Z\right)}\cdot p\left(Z\right).\label{eq:e^emb vs p(emb)}
\end{equation}
Consider an arbitrary embedding $f$ of the $\nu_{o}$ vertices $p^{-1}\left(o\right)$
of $Y$ into $\left[N\right]$, out of the $\left(N\right)_{\nu_{o}}$
possible ones. For every $i=1,\ldots,k$, the embedding $f$ determines
the embedding of the $\varepsilon_{i}$ vertices of $Y|_{G_{i}}$
incident to edges projecting to $e_{i}$, so there are 
\[
\left(N-\varepsilon_{i}\right)_{\nu_{i}-\varepsilon_{i}}=\frac{\left(N\right)_{\nu_{i}}}{\left(N\right)_{\varepsilon_{i}}}
\]
possible extensions of the embedding $f$ to an embedding of the $\nu_{i}$
vertices of $Y|_{G_{i}}$. Because of the independence of the random
$N$-covers of every $X_{G_{i}}$, we obtain that
\begin{eqnarray*}
\emb_{Y}\left(N\right) & = & \left(N\right)_{\nu_{o}}\cdot\prod_{i=1}^{k}\left[\frac{\left(N\right)_{\nu_{i}}}{\left(N\right)_{\varepsilon_{i}}}p\left(Y|_{G_{i}}\right)\right]\\
 & \stackrel{\eqref{eq:e^emb vs p(emb)}}{=} & \left(N\right)_{\nu_{o}}\left[\prod_{i=1}^{k}\frac{1}{\left(N\right)_{\varepsilon_{i}}}\right]\left[\prod_{i=1}^{k}\emb_{Y|_{G_{i}}}\left(N\right)\right].
\end{eqnarray*}
We already know that each term $\emb_{Y|_{G_{i}}}\left(N\right)$
admits an asymptotic expansion with exponents in $\frac{1}{m\left(G_{i}\right)}\mathbb{Z}$
(where $m\left(G_{i}\right)=1$ for torsion-free group and $m\left(G_{i}\right)=\left|G_{i}\right|$
if $G_{i}$ is finite). Of course, the product of these asymptotic
expansions gives an asymptotic expansion with exponents in $\frac{1}{m\left(\Gamma\right)}\mathbb{Z}$.
Together with the terms $\left(N\right)_{\nu_{o}}\cdot\prod_{i=1}^{k}\frac{1}{\left(N\right)_{\varepsilon_{i}}}$
we get an asymptotic expansion as in \eqref{eq:asy exp ofarbitrary  E^emb, again},
with leading coefficient $a_{0}\defi\prod_{i}a_{0}^{\left(i\right)}$,
and with leading exponent $\nu_{o}-\sum_{i}\varepsilon_{i}+\sum_{i}\chigrp\left(Y^{\left(i\right)}\right)$,
which is equal to $\chigrp\left(Y\right)$ by Lemma \ref{lem:plab(Y) same as graph of groups}.
The final statement of Proposition \ref{prop:addendum - value of a0}
now follows from Proposition \ref{prop:no matching boundary cycles =00003D=00003D> a_0=00003D1}.
\end{proof}

\section{The limit distribution and asymptotic expansion of $\protect\fix_{\gamma}\left(N\right)$\label{sec:limit-distr-and-asymptotic-expansion of fix}}

In this section we prove Theorems \ref{thm:limit expectation of fix}
and \ref{thm:limit distribution of fix} about the limit distribution
of $\fix_{\gamma}\left(N\right)$ as $N\to\infty$ for non-torsion
$\gamma\in\Gamma$, and Theorem \ref{thm:asmptotic expansion} about
the asymptotic expansion of $\mathbb{E}\left[\fix_{\gamma}\left(N\right)\right]$
for arbitrary $\gamma\in\Gamma$. For these results, we consider a
natural sub-cover $p_{\gamma}\colon Y_{\gamma}\to X_{\Gamma}$ such
that for every $\varphi\in\Hom\left(\Gamma,S_{N}\right)$, the number
of fixed points of $\varphi\left(\gamma\right)$ is equal to the number
of lifts of $p_{\gamma}$ to $X_{\varphi}$, the corresponding $N$-cover
of $X_{\Gamma}$. We then proceed by applying Theorem \ref{thm:E^emb is N^chi and asym expansion}
to a natural resolution of $Y_{\gamma}$. All the results in this
paper are immediate for the trivial element of $\Gamma$, so we may
assume that $\gamma\ne1$.

\subsubsection*{A canonical form of $\gamma$}

Fix $1\ne\gamma\in\Gamma$. We may write $\gamma$ in its canonical
form as 
\begin{equation}
\gamma=h_{1}h_{2}\ldots h_{\ell\left(\gamma\right)},\label{eq:canoncial form of gamma}
\end{equation}
where $\ell=\ell\left(\gamma\right)\in\mathbb{Z}_{\ge1}$, $h_{j}\in G_{i_{j}}\setminus\left\{ 1\right\} $
and $i_{j+1}\ne i_{j}$. We may further assume without loss of generality
that $\gamma$ is cyclically reduced, namely, that if $\ell\ge2$,
then $i_{\ell}\ne i_{1}$. Indeed, replacing $\gamma$ with a conjugate
does not alter any of the local statistics of a $\gamma$-random permutation
or the quantities appearing in our results (such as $\left|\H_{\gamma}\right|$
or the integers $t$, $\alpha_{1},\ldots,\alpha_{t}$ and $\beta_{1},\ldots,\beta_{t}$
from Theorem \ref{thm:limit distribution of fix}). 

Recall from Section \ref{subsec:The-graph-of-spaces} that each factor
$G_{i}$ of $\Gamma$ is endowed with a fixed, finite set of generators
-- those labeling the edges in $X_{G_{i}}$. For every $j=1,\ldots,\ell$,
let $w_{j}$ be a shortest word in these generators of $G_{i_{j}}$
representing $h_{j}$. Furthermore, we assume that whenever $i_{j}=i_{s}$
and $h_{j}=h_{s}$ or $h_{j}=h_{s}^{-1}$, then $w_{j}=w_{s}$ or
$w_{j}=w_{s}^{-1}$, respectively.

Finally, if $\ell=1$ and $G=G_{i_{1}}$ is free or a surface group,
then there is a unique non-power $\gamma_{0}\in G$ so that $\gamma=\gamma_{0}^{~q}$
with $q\in\mathbb{Z}_{\ge1}$, and the cyclic subgroups containing
$\gamma$ are precisely\footnote{\label{fn:cyclic subgroups containing gamma in free and surface groups}This
fact is standard, but let us explain it for completeness: when $G$
is either free or a surface group with the generators from Section
\ref{subsec:The-graph-of-spaces}, and $w$ is any word in the generators
that is a shortest representative of its conjugacy class, then the
concatenation of $n$ copies of $w$ is also shortest in its conjugacy
class (this is immediate for free groups and follows from \cite[Lem.~2.11 ]{BirmanSeries}
for surface groups). Therefore, there is a maximal $q\in\mathbb{Z}_{\ge1}$
so that $\gamma$ has a $q$-th root in $G$. Finally, any two elements
in a free or surface group generate a free subgroup, so every two
roots of $\gamma$ belong to the same cyclic subgroup.} $\left\langle \gamma_{0}^{~j}\right\rangle $ for $1\le j|q$. As
above, we may assume by conjugating $\gamma$ if needed, that $\gamma_{0}$
is represented by some word $w_{0}$ in the generators of $G$ which
is a shortest representative of any element in the conjugacy class
of $\gamma_{0}$. We then define $w=w_{1}$ to be the concatenation
of $q$ copies of $w_{0}$. This $w$ represents $\gamma$, and is
shortest among all words representing elements in the conjugacy class
of $\gamma$: this is trivial if $G$ is free, and follows from \cite[Lem.~2.11]{BirmanSeries}
if $G$ is a surface group.

\subsubsection*{Constructing $p_{\gamma}\colon Y_{\gamma}\to X_{\Gamma}$}

We now define the sub-cover $p_{\gamma}\colon Y_{\gamma}\to X_{\Gamma}$.
Let $\ell=\ell\left(\gamma\right)$. We distinguish between the cases
$\ell\ge2$ and $\ell=1$. If $\ell\ge2$, let $Y_{\gamma}$ be a
cycle subdivided by vertices to edges. We first divide the cycle
into $\ell$ parts by $\ell$ vertices, where each of these vertices
is mapped by $p_{\gamma}$ to $o\in X_{\Gamma}$. For every $j=1,\ldots,\ell$,
the $j$-th segment is then subdivided into $\left|w_{j}\right|+2$
edges: the first and last edges are both mapped to $e_{i_{j}}$, and
the $\left|w_{j}\right|$ edges in between are mapped to $X_{G_{i_{j}}}$
according to the word $w_{j}$. We denote by $y$ the vertex mapped
to $o$ at the beginning of the first segment. This is illustrated
in the top-left part of Figure \ref{fig:resolution}.

If $\ell=1$ and as above $G=G_{i_{1}}$ and $w=w_{1}$, let $Y_{\gamma}$
be a cycle subdivided into $\left|w\right|$ edges. Some vertex $y$
is mapped to the base point $v$ of $X_{G}$, and the remaining edges
are mapped by $p_{\gamma}$ to $X_{G}$ according to the word $w$. 

It is still not a priori clear that the map $p_{\gamma}\colon Y_{\gamma}\to X_{\Gamma}$
is a sub-cover. While this is true at least as long as $\gamma$ is
non-torsion, we can bypass the proof by saying that if this is not
the case, we replace $p_{\gamma}\colon Y_{\gamma}\to X_{\Gamma}$
with a sub-cover by lifting it to the connected covering space of
$X_{\Gamma}$ corresponding to $\left\langle \gamma\right\rangle $
and taking the image of the lift with the restricted covering map.

\subsubsection*{A resolution of $p_{\gamma}\colon Y_{\gamma}\to X_{\Gamma}$ and
the proof of Theorem \ref{thm:asmptotic expansion}}

Consider the 'natural' resolution of the sub-cover $p_{\gamma}\colon Y_{\gamma}\to X_{\Gamma}$:
\[
\R_{\gamma}\defi\left\{ f\colon Y_{\gamma}\twoheadrightarrow Z_{f\,}\,\middle|\,f~\mathrm{is~a~surjective~morhpism~of~sub\textnormal{-}covers}\right\} .
\]
This is indeed a resolution as every morphism decomposes uniquely
to a surjective one composed with an injective one. It is finite as
$Y_{\gamma}$ is compact. Figure \ref{fig:resolution} illustrates
such a resolution.

\begin{figure}
\begin{centering}
\includegraphics[viewport=0bp 0bp 600.124bp 323.904bp,scale=0.7]{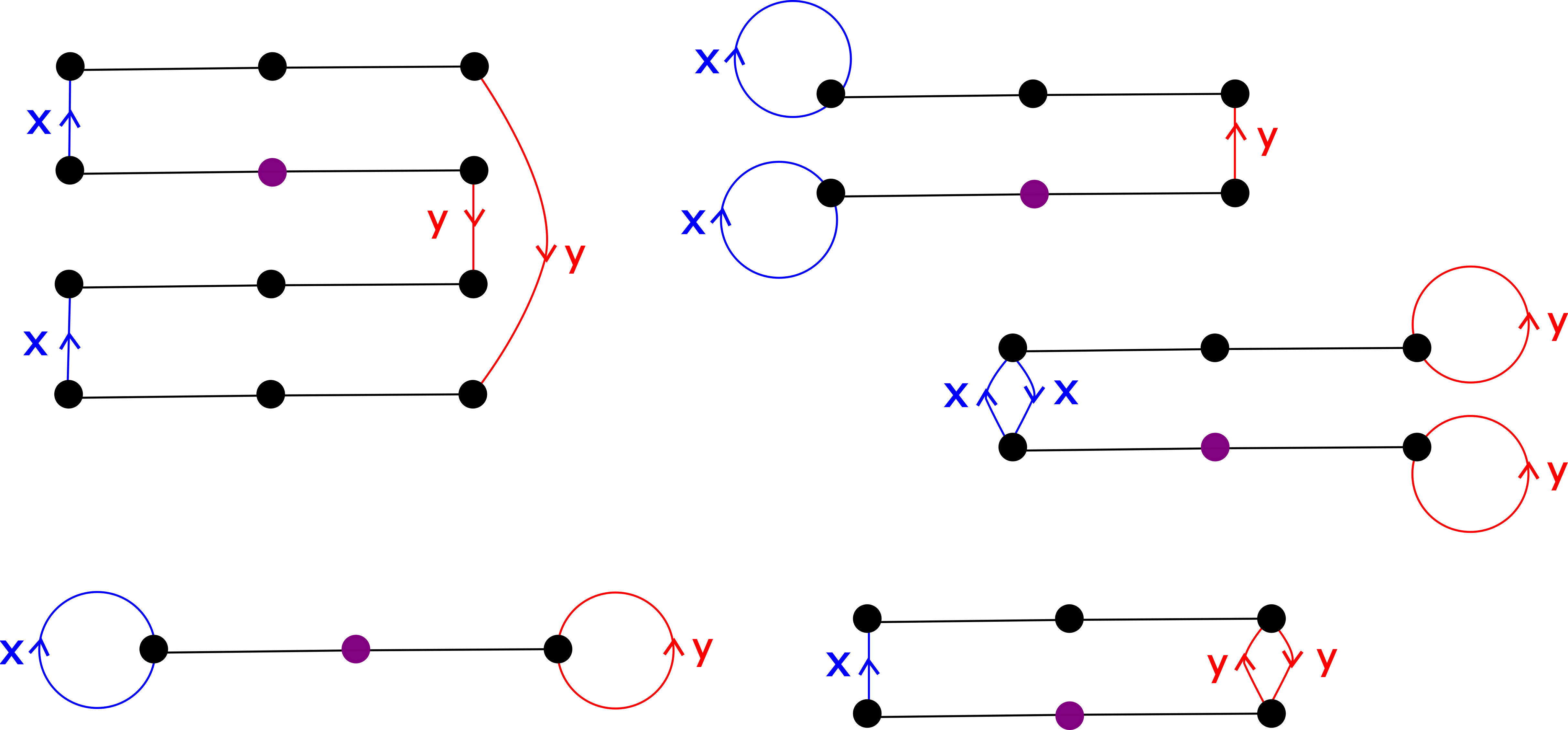}
\par\end{centering}
\caption{\label{fig:resolution}This figure illustrates $p_{\gamma}\colon Y_{\gamma}\to X_{\Gamma}$
and its resolution $\protect\R_{\gamma}$ for $\gamma=xyxy^{-1}\in C_{2}*C_{4}=\left\langle x\right\rangle *\left\langle y\right\rangle $.
The sub-cover $Y_{\gamma}$ is in the upper left part. The resolution
$\protect\R_{\gamma}$ consists of the identity map $Y_{\gamma}\to Y_{\gamma}$
together with four additional surjective morphisms of sub-covers.
In each one of them, the image of the purple vertex of $Y_{\gamma}$
is purple as well. Two of the elements in this resolution -- the
top two -- have $\protect\chigrp=0$.}
\end{figure}

\begin{proof}[Proof of Theorem \ref{thm:asmptotic expansion}]
 Let $X_{\varphi}$ be the $N$-cover of $X_{\Gamma}$ corresponding
to the uniformly random $\varphi\colon\Gamma\to S_{N}$. Recall that
vertices in every fiber of $X_{\varphi}$ are in a given bijection
with $\left[N\right]$ (this is by definition for the vertices above
$o$, and we label every other vertex in the same label as its $o$-fiber
neighbor). In the correspondence between $\Hom\left(\Gamma,S_{N}\right)$
and $N$-covers of $X_{\Gamma}$, the fixed points of $\varphi\left(\gamma\right)$
are precisely the elements $i$ in $\left[N\right]$ so that $\gamma\in\plab\left(X_{\varphi},i\right)$.
By Proposition \ref{prop:lifting criterion}, the number of fixed
points of $\varphi\left(\gamma\right)$ is thus precisely the number
of lifts of $p_{\gamma}\colon Y_{\gamma}\to X_{\Gamma}$ to $X_{\varphi}$.
Namely,
\begin{equation}
\mathbb{E}\left[\fix_{\gamma}\left(N\right)\right]=\mathbb{E}_{Y_{\gamma}}\left(N\right)=\sum_{f\in\R_{\gamma}}\emb_{Z_{f}}\left(N\right),\label{eq:fix_gamma and E_Y}
\end{equation}
where the last equality is by Lemma \ref{lem:resolution-sum-of-probabilities}.
Theorem \ref{thm:asmptotic expansion} now follows from Theorem \ref{thm:E^emb is N^chi and asym expansion}.
\end{proof}

\subsubsection*{The proof of Theorem \ref{thm:limit expectation of fix}}

We now turn to proving Theorem \ref{thm:limit expectation of fix}.
Now $\gamma$ is a non-torsion element, so either $\ell\left(\gamma\right)=1$
and $G=G_{i_{1}}$ is a free group or a surface group, or $\ell\left(\gamma\right)\ge2$.
We need to show that $\mathbb{E}\left[\fix_{\gamma}\left(N\right)\right]\underset{N\to\infty}{\to}\left|\H_{\gamma}\right|$
where $\H_{\gamma}=\left\{ H\le\Gamma\,\middle|\,\chi\left(H\right)=0~\mathrm{and}~H\ni\gamma\right\} $.
Every morphism $f\colon Y_{\gamma}\to Z_{f}$ in the resolution $\R_{\gamma}$
satisfies $\plab\left(Z_{f},f\left(y\right)\right)\ni\gamma$. (Here,
if $\ell\left(\gamma\right)=1$, we consider $\plab\left(Z_{f},f\left(y\right)\right)\le\pi_{1}\left(X_{\Gamma},v_{i_{1}}\right)$,
where $v_{i_{1}}$ is the vertex in $X_{G_{i_{1}}}$. We identify
this group with $\Gamma$ by conjugating with the edge $e_{i_{1}}$.)
In particular, $\plab\left(Z_{f},f\left(y\right)\right)$ is an infinite
subgroup of $\Gamma$ and therefore $\chigrp\left(Z_{f}\right)\le0$
(see the discussion following Definition \ref{def:EC}). Denote 
\[
\R_{\gamma}^{0}\defi\left\{ f\in\R_{\gamma}\,\middle|\,\chigrp\left(Z_{f}\right)=0\right\} .
\]
Theorem \ref{thm:E^emb is N^chi and asym expansion} and \eqref{eq:fix_gamma and E_Y}
now yield that 
\begin{equation}
\lim_{N\to\infty}\mathbb{E}\left[\fix_{\gamma}\left(N\right)\right]=\sum_{f\in\R_{\gamma}^{0}}a_{0}\left(Z_{f}\right),\label{eq:limit expectation as sum of resolution with chi=00003D0}
\end{equation}
where $a_{0}\left(Z_{f}\right)$ is the positive integer from Theorem
\ref{thm:E^emb is N^chi and asym expansion}. Consider the map 
\begin{eqnarray*}
\Psi\colon\R_{\gamma}^{0} & \to & \H_{\gamma}\\
f & \mapsto & H_{f}\defi\plab\left(Z_{f,}f\left(y\right)\right).
\end{eqnarray*}
Theorem \ref{thm:limit expectation of fix} will be proved by showing
that $\Psi$ is a bijection and that $a_{0}\left(Z_{f}\right)=1$
for all $f\in\R_{\gamma}^{0}$.

First, we show that $\Psi$ is injective. Let $f_{1},f_{2}\in\R_{\gamma}^{0}$
with $\plab\left(Z_{f_{1}},f_{1}\left(y\right)\right)=\plab\left(Z_{f_{2}},f_{2}\left(y\right)\right)$.
Then $\left(Z_{f_{1}},f_{1}\left(y\right)\right)$ and $\left(Z_{f_{2}},f_{2}\left(y\right)\right)$
have universal lifts to the \emph{same }full covering $\hat{p}_{i}\colon\left(Z_{f_{i}},f_{i}\left(y\right)\right)\to\left(\Upsilon,u\right)$
for $i=1,2$. Both are injective by Lemma \ref{lem:universal lift is injective},
and so $f_{1}$ and $f_{2}$ coincide with (the surjective part in
the decomposition of) the morphism $f\colon\left(Y,y\right)\to\left(\Upsilon,u\right)$,
and are thus identical.

For the remainder of the proof, we need the following lemma. Recall
that in any morphism $f\colon\left(Y,y\right)\to\left(Z,z\right)$
from a sub-cover to a full cover, $\plab\left(Y,y\right)\le\plab\left(Z,z\right)$.
\begin{lem}
\label{lem:Y_gamma to EC 0 is pi1-onto}If $f\colon Y_{\gamma}\to Z$
is a morphism where $Z$ is a connected full cover of $X_{\Gamma}$
with $\chigrp\left(Z\right)=0$, then it is $\pi_{1}$-surjective,
namely, $\plab\left(f\left(Y_{\gamma}\right),f\left(y\right)\right)=\plab\left(Z,f\left(y\right)\right)$.
\end{lem}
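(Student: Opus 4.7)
Set $H := \plab(Z, f(y))$ and $H' := \plab(f(Y_\gamma), f(y))$; since $p_\gamma = \pi_Z \circ f$ takes $\pi_1(Y_\gamma)$ onto $\langle \gamma \rangle$, we have $\gamma \in H' \le H$. Because $\chigrp(Z) = \chi(H) = 0$, the discussion following Definition~\ref{def:EC} forces $H \cong \mathbb{Z}$ or $H \cong C_2*C_2$. The plan is to show $H' = H$ by exhibiting explicit loops in $f(Y_\gamma)$ based at $f(y)$ whose labels in $\Gamma$ generate $H$, proceeding by cases.

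When $H = \langle h \rangle \cong \mathbb{Z}$, the canonical form of $\gamma = h^q$ is the $q$-fold concatenation of the cyclically reduced canonical form of $h$, so tracing $\gamma$ in $Z$ starting from $f(y)$ amounts to traversing the $h$-loop at $f(y)$ exactly $q$ times. Therefore $f(Y_\gamma)$ coincides, as a subset of $Z$, with this $h$-loop, and $H' = \langle h \rangle = H$. When $H = \langle \alpha, \beta \rangle \cong C_2*C_2$, we have $\gamma = (\alpha\beta)^q$ (possibly after swapping $\alpha,\beta$), and the task is to show that $\alpha$ and $\beta$ individually, not merely their product, lie in $H'$. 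The argument I have in mind is structural: for each factor $G_i$ of $\Gamma$ and each double coset $HgG_i \in H\backslash \Gamma / G_i$ visited by $f(Y_\gamma)$, the vertex-space of $Z$ above it is the cover of $X_{G_i}$ corresponding to the stabilizer $g^{-1}Hg \cap G_i$. Since $\alpha, \beta$ are order-$2$ elements of $\Gamma$ (so each is conjugate into a finite factor), any conjugate $g^{-1} \rho g$ of $\rho \in \{\alpha, \beta\}$ lying in some $G_i$ produces a non-trivial self-loop in the vertex-space visited by $f(Y_\gamma)$ above $X_{G_i}$ at the double coset $HgG_i$. Concatenating this self-loop with the path from $f(y)$ to that vertex-space and back yields a loop in $f(Y_\gamma)$ whose $\Gamma$-label is $\rho$, placing $\rho$ in $H'$.

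The main obstacle is to produce, for each generator $\rho \in \{\alpha, \beta\}$, an index $j \in \{1, \ldots, \ell\}$ so that $f(Y_\gamma)$'s trace actually visits a vertex-space of $Z$ above $X_{G_{i_j}}$ in which some conjugate of $\rho$ occurs — equivalently, an index $j$ with $P_{j-1}^{-1} \rho P_{j-1} \in G_{i_j}$, where $P_j := h_1 h_2 \cdots h_j$ denotes the prefix of $\gamma$'s canonical form. The existence of such a $j$ should follow from the fact that the axis of the infinite-order element $\gamma = (\alpha\beta)^q$ in the Bass--Serre tree of $H$ passes through the vertex-stabilizers $\langle \alpha \rangle$ and $\langle \beta \rangle$ cocompactly, combined with a translation between this Bass--Serre picture for $H$ and the combinatorial structure of $Y_\gamma$ mapped via $f$ into $Z$; the subtlety is that cancellation at $\alpha\beta$-seams can make the canonical form of $\gamma$ in $\Gamma$ strictly shorter than $|\alpha|+|\beta|$, so these generators must be read off not as syllables of $\gamma$ but as the labels of nontrivial backtracking cycles in the graph $f(Y_\gamma)$.
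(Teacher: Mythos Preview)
Your treatment of the cyclic case $H\cong\mathbb{Z}$ is essentially the same as the paper's and is fine (modulo the implicit use, made explicit in the paper, of the careful choice of the word $w$ so that a prefix of it spells the generator of $H$).

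The $C_2*C_2$ case, however, is not proved. You correctly identify what must be shown --- that the image $f(Y_\gamma)$ contains loops whose labels are the two order-$2$ generators of $H$ --- but your argument stops at ``should follow from'' the Bass--Serre axis picture, and you explicitly flag an unresolved subtlety about cancellation at the $\alpha\beta$-seams. The double-coset formulation you set up (finding $j$ with $P_{j-1}^{-1}\rho P_{j-1}\in G_{i_j}$) is not obviously equivalent to what you need, and in any case you do not verify it. As it stands this is a proof sketch with a gap at the crucial step.

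The paper closes this gap by a much more direct topological argument in $Z$ itself, avoiding any computation in $\Gamma$. Since $H\cong C_2*C_2$, the cover $Z$ viewed as a graph of spaces is a \emph{tree}, with exactly two vertex-spaces $v_1,v_2$ having $\pi_1\cong C_2$ and all others having trivial $\pi_1$. The closed path in $Z^{(1)}$ based at $f(y)$ spelling the cyclically reduced word $w$ cannot enter a vertex-space and immediately backtrack while reading the trivial element there (that would contradict cyclic reduction of $w$). But in a tree of spaces any closed path must backtrack, cyclically, in at least two distinct vertex-spaces; hence it backtracks at both $v_1$ and $v_2$, reading the nontrivial element of each $C_2$, and traverses the full path between them. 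This immediately gives $\plab(f(Y_\gamma),f(y))\ge H$. This is the missing idea: work with the tree structure of $Z$ rather than trying to untangle the canonical form of $\gamma$ against an abstract presentation of $H$.
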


\begin{proof}
Denote by $H=\plab\left(Z,f\left(y\right)\right)\le\Gamma$. By assumption,
$H\cong\mathbb{Z}$ or $H\cong C_{2}*C_{2}$. The existence of the
morphism $f$ guarantees that $\gamma\in H$. 

Assume first that $H\cong\mathbb{Z}$. By the discussion above, there
is a unique non-power $\gamma_{0}\in\Gamma$ and $q\in\mathbb{Z}_{\ge1}$
so that $\gamma=\gamma_{0}^{~q}$ (if $\ell\left(\gamma\right)\ge2$,
then $\gamma_{0}=h_{1}h_{2}\cdots h_{\ell/q}$ and is the shortest
period in $\gamma$), and $H=\left\langle \gamma_{0}^{~j}\right\rangle $
for some $j|q$. By the way we defined the word $w=w_{1}\cdots w_{\ell}$
representing $\gamma$, the first $\frac{j\left|w\right|}{q}$ letters
of $w$ represent $\gamma_{0}^{~j}$, and thus the $f$-image in $Z$
of these letters in $Y_{\gamma}$, is a loop at $f\left(y\right)$
representing $\gamma_{0}^{~j}$. We obtain $\plab\left(f\left(Y\right),f\left(y\right)\right)\ge\left\langle \gamma_{0}^{~j}\right\rangle =H$.

Now assume that $H\cong C_{2}*C_{2}$. The cover $Z$, as all covers
of $X_{\Gamma}$, is a graph of spaces itself (e.g., \cite[Sec.~3]{scott1979topological}),
with trivial edge-spaces which are the preimages of $e_{1},\ldots,e_{k}$
and vertex-spaces which are the connected components of $Z|_{G_{i}}$
for $i=1,\ldots,k$ and the vertices in the fiber above $o$. Furthermore,
any decomposition of $H$ as a free product of indecomposable groups
is of the form $C_{2}*C_{2}$ (e.g., \cite[Thm.~3.5]{scott1979topological}).
Hence as a graph of spaces, $Z$ has no cycles (it is a tree), two
of its vertex spaces have $\pi_{1}\cong C_{2}$ and all remaining
vertex spaces have trivial fundamental groups. Denote by $v_{1}$
and $v_{2}$ the two vertex spaces in $Z$ with $\pi_{1}\cong C_{2}$.
The cycle in the $1$-skeleton of $Z$, based at $f\left(y\right)$,
which spells out the word $w=w_{1}\cdots w_{\ell}$, cannot enter
a vertex-space and backtrack after reading the trivial element (this
is by the assumption that $w$ is cyclically reduced). As $Z$ is
a tree, this cycle must backtrack (cyclically) in at least two different
vertex spaces. Thus, it must backtrack in $v_{1}$ and in $v_{2}$
(at least once in each of them), read the non-trivial element in the
fundamental group in each of them, and traverse the entire path between
them. Thus $\plab\left(f\left(Y_{\gamma}\right),f\left(y\right)\right)\ge H$.
\end{proof}
We can now complete the proof of Theorem \ref{thm:limit expectation of fix}.
\begin{proof}[Proof of Theorem \ref{thm:limit expectation of fix}]
 It remains to show that $\Psi$ is surjective and that $a_{0}\left(Z_{f}\right)=1$
for all $f\in\R_{\gamma}^{0}$. Let $H\in{\cal H}_{\gamma}$ and $\left(\Upsilon_{H},u\right)$
the corresponding connected cover of $X_{\Gamma}$. By Lemma \ref{lem:Y_gamma to EC 0 is pi1-onto},
the morphism $f\colon\left(Y_{\gamma},y\right)\to\left(\Upsilon_{H},u\right)$
satisfies $\plab\left(f\left(Y_{\gamma}\right),f\left(y\right)\right)=H$.
Hence its 'surjective part' $\overline{f}\colon Y_{\gamma}\to f\left(Y_{\gamma}\right)$
is an element of ${\cal R}_{\gamma}^{0}$ with $\Psi\left(\overline{f}\right)=H$.
So $\Psi$ is surjective.

If $\ell\left(\gamma\right)\ge2$, then all vertex spaces in $\Upsilon_{H}$
have fundamental groups trivial or $C_{2}$. So all vertex spaces
of $\Upsilon_{H}$ projecting to $X_{G_{i}}$ where $G_{i}$ is a
surface group must be trivial. By Corollary \ref{cor:Y with a_0=00003D1}\eqref{enu:a_0=00003D1 if p_1(Y) is trivial},
$a_{0}\left(f\left(Y_{\gamma}\right)|_{G_{i}}\right)=1$ in this case,
by Theorem \ref{thm:E^emb is N^chi and asym expansion} $a_{0}\left(f\left(Y_{\gamma}\right)|_{G_{i}}\right)=1$
whenever $G_{i}$ is not a surface group, and Proposition \ref{prop:addendum - value of a0}
now yields that $a_{0}\left(f\left(Y_{\gamma}\right)\right)=1$.

Finally, if $\ell\left(\gamma\right)=1$ and $G=G_{i_{1}}$ is a free
or surface group\footnote{This case reduces to the results about free groups and surface groups
due to \cite{nica1994number} and \cite{magee2020asymptotic}, respectively.
We prove it here for completeness.} and $H=\langle\gamma_{0}^{~j}\rangle$, then by our choice of the
word $w=w_{1}$ above, $f\left(Y_{\gamma}\right)$ is a simple cycle:
this is trivial for $G$ a free group, and for $G$ a surface group,
$\left(\Upsilon_{H},u\right)$ is the space $\left\langle \gamma_{0}^{j}\right\rangle \backslash\tsg$,
where $w_{0}$ is a simple cycle in the $1$-skeleton (see the discussion
in \cite[Sec.~4 and 5]{MPcore}). So Proposition \ref{prop:addendum - value of a0}
and Corollary \ref{cor:Y with a_0=00003D1}\eqref{enu:a_0=00003D1 if Y is annulus}
yield that $a_{0}\left(f\left(Y_{\gamma}\right)\right)=a_{0}\left(f\left(Y_{\gamma}\right)|_{G}\right)=1$.
\end{proof}
\begin{cor}
\label{cor:H_gamma is finite}The set $\H_{\gamma}$ is finite for
every non-torsion $\gamma\in\Gamma$.
\end{cor}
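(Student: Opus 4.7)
The plan is to leverage the bijection $\Psi\colon\R_{\gamma}^{0}\to\H_{\gamma}$ established inside the proof of Theorem \ref{thm:limit expectation of fix}, which was just completed. Recall that the sub-cover $p_{\gamma}\colon Y_{\gamma}\to X_{\Gamma}$ was built as a finite CW-complex: it is a subdivided cycle with at most $\ell\left(\gamma\right)+\sum_{j}\left|w_{j}\right|$ cells (or just $\left|w\right|$ cells when $\ell\left(\gamma\right)=1$). So $Y_{\gamma}$ is compact.

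The first step is to observe that the resolution $\R_{\gamma}$ is finite. Indeed, any element of $\R_{\gamma}$ is a \emph{surjective} morphism $f\colon Y_{\gamma}\twoheadrightarrow Z_{f}$ of sub-covers, so the CW-structure on $Z_{f}$ has at most as many cells as $Y_{\gamma}$, and is entirely determined by the combinatorial partition of the cells of $Y_{\gamma}$ into fibers of $f$ (the covering map on $Z_{f}$ is forced by the labels). There are only finitely many such partitions, so $\R_{\gamma}$ is a finite set, and in particular so is its subset $\R_{\gamma}^{0}=\left\{ f\in\R_{\gamma}\,\middle|\,\chigrp\left(Z_{f}\right)=0\right\}$.

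The second step is simply to invoke the bijection $\Psi\colon\R_{\gamma}^{0}\to\H_{\gamma}$, $\Psi\left(f\right)=\plab\left(Z_{f},f\left(y\right)\right)$, which was shown to be well-defined (every $H\in\H_{\gamma}$ contains $\gamma$ and has $\chi\left(H\right)=0$, hence $H\cong\mathbb{Z}$ or $H\cong C_{2}*C_{2}$), injective (two elements of $\R_{\gamma}^{0}$ mapping to the same subgroup lift to the same universal cover and coincide via Lemma \ref{lem:universal lift is injective}), and surjective (Lemma \ref{lem:Y_gamma to EC 0 is pi1-onto} produced, for every $H\in\H_{\gamma}$, a preimage in $\R_{\gamma}^{0}$). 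Since $\R_{\gamma}^{0}$ is finite, so is $\H_{\gamma}$, proving the corollary.

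There is no real obstacle here — the entire content was built up during the proof of Theorem \ref{thm:limit expectation of fix}; the corollary is essentially a bookkeeping remark that the finite resolution $\R_{\gamma}$, whose finiteness is immediate from compactness of $Y_{\gamma}$, dominates $\H_{\gamma}$ via $\Psi$.
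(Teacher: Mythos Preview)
Your proof is correct and takes essentially the same approach as the paper: invoke the bijection $\Psi\colon\R_{\gamma}^{0}\to\H_{\gamma}$ established in the proof of Theorem \ref{thm:limit expectation of fix}, together with the finiteness of $\R_{\gamma}$ (hence of $\R_{\gamma}^{0}$), which follows from the compactness of $Y_{\gamma}$. The paper's own proof is a single sentence to this effect; you have simply unpacked the finiteness of $\R_{\gamma}$ in a bit more detail.
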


\begin{proof}
This follows from the fact that $\Psi$ gives a bijection between
the finite set $\R_{\gamma}^{0}$ and $\H_{\gamma}$.
\end{proof}
We also record here the following lemma, which  we need for the proof
of Theorem \ref{thm:limit distribution of fix}. 
\begin{lem}
\label{lem:conjuage subgroups of EC=00003D0 correspond to identical sub-covers}Let
$H_{1},H_{2}\in\H_{\gamma}$ be conjugate and for $i=1,2$, $f_{i}\colon Y_{\gamma}\to Z_{i}$
the corresponding morphisms in $\R_{\gamma}^{0}$. Then $Z_{1}$ and
$Z_{2}$ are identical.
\end{lem}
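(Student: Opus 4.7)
The plan is to construct an explicit isomorphism $Z_1\cong Z_2$ using the conjugating element $g\in\Gamma$ satisfying $H_2=gH_1g^{-1}$. The element $g$ descends from a deck transformation of the universal cover $\widetilde{X_\Gamma}$ to an isomorphism of covering spaces $\phi_g\colon\Upsilon_{H_1}\to\Upsilon_{H_2}$ given by $H_1\tilde{x}\mapsto H_2 g\tilde{x}$, and this map sends sub-covers to sub-covers. It therefore suffices to show $\phi_g(Z_1)=Z_2$ as subcomplexes of $\Upsilon_{H_2}$, for then $Z_1\cong\phi_g(Z_1)=Z_2$ as sub-covers.

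Under $\phi_g$, the sub-cover $\phi_g(Z_1)$ is realized as the image of the lift of $p_\gamma$ to $\Upsilon_{H_2}$ starting at $H_2 g\tilde{o}$, while $Z_2$ is the image of the lift starting at $H_2\tilde{o}$. Since both traces follow the same word $w=h_1\cdots h_\ell$ (the canonical form of $\gamma$), a combinatorial comparison of vertex sets and edges reduces the equality $\phi_g(Z_1)=Z_2$ to the assertion that the starting vertex $H_2 g\tilde{o}$ of the first trace already lies in $Z_2$, i.e., that $H_2 g = H_2\gamma_k$ for some partial product $\gamma_k$. Transporting this condition through $\phi_g^{-1}$, it becomes the equivalent statement that the coset $H_1 g^{-1}$ appears among the cosets $\{H_1\gamma_i\}_i$ making up the fiber of $Z_1$ above $o$; once such a vertex $u'=H_1 g^{-1}$ is produced inside $Z_1$, the $\pi_1$-surjectivity from Lemma \ref{lem:Y_gamma to EC 0 is pi1-onto} (which is basepoint-independent for connected sub-covers) automatically yields $\plab(Z_1,u')=\plab(\Upsilon_{H_1},u')=gH_1 g^{-1}=H_2$.

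The existence of such a vertex is where the main obstacle lies. My intended argument exploits the fact that $g^{-1}\gamma g\in H_1=\plab(Z_1,u_1)$ (since $\gamma\in H_2=gH_1g^{-1}$), so by $\pi_1$-surjectivity there is a loop in $Z_1$ at $u_1$ representing $g^{-1}\gamma g$. Lifting this loop to the universal cover yields a path from $\tilde{o}$ that visits $g^{-1}\tilde{o}$ and $g^{-1}\gamma\tilde{o}$ before closing up; projecting back to $\Upsilon_{H_1}$ places the intermediate vertex $H_1 g^{-1}$ inside $Z_1$, as required. The hard part is justifying cleanly that the $g^{-1}$-prefix of this loop lies within the subcomplex $Z_1$ rather than leaving it into the ambient $\Upsilon_{H_1}$; this combinatorial check uses the EC-zero constraint (forcing $H_1,H_2\in\{\mathbb{Z},C_2\ast C_2\}$) together with the structure of $\gamma$'s canonical form. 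Finally, having produced such a vertex, one may equivalently prove the lemma without the explicit $\phi_g$ by defining $f'\colon Y_\gamma\to Z_1$ as the lift of $p_\gamma$ starting at $u'$, observing that its surjective part is an element of $\mathcal{R}_\gamma^0$ mapping to $H_2$ under $\Psi$, and invoking the bijectivity of $\Psi$ to conclude $Z_2\hookrightarrow Z_1$; the reverse embedding follows by swapping $H_1\leftrightarrow H_2$, and compactness upgrades the mutual embeddings to an isomorphism.
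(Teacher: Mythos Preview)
Your approach has a genuine gap at the step where you claim that ``lifting this loop to the universal cover yields a path from $\tilde{o}$ that visits $g^{-1}\tilde{o}$.'' The $\pi_1$-surjectivity of Lemma~\ref{lem:Y_gamma to EC 0 is pi1-onto} tells you only that some loop in $Z_1$ based at $u_1$ represents the \emph{element} $g^{-1}\gamma g\in H_1$; it gives you no control over the actual combinatorial path. The lift of that loop to $\widetilde{X_\Gamma}$ is a path from $\tilde{o}$ to $g^{-1}\gamma g\cdot\tilde{o}$, but there is no reason it passes through $g^{-1}\tilde{o}$: the intermediate vertices of the lift are determined by the loop, not by the algebraic factorisation of the endpoint. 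So the argument that $H_1g^{-1}\in Z_1$ does not go through, and you correctly flag this as ``the hard part'' without resolving it.

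The paper's proof bypasses this difficulty entirely. Since $H_1$ and $H_2$ are conjugate, they correspond to the \emph{same} unpointed covering space $\Upsilon$, just with two basepoints $u_1,u_2$; there is no need for an explicit deck-induced isomorphism $\phi_g$. Both $Z_i$ are then the images of $\overline{f_i}\colon(Y_\gamma,y)\to(\Upsilon,u_i)$, and the task is to show these two images coincide as subcomplexes of $\Upsilon$. For $\ell(\gamma)\ge 2$ this follows from the graph-of-spaces structure of $\Upsilon$ (the image must traverse the canonical ``core'') together with the convention, fixed earlier in the section, that whenever $h_j=h_s^{\pm1}$ one takes $w_j=w_s^{\pm1}$; this coordination of words is exactly what forces the traced paths inside each vertex space to agree regardless of basepoint. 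Your proposal never invokes this convention, which is the real engine of the proof. For $\ell(\gamma)=1$ the paper observes the statement is vacuous: $\langle\gamma_0^{\,j}\rangle$ has no nontrivial conjugate containing $\gamma$, a case you do not address.
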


(Of course, if $H_{1}\ne H_{2}$, then $f_{1}$ and $f_{2}$ map the
basepoint $y$ to two different vertices of $Z_{1}=Z_{2}$.)
\begin{proof}
First assume that $\ell\left(\gamma\right)\ge2$. Let $\Upsilon$
be the connected cover of $X_{\Gamma}$ corresponding to the conjugacy
class of $H_{1}$ and $H_{2}$, with basepoints $u_{1}$ and $u_{2}$
corresponding to $H_{1}$ and $H_{2}$, respectively. Then $f_{i}$
is the surjective part of the morphism $\overline{f_{i}}\colon\left(Y_{\gamma},y\right)\to\left(\Upsilon,u_{i}\right)$.
Consider $\Upsilon$ as a graph of spaces. As explained in the proof
of Lemma \ref{lem:Y_gamma to EC 0 is pi1-onto}, the image of $\overline{f_{i}}$
goes precisely through the vertex-spaces and edge-spaces in the ``core''
of $\Upsilon$ (this is a cycle in the graph if $H_{1}\cong\mathbb{Z}$
or a path between the two non-$\pi_{1}$-trivial vertex spaces if
$H_{1}\cong C_{2}*C_{2}$). Our choice of the words $w_{1},\ldots,w_{\ell\left(\gamma\right)}$
-- that $w_{i}$ and $w_{j}$ are identical or inverse of one another
if so are the corresponding $h_{i}$ and $h_{j}$ -- guarantees that
the precise path traversed in every vertex space of the core is identical,
and so, indeed, $\overline{f_{1}}\left(Y_{\gamma}\right)=\overline{f_{2}}\left(Y_{\gamma}\right)$.

If $\ell\left(\gamma\right)=1$, then there are no (non-trivial) conjugates
of $H_{1}=\langle\gamma_{0}^{~j}\rangle$ containing $\gamma$, so
the Lemma is vacuous.
\end{proof}

\subsubsection*{The method of moments}

We now turn to prove Theorem \ref{thm:limit distribution of fix},
which describes the limit distribution of $\fix_{\gamma}\left(N\right)$
as $N\to\infty$ for every fixed non-torsion element $\gamma\in\Gamma$.
Our proof is based on the method of moments. Some of the steps follow
parallel steps in \cite[Sec.~4]{linial2010word}.

A probability distribution $\mu$ on $\mathbb{R}$ is said to be \emph{determined
by its moments} if it has finite moments $\alpha_{r}=\int_{-\infty}^{\infty}x^{r}\mu$$\left(dx\right)$
of all orders, and $\mu$ is the only probability measure with these
moments.
\begin{thm}[{Method of moments, e.g., \cite[Thm.~30.2]{billingsley1995probability}}]
\label{thm:method of moments} Let $X$ and $X_{N}$ $\left(N\in\mathbb{Z}_{\ge1}\right)$
be random variables, and suppose that the distribution of $X$ is
determined by its moments, that the $X_{N}$ have moments of all order,
and that $\lim_{N\to\infty}\mathbb{E}\left[X_{N}^{~r}\right]=\mathbb{E}\left[X^{r}\right]$
for every $r\in\mathbb{Z}_{\ge1}$. Then
\[
X_{N}\stackrel{\mathrm{dis}}{\to}X,
\]
where $\stackrel{\mathrm{dis}}{\to}$ denotes convergence in distribution.
\end{thm}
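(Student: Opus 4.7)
The plan is to combine a compactness argument with the uniqueness assumption on moments, via the subsequence principle for weak convergence. First, I would show that the sequence $\{X_N\}$ is tight: by hypothesis $\mathbb{E}[X_N^{~2}] \to \mathbb{E}[X^2]$, so the second moments are uniformly bounded by some constant $C$, and Markov's inequality gives
\[
\mathbb{P}(|X_N| > K) \le \frac{\mathbb{E}[X_N^{~2}]}{K^2} \le \frac{C}{K^2},
\]
which tends to zero uniformly in $N$ as $K \to \infty$. Tightness of $\{X_N\}$ thus follows.

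Next, by Prokhorov's theorem, every subsequence $\{X_{N_k}\}$ has a further subsequence $\{X_{N_{k_j}}\}$ converging in distribution to some random variable $Y$. I would then argue that $Y$ has the same moments as $X$, for every $r \in \mathbb{Z}_{\ge 1}$. This is the step which requires some care, and is the main obstacle: weak convergence yields convergence of $\mathbb{E}[f(X_{N_{k_j}})]$ only for bounded continuous $f$, but the monomials $x \mapsto x^r$ are unbounded. To upgrade, I would invoke uniform integrability of $\{X_{N_{k_j}}^{~r}\}$, which follows from the uniform boundedness of $\mathbb{E}[X_N^{~r+1}]$ (in turn a consequence of its convergence to $\mathbb{E}[X^{r+1}]$ and a standard truncation estimate): indeed, uniformly bounded $(r{+}1)$-th moments imply $\sup_N \mathbb{E}[|X_N|^r \cdot \mathbf{1}_{|X_N|>K}] \to 0$ as $K\to\infty$. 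Then $\mathbb{E}[Y^r] = \lim_j \mathbb{E}[X_{N_{k_j}}^{~r}] = \mathbb{E}[X^r]$.

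Having established that $Y$ and $X$ share all moments, the hypothesis that the distribution of $X$ is determined by its moments forces $Y \stackrel{\mathrm{dis}}{=} X$. Since every subsequence of $\{X_N\}$ contains a further subsequence converging in distribution to $X$, the standard subsequence principle yields $X_N \stackrel{\mathrm{dis}}{\to} X$, as desired. The only subtlety worth double-checking is the uniform integrability step; everything else is routine once tightness and the moment transfer are in place.
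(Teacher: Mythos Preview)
The paper does not supply its own proof of this statement: it is quoted as a classical result with a reference to Billingsley's textbook, and is used as a black box in the later arguments. So there is no ``paper's proof'' to compare against.

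That said, your argument is essentially the standard textbook proof (tightness via Prokhorov, subsequential limits, moment transfer via uniform integrability, then uniqueness by the determinacy hypothesis), and it is correct in outline. One small point deserves tightening: you deduce uniform integrability of $X_N^{\,r}$ from boundedness of $\mathbb{E}[X_N^{\,r+1}]$, but convergence of the $(r{+}1)$-th \emph{raw} moment does not control $\mathbb{E}[|X_N|^{r+1}]$ when $r{+}1$ is odd. The clean fix is to invoke the $2r$-th moments instead (or any even moment exceeding $r$): since $\mathbb{E}[X_N^{\,2r}] = \mathbb{E}[|X_N|^{2r}]$ converges and is therefore bounded, the inequality
\[
\mathbb{E}\bigl[|X_N|^r \,\mathbf{1}_{|X_N|>K}\bigr] \le K^{-r}\,\mathbb{E}\bigl[|X_N|^{2r}\bigr]
\]
gives the required uniform integrability. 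With that adjustment the proof goes through.
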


\begin{thm}[{Sufficient condition for $\mu$ to be determined by its moments, e.g.,
\cite[Thm.~30.1]{billingsley1995probability}}]
\label{thm:condition to be determined by moments} Let $\mu$ be
a probability measure on $\mathbb{R}$ having finite moments $\alpha_{r}=\int_{-\infty}^{\infty}x^{r}\mu$$\left(dx\right)$
of all orders. If the power series $\sum_{r}\alpha_{r}\frac{t^{r}}{r!}$
has a positive radius of convergence, then $\mu$ is determined by
its moments.
\end{thm}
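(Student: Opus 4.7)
The plan is to prove this by reducing the uniqueness problem to the uniqueness of characteristic functions (Fourier transforms of measures), using that the positive radius of convergence of the moment series lets us analytically continue the moment generating function into a strip around the real axis.

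First I would upgrade the hypothesis from the signed moments $\alpha_r$ to the absolute moments $\beta_r \defi \int_{-\infty}^{\infty} |x|^r \mu(dx)$. Clearly $|\alpha_r|\le\beta_r$, and by the Cauchy--Schwarz inequality applied to $|x|^r = |x|^{(r-1)/2}\cdot|x|^{(r+1)/2}$ one has the log-convexity $\beta_r^{\,2}\le\beta_{r-1}\beta_{r+1}$; this lets me bound the odd $\beta_{2r+1}$ by the geometric mean of $\beta_{2r}=\alpha_{2r}$ and $\beta_{2r+2}=\alpha_{2r+2}$, so that $\sum_r \beta_r t^r/r!$ has the same positive radius of convergence as $\sum_r \alpha_r t^r/r!$. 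Fix some $0<t_0$ strictly inside that radius.

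Next I would show that the moment generating function $M(z)\defi\int_{-\infty}^{\infty} e^{zx}\,\mu(dx)$ is defined and holomorphic on the open disk $|z|<t_0$, with Taylor series $\sum_r \alpha_r z^r/r!$. The pointwise bound $|e^{zx}|\le e^{|z||x|}$ combined with the monotone convergence theorem (expanding $e^{|z||x|}$ as a power series and using the $\beta_r$ estimate) shows $M$ is well-defined there; then dominated convergence and the standard Morera/Fubini argument give holomorphicity, and the Taylor expansion follows by term-by-term integration. Consequently the characteristic function $\widehat{\mu}(s)=\int e^{isx}\,\mu(dx)=M(is)$ extends from $s\in\mathbb{R}$ to a holomorphic function on the horizontal strip $|\mathrm{Im}(s)|<t_0$.

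Now suppose $\nu$ is any probability measure on $\mathbb{R}$ with the same moments $\alpha_r$ as $\mu$. Since $\nu$ has the same even moments, the same upper bound on $\beta_r$ applies to $\nu$, so $M_\nu(z)=\int e^{zx}\,\nu(dx)$ is holomorphic on the same disk $|z|<t_0$ and has the same Taylor coefficients $\alpha_r/r!$ as $M$. Hence $M_\nu\equiv M$ on that disk, and in particular $\widehat{\nu}(s)=M_\nu(is)=M(is)=\widehat{\mu}(s)$ for all real $s$ with $|s|<t_0$. Two characteristic functions that agree on a non-empty open subset of $\mathbb{R}$ actually agree everywhere on $\mathbb{R}$: both sides are restrictions to the real axis of holomorphic functions on $|\mathrm{Im}(s)|<t_0$, so their difference vanishes on a set with a limit point inside the domain of analyticity and hence vanishes identically. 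Finally, the classical uniqueness theorem for Fourier transforms of finite Borel measures (inversion) gives $\nu=\mu$, completing the proof.

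The main obstacle is the intermediate analyticity step: one has to justify carefully that the formal identity $M(z)=\sum_r \alpha_r z^r/r!$ on a disk really does translate into the existence of a genuine holomorphic extension of $\widehat{\mu}$ into a complex strip, since it is this extension -- not the mere equality of moments -- that forces two candidate measures to share characteristic functions on all of $\mathbb{R}$.
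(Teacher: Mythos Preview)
The paper does not supply its own proof of this statement; it is quoted as a classical black-box result from Billingsley (Theorem~30.1) and used only via Lemma~\ref{lem:linear combination of Poisson is determined by its moments}. So there is nothing to compare against, and your task reduces to whether your sketch is a valid proof of the classical fact.

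Your argument is the standard one and is correct in outline, with one imprecision worth tightening. You establish that $M(z)=\int e^{zx}\,\mu(dx)$ is well-defined and holomorphic on the \emph{disk} $|z|<t_0$ via the bound $|e^{zx}|\le e^{|z||x|}$, but then assert that $\widehat{\mu}(s)=M(is)$ extends holomorphically to the \emph{strip} $|\mathrm{Im}\,s|<t_0$; the disk statement alone does not give this. The fix is immediate: use instead the sharper bound $|e^{zx}|=e^{(\mathrm{Re}\,z)x}\le e^{|\mathrm{Re}\,z|\,|x|}$, which (together with your $\beta_r$ estimate and monotone convergence) shows $M$ is defined and holomorphic on the full vertical strip $|\mathrm{Re}\,z|<t_0$; equivalently, $\widehat{\mu}$ extends to $|\mathrm{Im}\,s|<t_0$. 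With this in place, your identity-theorem step goes through: $\widehat{\mu}$ and $\widehat{\nu}$ are holomorphic on a common strip containing the real axis and agree on $(-t_0,t_0)$, hence agree on all of $\mathbb{R}$, and the uniqueness theorem for characteristic functions gives $\mu=\nu$. (Alternatively, one can avoid the strip entirely and propagate along $\mathbb{R}$ by re-expanding the characteristic function in a Taylor series of radius $\ge t_0$ about each real point---this is how Billingsley phrases it---but your analytic-continuation route is equally clean once the strip is justified.)
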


Recall that Theorem \ref{thm:limit distribution of fix} states that
$\fix_{\gamma}\left(N\right)$ converges in distribution to $\sum_{i}\alpha_{i}\beta_{i}Z_{1/\beta_{i}}$
--- a finite linear combination of independent Poisson-distributed
random variables with coefficients from $\mathbb{Z}_{\ge1}$. We first
record the standard fact that such a sum is determined by its moments. 
\begin{lem}
\label{lem:linear combination of Poisson is determined by its moments}Let
$Z_{1},\ldots,Z_{t}$ be independent Poisson-distributed random variables
with parameters $\lambda_{1},\ldots,\lambda_{t}>0$, respectively,
and let\footnote{The assumption that $c_{1},\ldots,c_{t}\ge1$ is not crucial: it only
somewhat simplifies the notation in the proof and it holds anyway
in the case we use.} $c_{1},\ldots,c_{t}\ge1$. Then the distribution of $\sum_{i=1}^{t}c_{i}Z_{i}$
is determined by its moments.
\end{lem}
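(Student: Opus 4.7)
The plan is to invoke Theorem \ref{thm:condition to be determined by moments} directly, by showing that the moment generating function of $S \defi \sum_{i=1}^{t} c_i Z_i$ is finite on all of $\mathbb{R}$, which implies that its power series of moments has infinite radius of convergence.

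First I would recall the moment generating function of a Poisson random variable: for $Z \sim \mathrm{Poi}(\lambda)$ and any $s \in \mathbb{R}$,
\[
\mathbb{E}\!\left[e^{sZ}\right] \;=\; \sum_{n=0}^{\infty} e^{sn}\,\frac{\lambda^{n} e^{-\lambda}}{n!} \;=\; \exp\!\bigl(\lambda(e^{s}-1)\bigr),
\]
which is finite for every $s$. Hence $\mathbb{E}[e^{s c_i Z_i}] = \exp(\lambda_i(e^{s c_i}-1))$ is finite for every $s$. Using independence of $Z_1,\ldots,Z_t$,
\[
M(s) \;\defi\; \mathbb{E}\!\left[e^{sS}\right] \;=\; \prod_{i=1}^{t} \exp\!\bigl(\lambda_i(e^{s c_i}-1)\bigr) \;<\; \infty
\]
for every $s \in \mathbb{R}$. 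In particular, $M$ is finite on some open interval $(-\delta, \delta)$ around $0$.

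Next I would turn this into a bound on the moments $\alpha_r = \mathbb{E}[S^r]$. Since $S \ge 0$, for any $s > 0$ we have $s^r S^r / r! \le e^{sS}$ pointwise, and taking expectations gives $\alpha_r \le r!\, M(s)/s^r$. Fix any $s_0 \in (0,\delta)$; then $\alpha_r \le r!\,M(s_0)/s_0^{\,r}$, so
\[
\sum_{r=0}^{\infty} \alpha_r \frac{t^{r}}{r!} \;\le\; M(s_0)\sum_{r=0}^{\infty}\left(\frac{t}{s_0}\right)^{r},
\]
which converges for all $|t| < s_0$. Thus the moment power series has positive radius of convergence, and Theorem \ref{thm:condition to be determined by moments} implies that the distribution of $S$ is determined by its moments. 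No serious obstacle arises here; the only thing to be careful about is the standard passage from finiteness of the MGF near $0$ to the moment-series convergence, which is handled by the elementary Chernoff-type bound above.
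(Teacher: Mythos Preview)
Your proof is correct and follows essentially the same strategy as the paper: show that the moment power series has positive radius of convergence and invoke Theorem~\ref{thm:condition to be determined by moments}. The execution differs slightly. The paper observes that $\sum Z_i$ is itself Poisson (with parameter $\sum\lambda_i$), so its moment series converges everywhere; it then uses the sandwich $\sum Z_i \le \sum c_iZ_i \le c\sum Z_i$ (where $c=\max_i c_i$, and this is where $c_i\ge 1$ is used) to bound the moments of $S$ by $c^r$ times those of $\sum Z_i$. You instead compute the MGF of $S$ directly from independence and extract the moment bound $\alpha_r \le r!\,M(s_0)/s_0^{\,r}$ via the elementary inequality $s^rS^r/r!\le e^{sS}$. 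Your route is a bit more direct and does not need the hypothesis $c_i\ge 1$ at all (as the footnote in the statement anticipates, that hypothesis is only a notational convenience for the paper's comparison argument). One small expository note: your opening sentence promises infinite radius of convergence, but you only establish a positive radius; that is all Theorem~\ref{thm:condition to be determined by moments} requires, so nothing is lost.
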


\begin{proof}
For a single Poisson distribution with parameter $\lambda$, the power
series from Theorem \ref{thm:condition to be determined by moments}
is $\sum_{r}\alpha_{r}\frac{t^{r}}{r!}=e^{\lambda\left(e^{t}-1\right)}$
\cite[Eq.~(21.22) and (21.27)]{billingsley1995probability} and, in
particular, converges for all $t$. The sum $Z_{1}+\ldots+Z_{t}$
is Poisson with parameter $\lambda_{1}+\ldots+\lambda_{t}$ and, in
particular, the corresponding power series converges for all $t$.
Let $c\defi\max\left\{ c_{1},\ldots,c_{t}\right\} $. Then $\sum Z_{i}\le\sum c_{i}Z_{i}\le c\sum Z_{i}$.
In particular, if the $r$-th moment of $\sum Z_{i}$ is $\alpha_{r}$
and of $\sum c_{i}Z_{i}$ is $\beta_{r}$, then $\alpha_{r}\le\beta_{r}\le c^{r}\alpha_{r}$.
Consequently, the series $\sum_{r}\beta_{r}\frac{t^{r}}{r!}$ has
radius of convergence that is $\ge\frac{1}{c}$ that of the series
$\sum\alpha_{r}\frac{t^{r}}{r!}$. But the latter converges for all
real $t$, hence so does $\sum\beta_{r}\frac{t^{r}}{r!}$. By Theorem
\ref{thm:condition to be determined by moments} we conclude that
$\sum c_{i}Z_{i}$ is determined by its moments. 
\end{proof}

\subsubsection*{The proof of Theorem \ref{thm:limit distribution of fix}}

Recall the statement of Theorem \ref{thm:limit distribution of fix}:
$H_{1},\ldots,H_{t}$ are representatives of the conjugacy classes
of subgroups represented in $\H_{\gamma}=\left\{ H\le\Gamma\,\middle|\,\gamma\in H~\mathrm{and}~\chi\left(H\right)=0\right\} $,
$\alpha_{i}=\left|\left\{ \H_{\gamma}\cap H_{i}^{\Gamma}\right\} \right|$
and $\beta_{i}=\left[N_{\Gamma}\left(H_{i}\right)\colon H_{i}\right]$.
We need to show that as $N\to\infty$
\[
\fix_{\gamma}\left(N\right)\stackrel{\mathrm{dis}}{\to}\sum_{i=1}^{t}\alpha_{i}\beta_{i}Z_{1/\beta_{i}},
\]
where $Z_{1/\beta_{1}},\ldots,Z_{1/\beta_{t}}$ are independent Poisson
random variables with parameters $\frac{1}{\beta_{1}},\ldots,\frac{1}{\beta_{t}}$,
respectively. For every $N$, the random variable $\fix_{\gamma}\left(N\right)$
is finitely supported and so has finite moments. By Theorem \ref{thm:method of moments}
and Lemma \ref{lem:linear combination of Poisson is determined by its moments},
it is enough to prove that for every $r\in\mathbb{Z}_{\ge1}$ we have
\begin{equation}
\mathbb{E}\left[\left(\fix_{\gamma}\left(N\right)\right)^{r}\right]\underset{N\to\infty}{\to}\mathbb{E}\left[\left(\sum_{i=1}^{t}\alpha_{i}\beta_{i}Z_{1/\beta_{i}}\right)^{r}\right].\label{eq:goal-in-terms-of-moments}
\end{equation}

Recall that for every $\varphi\in\Hom\left(\Gamma,S_{N}\right)$,
the number of fixed points of $\varphi\left(\gamma\right)$ is equal
to the number of lifts of $p_{\gamma}\colon Y_{\gamma}\to X_{\Gamma}$
to $X_{\varphi}$. Similarly, define
\[
Y_{\gamma}^{\sqcup r}\defi\underbrace{Y_{\gamma}\sqcup\ldots\sqcup Y_{\gamma}}_{r~\mathrm{times}}.
\]
Then $\left(\fix\left(\varphi\left(\gamma\right)\right)\right)^{r}$
is equal to $\left(\#~\mathrm{lifts~of}~p_{\gamma}\right)^{r}$, which
is equal to the number of lifts to $X_{\varphi}$ of 
\[
p_{\gamma}^{\sqcup r}\colon Y_{\gamma}^{\sqcup r}\to X_{\Gamma},
\]
where $p_{\gamma}^{~\sqcup r}$ restricts to $p_{\gamma}$ on each
of the $r$ disjoint copies of $Y_{\gamma}$. So
\begin{equation}
\mathbb{E}\left[\left(\fix_{\gamma}\left(N\right)\right)^{r}\right]=\mathbb{E}_{Y_{\gamma}^{\sqcup r}}\left(N\right)=\sum_{f\in\R_{\gamma,r}}\emb_{Z_{f}}\left(N\right),\label{eq:r-th moment by lifts}
\end{equation}
where $\R_{\gamma,r}$ is the standard resolution of $p_{\gamma}^{\sqcup r}$:
\[
\R_{\gamma,r}\defi\left\{ f\colon Y_{\gamma}^{\sqcup r}\twoheadrightarrow Z_{f\,}\,\middle|\,f~\mathrm{is~a~surjective~morhpism~of~sub\textnormal{-}covers}\right\} .
\]
As for the case $r=1$, if we let 
\[
\R_{\gamma,r}^{0}\defi\left\{ f\in\R_{\gamma,r}\,\middle|\,\chigrp\left(Z_{f}\right)=0\right\} ,
\]
then 
\begin{equation}
\lim_{N\to\infty}\mathbb{E}\left[\fix_{\gamma}\left(N\right)^{r}\right]=\sum_{f\in\R_{\gamma,r}^{0}}a_{0}\left(Z_{f}\right).\label{eq:limit of moments of fix with resolution}
\end{equation}
Let $f_{1},\ldots,f_{t}\in\R_{\gamma}^{0}$ be the morphisms $f_{i}\colon Y_{\gamma}\to Z_{i}=Z_{f_{i}}$
corresponding through the bijection $\Psi$ to $H_{1},\ldots,H_{t}$
from the statement of Theorem \ref{thm:limit distribution of fix},
respectively. Namely, $f_{i}$ is onto and $\plab\left(Z_{i},f_{i}\left(y\right)\right)=H_{i}$. 
\begin{lem}
\label{lem:every chi=00003D0 element of the resolution is a disjoint union of copies of representatives}
For every $f\in\R_{\gamma,r}^{0}$, the sub-cover $Z_{f}$ is a disjoint
union of copies of $Z_{1},\ldots,Z_{t}$. 
\end{lem}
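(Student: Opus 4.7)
My plan is to argue component by component. Let $f \in \R_{\gamma,r}^0$ and let $Z''$ be a connected component of $Z_f$. Since each of the $r$ copies of $Y_\gamma$ in $Y_\gamma^{\sqcup r}$ is connected, its $f$-image lies in a single component of $Z_f$, and by surjectivity of $f$ every $Z''$ is hit by at least one copy. Write the restrictions of $f$ to the copies mapping into $Z''$ as $g_1,\ldots,g_s\colon Y_\gamma\to Z''$, so that $Z''=\bigcup_{i=1}^{s}g_i(Y_\gamma)$. Because $\gamma\in\plab(Z'',g_i(y))$ via any $g_i$, the subgroup $\plab(Z'')$ is infinite, so $\chigrp(Z'')\le 0$ by Definition \ref{def:EC}. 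The identity $\sum_{Z''}\chigrp(Z'')=\chigrp(Z_f)=0$ then forces $\chigrp(Z'')=0$, and so $\plab(Z'')$ is $\Gamma$-conjugate to some representative $H_j\in\H_\gamma$.

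I next lift everything into the universal cover of $Z''$. Let $\hat{p}_{Z''}\colon Z''\hookrightarrow\Upsilon$ be the universal lift of Definition \ref{def:universal lift}, where $\Upsilon$ is the connected full cover of $X_\Gamma$ corresponding to $\plab(Z'')$, and set $\hat{g}_i\defi\hat{p}_{Z''}\circ g_i$. Each $\hat{g}_i\colon Y_\gamma\to\Upsilon$ factors uniquely, by Definition \ref{def:resolution}, as $Y_\gamma\twoheadrightarrow Z_{\tilde{g}_i}\hookrightarrow\Upsilon$ with $\tilde{g}_i\in\R_\gamma$ and $Z_{\tilde{g}_i}=\hat{g}_i(Y_\gamma)$. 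Since $\chigrp(\Upsilon)=0$, Lemma \ref{lem:Y_gamma to EC 0 is pi1-onto} applied to $\hat{g}_i$ yields $\plab(Z_{\tilde{g}_i},\hat{g}_i(y))=\plab(\Upsilon,\hat{g}_i(y))$, which is a $\Gamma$-conjugate of $H_j$ lying in $\H_\gamma$; in particular $\tilde{g}_i\in\R_\gamma^0$.

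Now I invoke Lemma \ref{lem:conjuage subgroups of EC=00003D0 correspond to identical sub-covers}: because the subgroups associated to $\tilde{g}_1,\ldots,\tilde{g}_s$ are all $\Gamma$-conjugates of $H_j$ inside $\H_\gamma$, their codomains coincide as subsets of the common full cover $\Upsilon$ and each equals $Z_j$. Taking the union,
\[
\hat{p}_{Z''}(Z'')=\bigcup_{i=1}^{s}\hat{g}_i(Y_\gamma)=\bigcup_{i=1}^{s}Z_{\tilde{g}_i}=Z_j.
\]
Combining this with the injectivity of $\hat{p}_{Z''}$ from Lemma \ref{lem:universal lift is injective} gives $Z''\cong Z_j$ as sub-covers of $X_\Gamma$. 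Applying this argument to every connected component of $Z_f$ proves the lemma.

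The principal difficulty is the upgrade from a mere isomorphism of the subgroups $\plab(Z_{\tilde{g}_i},\hat{g}_i(y))$ to a \emph{set-theoretic} equality of the images $\hat{g}_i(Y_\gamma)$ inside $\Upsilon$; this is exactly what Lemma \ref{lem:conjuage subgroups of EC=00003D0 correspond to identical sub-covers} was designed to deliver, by exploiting the canonical choice of the words $w_1,\ldots,w_{\ell(\gamma)}$ representing $\gamma$, which ensures that all lifts of $p_\gamma$ to $\Upsilon$ at valid basepoints trace out the same subset $Z_j$.
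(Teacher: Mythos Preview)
Your proof is correct and follows essentially the same route as the paper's: reduce to a single connected component, observe it has $\chigrp=0$, pass to the universal lift $\Upsilon$, apply Lemma~\ref{lem:Y_gamma to EC 0 is pi1-onto} to see each restricted map $g_i$ is $\pi_1$-surjective onto a conjugate of some $H_j$, and then invoke Lemma~\ref{lem:conjuage subgroups of EC=00003D0 correspond to identical sub-covers} to conclude the component is a copy of $Z_j$. Your write-up is in fact more explicit than the paper's about the union step $\hat{p}_{Z''}(Z'')=\bigcup_i\hat{g}_i(Y_\gamma)=Z_j$ and the use of injectivity of the universal lift, but the underlying argument is the same.
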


\begin{proof}
Let $f\in\R_{\gamma,r}^{0}$. Every connected component of $Z_{f}$
contains $\gamma$ in its fundamental group, up to conjugation. Hence
every connected component has non-positive $\chigrp$. We conclude
that they all have $\chigrp=0$.

Let $C$ be a connected component of $Z_{f}$. Consider the universal
lift $\Upsilon$ of $C$. Let $g\colon Y_{\gamma}\to C$ be the restriction
of $f$ to one of the connected components of $Y_{\gamma}^{\sqcup r}$
mapped to $C$. By Lemma \ref{lem:Y_gamma to EC 0 is pi1-onto}, the
map $g\colon Y_{\gamma}\to\Upsilon$ is $\pi_{1}$-surjective. This
shows that all components of $Y_{\gamma}^{\sqcup r}$ mapped to $C$
have $\pi_{1}$-images which are identical, up to conjugation. By
Lemma \ref{lem:conjuage subgroups of EC=00003D0 correspond to identical sub-covers}
$C$ is identical to one of the $Z_{i}$'s ($1\le i\le t$).
\end{proof}

\begin{proof}[Proof of Theorem \ref{thm:limit distribution of fix}]
 Recall \eqref{eq:goal-in-terms-of-moments}: it is enough to prove
that $\mathbb{E}\left[\left(\fix_{\gamma}\left(N\right)\right)^{r}\right]\underset{N\to\infty}{\to}\mathbb{E}\left[\left(\sum_{i=1}^{t}\alpha_{i}\beta_{i}Z_{1/\beta_{i}}\right)^{r}\right]$
for every $r\in\mathbb{Z}_{\ge1}$. By \eqref{eq:limit of moments of fix with resolution},
$\lim_{N\to\infty}\mathbb{E}\left[\fix_{\gamma}\left(N\right)^{r}\right]=\sum_{f\in\R_{\gamma,r}^{0}}a_{0}\left(Z_{f}\right)$.
By Lemma \ref{lem:every chi=00003D0 element of the resolution is a disjoint union of copies of representatives},
for every $f\in\R_{\gamma,r}^{0}$, the sub-cover $Z_{f}$ is a disjoint
union of copies of $Z_{1},\ldots,Z_{t}$. By Corollary \ref{cor:Y with a_0=00003D1}\eqref{enu:a_0=00003D1 if Y is a union of several isomorphic annuli, no two are conjugates}
and Proposition \ref{prop:addendum - value of a0}, $a_{0}\left(Z_{f}\right)=1$
for all $f\in\R_{\gamma,r}^{0}$. With Lemma \ref{lem:every chi=00003D0 element of the resolution is a disjoint union of copies of representatives}
we now obtain 
\[
\lim_{N\to\infty}\mathbb{E}\left[\fix_{\gamma}\left(N\right)^{r}\right]=\sum_{\substack{r_{1}+\ldots+r_{t}=r\\
r_{i}\ge0
}
}\binom{r}{r_{1}~\ldots~r_{t}}\prod_{i=1}^{t}\#\left\{ \mathrm{surjective~maps~}Y_{\gamma}^{\sqcup r_{i}}\twoheadrightarrow Z_{i}^{\sqcup s_{i}},~0\le s_{i}\le r_{i}\right\} .
\]
By assumption, the variables $Z_{1/\beta_{1}},\ldots,Z_{1/\beta_{t}}$
are independent, and so 
\[
\mathbb{E}\left[\left(\sum_{i=1}^{t}\alpha_{i}\beta_{i}Z_{1/\beta_{i}}\right)^{r}\right]=\sum_{\substack{r_{1}+\ldots+r_{t}=r\\
r_{i}\ge0
}
}\binom{r}{r_{1}~\ldots~r_{t}}\prod_{i=1}^{t}\mathbb{E}\left[\left(\alpha_{i}\beta_{i}Z_{1/\beta_{i}}\right)^{r_{i}}\right],
\]
so it is enough to show that for all $r\in\mathbb{Z}_{\ge0}$, we
have 
\begin{equation}
\#\left\{ \mathrm{surjective~maps~}Y_{\gamma}^{\sqcup r}\twoheadrightarrow Z_{i}^{\sqcup s},~0\le s\le r\right\} =\mathbb{E}\left[\left(\alpha_{i}\beta_{i}Z_{1/\beta_{i}}\right)^{r}\right].\label{eq:equality for a given Z_i}
\end{equation}
Both sides equal $1$ when $r=0$, so assume that $r\ge1$. Recall
that $\left\{ \begin{array}{c}
r\\
j
\end{array}\right\} $ denotes a Stirling number of the second kind, and is equal to the
number of ways to partition a set of $r$ objects into $j$ non-empty
subsets. The left hand side of \eqref{eq:equality for a given Z_i}
is equal to 
\begin{equation}
\sum_{j=1}^{r}\#\left\{ \mathrm{surjective~maps~}Y_{\gamma}^{\sqcup r}\twoheadrightarrow Z_{i}^{\sqcup j}\right\} =\left(\alpha_{i}\beta_{i}\right)^{r}\sum_{j=1}^{r}\left\{ \begin{array}{c}
r\\
j
\end{array}\right\} \cdot\frac{1}{\beta_{i}^{~j}}.\label{eq:lhs}
\end{equation}
Indeed, a surjective map $Y_{\gamma}^{\sqcup r}\twoheadrightarrow Z_{i}^{\sqcup j}$
is determined by a partition of the $r$ copies of $Y_{\gamma}$ into
$j$ non-empty subsets. For each subset, we map one element $\left(Y_{\gamma},y\right)$
to one of $\alpha_{i}$ non-isomorphic possible base points $u$ in
$Z_{i}$ so that $\plab\left(Z_{i},u\right)\ni\gamma$. As $\beta_{i}$
is the number of automorphisms of $Z_{i}$, we get that each remaining
element of the subset now has $\alpha_{i}\beta_{i}$ possibilities
for the image-vertex of $y$. Together, these images of $y$ completely
determine the map $Y_{\gamma}^{\sqcup r}\twoheadrightarrow Z_{i}^{\sqcup j}$,
and the total number of options is $\left(\alpha_{i}\beta_{i}\right)^{r-j}\cdot\alpha_{i}^{~j}=\left(\alpha_{i}\beta_{i}\right)^{r}\cdot\beta_{i}^{~-j}$.

On the other hand, the right hand side of \eqref{eq:equality for a given Z_i}
is $\left(\alpha_{i}\beta_{i}\right)^{r}\mathbb{E}\left[\left(Z_{1/\beta_{i}}\right)^{r}\right]$,
and it is a standard fact about the moments of Poisson variables that
\[
\mathbb{E}\left[\left(Z_{\lambda}\right)^{r}\right]=\sum_{j=1}^{r}\left\{ \begin{array}{c}
r\\
j
\end{array}\right\} \cdot\lambda^{j}.
\]
\end{proof}

\section{Asymptotic independence and statistics of small cycles\label{sec:Asymptotic-independence-and-small-cycles}}

In this section we prove the remaining results: Theorem \ref{thm:asymptotic independence}
giving a precise condition on when $\fix_{\gamma_{1}}\left(N\right)$
and $\fix_{\gamma_{2}}\left(N\right)$ are asymptotic independent
for non-torsion $\gamma_{1},\gamma_{2}\in\Gamma$, and Theorem \ref{thm:asymptotic number of cyclces of bounded size}
about the statistics of cycles of bounded size. These two results
are proven along similar lines to the results proven in Section \ref{sec:limit-distr-and-asymptotic-expansion of fix},
so we stress mostly some crucial points that did not appear in the
previous results.

\subsubsection*{The multivariate method of moments}

In both results we need the following classical extension of Theorem
\ref{thm:method of moments}:
\begin{thm}[{Multivariate method of moments, e.g., \cite[Exer.~30.6]{billingsley1995probability} }]
\label{thm:multivariate method of moments} Let $X^{\left(1\right)},\ldots,X^{\left(p\right)}$
and $X_{N}^{\left(1\right)},\ldots,X_{N}^{\left(p\right)}$ $\left(N\in\mathbb{Z}_{\ge1}\right)$
be random variables, and suppose that the distribution of $X^{\left(1\right)},\ldots,X^{\left(p\right)}$
on $\mathbb{R}^{p}$ is determined by its moments (see \cite[Exer.~30.5]{billingsley1995probability}
for the definition), that the $X_{N}^{\left(i\right)}$ have moments
of all order, and that 
\[
\lim_{N\to\infty}\mathbb{E}\left[\left(X_{N}^{~\left(1\right)}\right)^{r_{1}}\cdots\left(X_{N}^{~\left(p\right)}\right)^{r_{p}}\right]=\mathbb{E}\left[\left(X^{\left(1\right)}\right)^{r_{1}}\cdots\left(X^{\left(p\right)}\right)^{r_{p}}\right]
\]
 for every $r_{1},\ldots,r_{p}\in\mathbb{Z}_{\ge0}$. Then
\[
\left(X_{N}^{\left(1\right)},\ldots,X_{N}^{\left(p\right)}\right)\stackrel{\mathrm{dis}}{\to}\left(X^{\left(1\right)},\ldots,X^{\left(p\right)}\right).
\]
In particular, if $X^{\left(1\right)},\ldots,X^{\left(p\right)}$
are independent, then $X_{N}^{\left(1\right)},\ldots,X_{N}^{\left(p\right)}$
are asymptotically independent. 
\end{thm}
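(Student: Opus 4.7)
Since this is a classical result from probability theory cited directly to Billingsley, I would expect the paper to invoke it without proof. Nevertheless, let me outline how the standard argument proceeds, paralleling the univariate version in Theorem~\ref{thm:method of moments}.

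The plan is to combine tightness of the joint distributions with uniqueness in the moment problem. First I would establish tightness of the sequence of joint laws $\mu_N$ of $(X_N^{(1)},\ldots,X_N^{(p)})$ on $\mathbb{R}^p$. Setting all but one exponent to zero in the hypothesis gives $\mathbb{E}\bigl[(X_N^{(i)})^2\bigr] \to \mathbb{E}\bigl[(X^{(i)})^2\bigr]$ for each $i$, so the coordinatewise second moments are uniformly bounded; by Markov's inequality the marginals are tight, and marginal tightness in each coordinate implies tightness of $\mu_N$ on $\mathbb{R}^p$.

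By Prokhorov's theorem, every subsequence of $\mu_N$ has a further subsequence converging weakly to some probability measure $\nu$ on $\mathbb{R}^p$. The moment convergence hypothesis, together with uniform integrability supplied by the convergence of all moments (in particular, of moments one order higher than required), guarantees that every joint moment of $\nu$ coincides with the corresponding joint moment of $(X^{(1)},\ldots,X^{(p)})$. Since the latter distribution is assumed to be determined by its moments, $\nu$ must equal it. As every subsequential weak limit is the same target distribution, $\mu_N$ itself converges weakly to it, yielding the claimed convergence in distribution.

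The final assertion about independence is then immediate: if the components of $(X^{(1)},\ldots,X^{(p)})$ are independent, the joint convergence $\mu_N \stackrel{\mathrm{dis}}{\to} \mu_{X^{(1)}} \otimes \cdots \otimes \mu_{X^{(p)}}$ is, by definition, asymptotic independence. The main subtle point in the plan is verifying that the joint distribution of $(X^{(1)},\ldots,X^{(p)})$ is indeed determined by its moments -- this is a genuinely nontrivial multivariate moment problem in general, but in the independent case the hypothesis reduces to each marginal being moment-determined, which is where one would invoke the univariate criterion (Theorem~\ref{thm:condition to be determined by moments}) separately for each $X^{(i)}$.
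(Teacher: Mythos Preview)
You correctly anticipated that the paper does not prove this theorem; it is stated with a citation to Billingsley and used as a black box. Your outline of the standard tightness--Prokhorov--moment-uniqueness argument is sound and is precisely the route indicated by the cited exercise, so there is nothing to compare against here.
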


\subsubsection*{Proof of Theorem \ref{thm:asymptotic independence}}

Recall that we are given two non-torsion elements $\gamma_{1},\gamma_{2}\in\Gamma$,
and we need to show that the following three conditions are equivalent:
$\left(i\right)$ $\fix_{\gamma_{1}}\left(N\right)$ and $\fix_{\gamma_{2}}\left(N\right)$
are asymptotically independent, $\left(ii\right)$ $\gamma_{1}$ and
$\gamma_{2}$ cannot be both conjugated into the same EC-zero subgroup
of $\Gamma$, and $\left(iii\right)$ $\mathbb{E}\left[\fix_{\gamma_{1}}\left(N\right)\cdot\fix_{\gamma_{2}}\left(N\right)\right]=\mathbb{E}\left[\fix_{\gamma_{1}}\left(N\right)\right]\cdot\mathbb{E}\left[\fix_{\gamma_{2}}\left(N\right)\right]+O\left(N^{-1/m}\right)$.
\begin{proof}[Proof of Theorem \ref{thm:asymptotic independence} ]
 We start by proving $\left(ii\right)\Longrightarrow\left(i\right)$.
So we assume that $\gamma_{1}$ and $\gamma_{2}$ cannot be both conjugated
into the same EC-zero subgroup of $\Gamma$. For $j=1,2$, denote
by $Y_{j}$ a random variable distributed as the linear combination
of Poissons from Theorem \ref{thm:limit distribution of fix} corresponding
to $\fix_{\gamma_{j}}\left(N\right)$. By Theorem \ref{thm:multivariate method of moments},
it is enough to show that 
\begin{equation}
\lim_{N\to\infty}\mathbb{E}\left[\left(\fix_{\gamma_{1}}\left(N\right)\right)^{r_{1}}\left(\fix_{\gamma_{2}}\left(N\right)\right)^{r_{2}}\right]=\mathbb{E}\left[\left(Y_{1}\right)^{r_{1}}\right]\cdot\mathbb{E}\left[\left(Y_{2}\right)^{r_{2}}\right]\label{eq:goal-thm-1.12}
\end{equation}
for every $r_{1},r_{2}\in\mathbb{Z}_{\ge0}$. Let $\R_{\gamma_{1},r_{1},\gamma_{2},r_{2}}$
be the natural resolution of $p\colon Y_{\gamma_{1}}^{\sqcup r_{1}}\sqcup Y_{\gamma_{2}}^{\sqcup r_{2}}$,
and $\R_{\gamma_{1},r_{1},\gamma_{2},r_{2}}^{0}$ the subset of morphisms
$f\in\R_{\gamma_{1},r_{1},\gamma_{2},r_{2}}$ with $\chigrp\left(Z_{f}\right)=0$.
As in the proof of Theorem \ref{thm:limit distribution of fix}, 
\[
\lim_{N\to\infty}\mathbb{E}\left[\left(\fix_{\gamma_{1}}\left(N\right)\right)^{r_{1}}\left(\fix_{\gamma_{2}}\left(N\right)\right)^{r_{2}}\right]=\sum_{f\in\R_{\gamma_{1},r_{1},\gamma_{2},r_{2}}^{0}}a_{0}\left(Z_{f}\right)=\left|\R_{\gamma_{1},r_{1},\gamma_{2},r_{2}}^{0}\right|.
\]
The assumption on $\gamma_{1}$ and $\gamma_{2}$ guarantees that
in every $f\in\R_{\gamma_{1},r_{1},\gamma_{2},r_{2}}^{0}$, the copies
of $Y_{\gamma_{1}}$ and those of $Y_{\gamma_{2}}$ are mapped to
disjoint connected components of $Z_{f}$. Thus 
\[
\left|\R_{\gamma_{1},r_{1},\gamma_{2},r_{2}}^{0}\right|=\left|\R_{\gamma_{1},r_{1}}^{0}\right|\cdot\left|\R_{\gamma_{2},r_{2}}^{0}\right|=\mathbb{E}\left[\left(Y_{1}\right)^{r_{1}}\right]\cdot\mathbb{E}\left[\left(Y_{2}\right)^{r_{2}}\right].
\]

The implication $\left(i\right)\Longrightarrow\left(iii\right)$:
from the very definition of asymptotic independence it follows that
$\mathbb{E}\left[\fix_{\gamma_{1}}\left(N\right)\cdot\fix_{\gamma_{2}}\left(N\right)\right]=\mathbb{E}\left[\fix_{\gamma_{1}}\left(N\right)\right]\cdot\mathbb{E}\left[\fix_{\gamma_{2}}\left(N\right)\right]+o_{N}\left(1\right)$.
Applying Theorem \ref{thm:E^emb is N^chi and asym expansion} to $p_{\gamma_{1}}\colon Y_{\gamma_{1}}\to X_{\Gamma}$,
to $p_{\gamma_{2}}\colon Y_{\gamma_{2}}\to X_{\Gamma}$ and to $p_{\gamma_{1}\sqcup\gamma_{2}}\colon Y_{\gamma_{1}}\sqcup Y_{\gamma_{2}}\to X_{\Gamma}$,
shows that the error term is $O\left(N^{-1/m}\right)$.

The implication $\left(iii\right)\Longrightarrow\left(ii\right)$:
Finally, assume that $\mathbb{E}\left[\fix_{\gamma_{1}}\left(N\right)\cdot\fix_{\gamma_{2}}\left(N\right)\right]=\mathbb{E}\left[\fix_{\gamma_{1}}\left(N\right)\right]\cdot\mathbb{E}\left[\fix_{\gamma_{2}}\left(N\right)\right]+O\left(N^{-1/m}\right)$.
As in the proof of Theorem \ref{thm:limit expectation of fix}, 
\[
\lim_{N\to\infty}\mathbb{E}\left[\fix_{\gamma_{1}}\left(N\right)\cdot\fix_{\gamma_{2}}\left(N\right)\right]=\left|\R_{\gamma_{1}\sqcup\gamma_{2}}^{0}\right|,
\]
the number of elements $f$ in the natural resolution of $p_{\gamma_{1}\sqcup\gamma_{2}}$
with $\chigrp\left(Z_{f}\right)=0$. Inside $\R_{\gamma_{1}\sqcup\gamma_{2}}^{0}$
there are all those morphisms in which $Y_{\gamma_{1}}$ and $Y_{\gamma_{2}}$
are mapped to two different connected components of $Z_{f}$. The
number of such elements is $\left|\R_{\gamma_{1}}^{0}\right|\cdot\left|\R_{\gamma_{1}}^{0}\right|$.
By the assumption in $\left(iii\right)$, there are no further elements
in $\R_{\gamma_{1}\sqcup\gamma_{2}}^{0}$. 

Assume towards contradiction that $\gamma_{1}$ and $\gamma_{2}$
are both conjugate into the same EC-zero subgroup $H\le\Gamma$. In
particular, $\ell\left(\gamma_{1}\right)\ge2$ if and only if $\ell\left(\gamma_{2}\right)\ge2$.
We may assume that $Y_{\gamma_{1}}$ and $Y_{\gamma_{2}}$ were constructed
in a coordinated manner: if $\ell\left(\gamma_{1}\right)\ge2$, then
in the word spelling $\gamma_{1}$ and the word spelling $\gamma_{2}$
we use the same subwords whenever the corresponding elements of the
canonical forms are identical or inverse of one another, and if $\ell\left(\gamma_{1}\right)=1$,
then we take $\gamma_{1}$ and $\gamma_{2}$ to be powers of the same
$\gamma_{0}$. 

But then there is a map of $Y_{\gamma_{1}}\sqcup Y_{\gamma_{2}}$
into the connected cover $\Upsilon_{H}$ corresponding to $H$. The
images of both $Y_{\gamma_{1}}$ and $Y_{\gamma_{2}}$ in $\Upsilon_{H}$
are identical, and the surjective part of this morphism constitutes
another element of $\R_{\gamma_{1}\sqcup\gamma_{2}}^{0}$, a contradiction.
\end{proof}

\subsubsection*{Proof of Theorem \ref{thm:asymptotic number of cyclces of bounded size}}

We will need the following lemma. Recall that for non-torsion $\gamma\in\Gamma$
and any $L\in\mathbb{Z}_{\ge1}$, we let $\H_{\gamma,L}$ mark the
set of EC zero subgroups of $\Gamma$ containing $\gamma^{L}$ but
not any smaller power of $\gamma$.
\begin{lem}
\label{lem:different powers of gamma are not conjugate in EC 0} Let
$L_{1}\ne L_{2}$ be positive integer. Then no subgroup in $\H_{\gamma,L_{1}}$
is conjugate to a subgroup in $\H_{\gamma,L_{2}}$.
\end{lem}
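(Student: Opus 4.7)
The plan is to show that every $g \in \Gamma$ with $gH_1g^{-1} = H_2$ satisfies $g\gamma g^{-1} = \gamma^{\pm 1}$. Granted this, the conclusion $L_1 = L_2$ is immediate: for every positive integer $L$,
\[
\gamma^L \in H_2 \iff g^{-1}\gamma^L g \in H_1 \iff \gamma^{\pm L} \in H_1 \iff \gamma^L \in H_1,
\]
where the last equivalence uses that $H_1$ is a subgroup. So the minimum $L>0$ with $\gamma^L \in H_i$ coincides for $i=1,2$, contradicting $L_1 \ne L_2$.

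To show $g\gamma g^{-1} = \gamma^{\pm 1}$, I would first argue that $H_1$ and $H_2$ both lie inside a common \emph{maximal virtually cyclic} subgroup $M \le \Gamma$ containing $\gamma$. The group $\Gamma$ is word-hyperbolic (each factor $G_i$ in Assumption \ref{assu:Gamma} is hyperbolic, and free products of hyperbolic groups are hyperbolic), so every infinite-order element lies in a unique maximal virtually cyclic subgroup, realized as the stabilizer of its pair of fixed points on the Gromov boundary $\partial\Gamma$. Since $\langle\gamma\rangle$ and $\langle\gamma^L\rangle$ share the same fixed-point set on $\partial\Gamma$, this subgroup $M = M(\gamma)$ equals $M(\gamma^L)$ for every $L \ne 0$. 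Any EC-zero subgroup $H \le \Gamma$ containing some $\gamma^L$ is itself virtually cyclic and hence contained in $M(\gamma^L) = M$; applying this to $H_1$ and $H_2$ places both inside $M$.

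Next, picking any infinite-order $h \in H_1$ (e.g.\ $h = \gamma^{L_1}$), one has $ghg^{-1} \in H_2 \subseteq M$, so $gMg^{-1}$ is a maximal virtually cyclic subgroup containing $ghg^{-1}$; uniqueness forces $gMg^{-1} = M$, i.e.\ $g \in N_\Gamma(M)$. Since $M \cong \mathbb{Z}$ or $M \cong C_2 * C_2$, its characteristic ``translation'' subgroup $T$ (equal to $M$ in the first case, and the unique index-$2$ infinite cyclic subgroup in the second) contains $\gamma$, and every automorphism of $M$ restricts to an automorphism of $T \cong \mathbb{Z}$, i.e.\ to $\pm 1$. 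Conjugation by $g$ therefore sends $\gamma$ to $\gamma^{\pm 1}$, as needed.

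The main obstacle is the geometric input about unique maximal virtually cyclic subgroups in hyperbolic groups; if one prefers to avoid invoking this general theory, the same conclusions can be obtained by a direct Kurosh-type analysis in $\Gamma$, using that neither $\mathbb{Z}^2$ nor the Klein bottle group embeds in any of the permitted factors $G_i$ (and hence, by Kurosh, not in $\Gamma$), which prevents any non-trivial extension of $M$ by an infinite cyclic quotient from being virtually cyclic.
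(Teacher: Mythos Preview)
Your argument is correct and takes a genuinely different route from the paper. The paper argues case by case: for cyclic $H$ it observes that the subgroups $\langle\gamma_0^{\,j}\rangle$ are pairwise non-conjugate and that $j$ determines $L$; for $H\cong C_2*C_2$ it reads off an integer invariant $s$ (the length of the path between the two order-$2$ vertex-spaces in the graph-of-spaces picture of $\Upsilon_H$), notes that $s$ is a conjugacy invariant, and checks that $s$ together with $q$ and $\ell(\gamma_0)$ determines $L$. Your proof instead exploits that $\Gamma$ is word-hyperbolic (each permitted factor is hyperbolic and free products of hyperbolic groups are hyperbolic), so there is a unique maximal virtually cyclic subgroup $M=M(\gamma)$ containing $\gamma$; you then show any $g$ with $gH_1g^{-1}=H_2$ normalises $M$, hence conjugation by $g$ restricts to $\pm1$ on the characteristic infinite cyclic subgroup $T\le M$, giving $g\gamma g^{-1}=\gamma^{\pm1}$ and forcing $L_1=L_2$. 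This is a clean, case-free argument and in fact works verbatim in any hyperbolic $\Gamma$; the price is importing the standard structure theory of elementary subgroups in hyperbolic groups, whereas the paper's proof stays entirely within the combinatorial framework already built. One small remark: your final paragraph sketching a Kurosh-based alternative is a bit loose (the relevant point is that in this $\Gamma$ every infinite virtually cyclic subgroup is already $\mathbb{Z}$ or $C_2*C_2$, so $M$ itself has this form and $N_\Gamma(M)=M$), but this is peripheral to your main argument, which stands on its own.
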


\begin{proof}
As mentioned in the proof of Lemma \ref{lem:Y_gamma to EC 0 is pi1-onto},
for every non-torsion $\gamma\in\Gamma$ there is a unique non-power
$\gamma_{0}$ and $q\in\mathbb{Z}_{\ge1}$ with $\gamma=\gamma_{0}^{~q}$.
We first show the claim of the lemma is true for subgroup isomorphic
to $\mathbb{Z}$. The subgroups isomorphic to $\mathbb{Z}$ in $\bigcup_{L}\H_{\gamma,L}$
are precisely $\left\{ \left\langle \gamma_{0}^{~j}\right\rangle \,\middle|\,j\in\mathbb{Z}_{\ge1}\right\} $,
no distinct two of which are conjugate one to the other. Moreover,
for every $j\in\mathbb{Z}_{\ge1}$, $\left\langle \gamma_{0}^{~j}\right\rangle $
belongs to a single $\H_{\gamma,L}$: exactly the $L$ satisfying
that it is the smallest positive integer with $j\mid qL$.

For subgroups isomorphic to $C_{2}*C_{2}$, the argument is similar,
as we now explain. Let $H\le\Gamma$ with $H\cong C_{2}*C_{2}$. If
$\gamma^{L}\in H$ for some $L\in\mathbb{Z}_{\ge1}$, then $\ell\left(\gamma\right)\ge2$.
Let $f\colon Y_{\gamma^{L}}\to Z_{f}$ be the element of $\R_{\gamma^{L}}^{0}$
corresponding to $H$. By the analysis in the proof of Lemma \ref{lem:Y_gamma to EC 0 is pi1-onto},
as a graph of spaces, $Z_{f}$ is a path of vertex-spaces with trivial
groups, between two vertex-spaces representing order two subgroups.
Assume that the path, excluding the vertex-spaces at the two ends,
consists of $s$ ``vertex spaces'', namely, it spells out an element
$\delta\in\Gamma$ with $\ell\left(\delta\right)=s$. Assume that
$H=\plab\left(Z_{f},u\right)$ for some vertex $u$. Because $\gamma$
is cyclically reduced, the closed path at $u$ corresponding to $\gamma^{L}$
starts by leaving $u$ to one direction (say, to the right), and ends
by arriving to $u$ from the other direction (say, from the left).
Thus $\ell\left(\gamma^{L}\right)=L\cdot\ell\left(\gamma\right)=Lq\cdot\ell\left(\gamma_{0}\right)$
is equal to some multiple of $\left(2+2s\right)$. So knowing that
$H\in\H_{\gamma,L}$ for some $L$, we may find $L$ simply as the
smallest positive integer $L$ satisfying that $2+2s\mid Lq\cdot\ell\left(\gamma_{0}\right)$.
Any conjugate of $H$ has the same parameter $s$ (it is the same
graph-of-spaces, only, possibly, with a different basepoint), so if
it belongs to any $\H_{\gamma,L}$, it must belong to the same $\H_{\gamma,L}$
as $H$ does. 
\end{proof}

\begin{proof}[Proof of Theorem \ref{thm:asymptotic number of cyclces of bounded size}]
 We begin with the first part of the theorem: that $\mathbb{E}\left[\cyc_{\gamma,L}\left(N\right)\right]=\frac{1}{L}\left|\H_{\gamma,L}\right|+O\left(N^{-1/m}\right)$,
where $\H_{\gamma,L}$ is the set of EC-zero subgroups of $\Gamma$
containing $\gamma^{L}$ but not any smaller power of $\gamma$. Note
that 
\[
\fix_{\gamma^{L}}\left(N\right)=\sum_{1\le d|L}d\cdot\cyc_{\gamma,d}\left(N\right),
\]
so the this part of the theorem follows from Theorem \ref{thm:limit expectation of fix}
by a simple induction on $L$. Indeed, when $L=1$ this is precisely
Theorem \ref{thm:limit expectation of fix}. For general $L$, we
get by induction that
\begin{eqnarray*}
L\cdot\mathbb{E}\left[\cyc_{\gamma,L}\left(N\right)\right] & = & \mathbb{E}\left[\fix_{\gamma^{L}}\left(N\right)\right]-\sum_{1\le d<L,d|L}d\cdot\mathbb{E}\left[\cyc_{\gamma,d}\left(N\right)\right]\\
 & = & \left|\H_{\gamma^{L}}\right|-\sum_{1\le d<L,d|L}\left|\H_{\gamma,d}\right|+O\left(N^{-1/m}\right)=\left|\H_{\gamma,L}\right|+O\left(N^{-1/m}\right).
\end{eqnarray*}
For the second part of Theorem \ref{thm:asymptotic number of cyclces of bounded size},
recall that $H_{1},\ldots,H_{t}$ are \emph{representatives of the
conjugacy classes of subgroups }represented in ${\cal H}_{\gamma,L}$,
and $\alpha_{1},\ldots,\alpha_{t}$ and $\beta_{1},\ldots,\beta_{t}$
are defined analogously to their definition in Theorem \ref{thm:limit distribution of fix}.
By Lemma \ref{lem:different powers of gamma are not conjugate in EC 0},
$H_{1},\ldots,H_{t}$ can be taken to be a subset of the representatives
of $\H_{\gamma^{L}}$ from Theorem \ref{thm:limit distribution of fix},
which are not conjugate to any element of $\H_{\gamma^{L'}}$ with
$L'<L$. 

This part of Theorem \ref{thm:asymptotic number of cyclces of bounded size}
states that $\cyc_{\gamma,L}\left(N\right)\stackrel[N\to\infty]{\mathrm{dis}}{\longrightarrow}\frac{1}{L}\sum_{i=1}^{t}\alpha_{i}\beta_{i}Z_{1/\beta_{i}}$.
As in the first part, this can be deduced from Theorem \ref{thm:limit distribution of fix}
applied to $\fix_{\gamma^{L}}\left(N\right)$ by a simple induction
on $L$. Indeed, 
\begin{eqnarray*}
L\cdot\cyc_{\gamma,L}\left(N\right) & = & \fix_{\gamma^{L}}\left(N\right)-\sum_{1\le d<L,d|L}d\cdot\cyc_{\gamma,d}\left(N\right)
\end{eqnarray*}
and applying Theorem \ref{thm:limit distribution of fix} to $\fix_{\gamma^{L}}\left(N\right)$
and the induction hypothesis on $d\cdot\cyc_{\gamma,d}\left(N\right)$
for $1\le d<L,d|L$, we obtain the result.

Finally, the third part of Theorem \ref{thm:asymptotic number of cyclces of bounded size}
states that $\cyc_{\gamma,1}\left(N\right),\cyc_{\gamma,2}\left(N\right),\ldots,\cyc_{\gamma,L}\left(N\right)$
are asymptotically independent, and that for $L_{1}\ne L_{2}$ we
have $\mathbb{E}\left[\cyc_{\gamma,L_{1}}\left(N\right)\cdot\cyc_{\gamma,L_{2}}\left(N\right)\right]=\mathbb{E}\left[\cyc_{\gamma,L_{1}}\left(N\right)\right]\cdot\mathbb{E}\left[\cyc_{\gamma,L_{2}}\left(N\right)\right]+O\left(N^{-1/m}\right)$.
The argument here is the same as in the proof of Theorem \ref{thm:asymptotic independence},
where Lemma \ref{lem:different powers of gamma are not conjugate in EC 0}
replaces the assumption in part \ref{enu:gamma1 and gamma2 are not conjugate in a EC=00003D0 subgroup}
of Theorem \ref{thm:asymptotic independence}.
\end{proof}

\section{Open questions\label{sec:Open-questions}}

There are several questions the current paper raises. We discuss here
two we find most appealing.

\subsubsection*{The leading term of $\mathbb{E}\left[\protect\fix_{\gamma}\left(N\right)\right]-1$}

Recall that when $\Gamma$ is a free group and $1\ne\gamma\in\Gamma$,
Theorem \ref{thm:limit expectation of fix}, which is originally due
to \cite{nica1994number} in this case, says that $\mathbb{E}\left[\fix_{\gamma}\left(N\right)\right]=d\left(q\right)+O\left(N^{-1}\right)$,
where $q\in\mathbb{Z}_{\ge1}$ is maximal so that $\gamma$ is a $q$-th
power, and $d\left(q\right)$ the number of positive divisors of $q$.
In particular, when $\gamma$ is a non-power, then $\mathbb{E}\left[\fix_{\gamma}\left(N\right)\right]=1+O\left(N^{-1}\right)$.
But, in fact, much more is known. For $\gamma$ in a free group $\Gamma$,
denote 
\begin{equation}
\chimax\left(\gamma\right)\defi\max\left\{ \chi\left(H\right)\,\middle|\,\gamma\in H\le\Gamma,~\gamma~\mathrm{non\textnormal{-}primitive~in}~H\right\} ,\label{eq:pi(gamma)}
\end{equation}
and let $\crit\left(\gamma\right)$ denote the number of subgroups
attaining the maximum from \eqref{eq:pi(gamma)}. Then \cite[Thm.~1.8]{PP15}
states that for every $\gamma\in\Gamma$, 
\[
\mathbb{E}\left[\fix_{\gamma}\left(N\right)\right]-1=\left|\crit\left(\gamma\right)\right|\cdot N^{\chimax\left(\gamma\right)}\left(1+O\left(N^{-1}\right)\right).
\]
Notice that this estimate is true for proper powers as well and even
for the identity element. We conjecture that the same phenomenon is
true for the family of groups considered in this paper.
\begin{conjecture}
\label{conj:pi-conjecture}In the notation of Assumption \ref{assu:Gamma},
let $\gamma\in\Gamma$ be a non-torsion element. Denote 
\[
\chimax\left(\gamma\right)\defi\max\left\{ \chi\left(H\right)\,\middle|\,\begin{gathered}\gamma\in H\le\Gamma,~\mathrm{and}\\
\left\langle \gamma\right\rangle ~\mathrm{is~not~a}~\mathrm{free~factor~isomorphic~to}~\mathbb{Z}~\mathrm{of}~H
\end{gathered}
\right\} ,
\]
and let $\crit\left(\gamma\right)$ denote the number of subgroups
$H\le\Gamma$ satisfying the conditions in the definition of $\chimax\left(\gamma\right)$
with $\chi\left(H\right)=\chimax\left(\gamma\right)$. Then
\[
\mathbb{E}\left[\fix_{\gamma}\left(N\right)\right]-1=\left|\crit\left(\gamma\right)\right|\cdot N^{\chimax\left(\gamma\right)}\left(1+O\left(N^{-1/m}\right)\right).
\]
\end{conjecture}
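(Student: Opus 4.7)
The natural approach is to adapt the proof of \cite[Thm.~1.8]{PP15}, which establishes exactly this statement for free groups, to the graph-of-spaces setting of the current paper. Starting from the natural resolution $\R_\gamma$ of $p_\gamma\colon Y_\gamma\to X_\Gamma$, Theorem \ref{thm:E^emb is N^chi and asym expansion} yields
\[
\mathbb{E}\left[\fix_\gamma(N)\right] \;=\; \sum_{f\in\R_\gamma}\emb_{Z_f}(N) \;=\; \sum_{f\in\R_\gamma} a_0(Z_f)\,N^{\chigrp(Z_f)}\bigl(1+O(N^{-1/m})\bigr).
\]
The element $f_\gamma\in\R_\gamma$ with $\plab(Z_{f_\gamma})=\langle\gamma\rangle$ (the image of the ``universal lift'' of $Y_\gamma$ into $\langle\gamma\rangle\backslash\widetilde{X_\Gamma}$) already contributes $1+O(N^{-1/m})$ since $a_0(Z_{f_\gamma})=1$ and $\chigrp(Z_{f_\gamma})=0$; this accounts for the subtracted~$1$.

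The plan is to group the remaining contributions by the subgroup $H=\plab(Z_f,f(y))\supsetneq\langle\gamma\rangle$, writing
\[
\mathbb{E}\left[\fix_\gamma(N)\right]-1 \;=\; \sum_{H\supsetneq\langle\gamma\rangle} L_H(N), \qquad L_H(N) \;\defi\; \sum_{\substack{f\in\R_\gamma\\ \plab(Z_f,f(y))=H}} \emb_{Z_f}(N),
\]
and to prove two things. First, if $H$ is critical in the sense of the conjecture (\(\chi(H)=\chimax(\gamma)\) and $\langle\gamma\rangle$ is not a $\mathbb{Z}$-free-factor of $H$), then $L_H(N)=N^{\chimax(\gamma)}(1+O(N^{-1/m}))$; this should follow by considering the universal lift of $Y_\gamma$ to the cover of $X_\Gamma$ with $\plab=H$ and checking via a generalisation of Corollary \ref{cor:Y with a_0=00003D1} that the resulting sub-cover contributes with $a_0=1$. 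Second, whenever $\langle\gamma\rangle$ \emph{is} a free factor of $H$ isomorphic to $\mathbb{Z}$, the ensemble of contributions $L_{H'}(N)$ for $H'\supseteq H$ must cancel via Möbius inversion on the sub-lattice of $\Gamma$, leaving no net contribution at level $N^{\chimax(\gamma)}$.

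I expect this Möbius-type cancellation to be the main obstacle. In \cite{PP15} it rests on a ``wedge decomposition'' of Stallings core graphs: if $\langle\gamma\rangle$ is a free factor of $H$, the core of $H$ splits as a wedge of the $\gamma$-cycle with another core graph, inducing a product factorisation of the embedding counts whose inclusion--exclusion sums vanish. The analogue here requires a structure theorem asserting that a compact sub-cover $Z$ of $X_\Gamma$ whose $\plab$ admits $\langle\gamma\rangle$ as an $\infty$-cyclic free factor decomposes canonically as a ``wedge'' of $Y_\gamma$ with another sub-cover of $X_\Gamma$. Making this precise—and tracking how the leading coefficients $a_0$ behave under such decompositions when surface-group factors are involved, where $a_0$ is not always~$1$ and interacts with matching boundary cycles in the sense of Definition~\ref{def:matching bdy cycles}—will likely require extending the SBR/tiled-surface theory of \cite{MPcore,magee2020asymptotic} across the graph-of-spaces structure. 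A more elegant alternative, if available, would be to develop a notion of \emph{algebraic extension} of $\langle\gamma\rangle$ inside $\Gamma=G_1*\cdots*G_k$ and to show that the critical subgroups are precisely the algebraic extensions maximising $\chi$; the Möbius cancellation would then be built into the very definition, as in the free-group case.
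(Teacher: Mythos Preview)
This statement is presented in the paper as \emph{Conjecture}~\ref{conj:pi-conjecture}, not as a theorem; the paper offers no proof. What the paper does provide is (i) the observation that the conjecture follows from Theorem~\ref{thm:limit expectation of fix} whenever $\left|\H_\gamma\right|\ge 2$ (so that $\chimax(\gamma)=0$ and $\left|\crit(\gamma)\right|=\left|\H_\gamma\right|-1$), and (ii) a handful of worked examples in $\Lambda_2$ and $C_3*C_3*C_3$ where the first few coefficients of the asymptotic expansion are computed directly and shown to agree with the conjectured answer. There is therefore no ``paper's own proof'' to compare your proposal against.

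As for your proposal itself, you correctly identify it as a research outline rather than a proof, and you are candid about the main obstacle: the cancellation of contributions from subgroups $H$ in which $\langle\gamma\rangle$ is a $\mathbb{Z}$-free-factor. Two remarks are in order. First, the decomposition $\mathbb{E}[\fix_\gamma(N)]-1=\sum_{H\supsetneq\langle\gamma\rangle}L_H(N)$ is slightly imprecise: the term with $\plab=\langle\gamma\rangle$ contributes $1+O(N^{-1/m})$, not exactly $1$, so the error terms from \emph{that} element must also be absorbed into the cancellation. Second, and more seriously, the natural resolution $\R_\gamma$ need not see every subgroup containing $\gamma$; it sees only those arising as $\plab$ of a surjective image of $Y_\gamma$. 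In the free-group case \cite{PP15} handles this by passing to a different bookkeeping device (algebraic extensions and the associated M\"obius function), and your suggestion to develop an analogue of algebraic extensions for free products is indeed the natural line of attack---but this is genuinely new territory, especially once surface factors are present and the leading coefficients $a_0$ are no longer automatically $1$. In short: the outline is reasonable as a plan, but the missing ingredients are exactly those the paper flags as open.
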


The fact that $\crit\left(\gamma\right)$ is finite can be shown using
the techniques of the current paper. By Theorem \ref{thm:limit expectation of fix},
this conjecture is true for any element $\gamma$ with $\left|\H_{\gamma}\right|\ge1$.
Here are a few other examples illustrating the conjecture:
\begin{itemize}
\item Let $\Gamma=\Lambda_{2}=\left\langle a,b,c,d\,\middle|\,\left[a,b\right]\left[c,d\right]\right\rangle $
be the genus-2 surface group. Consider $\gamma=a$. It is possible
to obtain the following estimate: 
\[
\mathbb{E}\left[\fix_{a}\left(N\right)\right]=1+\frac{1}{N^{2}}+\frac{2}{N^{3}}+\frac{10}{N^{4}}+O\left(\frac{1}{N^{5}}\right).
\]
It seems that $a$ is primitive in every free subgroup of $\Gamma$
containing it, so the only subgroups containing it not inside a proper
free factor are the finite-index subgroups, which are all surface
groups. Among these, $\Gamma$ itself has maximal Euler characteristic:
$\chi\left(\Gamma\right)=2-2g=-2$. So $\chimax\left(\gamma\right)=-2$
and $\crit\left(\gamma\right)=\left\{ \Gamma\right\} $. This agrees
with the conjecture.
\item Let $\Gamma=\Lambda_{2}=\left\langle a,b,c,d\,\middle|\,\left[a,b\right]\left[c,d\right]\right\rangle $
again, and consider $\gamma=\left[a,b\right]$. Using a computer,
Michal Buran carried out a computation showing that most likely
\[
\mathbb{E}\left[\fix_{\left[a,b\right]}\left(N\right)\right]=1+\frac{2}{N}+O\left(\frac{1}{N^{2}}\right).
\]
This seems to agree with the conjecture as $\left[a,b\right]$ is
a non-primitive element in two free subgroups of Euler characteristic
$-1$:$\left\langle a,b\right\rangle $ and $\left\langle c,d\right\rangle $.
So $\chimax\left(\gamma\right)=-1$, and most likely $\crit\left(\gamma\right)=\left\{ \left\langle a,b\right\rangle ,\left\langle c,d\right\rangle \right\} $.
\item Now consider $\Gamma=C_{3}*C_{3}*C_{3}=\left\langle x\right\rangle *\left\langle y\right\rangle *\left\langle z\right\rangle $,
and let $\gamma=xyz$. The resolution $\R_{\gamma}$ from Section
\ref{sec:limit-distr-and-asymptotic-expansion of fix} contains five
elements, corresponding to the subgroups $\left\langle \gamma\right\rangle ,\left\langle x,yz\right\rangle ,\left\langle xz,xyx^{-1}\right\rangle ,\left\langle xy,z\right\rangle ,\left\langle x,y,z\right\rangle $.
It is possible to show that any critical subgroup of $\gamma$ must
be found inside this collection. But $\left\langle \gamma\right\rangle $
is not critical by definition, $\left\langle x,yz\right\rangle \cong C_{3}*\mathbb{Z}$
has also the decomposition $\left\langle x\right\rangle *\left\langle xyz\right\rangle $
so $\gamma$ belongs to a proper free factor. Similarly, $\left\langle xz,xyx^{-1}\right\rangle =\left\langle xyz\right\rangle *\left\langle xyx^{-1}\right\rangle $
and $\left\langle xy,z\right\rangle =\left\langle xyz\right\rangle *\left\langle z\right\rangle $.
This leaves us with $\left\langle x,y,z\right\rangle $ where $\left\langle x,y,z\right\rangle $
does not seem to belong to a proper free factor. We conclude that
most likely, $\chimax\left(\gamma\right)=\chi\left(\left\langle x,y,z\right\rangle \right)=\chi\left(\Gamma\right)=-1$,
and $\crit\left(\gamma\right)=\left\{ \Gamma\right\} $. We thus expect
that $\mathbb{E}\left[\fix_{\gamma}\left(N\right)\right]-1=N^{-1}+O\left(N^{-4/3}\right)$.\\
Using the results of \cite{muller1997finite}, we may compute the
leading terms of $\emb_{Y}\left(N\right)$ for these five sub-covers.
We get the following. For $\left\langle \gamma\right\rangle $ we
get $1-3N^{-2/3}+O\left(N^{-4/3}\right)$; For $\left\langle x,yz\right\rangle $
we get $N^{-2/3}+O\left(N^{-4/3}\right)$, and by symmetry, the same
leading term apply to $\left\langle xz,xyx^{-1}\right\rangle $ and
to $\left\langle xy,z\right\rangle $; For $\left\langle x,y,z\right\rangle $
we get $N^{-1}+O\left(N^{-5/3}\right)$. Overall the coefficients
of $N^{-2/3}$ cancel out and we get $\mathbb{E}\left[\fix_{\gamma}\left(N\right)\right]=1+N^{-1}+O\left(N^{-4/3}\right)$,
which agrees with the conjecture. 
\end{itemize}

\subsubsection*{Scope of the phenomena described in this paper}

We are curious as to what extent the results of this paper can be
generalized to a larger family of groups. As noted in Remark \ref{rem:Z^2},
Theorem \ref{thm:limit expectation of fix} does not hold when $\Gamma=\mathbb{Z}^{2}$.
Other results in this paper, such as Theorem \ref{thm:E^emb is N^chi and asym expansion},
require that $\chi\left(H\right)\le1$ for any subgroup of $\Gamma$:
indeed, the number of (injective) lifts of a connected sub-cover to
an arbitrary $N$-cover is bounded from above by $N$. Still, we are
positive that the results of this paper apply to groups not covered
by Assumption \ref{assu:Gamma}. For example, we suspect they are
true for fundamental groups of non-orientable surfaces of negative
Euler characteristic, and more generally to all Fuchsian groups. They
may also hold for general amalgams of finite groups. What it the widest
generality? Do some of the results, e.g., the asymptotic expansion
of Theorem \ref{thm:asmptotic expansion}, apply nonetheless to groups
such as $\mathbb{Z}^{2}$?

\bibliographystyle{alpha}
\bibliography{random_permutations_from_free_products}

\noindent Doron Puder, School of Mathematical Sciences, Tel Aviv University,
Tel Aviv, 6997801, Israel\\
\texttt{doronpuder@gmail.com}~\\

\noindent Tomer Zimhoni, School of Mathematical Sciences, Tel Aviv
University, Tel Aviv, 6997801, Israel\\
\texttt{tomerzimhoni@mail.tau.ac.il}
\end{document}